\newtheorem{thm}{Theorem}[section]
\newtheorem{prop}[thm]{Proposition}
\newtheorem{lemma}[thm]{Lemma}
\newtheorem{cor}[thm]{Corollary}
\newtheorem{claim}[thm]{Claim}
\newtheorem{fact}[thm]{Fact}
\numberwithin{equation}{subsection}
\numberwithin{thm}{subsection}
\theoremstyle{definition}
\newtheorem{defn}[thm]{Definition}
\theoremstyle{remark}
\newtheorem{rmk}[thm]{Remark}
\newtheorem{convention}[thm]{Convention}
\newtheorem{warning}[thm]{Warning}
\DeclareMathAlphabet{\mathpzc}{OT1}{pzc}{m}{it}
\newcommand{\C}{\mathbb{C}}
\newcommand{\R}{\mathbb{R}}
\newcommand{\Z}{\mathbb{Z}}
\newcommand{\Q}{\mathbb{Q}}
\newcommand{\bdry}{\partial}
\newcommand{\s}{\vskip.1in}
\newcommand{\n}{\noindent}
\newcommand{\vp}{\varphi}
\newcommand{\be}{\begin{enumerate}}
\newcommand{\ee}{\end{enumerate}}
\newcommand{\op}{\operatorname}
\newcommand{\bs}{\boldsymbol}
\newcommand{\cb}{\color{black}}
\begin{document}

\title[Definition of cylindrical contact homology in dimension three]
{Definition of cylindrical contact homology in dimension three}

\author{Erkao Bao}
\address{University of California, Los Angeles, Los Angeles, CA 90095}
\email{bao@math.ucla.edu}

\author{Ko Honda}
\address{University of California, Los Angeles, Los Angeles, CA 90095}
\email{honda@math.ucla.edu} \urladdr{http://www.math.ucla.edu/\char126 honda}

\date{This version: July 18, 2018.}

\keywords{contact structure, Reeb dynamics, contact homology, symplectic field theory}

\subjclass[2000]{Primary 53D10, 53D40; Secondary 57M50.}

\begin{abstract} \cb
In this paper we give a rigorous definition of cylindrical contact homology for contact $3$-manifolds that admit nondegenerate contact forms with no contractible Reeb orbits, and show that the cylindrical contact homology  is an invariant of the contact structure. \cb
\end{abstract}

\maketitle

\setcounter{tocdepth}{1}
\tableofcontents

\section{Introduction}

The goal of this paper is to give a rigorous definition of cylindrical contact homology for contact $3$-manifolds that admit nondegenerate \cb {\em hypertight} contact forms, i.e., contact forms with no contractible Reeb orbits. \cb
%whose first homology has no torsion.  A contact manifold is {\em monotone} if it admits a {\em monotone contact form}, i.e., a contact form which is nondegenerate and whose contractible Reeb orbits have Conley-Zehnder index $\geq 3$ with respect to any bounding disk.
By ``defining cylindrical contact homology'' we mean the following:

\begin{thm}\label{main thm}
\cb Let $(M,\xi)$ be a closed oriented contact $3$-manifold that admits nondegenerate hypertight contact forms.
\be
\item There is an assignment of a $\Q$-vector space $HC(\mathcal{D})$ to each auxiliary data
$$\mathcal{D}=(\alpha,\{L_i\},\{\vp_i\},\{J_i\},\{\overline{J}_i\}),$$
where $\alpha$ is a nondegenerate contact form for $\xi$ with no contractible Reeb orbits, $L_i\to \infty$, $i=1,2,\dots$, $\vp_i:M\to \R^+$ is a smooth function, $\vp_i\alpha$ has no elliptic orbits of action $< L_i$, and the rest of $\mathcal{D}$ is described in Section~\ref{subsection: def}.
\item $HC(\mathcal{D})$ is the direct limit of homologies of chain complexes generated by good Reeb orbits of $\vp_i\alpha$ of action $<L_i$ whose differential counts Fredholm index $1$ holomorphic cylinders.
\item Given two data $\mathcal{D}_1$ and $\mathcal{D}_2$ for $(M,\xi)$ there exists a natural isomorphism
$$\Phi_{\mathcal{D}_2\mathcal{D}_1}: HC(\mathcal{D}_1)\xrightarrow\sim HC(\mathcal{D}_2)$$
satisfying the property $\Phi_{\mathcal{D}_3\mathcal{D}_2}\circ \Phi_{\mathcal{D}_2\mathcal{D}_1}= \Phi_{\mathcal{D}_3\mathcal{D}_1}$.
\ee
\cb
\end{thm}

The notion of contact homology was proposed by Eliashberg-Givental-Hofer in \cite{EGH} over a decade ago, but a rigorous definition had not been written down yet, even for cylindrical contact homology for contact $3$-manifolds. This is starting to change with our work and also the recent work of Hutchings-Nelson~\cite{HN} towards defining cylindrical contact homology in dimension three.

There are earlier ``abstract perturbation'' approaches which are much broader in scope and are likely to give a definition of contact homology:  polyfolds of Hofer-Wysocki-Zehnder~\cite{HWZ3}, Kuranishi structures of Fukaya-Ono~\cite{FO}, and work of Liu-Tian~\cite{LT} and Ruan~\cite{Ru}.  Our approach is quite different (and closer in spirit to \cite{HN}) in that we do not use any type of abstract perturbation theory: we try to minimize the analysis by more carefully using asymptotic eigenfunctions in the spirit of Hutchings-Taubes~\cite{HT1,HT2}. \cb We expect that cylindrical contact homology, once it is defined via abstract perturbation theory, will be isomorphic to the one in this paper. \cb

\begin{rmk}
The assumption of the theorem is designed to simplify the possible degenerations that we need to analyze. Taking away the assumption would lead to the next level of difficulty: defining full contact homology in dimension three.
\end{rmk}

\begin{rmk}
Since the first version of this paper was submitted, other approaches (cf.\ Pardon \cite{Pa} and Bao-Honda \cite{BH}) have appeared which define full contact homology in all dimensions using abstract perturbation theory.  One of the usual properties of abstract perturbation theory is that, when perturbations are not needed (i.e., the relevant moduli spaces are already transversely cut out) the counts of holomorphic curves without perturbations agrees with the counts with perturbations.  Hence the definition of cylindrical contact homology in this paper agrees with those of \cite{Pa} and \cite{BH}.
\end{rmk}

We also make some remarks on computations:

\s\n
(1) Currently the only case that can be explicitly computed using the definition of cylindrical contact homology given in this paper is the unit cotangent bundle of a hyperbolic surface. This is because the definition relies on replacing elliptic orbits by hyperbolic ones.  It is an interesting problem to determine how to explicitly replace elliptic orbits by a countable collection of hyperbolic orbits (and holomorphic curves connecting them).

\cb
\s\n (2) If we want to compute $HC(\mathcal{D})$ up to a given action $L_1$, we take a contact form $\varphi_1\alpha$ that is nice up to $L_1$, take $\varphi_i\alpha$, $i>1$, such that $\varphi_i=\varphi_1$ on fixed neighborhoods of the Reeb orbits of $\varphi_1\alpha$ of action $\leq L_1$ and $\varphi_i$ is $C^0$-close to $\varphi_1$, and form the direct limit of chain complexes induced by cobordisms from $\varphi_i\alpha$ to $\varphi_{i+1}\alpha$. Then the cobordism maps are induced by isomorphisms of chain complexes up to $L_1$ since we can take the almost complex structure to be very close to an $\R$-invariant one.

\cb

\begin{proof}[Outline of proof.]  Starting with a nondegenerate contact form $\alpha$ with no contractible Reeb orbits for $(M,\xi)$, it is possible to eliminate all its elliptic orbits up to a given action $L>0$ by taking a small perturbation $\vp\alpha$ of $\alpha$.  Such a perturbed contact form (with some extra normalizations near the hyperbolic orbits) will be called {\em $L$-supersimple}. The elimination of elliptic orbits will be reviewed in  Section~\ref{section: elimination of elliptic orbits}.

The advantage of using an $L$-supersimple contact form is that, if $v$ is an $m$-fold branched cover of a finite energy holomorphic map $u$ with $b$ simple branch points, then the Fredholm index $\op{ind}$, given by Equation~\eqref{eqn: Fredholm index of u}, satisfies
$$\op{ind}(v)=m\op{ind}(u)+b;$$
see Lemma~\ref{lemma: multiplicative}.
In particular, if $\op{ind}(u)\geq 0$, then $\op{ind}(v)\geq 0$.

\begin{rmk}
Using $L$-supersimple contact forms is mostly a matter of convenience, used to reduce the number of possible cases that we need to consider.  It is expected (although not worked out in this paper) that elliptic orbits can be treated in a similar manner.
\end{rmk}

The chain groups $CC^L(M,\vp \alpha,J)$ are generated by the good Reeb orbits of action $<L$ for $\vp \alpha$ and the differential $\bdry$ counts $J$-holomorphic maps of $\op{ind}=1$ as usual. Here we require $(\vp \alpha,J)$ to be an {\em $L$-supersimple pair}; see Section~\ref{subsection: L-supersimple pairs} for the definition. Its significance will be explained later in this section.

We need to verify the following:
\begin{enumerate}
\item[(i)] $\bdry^2=0$;
\item[(ii)] an exact symplectic cobordism gives rise to a chain map; and
\item[(iii)] a homotopy of cobordisms gives rise to a chain homotopy.
\end{enumerate}
\cb The contact homology group $HC(\mathcal{D})$ is then defined as the direct limit of groups $HC^{L_i}(\vp_i\alpha,J_i)$ as $L_i\to \infty$. For supersimple contact forms, (i) and (ii) are not difficult with the aid of automatic transversality techniques of Wendl~\cite{We}. \cb Automatic transversality will be reviewed in Section~\ref{section: automatic transversality}, and (i) and (ii) will be proven in Section~\ref{section: def of HC}.

In order to prove (iii), we need to make one type of obstruction bundle calculation using the setup of Hutchings-Taubes~\cite{HT2}, which takes up the rest of the paper. The prototypical gluing problem is the following (there are a few variations, but all of them can be understood in the same way, as explained in Sections~\ref{section: chain homotopy} and \ref{section: gluing}):

\s\n
{\em Prototypical gluing problem.}
Let $\overline{J}^\tau$, $\tau\in[0,1]$, be a $1$-parameter family of almost complex structures adapted to a $1$-parameter family of completed exact symplectic cobordisms $(\widehat{X}^\tau,\widehat{\alpha}^\tau)$, $\tau\in[0,1]$, and let $J_\pm$ be the adapted\footnote{More precisely, ``tame'', which is defined in Section~\ref{subsection: alpha-tame almost complex structures}.} almost complex structures which agree with $\overline{J}^\tau$ at the positive/negative symplectization ends.
Let $v_0\cup v_1$ be a two-level SFT building, arranged from bottom to top as we go from left to right,\footnote{This will be our usual convention.} where:
\begin{enumerate}
\item[(C1)] \label{Condition1} $v_1$ is a holomorphic cylinder from $\gamma$ to $\gamma''$ and $v_0$ is a holomorphic cylinder from $\gamma''$ to $\gamma'$;\footnote{By a {\em curve from $\gamma_+$ to $\gamma_-$} we mean a curve which is asymptotic to $\gamma_+$ at the positive end and to $\gamma_-$ at the negative end.} we assume that $\gamma''$ is negative hyperbolic and $\gamma'$ is positive hyperbolic;
\item[(C2)] $v_0$ maps to a cobordism $(X^{\tau_0},\alpha^{\tau_0},\overline{J}^{\tau_0})$ for some $\tau_0\in (0,1)$ and $v_1$ maps to a symplectization;
\item[(C3)] $\op{ind}(v_0)=-k$, $\op{ind}(v_1)=k$, $k>1$;
\item[(C4)] $v_0$ is a $k$-fold unbranched cover of a transversely cut out (in a $1$-parameter family) cylinder $u_0$ with $\op{ind}(u_0)=-1$ and $v_1$ is regular; and we write $v_0=u_0\circ \pi$, where $\pi$ is the covering map.
\end{enumerate}
 The curves $v_0$ and $v_1$ are also equipped with asymptotic markers at the positive and negative ends; this will be described more precisely later.
Let $\mathcal{M}=\mathcal{M}_{J_+}$ be the moduli space of $v_1$'s from $\gamma$ to $\gamma''$ satisfying the above. We want to glue $v_0$ to $\mathcal{M}/\R$ (or its compactification $\overline{\mathcal{M}/\R}$).  See Figure~\ref{fig: degeneration}.

\begin{figure}[ht]
\begin{center}
\psfragscanon
\psfrag{A}{\tiny $\gamma$}
\psfrag{B}{\tiny $\gamma''$}
\psfrag{C}{\tiny $\gamma'$}
\psfrag{D}{\tiny $0$}
\psfrag{E}{\tiny $k$}
\psfrag{F}{\tiny $-k$}
\psfrag{G}{\tiny \mbox{symplectization}}
\psfrag{H}{\tiny \mbox{cobordism}}
\psfrag{I}{\tiny $v_1$}
\psfrag{J}{\tiny $v_0$}
\includegraphics[width=7cm]{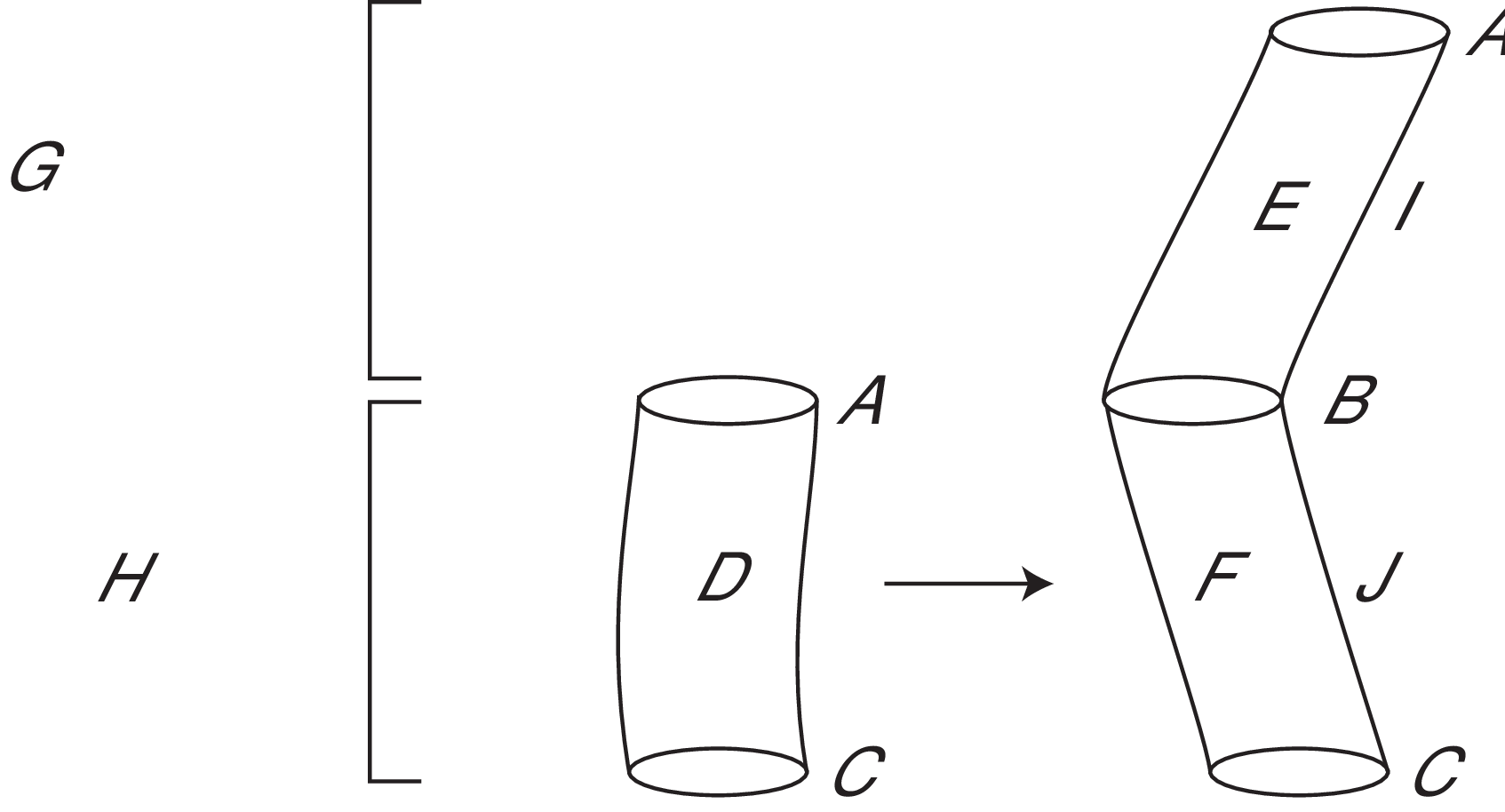}
\end{center}
\caption{The degeneration to the limit $v_0\cup v_1$.  The numbers on the holomorphic curves represent Fredholm indices.} \label{fig: degeneration}
\end{figure}

\s
By a slight modification of \cite{HT2}, there is an obstruction bundle
$$\mathcal{O}\to [R,\infty)\times \overline{\mathcal{M}/\R},\quad  R\gg 0,$$
whose fiber over $(T,v_1)$ is
$$\mathcal{O}(T,v_1)=\op{Hom}(\op{Ker}((D_{v_0}^N)^*)/\R\langle Y\rangle,\R),$$
and a section $\mathfrak{s}$ of the bundle whose zeros we are trying to count \cb as they correspond to curves in $\overline{\mathcal M/\R}$ that are ``gluable'' to $v_0$. \cb Here we assume that $v_0$ is immersed for notational convenience, $ D^N_{v_0}$ is the linearized normal $\overline\bdry$-operator of $v_0$ (i.e., the linearized $\overline \bdry$-operator projected to the normal direction), $(D^N_{v_0})^*$ is its $L^2$-adjoint, and  $Y$ is a nonzero element of $\op{coker}(D^N_{v_0})$ which satisfies the following:  The linearizations of the operators $\overline\bdry_{\overline{J}^\tau} (\exp_{u_0} \xi_0)$ and $\overline\bdry_{\overline{J}^\tau} (\exp_{v_0} \xi)$, projected to the normal direction, can be written as $D^N_{u_0}\xi_0 + (\tau-\tau_0)Y'_0$ and $D^N_{v_0}\xi + (\tau-\tau_0)Y'$, respectively, where $Y'=\pi^* Y'_0$.  If $\Pi_{u_0}$ and $\Pi_{v_0}$ are orthogonal projections to $\ker (D^N_{u_0})^*$ and $\ker (D^N_{v_0})^*$, then $Y_0=\Pi_{u_0} Y_0'$, $Y=\Pi_{v_0} Y'$, and $Y=\pi^* Y_0$.

\begin{rmk} \cb
In general, if $v_0$ is not immersed,  we must replace $D^N_{v_0}$ by the full linearized $\overline \bdry $-operator, which is usually denoted by $D_{v_0}$.  For immersed $v_0$, $D_{v_0}$ and $D^N_{v_0}$ have isomorphic kernels and cokernels and have Fredholm index equal to $\op{ind}(v_0)$ as given by Equation~\eqref{eqn: Fredholm index of u}.  % In this paper, we never use the full linearized explicitly, so later on we sometimes omit the superscript $N$ for ease of notation. \cb
\end{rmk}

 Suppose $\gamma''$ is an $m(\gamma'')$-fold cover of a simple orbit $\gamma''_0$.  Let $\gamma''_0\times D^2$ be a neighborhood of $\gamma''_0$ and let $(\R/\Z)\times D^2\to \gamma''_0\times D^2$ be its $m(\gamma'')$-fold cover with coordinates $(t,z=x+iy)$ such that $\{z=0\}$ corresponds to $\gamma''$.  Also let $\R\times (\R/\Z)\times D^2$ be the cylinder over $(\R/\Z)\times D^2$ with coordinates $(s,t,z=x+iy)$. We parametrize $\R/\Z$ such that the asymptotic marker of $v_1$ at the negative end corresponds to $t=0$.

We consider the negative end of $v_1\in\mathcal{M}$ and write it as a graph $\eta_1(s,t)$ over a subset of $\R\times (\R/\Z)$; \cb a map of the form $(s,t)\mapsto (s,t,\eta_1(s,t))$ will be called {\em graphical over $\gamma''$}. \cb  Let
$$A=-j_0{\bdry\over \bdry t}-S(t): W^{1,2}(\R/\Z, \R^2) \to L^2(\R/\Z,\R^2)$$
be the asymptotic operator of $\gamma''$, where $j_0=\begin{pmatrix} 0 & -1 \\ 1 & 0 \end{pmatrix}$ and $S(t)$ is a family of symmetric matrices.  The sign convention for $A$ is consistent with \cite{HWZ1,HWZ2} but opposite to that of \cite{HT1,HT2}.  Asymptotic eigenfunctions will be discussed in more detail in Sections~\ref{section: automatic transversality} and \ref{section: evaluation map}.

In an idealized situation, $\eta_1\in \mathcal{M}$ admits a ``Fourier expansion''
$$\eta_1(s,t)=\sum_{i\in \Z-\{0\}} c_i(s) e^{\lambda_i s} f_i(t),$$
where $f_i(t)$ is an eigenfunction of the asymptotic operator $A$ corresponding to $\gamma''$ with unit $L^2$-norm, $\lambda_i$ is the corresponding eigenvalue,
\begin{equation} \label{eqn: ordering eigenvalues 1}
\dots \leq \lambda_{-2}\leq \lambda_{-1} < 0 < \lambda_1\leq \lambda_2\leq \dots,
\end{equation}
$\{f_i(t)\}_{i\in\Z-\{0\}}$ forms an orthonormal basis of $L^2(\R/\Z;\R^2)$, and $c_i(s)$ limits to a constant $c_i$ as $s\to -\infty$ with $c_i=0$ for $i<0$.  (If there are multiple eigenvalues, then we will make specific, convenient choices for $f_i(t)$.) Hence there would be an evaluation map
$$ev^k: \mathcal{M} \to \R^k,$$
$$v_1\mapsto (c_1,\dots, c_k),$$
\cb which can be shown to miss the origin,
and a corresponding quotient
$$\widetilde{ev}^k:\mathcal{M}/\R\to (\R^k -\{0\})/\R^+\simeq S^{k-1}$$
given by $\R$-translation. \cb

In reality, unless the $\overline\bdry_{J_+}$-equation is linear, i.e., of the form
$${\bdry \eta \over \bdry s} +j_0 {\bdry \eta \over \bdry t} + S(t) \eta=0$$ for curves that are close to and graphical over $\R\times \gamma''$, the nonlinear terms seem to interfere with the definition of the higher-order terms in the evaluation map and perhaps the best one can do is Siefring's asymptotic analysis~\cite{Si}. This now brings us to the {\em key reason} for using $L$-supersimple pairs $(\vp \alpha,J)$: For $(\vp \alpha,J)$ supersimple, $\overline{\bdry}_J$ is linear near $\gamma''$ and we are able to define the evaluation map; in fact $c_i(s)=c_i$ for all $s\ll 0$.

\cb The following lemma, proved in Section~\ref{subsection: proof of lemma}, is a consequence of automatic transversality: \cb

\begin{lemma} \label{lemma: good basis for coker}
Suppose (C1)--(C4) hold. Then there exists a basis $\{\sigma_1,\dots,\sigma_k\}$ for $\ker (D_{v_0}^N)^*$, such that the positive ends of $\sigma_i$, $i=1,\dots, k$, are of the form
\begin{equation} \label{eqn: good basis}
\sigma_i(s,t)= e^{-\lambda_i s} f_i(t) \quad \mbox{modulo $f_{k+1}, f_{k+2},\cdots$}
\end{equation}
and $\sigma_k=Y$ modulo $f_{k+1}, f_{k+2}, \cdots$ (up to a nonzero constant multiple).
\end{lemma}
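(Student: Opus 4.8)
The plan is to study $\op{coker}(D^N_{v_0})=\ker (D^N_{v_0})^*$ through the covering $v_0=u_0\circ\pi$, using two features of the supersimple setup: near $\R\times\gamma''$ the operator $\overline\bdry_J$ is \emph{linear}, so $D^N_{v_0}$ is literally $\bdry_s+j_0\bdry_t+S(t)$ on the positive end; and $\pi$, being a connected $k$-fold cover of a cylinder, is cyclic with deck group $\Z/k$. Write $\delta,\delta'$ for the positive, negative asymptotic orbits of $u_0$, so $\gamma''=\delta^k$ and $\gamma'=(\delta')^k$; since $\gamma''$ is negative hyperbolic, $\delta$ is negative hyperbolic and $k$ is odd.

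\emph{Step 1: dimension count.} Since $u_0$ is transversely cut out in a $1$-parameter family and $\op{ind}(u_0)=-1$, the index-$0$ stabilized operator $(\xi,r)\mapsto D^N_{u_0}\xi+rY'_0$ is an isomorphism; hence $\ker D^N_{u_0}=0$ and $\op{coker}D^N_{u_0}=\R\langle Y_0\rangle$ with $Y_0=\Pi_{u_0}Y'_0\ne0$. Decomposing sections over the cover into $\Z/k$-isotypic pieces gives $D^N_{v_0}\cong\bigoplus_{j=0}^{k-1}D^N_{u_0,j/k}$, where $D^N_{u_0,j/k}$ is the normal $\overline\bdry$-operator of $u_0$ twisted by the flat line bundle with monodromy $e^{2\pi ij/k}$ (the $j=0$ term being $D^N_{u_0}$). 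Using the automatic transversality input of Section~\ref{section: automatic transversality} together with the supersimple normalizations near the hyperbolic orbits (which pin down the asymptotic operators $A^{\delta},A^{\delta'}$) and the oddness of $k$, one sees that each $D^N_{u_0,j/k}$ still has trivial kernel and index $-1$, hence one-dimensional cokernel. Summing, $\ker D^N_{v_0}=0$ and $\dim\ker (D^N_{v_0})^*=k$.

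\emph{Step 2: the evaluation map.} By linearity of $\overline\bdry_J$ near $\gamma''$, on the positive end $(D^N_{v_0})^*$ equals $-\bdry_s+j_0\bdry_t+S(t)$, so every $\sigma\in\ker (D^N_{v_0})^*$ has an honest, deck-equivariant eigenfunction expansion $\sigma(s,t)=\sum_{i\ge1}a_i(\sigma)\,e^{-\lambda_i s}f_i(t)$ there; only positive eigenvalues occur because of the decay, and a $\zeta$-isotypic $\sigma$ involves only those $f_i$ lying in the $\zeta$-sector of $A$. Now the key fact: under the supersimple normalization the spectrum of $A=A^{\gamma''}$ is the disjoint union, over the $k$ isotypic sectors $j=0,\dots,k-1$, of the spectra of the $j/k$-twisted asymptotic operators of $\delta$, arranged so that the smallest positive eigenvalue in each sector precedes the second-smallest positive eigenvalue in every sector. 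Hence $f_1,\dots,f_k$ occupy the $k$ distinct sectors, one apiece, and the asymptotic evaluation map $\op{ev}\colon\ker (D^N_{v_0})^*\to\bigoplus_{i=1}^k\R\langle f_i\rangle$, $\sigma\mapsto(a_1(\sigma),\dots,a_k(\sigma))$, is block-diagonal with nonzero $1\times1$ blocks: the one-dimensional cokernel in sector $j$ maps isomorphically onto the line of its sector's representative among $f_1,\dots,f_k$. Thus $\op{ev}$ is an isomorphism; set $\sigma_i:=\op{ev}^{-1}(f_i)$, so the positive end of $\sigma_i$ is $e^{-\lambda_i s}f_i(t)$ modulo $f_{k+1},f_{k+2},\dots$, which is \eqref{eqn: good basis}.

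\emph{Step 3: identifying $\sigma_k$ with $Y$, and the main obstacle.} Since $Y=\pi^*Y_0$ and $Y_0$ spans $\op{coker}D^N_{u_0}$, the element $Y$ spans the $j=0$ summand of $\ker (D^N_{v_0})^*$, so its positive-end expansion is supported in the untwisted sector of $A$. Under the supersimple normalization the fractional twists push the first positive eigenvalue of every nonzero sector below that of the untwisted sector, so the untwisted sector's eigenvalue is $\lambda_k$, i.e. $Y\in\R\langle\sigma_k\rangle$; hence $\sigma_k=Y$ modulo $f_{k+1},f_{k+2},\dots$ up to a nonzero scalar. The genuine difficulty is concentrated in Step 1's assertion that passing to the $k$-fold cover (and to each intermediate twisted operator) preserves ``kernel zero, index $-1$'', and in Step 2's alignment of isotypic sectors with the ordering $\lambda_1\le\lambda_2\le\cdots$; both are exactly where negative hyperbolicity of $\gamma''$, the oddness of $k$, and the normalizations built into supersimple pairs are used. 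The remaining content is bookkeeping with the Hofer--Wysocki--Zehnder asymptotics reviewed in Sections~\ref{section: automatic transversality} and \ref{section: evaluation map}.
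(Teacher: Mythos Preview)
Your approach via the $\Z/k$-isotypic decomposition is genuinely different from the paper's. The paper (proving the stronger Lemma~\ref{lemma: better basis for coker}, which implies this one) argues instead by varying the exponential weight $\delta$ at the positive end: as $\delta$ crosses each eigenvalue pair $\lambda_{2j-1}=\lambda_{2j}$, the Fredholm index of $D^{N,\delta}_{v_0}$ jumps by $2$, and Claim~\ref{claim1a} forces the kernel to stay zero throughout; so $\dim\ker(D^{N,\delta}_{v_0})^*$ drops $k,k-2,\dots,1$, and row reduction on the successive quotients produces the triangular form \eqref{eqn: good basis}. The identification of $\sigma_k$ with $Y$ then uses only Claim~\ref{claim2a} applied to $Y_0\in\ker(D^N_{u_0})^*$: it forces $\op{wind}_{\tilde\tau}(Y_0)=0$ at both ends, hence the same for $Y=\pi^*Y_0$, which pins down its leading eigenfunction. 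No equivariant decomposition and no twisted index computation are needed.

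Your argument, by contrast, rests on Step~1's assertion that each twisted operator $D^N_{u_0,j/k}$ has complex index $-1$ and trivial kernel. You flag this as ``the genuine difficulty'' but do not prove it: the indices sum to $-k$, so they \emph{average} to $-1$, but equality for every $j$ requires computing the twisted Conley--Zehnder contributions at both ends under the supersimple normalization, which you do not carry out. Without it, neither the per-sector dimension count nor the ``one eigenfunction per sector'' alignment in Step~2 is justified. A second gap: the untwisted sector's smallest positive eigenvalue is $\lambda_k=\lambda_{k+1}$, a \emph{double} eigenvalue, so the pullback sector meets $\R\langle f_1,\dots,f_k\rangle$ in a line only after a choice of $f_k$ within that eigenspace; you must still argue that $Y$'s leading term is nonzero along that line, which is precisely the winding constraint of Claim~\ref{claim2a}. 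So the equivariant route can plausibly be completed, but as written it defers the actual work to the same winding-number inputs the paper exploits directly and more economically.
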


Note that $D^N_{v_0}$ is given locally by ${\bdry\over \bdry s}-A$ and $(D_{v_0}^N)^*$ by ${\bdry \over \bdry s}+A$, when $v_0$ is close to and graphical over $\R\times \gamma''$.

The section $\mathfrak{s}$ of the obstruction bundle $\mathcal{O}$ is homotopic to a section $\mathfrak{s}_0$ which is given by:
\begin{equation} \label{eqn: eqn for s_0}
\mathfrak{s}_0(T,v_1)(\sigma_i) = e^{-2\lambda_i T} c_i,
\end{equation}
where $ev^k(v_1)=(c_1,\dots,c_k)$, $i=1,\dots,k-1.$
In other words, on each slice $\{T\}\times \overline{\mathcal{M}/\R}$,  $\mathfrak{s}_0$ is more or less an evaluation map.  It is important that the zeros of the homotopy $\mathfrak{s}_\zeta$, $\zeta\in[0,1]$, stay away from the boundary $[R,\infty)\times \bdry(\mathcal{M}/\R)$ as we go from $\mathfrak{s}=\mathfrak{s}_1$ to $\mathfrak{s}_0$.  This is proved in Section~\ref{subsection: linearized section s_0}.

\begin{figure}[ht]
\begin{center}
\psfragscanon
\psfrag{A}{\tiny $(0,\dots, 0,1)$}
\psfrag{B}{\tiny $(0,\dots,0,-1)$}
\psfrag{C}{\tiny $S^{k-1}$}
\psfrag{D}{\tiny $\nu$}
\includegraphics[width=5cm]{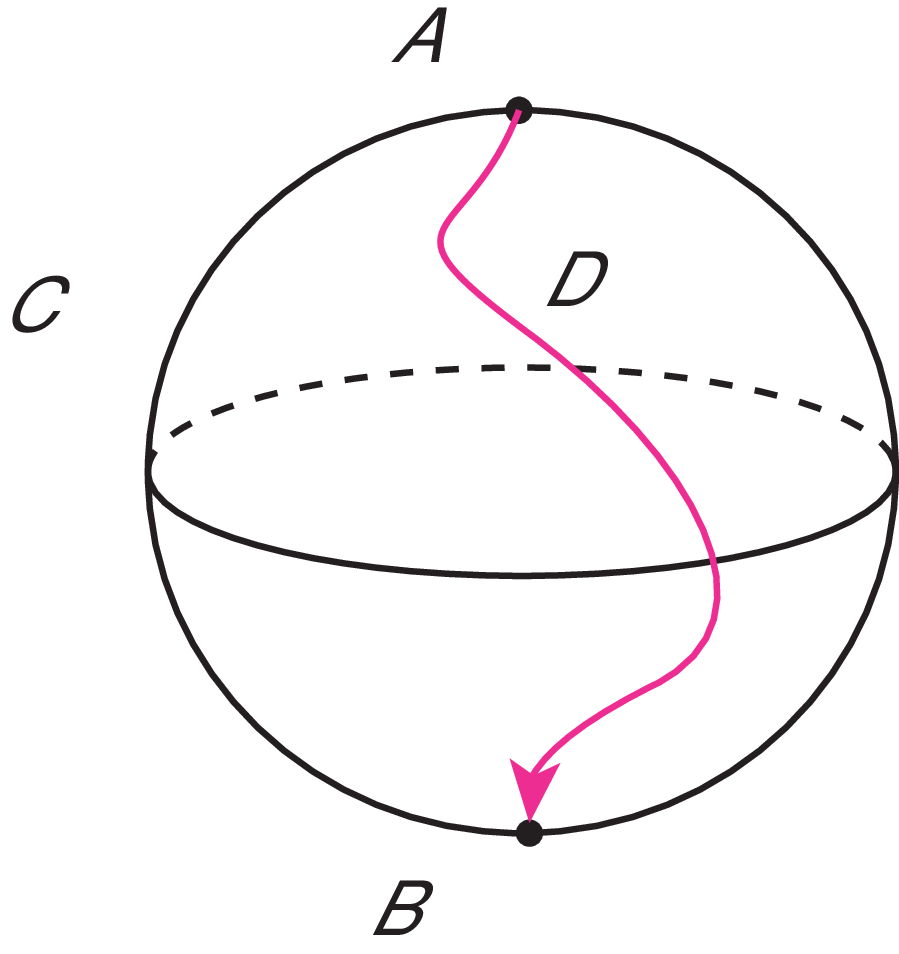}
\end{center}
\caption{The path $\nu\subset S^{k-1}$.} \label{fig: sphere}
\end{figure}

In Section~\ref{section: evaluation map} we prove the transversality of $\widetilde{ev}^k$ for generic $J$ (Theorem~\ref{thm: transversality of evaluation map}). By the transversality of $\widetilde{ev}^k$ and Equation~\eqref{eqn: eqn for s_0}, the zero set $(\mathfrak{s}_0)^{-1}(0)$ is given by $[R,\infty)\times (\widetilde{ev}^k)^{-1}(\{(0,\dots,0,\pm 1)\})$.   Now, let $\nu$ be a generic embedded path in $S^{k-1}$ from $(0,\dots,0,1)$ to $(0,\dots,0,-1)$.  An analysis of $(\widetilde{ev}^k)^{-1}(\nu)$ yields a chain homotopy term $K'\circ \bdry$, where $K'$ is part of the full chain homotopy $K$.  This will be carried out in Section~\ref{section: chain homotopy}.
\end{proof}

\n
{\em Acknowledgements.} The second author thanks Michael Hutchings for various discussions over the years and Vincent Colin and Paolo Ghiggini for ideas leading to using only hyperbolic orbits.  Also the authors owe an enormous mathematical debt to the work of Hutchings-Taubes~\cite{HT2} which is used throughout this paper. Finally, we thank Michael Hutchings again for pointing out some errors in earlier versions of the paper \cb and the referees for extensive lists of comments. \cb

\section{Elimination of elliptic orbits} \label{section: elimination of elliptic orbits}

\begin{convention}
In this paper an ``orbit'' or ``Reeb orbit'' is a closed Reeb orbit, unless stated otherwise.  Also, a Reeb orbit may be a multiple cover of a simple orbit.
\end{convention}

In what follows, $M$ is a closed oriented $3$-manifold. Given a contact form $\alpha$ on $M$, we denote its Reeb vector field by $R_\alpha$.  The {\em $\alpha$-action} of an orbit $\gamma$ is given by $\mathcal{A}_\alpha(\gamma)=\int_\gamma\alpha$.

\cb We first review the Conley-Zehnder index and the classification of Reeb orbits into three types: positive hyperbolic, negative hyperbolic, and elliptic.  Let $\gamma$ be an orbit of $R_\alpha$ with $\alpha$-action $\ell$ and $\tau$ be a framing for $\xi$ along $\gamma$.  The {\em Conley-Zehnder index $\mu_\tau(\gamma)$} of $\gamma$ with respect to $\tau$ is given as follows: Let $\phi_t$, $t\in[0,\ell]$, be the time-$t$ flow of $R_\alpha$ and let $p$ be a point on $\gamma$.  We consider the path $d\phi_t(p): \xi_p\stackrel\sim\to \xi_{\phi_t(p)}$, $t\in[0,\ell]$, of $d\alpha$-symplectic linear maps and in particular the first return map $d\phi_\ell(p):\xi_p\stackrel\sim\to \xi_p$.
\be
\item If the eigenvalues of $d\phi_\ell(p)$ are positive real numbers and $d\phi_t(v)$ winds $k$ times around the origin with respect to $\tau$, where $v\not=0\in \xi_p$ is an eigenvector of $d\phi_\ell(p)$,  then $\mu_\tau(\gamma)=2k$.  Such an orbit $\gamma$ is {\em positive hyperbolic}.
\item If the eigenvalues are negative real numbers and $d\phi_t(v)$ winds $k$ and a half times around the origin with respect to $\tau$, where $v\not=0\in \xi_p$ is an eigenvector of $d\phi_\ell(p)$, then $\mu_\tau(\gamma)=2k+1$. Such an orbit $\gamma$ is {\em negative hyperbolic}.
\item If the eigenvalues are not real and $d\phi_t(w)$ winds between $k$ and $k+1$ times around the origin with respect to $\tau$ for any $w\not=0\in \xi_p$, then $\mu_\tau(\gamma)=2k+1$.  Such an orbit $\gamma$ is {\em elliptic}.
\ee
The definition of $\mu_\tau(\gamma)$ is independent of the choice of $p$. \cb

The starting point of the work is the following theorem:

\begin{thm}[Elimination of elliptic orbits]\label{thm: elimination}
Let $\alpha$ be a nondegenerate contact form for $(M,\xi)$. Then for any \cb finite \cb $L>0$ and $\varepsilon>0$ there exists a smooth function $\vp :M\rightarrow \R^+$ such that:
\begin{enumerate}
\item $\vp$ is $\varepsilon$-close to $1$ with respect to a fixed $C^1$-norm;
\item all the orbits of $R_{\vp \alpha}$ of $\vp \alpha$-action less than $L$ are hyperbolic;
\item[(3)] each positive hyperbolic orbit $\gamma$ \cb of  $\vp \alpha$-action less than $L$ \cb has a neighborhood $(\R/\Z)\times D^2_{\delta_0}$ with coordinates $t,x,y$ such that
\begin{enumerate}
\item $D^2_{\delta_0}=\{x^2+y^2 \leq \delta_0\}$, $\delta_0>0$ small,
\item $\vp \alpha= Hdt +\beta$,
\item $H=c(\gamma)-\varepsilon xy$ with $c(\gamma),\varepsilon>0$ and $c(\gamma)\gg \varepsilon$,
\item $\beta=2xdy+ydx$, and
\item $\gamma=\{x=y=0\}$; and
\end{enumerate}
\item[(4)] each negative hyperbolic orbit $\gamma$ \cb of  $\vp \alpha$-action less than $L$ \cb has a neighborhood $([0,1]\times D^2_{\delta_0})/\sim$ with coordinates $t,x,y$ and identification $(1,x,y)\sim (0,-x,-y)$ satisfying (a)--(e).
\end{enumerate}
\end{thm}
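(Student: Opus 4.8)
The plan is to proceed in two stages: first eliminate all elliptic orbits of $\vp\alpha$-action less than $L$ by a $C^1$-small perturbation of the contact form, and then upgrade a further small perturbation so that near each surviving hyperbolic orbit the form takes the prescribed normal form (3)(a)--(e), respectively (4). For the first stage, I would argue by induction on the finite list of elliptic orbits $\gamma_1, \dots, \gamma_N$ of $\alpha$-action less than $L$ (finiteness follows from nondegeneracy of $\alpha$ and compactness of $M$, plus the fact that a $C^1$-small perturbation changes actions by an arbitrarily small amount, so no new short orbits can appear below $L$ if we work slightly below $L$ and then relax). The key local input is that an elliptic orbit of a Reeb flow in dimension three has linearized return map conjugate to a rotation $R_\theta$, and by an arbitrarily $C^1$-small change of the contact form supported in a small tubular neighborhood $(\R/\Z)\times D^2_{\delta}$ of $\gamma_i$ one can either (i) perturb the rotation number $\theta$ so that the orbit becomes a short sequence of non-degenerate orbits, or more cleanly (ii) push the orbit off to action $\geq L$ — but the standard and cleanest device is a local model: replace $\vp\alpha$ near $\gamma_i$ by a form whose Reeb flow in the normalized neighborhood is generated by a Hamiltonian $H_i$ on the disk $D^2_\delta$ (with respect to $dt$ and $\beta = 2x\,dy + y\,dx$, whose $d$ restricts to an area form on the disk) having exactly one critical point at the origin, with that critical point being a hyperbolic (saddle) zero of the Hamiltonian vector field; the $dt$-component $c(\gamma_i)$ is $\mathcal{A}_\alpha(\gamma_i)$ up to $\varepsilon$, which we keep $\gg \varepsilon$ since short orbits still have a definite positive action. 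Concretely one interpolates between the given $\vp\alpha$ and $c(\gamma_i)\,dt - \varepsilon xy\,dt + 2x\,dy + y\,dx$ using a cutoff in the radial coordinate; one checks the contact condition $\vp\alpha \wedge d(\vp\alpha) > 0$ survives because $c(\gamma_i) \gg \varepsilon$ and $\delta$ is small, and that the only orbit of action $< L$ created inside the neighborhood is the new hyperbolic $\gamma = \{x = y = 0\}$ (this is forced by $dH_i$ vanishing only at the origin and the $t$-direction contributing action $\approx c(\gamma_i) > 0$ per turn). Outside all the neighborhoods the form is unchanged, so no orbits are created there, and iterating over $i = 1, \dots, N$ with disjoint neighborhoods and errors summing to $\varepsilon$ completes stage one; the positive/negative hyperbolic dichotomy is read off from the sign of the eigenvalues of the resulting return map, and in the negative case one works on the mapping torus $([0,1]\times D^2_\delta)/\!\sim$ with the orientation-reversing identification, which is why (4) is phrased that way.

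The second stage is essentially subsumed in the construction above: once we have decided to replace the form near each short hyperbolic orbit by the explicit model, conditions (3)(a)--(e) and (4) hold by fiat. The one point requiring care is that the \emph{pre-existing} short hyperbolic orbits of $\alpha$ (which were hyperbolic before any perturbation) must also be brought into this normal form, and that this secondary perturbation does not resurrect elliptic orbits or create new short orbits elsewhere — this is handled the same way, by a $C^1$-small local interpolation supported near each such orbit, using that the germ of a nondegenerate Reeb flow near a hyperbolic orbit in dimension three is already, after a $C^1$-small change of contact form and change of coordinates, of the stated saddle-Hamiltonian type (a Moser-type normalization in the tubular neighborhood, using the contact Darboux-type coordinates $Hdt + \beta$ with $\beta = 2x\,dy + y\,dx$).

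The main obstacle I anticipate is controlling the \emph{global} effect of the local surgeries: one must be sure that perturbing the contact form near the elliptic orbits does not create new orbits of action $< L$ that pass through the perturbation region transversally or wind around it, and does not destroy the nondegeneracy/shortness bookkeeping of the orbits that were already hyperbolic. The clean way to rule this out is to take the support of the perturbation in a neighborhood $(\R/\Z)\times D^2_\delta$ with $\delta$ so small that $d(\vp\alpha)$ restricted to $D^2_\delta$ is close to $2\,dx\wedge dy$ and the $t$-component stays $\gg \varepsilon$; then any Reeb trajectory entering $D^2_\delta$ either stays near $\{z=0\}$ (and is the model orbit) or exits after a controlled time, so it cannot close up with action $< L$ unless it is the model orbit itself — a quantitative estimate that I would record as a short lemma. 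A secondary subtlety, worth flagging but routine, is the action bookkeeping near the threshold $L$: perform the elimination for a slightly larger $L' > L$, keep all errors below a quantity smaller than the spectral gap of actions, and then the conclusion holds for $L$.
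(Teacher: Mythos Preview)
The paper's own proof is a one-line citation to \cite[Theorem~2.5.2]{CGH1}, so there is nothing to compare at the level of argument. Your outline has the right architecture (local surgery near each short orbit, then normal-form normalization), but there is a genuine gap in the mechanism you propose for Stage~1.

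You claim that inside a small neighborhood of an elliptic orbit $\gamma_i$ one can install the model $H=c(\gamma_i)-\varepsilon xy$, interpolate radially to the old form, and that ``the only orbit of action $<L$ created inside the neighborhood is the new hyperbolic $\gamma=\{x=y=0\}$.'' This is false for a topological reason. The first-return map on the transverse disk $D^2_\delta$ is an area-preserving diffeomorphism; an elliptic fixed point has Lefschetz index $+1$, while a positive hyperbolic (saddle) fixed point has index $-1$. Your radial interpolation does not change the map near $\partial D^2_\delta$, so the total fixed-point index inside the disk is still $+1$. Replacing the center by a saddle forces additional fixed points of total index $+2$ to appear in the annular transition region, i.e.\ new closed Reeb orbits of roughly the same action. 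Your proposed lemma (``any Reeb trajectory entering $D^2_\delta$ either stays near $\{z=0\}$ or exits after a controlled time'') cannot hold as stated; it is exactly what the index obstruction rules out. The same obstruction prevents the variant ``push the orbit off to action $\geq L$'' by a $C^1$-small perturbation.

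What actually happens (and what the argument in \cite{CGH1} arranges) is that one accepts the creation of new orbits but forces them all to be hyperbolic: one first perturbs the rotation number of the elliptic orbit to be rational, and then uses a resonance-type bifurcation in which the elliptic orbit splits into a configuration of positive and negative hyperbolic orbits whose indices add up correctly. A related point you do not address: all iterates $\gamma_i^k$ with $\mathcal{A}_\alpha(\gamma_i^k)<L$ are themselves elliptic orbits on your list, and they are not independent of $\gamma_i$; the perturbation must handle the whole tower at once. Once you accept that the replacement produces several hyperbolic orbits rather than one, Stage~2 goes through as you describe.
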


\begin{proof}[Idea of proof.]
\cb (1) and (2) are proved in \cite[Theorem~2.5.2]{CGH1}.  We will explain the idea behind the proof.

Given a simple elliptic orbit $\gamma$ with $\mathcal{A}_\alpha(\gamma)=\ell< L$, there exists a sufficiently small neighborhood $N(\gamma)$ of $\gamma$ and a perturbation of $\alpha$ (which we still call $\alpha$) such that:
\be
\item[(i)] $N(\gamma)\simeq D^2_\delta \times \R/\ell \Z$ with cylindrical coordinates $(r,\theta,z)$, where $\delta>0$ is small;
\item[(ii)] $\alpha= f(r) dz + \tfrac{1}{2} r^2d\theta$ and $f(r)= 1-{1\over 2} a  r^2$ for some $a>0$;
\item[(iii)] $R_\alpha$ is parallel to $X_0=\tfrac{\bdry}{\bdry z} +a{\bdry\over \bdry \theta}$, and the first return map is a rotation through an angle $a\ell\in (0,2\pi)$.
\ee
Note that perturbing $\alpha$ is equivalent to multiplying it by a positive function.

We then modify $\alpha$ on $int(N(\gamma))$ so that $\gamma$ is replaced by a simple negative hyperbolic orbit of action approximately equal to $\ell$ and a simple elliptic orbit of action approximately equal to $2\ell$, and no orbits of action $\leq 1.5 \ell$ are created (besides the negative hyperbolic orbit).  We will not explicitly give the modification, but indicate what happens to a vector field parallel to the Reeb vector field:  First we change $X_0=\tfrac{\bdry}{\bdry z} +a{\bdry\over \bdry \theta}$ to $X_1=\tfrac{\bdry}{\bdry z}+ Y_1$ so that $Y_1=a{\bdry \over \bdry\theta}$ on $\bdry D^2_{\delta}$ and $Y_1= ({\pi\over \ell} +\varepsilon'){\bdry \over \bdry\theta}$ on $D^2_{\delta/2}$, where $\varepsilon'>0$ is small.  This means that the first return map of $X_1$ is a rotation slightly larger than $\pi$ on $D^2_{\delta/2}$.  Next view $D^2_{\delta/2}\times \R/\ell\Z$ as $(D^2_{\delta/2}\times[0,\ell])/ (r,\theta,\ell)\sim (r,-\theta,0)$, also with coordinates $(r,\theta,z)$.  Then $X_1= {\bdry \over \bdry z}+ Y'_1$, $Y'_1= \varepsilon'{\bdry\over \bdry \theta}$, with respect to the latter identification.  Finally modify $Y'_1$ to $Y'_2$ as given in Figure~\ref{fig: modification}.

\begin{figure}[ht]
\begin{center}
\psfragscanon
\psfrag{e}{\tiny $e$}
\psfrag{h}{\tiny $h$}
\includegraphics[width=7cm]{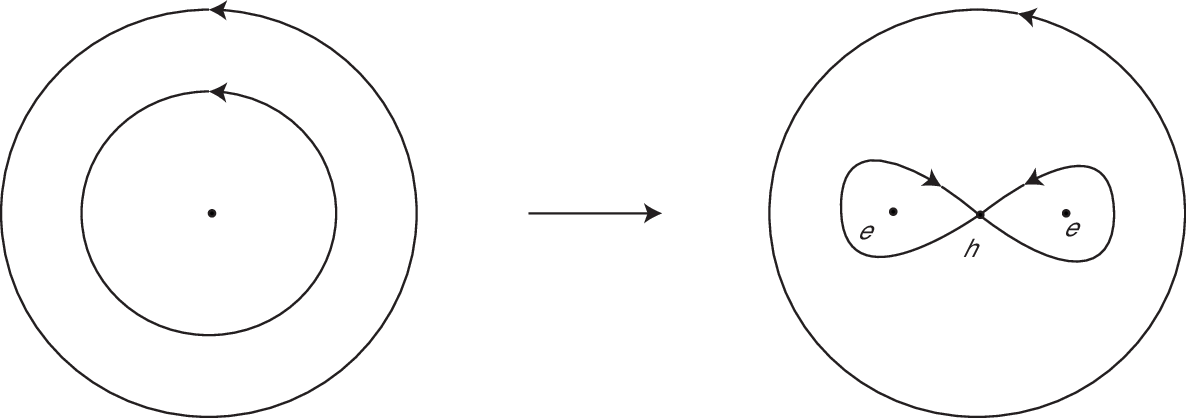}
\end{center}
\caption{Modifying $Y'_1$ on the left-hand side to $Y'_2$ on the right-hand side. What we have drawn are integral curves of $Y'_1$ and $Y'_2$ and their directions. $h$ corresponds to the negative hyperbolic orbit and $e$ the elliptic orbit with (roughly) doubled action.} \label{fig: modification}
\end{figure}

(3) and (4) are standard modifications that can be done in a neighborhood of any hyperbolic orbit.  \cb
\end{proof}

\begin{defn}
Let $L>0$. Then a contact form $\alpha$ is:
\begin{enumerate}
\item {\em $L$-nondegenerate} if all the Reeb orbits of action $<L$ are nondegenerate;
\item {\em $L$-supersimple} if $\alpha$ is $L$-nondegenerate and all the Reeb orbits of action $<L$ are hyperbolic with neighborhoods that are given in Theorem~\ref{thm: elimination}(3) and (4);
\cb
\item {\em $L$-hypertight} if $\alpha$ is $L$-nondegenerate and has no contractible Reeb orbits of action $<L$; and
\item {\em $L$-monotone} if $\alpha$ is $L$-nondegenerate and has no contractible Reeb orbits of action $<L$ and Conley-Zehnder index $\leq 3$ with respect to any bounding disk.
\cb
\end{enumerate}
\end{defn}

\cb Given a nondegenerate contact form $\alpha$  for $(M,\xi)$, we choose positive increasing sequences $L_1, L_2, \dots$ and $\varepsilon_1,\varepsilon_2,\dots$ such that $\lim_i L_i=\infty$ and $\lim_i \varepsilon_i=\varepsilon$ for some $\varepsilon>0$.  Then there exists a sequence $\vp_1,\vp_2,\dots$ of functions $\vp_i:M\to \R^+$ where $\vp_i$ is $\varepsilon_i$-close to $1$ and $\vp_i \alpha$ is $L_i$-supersimple.\footnote{Unfortunately, in general the limit $\lim_i \vp_i$ is not a smooth function.}
The sequences $\{L_i\}_i$ and $\{\varphi_i\}_i$ are parts of additional data that we choose in order to define cylindrical contact homology. \cb

\section{Almost complex structures and moduli spaces} \label{section: almost complex structures}

Let $\alpha$ be a contact form and $R_\alpha$ be its Reeb vector field.  We write $\boldsymbol\gamma=(\gamma_1,\dots, \gamma_l)$ for some ordered tuple of (not necessarily simple) Reeb orbits of $R_\alpha$.

\begin{warning}
Unlike the case of embedded contact homology~\cite{Hu}, $\gamma_i$ may be multiply-covered.
\end{warning}

\subsection{$\alpha$-tame almost complex structures} \label{subsection: alpha-tame almost complex structures}

The almost complex structures on $\R\times M$ that we use are slight variants of almost complex structures which are commonly known as ``adapted to'' or ``compatible with'' a contact form $\alpha$ for the contact manifold $(M,\xi)$.

\begin{defn} \label{def: almost cplx str}
An almost complex structure $J$ on $\R\times M$ with coordinates $(s,x)$ is {\em $\alpha$-tame} if
\begin{itemize}
\item $J$ is $\R$-invariant,
\item $J\bdry_s=g(x) R_\alpha$, where $g$ is a smooth positive function on $M$ and
\item $J(\xi')=\xi'$ for some oriented $2$-plane field $\xi'$ of $M$ on which $d\alpha$ is symplectic and $d\alpha(v,Jv)>0$ for nonzero $v\in \xi'$. \cb {\em Note that we are not requiring $\xi'=\xi$.} \cb
\end{itemize}
\end{defn}

Note that $d\alpha|_{\xi'}$ being symplectic is equivalent to $R_\alpha$ being positively transverse to $\xi'$. \cb  Also observe that if $J$ is $\alpha$-tame, then $d(e^{\delta s} \alpha)$ is tamed by $J$ for sufficiently small $\delta>0$. \cb

\begin{rmk}[Failure of maximum principle] \label{maximal principle} \cb
An $\alpha$-tame $J$ does not necessarily satisfy the maximum principle for $J$-holomorphic curves. However the $\alpha$-tameness of $J$ is sufficient to guarantee that the moduli space of ``finite energy'' $J$-holomorphic curves satisfies SFT compactness; see Section \ref{subsection: compactness}. \cb
\end{rmk}

The space of all $\alpha$-tame $J$ in the class $C^\infty$ will be denoted by $\mathcal{J}(\alpha)$ or by $\mathcal{J}$ if $\alpha$ is understood.

\subsection{$L$-supersimple pairs} \label{subsection: L-supersimple pairs}

When $ \alpha$ is $L$-supersimple, we choose a $\alpha$-tame $J$ which satisfies the following on each neighborhood $(\R/\Z)\times D^2_{\delta_0}$ of a positive hyperbolic orbit of action $< L$ given in Theorem~\ref{thm: elimination} (3) (and an analogous condition for each neighborhood of a negative hyperbolic orbit):
\begin{enumerate}
\item[(J1)] $\xi'=\xi$ on $(\R/\Z)\times (D^2_{\delta_0} - D^2_{2\delta_0/3})$, $\xi'=TD^2_{\delta_0}$ on $(\R/\Z)\times D^2_{\delta_0/3}$, and $\xi'$ is \cb $C^0$-close to $\xi$ on $(\R/\Z)\times (D^2_{2\delta_0/3} - D^2_{\delta_0/3})$; \cb
\item[(J2)] $J: \bdry_s\mapsto \bdry_t +X_H=\bdry_t - \varepsilon( x\bdry_x-y\bdry_y)=H R_{ \alpha}$ and $\bdry_x\mapsto \bdry_y$ on  $(\R/\Z)\times D^2_{\delta_0/3}$.
\end{enumerate}
The subset of such $J$ will be denoted by $\mathcal{J}_{\star_L}(\alpha)$ or $\mathcal{J}_\star( \alpha)$ when $L$ is understood.  Finally, a pair $( \alpha,J)$ consisting of an $L$-supersimple $ \alpha$ and a tame $J$ satisfying the above will be called an {\em $L$-supersimple pair}.

\subsection{Moduli spaces} \label{subsection: moduli spaces}

Let $\alpha$ be a nondegenerate contact form on $M$ and let $J$ be an $\alpha$-tame almost complex structure.

\begin{defn}
Let $\boldsymbol\gamma_+=(\gamma_{+,1},\dots,\gamma_{+,l_+})$ and $\boldsymbol\gamma_-=(\gamma_{-,1},\dots,\gamma_{-,l_-})$ be ordered tuples for $\alpha$ and let $J$ be $\alpha$-tame.
We say a $J$-holomorphic map $u:\dot F\to \R\times M$ is {\em from $\boldsymbol\gamma_+$ to $\boldsymbol\gamma_-$} if it is asymptotic to $\boldsymbol\gamma_+$ at the positive end and to $\boldsymbol\gamma_-$ at the negative end. Here $F$ is a closed Riemann surface, $\mathbf{p}=\mathbf{p}_+\sqcup\mathbf{p}_{-}$ is a finite ordered subset of $F$ (whose points are called {\em punctures}), $\mathbf{p}_+$ corresponds to $\boldsymbol \gamma_+$ in order,  $\mathbf{p}_-$ corresponds to $\boldsymbol \gamma_-$ in order, and $\dot F= F-\mathbf{p}$.
\end{defn}

\n
{\em Fredholm index.}
The Fredholm index of $u$ is given by:
\begin{equation}\label{eqn: Fredholm index of u}
\op{ind}(u)= -\chi(\dot F) + \mu_\tau(\boldsymbol\gamma_+)- \mu_\tau(\boldsymbol\gamma_-) + 2c_1(u^*\xi,\tau),
\end{equation}
where $\tau$ is a framing for $\xi$ defined along $\boldsymbol\gamma_+$ and $\boldsymbol\gamma_-$,  $\mu_\tau(\boldsymbol\gamma_\pm):= \sum_i \mu_\tau(\gamma_{\pm, i})$, and $c_1(u^*\xi,\tau)$ is the first Chern class of $u^*\xi$ with respect to $\tau$. (Here we are viewing $\xi$ as a $2$-plane field of $M$ and also $\R\times M$.) Here $\op{ind}$ keeps track of both the variations of complex structures of $\dot F$ and the infinitesimal automorphisms of $\dot F$.

\s
The most important aspect of working with $L$-supersimple $\alpha$ is the following:

\begin{lemma} \label{lemma: multiplicative}
Let $u$ be a $J$-holomorphic map from $\boldsymbol\gamma_+$ to $\boldsymbol\gamma_-$, where all the orbits of $\boldsymbol\gamma_+$ and $\boldsymbol\gamma_-$ are hyperbolic.
If $v$ is a $k$-fold branched cover of $u$ with total branching multiplicity $b$, then $\op{ind}(v)= k \op{ind}(u) +b$.
\end{lemma}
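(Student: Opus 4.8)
The plan is to compute both sides of the claimed identity term by term using the formula \eqref{eqn: Fredholm index of u}, exploiting the fact that branched covering is well-behaved with respect to Euler characteristics, pullbacks of bundles, and (crucially, under the hyperbolicity hypothesis) Conley-Zehnder indices of covers. Write $u:\dot F\to\R\times M$ from $\boldsymbol\gamma_+$ to $\boldsymbol\gamma_-$, and let $\pi:\dot F'\to\dot F$ be the $k$-fold branched cover with total branching multiplicity $b$, so that $v=u\circ\pi:\dot F'\to\R\times M$.

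First I would handle the topological term. By the Riemann-Hurwitz formula, $\chi(\dot F')=k\,\chi(\dot F)-b$ (the punctures are arranged to be unbranched — the asymptotic ends of $v$ lie over the ends of $u$ — so all branching is interior), hence $-\chi(\dot F')=-k\,\chi(\dot F)+b$, which already accounts for the additive $+b$. Next, $v^*\xi=\pi^*(u^*\xi)$, so with respect to a framing $\tau'$ on the orbits underlying $\boldsymbol\gamma_+',\boldsymbol\gamma_-'$ induced by pulling back $\tau$, one has $c_1(v^*\xi,\tau')=k\,c_1(u^*\xi,\tau)$ by multiplicativity of the relative first Chern number under pullback along a degree-$k$ map. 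It remains to show the Conley-Zehnder terms are multiplicative: $\mu_{\tau'}(\boldsymbol\gamma_\pm')=k\,\mu_\tau(\boldsymbol\gamma_\pm)$. Here is where hyperbolicity enters decisively. For a hyperbolic orbit $\gamma$, the linearized return map has real eigenvalues, the Conley-Zehnder index of the $n$-fold iterate satisfies $\mu(\gamma^n)=n\,\mu(\gamma)$ with respect to the iterated framing (unlike the elliptic case, where $\mu(\gamma^n)$ is only approximately $n\mu(\gamma)$ with a rotation-number correction and parity jumps). Combined with the bookkeeping that the ends of $v$ over an end $\gamma_{\pm,i}$ of $u$ consist of iterates whose covering multiplicities sum appropriately so that $\sum\mu_{\tau'}$ over the ends of $v$ above $\gamma_{\pm,i}$ equals $k\,\mu_\tau(\gamma_{\pm,i})$, the Conley-Zehnder contribution is exactly $k$ times that of $u$. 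Summing, $\op{ind}(v)=\big(-k\chi(\dot F)+b\big)+k\mu_\tau(\boldsymbol\gamma_+)-k\mu_\tau(\boldsymbol\gamma_-)+2k\,c_1(u^*\xi,\tau)=k\,\op{ind}(u)+b$.

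The main obstacle is the Conley-Zehnder step, and more precisely making the framing bookkeeping honest: one must check that the framing $\tau'$ along the ends of $v$ is the one induced from $\tau$ under the covering, so that the relative quantities $\mu_{\tau'}$ and $c_1(\,\cdot\,,\tau')$ genuinely pull back, and that any reindexing of ends (a single end of $u$ at $\gamma_{\pm,i}$ may be covered by several ends of $v$ of various multiplicities) is consistent. The clean statement to invoke is that for a hyperbolic orbit the Conley-Zehnder index is genuinely additive under iteration — this is exactly the failure mode that $L$-supersimplicity is designed to avoid, which is why the lemma is flagged as "the most important aspect of working with $L$-supersimple $\alpha$." Everything else (Riemann-Hurwitz, Chern number multiplicativity) is standard and I would state it without belaboring the computation.
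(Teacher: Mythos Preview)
Your proposal is correct and takes essentially the same approach as the paper: the paper's proof is a single sentence pointing to the multiplicativity of Conley--Zehnder indices for hyperbolic orbits under iteration, and you have simply unpacked this together with the standard Riemann--Hurwitz and relative $c_1$ bookkeeping. One minor wording quibble: your parenthetical ``the punctures are arranged to be unbranched'' could be misread---ends of $v$ \emph{can} cover ends of $u$ with multiplicity $>1$, but this winding is not counted in $b$ (it is absorbed into the CZ term, as you correctly note later); the Riemann--Hurwitz identity $\chi(\dot F')=k\chi(\dot F)-b$ holds with $b$ the branching on $\dot F$ regardless of the end multiplicities.
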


Here $b$ is the sum over all the branch points of the order of multiplicity minus one; in particular, if all the branch points are double points, then $b$ is the number of branch points.

\begin{proof}
Follows immediately from observing that the Conley-Zehnder indices of hyperbolic orbits behave multiplicatively when we take multiple covers of Reeb orbits.
\end{proof}

\s\n {\em Definition of $\mathcal{M}_J(\boldsymbol\gamma_+,\boldsymbol\gamma_-)$.}
Pick a point $x_\gamma$ on each simple Reeb orbit $\gamma$ of $R_\alpha$. Let $(u,{\bf r})$ be a pair consisting of a $J$-holomorphic map $u:\dot F\to \R\times M$ from $\boldsymbol\gamma_+$ to $\boldsymbol\gamma_-$ and an ordered set ${\bf r}={\bf r}_+\sqcup {\bf r}_-$ of asymptotic markers, where
$${\bf r}_+=(r_{+,1},\dots,r_{+,l_+})\quad \mbox{and} \quad {\bf r}_-=(r_{-,1},\dots,r_{-,l_-})$$
correspond to punctures ${\bf p}_+$ and ${\bf p}_-$, the marker $r_{\pm, i}$ is ``mapped to'' $x_{\gamma^s_{\pm,i}}$, and $\gamma^s_{\pm,i}$ is the simple orbit corresponding to $\gamma_{\pm,i}$.  Here an {\em asymptotic marker} at a puncture $z$ of $F$ is an element of $(T_z F-\{0\})/\R^+$.
The moduli space $\mathcal{M}_J(\boldsymbol\gamma_+,\boldsymbol\gamma_-)$ is the space of $(u,{\bf r})$, modulo biholomorphisms of the domain that take markers to markers.

For convenience we will suppress ${\bf r}$ from $(u,{\bf r})$ when there is no confusion.

\s
If $\boldsymbol\gamma_\pm=(\gamma_{\pm,1})$, then we also write $\boldsymbol\gamma_\pm=\gamma_\pm$ and $\mathcal{M}_J(\boldsymbol\gamma_+,\boldsymbol\gamma_-)=\mathcal{M}_J(\gamma_+,\gamma_-)$; similarly, if ${\bf r}_\pm=(r_{\pm,1})$, then we also write ${\bf r}_\pm=r_\pm$.  We write $\mathcal{M}^*_J(\boldsymbol\gamma_+,\boldsymbol\gamma_-)$ to denote the subset of $\mathcal{M}_J(\boldsymbol\gamma_+,\boldsymbol\gamma_-)$ satisfying $*$.  In particular, $\op{ind}=k$ means ``Fredholm index $k$'', $s$ means ``simple (= non-multiply-covered)", $A$ means ``in the homology class $A\in H_2(M;\Z)$'', $\op{sing}$ means ``singular'', i.e., non-immersed, and $\mbox{cyl}$ means we only count cylinders.

A generic $J\in\mathcal{J}$ is {\em regular}, i.e, the moduli spaces $\mathcal{M}_J^s(\boldsymbol\gamma_+,\boldsymbol\gamma_-)$ are transversely cut out for all $\boldsymbol\gamma_+,\boldsymbol\gamma_-$. Let $\mathcal{J}^{\tiny \mbox{reg}}\subset\mathcal{J}$ be the subset of regular $J$ and let $\mathcal{J}^{<L,\tiny \mbox{reg}}\subset\mathcal{J}$ be the subset of $J$ for which $\mathcal{M}_J^s(\boldsymbol\gamma_+,\boldsymbol\gamma_-)$ is transversely cut out for all $\boldsymbol\gamma_+,\boldsymbol\gamma_-$ with action $< L$. The space $\mathcal{J}^{<L,\tiny \mbox{reg}}_{\star}=\mathcal{J}^{<L,\tiny \mbox{reg}}_{\star_L}$ of regular $L$-supersimple almost complex structures (with respect to a fixed $\alpha$) is defined similarly. \cb We sometimes use the superscript $\delta_0$ on $\mathcal J$ to indicate the radius of $D^2$ in (J1) and (J2). \cb

\begin{rmk}
We sometimes say that a curve is {\em regular} if it is transversely cut out \cb (in the sense of including variations of the complex structure on the domain). \cb This should not be confused with a curve being singular, which means the curve is not an immersion.
\end{rmk}

Finally, we use the coherent orientation system for all $\mathcal{M}_J(\boldsymbol\gamma_+,\boldsymbol\gamma_-)$ as described in Bourgeois-Mohnke~\cite{BM}.

\subsection{Compactness} \label{subsection: compactness}

\cb Since the class of $\alpha$-tame almost complex structures that we use is slightly different from the one in \cite{BEHWZ}, i.e., is not stable Hamiltonian, the SFT compactness results (Theorem 10.1, 10.2, and 10.3 in \cite{BEHWZ}) for our case do not follow automatically from their paper.  In this subsection we show that they still hold in our setting by a slight modification of their proofs. \cb

\begin{thm} \label{thm: SFT compactness fo modified J}
Suppose that $J$ is an $\alpha$-tame almost complex structure. Then the moduli space $\mathcal{M}_J(\boldsymbol\gamma_+,\boldsymbol\gamma_-)$ can be compactified by adding holomorphic buildings (see Section 8 and 9 of \cite{BEHWZ} for the definition of the holomorphic buildings and the topology of the compactified space).
\end{thm}
\cb

\begin{proof}[Sketch of proof]
The proof follows the \cb general outline of \cite{Ho, HWZ1, BEHWZ} \cb with the following modifications to adjust to the slightly different choice of almost complex structures.

\cb Let $K>0$ and let $\mathcal{C}_K$ be the set of ``slow growth'' smooth functions $\phi:\R\to [1,2]$ such that $\phi(s)=1$ for $s\ll 0$, $\phi(s)=2$ for $s\gg 0$, and $0\leq \phi'(s)\leq K$ for all $s$.

\begin{lemma}\label{lemma: J-nonnegative}
There exists $K>0$ small such that, all $\phi \in\mathcal{C}_K$, $d(\phi \alpha)(v,Jv)\geq 0$ for all $v$ and $d(\phi \alpha)(v,Jv)> 0$ for all nonzero $v$ on the region $\phi '(s)>0$.
\end{lemma}

\begin{proof}[Proof of Lemma~\ref{lemma: J-nonnegative}]
Write $v= X+ a\bdry_s + b J\bdry_s=X+ a\bdry_s + b gR_\alpha$, where $X\in \xi'$. Then we have $Jv=JX-b \bdry_s +agR_\alpha$ and
\begin{align}\label{eqn: J-nonnegative}
d(\phi \alpha)(v,Jv) & = \phi'(s)( ds\wedge \alpha) (v,Jv) + \phi d\alpha(v,Jv)\\
\nonumber &= \phi'(s) ( a^2 g+ a\alpha(JX)+ b^2 g+ b \alpha(X)) + \phi d\alpha(X,JX).
\end{align}
Here $d\alpha(X,JX)>0$ if $X\not=0$.  Also note that $d\alpha(X,JX)$ is bounded below by  $C_0 \cdot |X|^2$ for some constant $C_0>0$; $|\alpha(X)|$ and $|\alpha(JX)|$ are bounded above by  $C_1 \cdot |X|$ for some constant $C_1>0$, where the norm $|\cdot|$ is measured with respect to some fixed Riemannian metric on $M$; and $g \geq \kappa_0>0$.
Therefore, by the Cauchy-Schwarz lemma we have
\begin{align}\label{eqn: J-nonnegative2}
d(\phi \alpha)(v,Jv) & = \phi'(s) ( (a^2 + b^2) g + a\alpha(JX)+ b \alpha(X)) + \phi d\alpha(X,JX)\\
\nonumber & \geq \phi'(s) ( (a^2 + b^2) \kappa_0 -  \tfrac{1}{2}(\kappa_0 a^2 + \tfrac{1}{\kappa_0}C_1^2 |X|^2 ) \\
\nonumber & \qquad \qquad -  \tfrac{1}{2}(\kappa_0 b^2 + \tfrac{1}{\kappa_0} C_1^2|X|^2 )) + \phi C_0 |X|^2\\
\nonumber & = \phi'(s) (a^2 + b^2) \tfrac{\kappa_0}{2} +( \phi C_0 - \tfrac{1}{\kappa_0}\phi'(s)C_1^2)|X|^2.
\end{align}
Since $\phi \geq 1$, the lemma follows by taking $0\leq \phi'(s)<K$ for $K$ small.
%$d(\phi \alpha)(v,Jv)\geq 0$
%for all $v$.
%Similarly, $d(\phi \alpha)(v,Jv)> 0$ for all nonzero $v$ on the region $\phi'(s)>0$.
%[The point is that, if we fix a constant $k$, then $\varepsilon a^2 \pm \varepsilon k ac +c^2>0$ for all $(a,c)\not=(0,0)$ and sufficiently small $\varepsilon>0$: First observe that $$(\sqrt{\varepsilon}a\pm c)^2= \varepsilon a^2 \pm 2\sqrt{\varepsilon} ac + c^2\geq 0.$$
%Then $\varepsilon a^2+c^2\geq | 2\sqrt{\varepsilon} ac| > | \varepsilon k ac |$ for sufficiently small $\varepsilon>0$, provided $a\not=0$ and $c\not=0$.  On the other hand, if $a=0$ and $c\not=0$ or $a\not=0$ and $c=0$, then $\varepsilon a^2 \pm \varepsilon k ac +c^2>0$ is immediate, provided $\varepsilon>0$.]
\end{proof}

\cb
Let $u: \dot F\to \R\times M$ be a $J$-holomorphic map.  The {\em $d\alpha$-energy of $u$} is defined as usual by
$$E_{d\alpha}(u)=\int_{\dot F} u^*d\alpha$$
and {\em the $\alpha$-energy of $u$} is defined slightly differently by
$$E_\alpha(u)=\sup_{\phi\in \mathcal{C}_K}\int_{\dot F}u^* (\phi'(s) ds\wedge \alpha).$$
where $K>0$ is the constant given by Lemma \ref{lemma: J-nonnegative}. The {\em total energy} is defined as
$$E(u) = \sup_{\phi\in \mathcal{C}_K}\int_{\dot F}u^* d(\phi \alpha),$$
\cb and we have the energy bounds
\begin{equation}\label{eqn: energy bounds}
E_{d\alpha}(u) + E_{\alpha}(u)\leq E(u) \leq 2\cdot E_{d\alpha}(u) + E_{\alpha}(u).
\end{equation}

\begin{rmk}[Energy bounds]
If $u$ is a $J$-holomorphic map from $\boldsymbol\gamma_+$ to $\boldsymbol\gamma_-$, then $$E(u)=2\mathcal{A}_\alpha(\boldsymbol\gamma_+)-\mathcal{A}_\alpha(\boldsymbol\gamma_-) \quad \mbox{and} \quad E_{d\alpha}(u)=\mathcal{A}_\alpha(\boldsymbol\gamma_+)-\mathcal{A}_\alpha(\boldsymbol\gamma_-).$$
Then Equation~\eqref{eqn: energy bounds} implies:
\begin{equation}
\mathcal{A}_\alpha(\boldsymbol\gamma_-)  \leq E_\alpha(u)\leq \mathcal{A}_\alpha(\boldsymbol\gamma_+).
\end{equation}
\end{rmk}

With our choice of $J$ and energy (the curves $u$ under consideration will have finite $E(u)$ and $E_{d\alpha}(u)$, or, equivalently, finite $E_\alpha(u)$ and $E_{d\alpha}(u)$), the following key arguments in \cite{Ho, HWZ1, BEHWZ} carry over:
\be
\item The removal of singularities and monotonicity follow from the ``$J$-tameness" of the $2$-forms $d(\phi \alpha)$ for $\phi\in \mathcal{C}_K$, which is the content of Lemma~\ref{lemma: J-nonnegative}.
\item The ends of a holomorphic curve $u:\dot F\to \R\times M$ with finite $E_{d\alpha}(u)$ and $E(u)$ limit to cylinders over closed orbits (discussed in the next paragraph).
\item In particular, the maximum principle is not necessary (cf.\  Remark~\ref{maximal principle}), since the control of the $d\alpha$-energy $E_{d\alpha}(u)$ and (2) imply that there are no holomorphic curves with no positive end.
\ee

We will sketch the proof of (2), following \cite[Section~3.1]{Ho}.  If $E_{d\alpha}(u)=0$, then $u$ maps to a cylinder over a (not necessarily closed) Reeb trajectory $\gamma$. If $\dot F=\C$ in addition, then the little Picard theorem applied to $\widetilde u: \C\to \R\times \widetilde\gamma$ and the finiteness of $E(u)$ imply that $u$ is a constant map. (This is a simplification of the argument of \cite[Lemma~28]{Ho}.) Here $\widetilde\gamma$ is the universal cover of $\gamma$ and $\widetilde u$ is the lift of $u:\C\to\R\times\gamma$.  Note that using a smaller class $\mathcal{C}_K$ of ``slow growth'' test functions does not alter this argument or any other argument where $E_\alpha$ is used.

Next, as in the proof of \cite[Proposition~27]{Ho}, $u$ with finite $E(u)$ and $E_{d\alpha}(u)$ does not have gradient blow-up at the ends of $\dot F$, since otherwise we can construct a nonconstant finite energy plane with $E_{d\alpha}=0$ using the ``Hofer lemma'' (\cite[Lemma~26]{Ho}) and the taming of $J$ by $d(\phi \alpha)$, $\phi \in\mathcal{C}_K$, on compact regions. (Here the gradient is measured with respect to some $\R$-invariant Riemannian metric on $\R\times M$ and a cylindrical metric on the ends of $u$.) The gradient bound implies that if $E_{d\alpha}(u)=0$, $\dot F = \R\times S^1$, and $u$ is nonconstant, then $u$ maps to a cylinder over a closed orbit $\gamma$ and
$$u:\dot F\to \R\times \gamma=\R\times (\R/\Z),$$
$$(s',t')\mapsto (s'+c, mt'+d),$$
where $c,d\in\R$ and $m\in \Z^+$ (this follows from the fact that an entire holomorphic function with bounded derivative is a linear map).  Finally, the gradient bound, the $E_\alpha$-bound, and the taming of $J$ together imply that $u|_{\mathcal{E}}$ (here $\mathcal{E}$ is an end) converges either to a cylinder over a Reeb orbit or to a removable singularity.

Having controlled the behavior of holomorphic curves at the ends (and analogously on the necks), the rest of the SFT compactness argument from \cite[Section 10.2]{BEHWZ} then carries over.  This completes the sketch of proof of Theorem~\ref{thm: SFT compactness fo modified J}.\cb
\end{proof}

\section{Automatic transversality and applications} \label{section: automatic transversality}

\subsection{Calculation of asymptotic eigenvalues and eigenfunctions} \label{subsection: calculation of asymptotic eigenvalues}

For simplicity of notation, in this section we only look at the orbits of action $1$.  Let $S^1=\R/\Z$ with coordinate $t$.  In this subsection we explicitly calculate the eigenvalues and eigenfunctions for certain asymptotic operators
$$A: W^{1,2}(\R/\Z,\R^2)\to L^2 (\R/\Z,\R^2),$$
$$A=-j_0{\bdry\over \bdry t}-S(t),$$
where $j_0=\begin{pmatrix} 0 & -1 \\ 1 & 0 \end{pmatrix}$ and $S(t)$ is a family of symmetric matrices.  This is mostly for reference (especially the elliptic case, which is never used) and is completely standard.

\subsubsection{Elliptic case} \label{subsubsection: elliptic case}

Let $S(t)=\begin{pmatrix} \varepsilon & 0 \\ 0 & \varepsilon \end{pmatrix}$, where $0<\varepsilon<2\pi$.  If we view $\R^2=\C$, then $J$ is multiplication by $i$ and $S(t)$ is multiplication by $\varepsilon$. If $f$ is an eigenfunction of $A$ with eigenvalue $\lambda$, then
$$Af=-i{\bdry f\over \bdry t}-\varepsilon  f=\lambda f$$
and $ f'(t)=i(\lambda+\varepsilon) f$. Hence $\lambda+\varepsilon=2\pi n$ for $n\in\Z$ and $\lambda=2\pi n- \varepsilon$; the corresponding asymptotic eigenfunction is $ f(t)= ce^{2\pi in t}$, $c\in\C$. We write
$$ f_{2n-1}(t)= e^{2\pi i nt},  f_{2n}(t)=ie^{2\pi i nt},$$
$$\lambda_{2n-1}=\lambda_{2n}=2\pi n -\varepsilon,$$
for $n\geq 1$ and
$$ f_{2n-2}(t)= e^{2\pi i nt},  f_{2n-1}(t)=ie^{2\pi i nt},$$
$$\lambda_{2n-2}=\lambda_{2n-1}=2\pi n -\varepsilon,$$
for $n\leq 0$.\footnote{We choose this slightly strange numbering so it is consistent with Equation~\eqref{eqn: ordering eigenvalues 1}.}
Note that $\lambda_{2n-1}=\lambda_{2n}$ is a multiple eigenvalue and the corresponding eigenspace is a complex vector space $\C \langle  f_{2n-1}(t)\rangle$.

\subsubsection{Positive hyperbolic case} \label{subsubsection: positive hyperbolic case}

Let $S(t)=\begin{pmatrix} 0 & \varepsilon \\ \varepsilon & 0 \end{pmatrix}$, where $\varepsilon$ is a small positive number.  Then $A f=\lambda f$ can be written as:
$$ f'(t)= \begin{pmatrix} 0 & -1 \\ 1 & 0 \end{pmatrix} \begin{pmatrix} \lambda & \varepsilon \\ \varepsilon & \lambda \end{pmatrix} f
=\begin{pmatrix} -\varepsilon & -\lambda \\ \lambda & \varepsilon \end{pmatrix}  f .$$

We diagonalize the matrix $P=\begin{pmatrix} -\varepsilon & -\lambda \\ \lambda & \varepsilon \end{pmatrix}$: We solve for $\mu$ in
$$\det \begin{pmatrix} -\varepsilon-\mu & -\lambda \\ \lambda & \varepsilon-\mu \end{pmatrix} = \mu^2+(\lambda^2-\varepsilon^2)=0.$$

\s\n
{\bf Case $|\lambda|>\varepsilon$.}
The eigenvalues of $P$ are $\mu=\pm i \sqrt{\lambda^2-\varepsilon^2}$ and the eigenvectors are
$$v=\begin{pmatrix} \varepsilon -i\sqrt{\lambda^2-\varepsilon^2}\\ -\lambda \end{pmatrix},\quad \overline{v}=\begin{pmatrix} \varepsilon +i\sqrt{\lambda^2-\varepsilon^2}\\ -\lambda \end{pmatrix}.$$

In order for $ f(0)= f(1)$, we require $\sqrt{\lambda^2-\varepsilon^2}=2\pi n$.  Hence $\lambda=\pm\sqrt{(2\pi n)^2+\varepsilon^2}$. \cb Here $n>0$ since we are assuming that $|\lambda|>\varepsilon$. \cb Then $f$ is the real or imaginary part of $e^{i\sqrt{\lambda^2-\varepsilon^2}t} v=e^{i2\pi n t}v$.  Then for $n>0$ we obtain
\begin{align}\label{eqn: eigenfunction1}
 f_{\pm 2n}(t) &= \begin{pmatrix} \varepsilon \cos (2\pi nt)+2\pi n \sin (2\pi nt)\\ \mp\sqrt{(2\pi n)^2+\varepsilon^2}\cos (2\pi nt) \end{pmatrix},\\
\label{eqn: eigenfunction2}
 f_{\pm (2n+1)}(t) &= \begin{pmatrix} -2\pi n\cos (2\pi nt)+\varepsilon \sin (2\pi nt)\\ \mp\sqrt{(2\pi n)^2+\varepsilon^2}\sin (2\pi nt) \end{pmatrix},
\end{align}
$$\lambda_{\pm 2n}= \lambda_{\pm (2n+1)}= \cb \pm \sqrt{(2\pi n)^2+\varepsilon^2}. \cb $$
In particular, we have multiple eigenvalues.

\s\n
{\bf Case  $|\lambda|\leq \varepsilon$.} The eigenvalues are real, and in order for $f(0)=f(1)$ we require $\lambda=\pm \varepsilon$.  We obtain
$$ f_{-1}(t)=\begin{pmatrix} 1\\1  \end{pmatrix}, \quad  f_1(t)=\begin{pmatrix} 1\\ -1 \end{pmatrix},$$
$$\lambda_{-1}=-\varepsilon,\quad \lambda_1=\varepsilon.$$

\subsubsection{Negative hyperbolic case} \label{subsubsection: negative hyperbolic case}

We identify a neighborhood of a negative hyperbolic orbit with $((\R/\Z)\times \R^2)/\sim$, where $(t,x)$ are coordinates on $[0,1]\times \R^2$ and $(1,x)\sim (0,-x)$.  With respect to these coordinates we write $A=-j_0{\bdry\over \bdry t}-S(t)$ with $S(t)=\begin{pmatrix} 0 & \varepsilon \\ \varepsilon & 0 \end{pmatrix}$. The calculations are similar to the positive hyperbolic case and for \cb $n\geq 1$ \cb we obtain
\begin{align}\label{eqn: eigenfunction3}
 f_{\pm (2n-1)}(t) &= \begin{pmatrix} \varepsilon \cos ((2n-1)\pi t)+(2n-1)\pi \sin ((2n-1) \pi t)\\ \mp\sqrt{((2n-1)\pi)^2+\varepsilon^2}\cos ((2n-1)\pi t) \end{pmatrix},\\
\label{eqn: eigenfunction4}
 f_{\pm 2n}(t) &= \begin{pmatrix} -(2n-1)\pi\cos ((2n-1) \pi t)+\varepsilon \sin ((2n-1) \pi t)\\ \mp\sqrt{((2n-1)\pi)^2+\varepsilon^2}\sin ((2n-1)\pi t) \end{pmatrix},
\end{align}
$$\lambda_{\pm (2n-1)}= \lambda_{\pm 2n}= \cb \pm\sqrt{((2n-1)\pi)^2+\varepsilon^2}. \cb $$

\subsection{Automatic transversality}

In this subsection we summarize the parts of the proof of the automatic transversality theorem of Wendl~\cite{We}  which are used later. Automatic transversality was originally due to Gromov~\cite{Gr} and worked out carefully by Hofer-Lizan-Sikorav \cite{HLS} for closed $J$-holomorphic curves.  In this section $J\in \mathcal{J}(\alpha)$.

A Reeb orbit is {\em even} (resp.\ {\em odd}) if its Conley-Zehnder index is even (resp.\ odd) with respect to any trivialization $\tau$. \cb Recall that the parity of the Conley-Zehnder index is independent of the choice of trivialization $\tau$. Note that a Reeb orbit is even if and only if it is positive hyperbolic,  and an even multiple cover of a negative hyperbolic Reeb orbit is even. \cb  Given a $J$-holomorphic curve $u$, let $\#\Gamma_0(u)$ be the number of ends that limit to an even orbit and $\#\Gamma_1(u)$ be the number of ends that limit to an odd orbit.

\cb The following is a special case of \cite[Theorem 1]{We}: \cb

\begin{thm} [Automatic transversality]
\label{thm: automatic transversality}
Let $u:\dot F\to \R\times M$ be an element of $\mathcal{M}_J(\boldsymbol\gamma,\boldsymbol\gamma')$. If $u$ is an immersion, then $u$ is regular if
\begin{equation}
\op{ind}(u) > 2g(F)-2 + \# \Gamma_0(u).
\end{equation}
\end{thm}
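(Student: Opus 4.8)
The plan is to follow Wendl's argument, adapted to the punctured setting, by reducing the question of surjectivity of the linearized operator $D_u$ to a vanishing statement about elements of its cokernel, and then to bound the count of zeros of such an element against the Euler characteristic and the contribution of the even (positive hyperbolic) ends. First I would set up the normal linearized Cauchy--Riemann operator. Since $u$ is assumed to be an immersion, the pullback bundle $u^*T(\R\times M)$ splits (after a homotopy of the complex structure on it, or working up to a compact perturbation that does not affect index or transversality) as $T\dot F\oplus N_u$, where $N_u$ is the normal bundle; the tangential part is automatically surjective, so regularity of $u$ is equivalent to surjectivity of the normal operator $D_u^N$ acting on sections of $N_u$ with the appropriate exponential weights dictated by the asymptotic operators at the punctures. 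Its Fredholm index is $\op{ind}(u)$ by Equation~\eqref{eqn: Fredholm index of u}.

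Next I would invoke the key analytic input: a nonzero element $\eta$ in the kernel of the formal adjoint $(D_u^N)^*$ (equivalently, a nonzero element of $\op{coker}(D_u^N)$ represented in a convenient way) satisfies a Cauchy--Riemann type equation with a zeroth-order term, hence by the Carleman similarity principle it has only isolated zeros, each of positive local index, and along each end it is asymptotic to a nonzero eigenfunction of the corresponding asymptotic operator $A$ with a definite sign of the eigenvalue (this is where the weight convention enters). Counting zeros with multiplicity and bounding the total via the relevant adjunction/Riemann--Roch relation for the line bundle $N_u$ gives an inequality of the shape
\begin{equation}
0 \;\le\; Z(\eta) \;\le\; 2g(F) - 2 + \#\Gamma_0(u) - \op{ind}(u) + \#\Gamma_1(u) - \#\Gamma_1(u),
\end{equation}
where the asymptotic winding estimates at odd ends contribute a half-integer that rounds favorably and at even ends contribute the full $\#\Gamma_0(u)$. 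Reorganizing, if $\op{ind}(u) > 2g(F)-2+\#\Gamma_0(u)$ then $Z(\eta) < 0$, a contradiction unless $\eta \equiv 0$; so $\op{coker}(D_u^N) = 0$, $D_u^N$ is surjective, and $u$ is regular. I would phrase the winding bounds using the asymptotic eigenfunction analysis of Section~\ref{subsection: calculation of asymptotic eigenvalues}, in particular the parity statement ``even $\Leftrightarrow$ positive hyperbolic'' already recorded above.

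The main obstacle is getting the asymptotic bookkeeping exactly right: one must control the asymptotic winding number of the cokernel element $\eta$ at each puncture in terms of the Conley--Zehnder index and the chosen framing $\tau$, keep careful track of the exponential weights (which differ by sign from the kernel weights) so that the adjoint picks out the ``extremal'' eigenvalue on the correct side, and assemble these local contributions into the global count via the relative first Chern number $c_1(u^*\xi,\tau)$ appearing in Equation~\eqref{eqn: Fredholm index of u}. The even ends are the delicate ones, since there the extremal winding can be achieved by a two-dimensional eigenspace and the naive bound must be corrected, which is the origin of the $\#\Gamma_0(u)$ term; I would follow Wendl's treatment of this point verbatim. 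Everything else---the similarity principle, positivity of zeros, and the Fredholm setup---is standard and I would cite \cite{We,HLS,Gr} for the details rather than reproduce them.
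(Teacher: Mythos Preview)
Your approach is essentially the same as the paper's: reduce to the normal operator $D_u^N$, show $\ker (D_u^N)^* = 0$ by bounding winding numbers of a putative nonzero cokernel element against the relative Chern class, and use positivity of intersections (your Carleman similarity principle) to force a contradiction under the index hypothesis.

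One point of confusion worth correcting: you attribute the $\#\Gamma_0(u)$ term to the even ends being ``delicate'' because of a two-dimensional extremal eigenspace. This is backwards. At a positive hyperbolic (even) end the extremal eigenvalue $\lambda_{\pm 1}=\pm\varepsilon$ is simple, and the winding bound $2\operatorname{wind}_\tau(\zeta,p)\gtrless \mu_\tau$ can be achieved with equality. It is the \emph{odd} ends where the inequality is necessarily strict, since $\mu_\tau$ is odd and the winding is an integer; this contributes $-\#\Gamma_1(u)$ to the total winding bound, which one then rewrites as $\#\Gamma_0(u)-k$. The $k$ combines with $\chi(\dot F)$ to produce $2g(F)-2$. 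Your displayed inequality with the self-cancelling $+\#\Gamma_1(u)-\#\Gamma_1(u)$ obscures this, and the paper's Claims~\ref{claim2a} and~\ref{claim2b} make the correct bookkeeping explicit: for the adjoint one works with $\Lambda^{0,1}T^*\dot F\otimes N$, uses $c_1(\Lambda^{0,1}T^*\dot F,ds-idt)=\chi(\dot F)$, and arrives cleanly at $\operatorname{ind}(u)>2g(F)-2+\#\Gamma_0(u)\Rightarrow \ker(D_u^N)^*=0$.
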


\begin{proof}
Let $u\in \mathcal{M}_J(\boldsymbol\gamma,\boldsymbol\gamma')$ be an immersion and let $N\to \dot F$ be a normal bundle to $u(\dot F)\subset \R\times M$ such that $N=TD^2_{\delta'}$ on the ends of $u$ for $\delta'>0$ small.  Here each Reeb orbit has a neighborhood of the form $(\R/\Z)\times D^2_{\delta_0}$ as described in Theorem~\ref{thm: elimination}.
Let
$$D_u^N: W^{k+1,p}(\dot F,N) \to W^{k,p}(\dot F, \Lambda^{0,1}T^* \dot F \otimes_J N)$$
be the linearized normal $\overline\bdry$-operator for $u$ and $(D_u^N)^*$ be its adjoint. The Fredholm index of $D_u^N$ is equal to the Fredholm index $\op{ind}(u)$ of the full linearized $\overline{\bdry}$-operator.  Since $\op{coker} D_u^N\simeq \op{ker} (D_u^N)^*$, it suffices to show $\op{ker} (D_u^N)^*=0$ in order to prove Theorem~\ref{thm: automatic transversality}.

Let $\tau$ be a trivialization of $\xi$ along $\boldsymbol \gamma$ and $\boldsymbol\gamma'$.  Given a nontrivial element $\eta\in \op{ker} D_u^N$, we define the {\em winding number $\op{wind}_\tau(\eta,p)$ of $\eta$ near a puncture $p$} of $F$ to be the winding number of the leading asymptotic eigenfunction of $\eta$ (projected to $TD^2_{\delta'}$) with respect to $\tau$, viewed as a trivialization of $TD^2_{\delta'}$.  We also define the {\em total winding number}
\begin{equation}
\op{wind}_\tau(\eta)=\sum_{p_+\in\mathbf{p_+}} \op{wind}_\tau(\eta,p_+) -\sum_{p_-\in\mathbf{p_-}} \op{wind}_\tau(\eta,p_-).
\end{equation}
For each positive puncture $p$ that limits to $\gamma$, $2\op{wind}_\tau(\eta,p)\leq \mu_\tau(\gamma)$ and equality is possible if and only if $\gamma$ is even; similarly, for each negative puncture that limits to $\gamma$, $2\op{wind}_\tau(\eta,p)\geq \mu_\tau(\gamma)$ and equality is possible if and only if $\gamma$ is even.  Hence
\begin{align}
2\op{wind}_\tau(\eta)& \leq \mu_\tau(\boldsymbol\gamma)-\mu_\tau(\boldsymbol \gamma')-\#\Gamma_1(u)\\
&= \mu_\tau(\boldsymbol\gamma)-\mu_\tau(\boldsymbol \gamma')+\#\Gamma_0(u)-k, \nonumber
\end{align}
where $k$ is the total number of punctures.

The intersection number of $\eta$ with the zero section of $N$ is given by $\op{wind}_\tau(\eta) +c_1(N,\tau)$ and the positivity of intersections in dimension $4$ implies that:

\begin{claim} \label{claim1a}
If $\eta\in \op{ker} D_u^N$ is nonzero, then $\op{wind}_\tau(\eta) +c_1(N,\tau)\geq 0$.
\end{claim}

\begin{claim}\label{claim1b}
If
\begin{equation}
2c_1(N,\tau)+ \mu_\tau(\boldsymbol\gamma)-\mu_\tau(\boldsymbol \gamma')+\#\Gamma_0(u)-k<0,
\end{equation}
then $\op{ker} D_u^N=0$.
\end{claim}

Similarly, if we replace $N$ by $\Lambda^{0,1}T^* \dot F\otimes N$ and note that the leading term of $(D_u^N)^*$ is the $\bdry$-operator, we obtain:

\begin{claim}\label{claim2a}
If $\zeta\in\op{ker} (D_u^N)^*$ is nonzero, then
\begin{equation} \label{eqn: 2a1}
2\op{wind}_\tau(\zeta)\geq \mu_\tau(\boldsymbol\gamma)-\mu_\tau(\boldsymbol \gamma')+\#\Gamma_1(u),
\end{equation}
\begin{equation} \label{eqn: 2a2}
\op{wind}_\tau(\zeta)+c_1(\Lambda^{0,1}T^* \dot F\otimes N,(ds-idt)\otimes \tau)\leq 0,
\end{equation}
where $ds-idt$ is the trivialization of $\Lambda^{0,1}T^*\dot F$ on the ends of $u$ induced by $ds-idt$.  Here $(s,t)$ are the first two coordinates of $\R\times (\R/\Z)\times D^2_{\delta_0}$.
\end{claim}

% DO NOT REMOVE
%Explanation of Claim~\ref{claim2a}: Consider
%$$D_u^N= {\bdry\over \bdry s}-A$$
%at the positive end that limits to a hyperbolic orbit $\gamma$.  If $Af=\lambda f$, then an element $\eta\in \op{ker} D_u^N$ is given by $\eta(s,t)=e^{\lambda s} f(t)$, $\lambda<0$, and $f$ tends to rotate in the negative direction for $\lambda$ negative.  On the other hand,
%$$(D_u^N)^*={\bdry \over \bdry s}+A$$
%and an element $\zeta\in \op{ker} (D_u^N)^*$ is given by $\zeta(s,t)=e^{-\lambda s} f(t)$, $\lambda>0$, and $f$ tends to rotate in the positive direction for $\lambda$ positive.

%If $\mu(\gamma)=0$, then $\op{wind}(\eta)\leq 0$ and $\op{wind}(\zeta)\geq 0$; if $\mu(\gamma)=1$, then $\op{wind}(\eta)\leq 0$ and $\op{wind}(\zeta)\geq 1$.  This implies that
%$$2\op{wind}_\tau(\eta)\leq \mu_\tau(\boldsymbol\gamma)-\mu_\tau(\boldsymbol \gamma')-\#\Gamma_1(u).$$
%$$2\op{wind}_\tau(\zeta)\geq \mu_\tau(\boldsymbol\gamma)-\mu_\tau(\boldsymbol \gamma')+\#\Gamma_1(u).$$

\begin{claim}\label{claim2b}
If
\begin{equation} \label{equation: condition for vanishing2}
2c_1(\Lambda^{0,1}T^*\dot F\otimes N,(ds-idt)\otimes\tau)+\mu_\tau(\boldsymbol\gamma)-\mu_\tau(\boldsymbol \gamma')+k-\#\Gamma_0(u)>0,
\end{equation}
then $\op{ker} (D_u^N)^*=0$.
\end{claim}

Since
$$\chi(\dot F)=2-2g(\dot F) -k=c_1(T\dot F,ds-idt)= c_1(\Lambda^{0,1}T^*\dot F,ds-idt)$$
and
$$c_1(\Lambda^{0,1}T^*\dot F\otimes N,(ds-idt)\otimes\tau)= c_1(\xi,\tau),$$
Inequality~\eqref{equation: condition for vanishing2} can be rephrased as:
\begin{align*}
\op{ind}(u) &= -\chi (\dot F) + 2c_1(\xi,\tau)+ \mu_\tau(\boldsymbol\gamma)-\mu_\tau(\boldsymbol \gamma')\\
&= -\chi(\dot F) +2 c_1(\Lambda^{0,1}T^*\dot F\otimes N,(ds-idt)\otimes\tau) + \mu_\tau(\boldsymbol\gamma)-\mu_\tau(\boldsymbol \gamma')\\
&> -\chi(\dot F) + \#\Gamma_0(u)-k=2g(\dot F) -2 +\#\Gamma_0(u),
\end{align*}
which implies the theorem.
\end{proof}

The following theorem, due to Hutchings-Taubes~\cite[Theorem 4.1]{HT2} will also be used frequently in conjunction with Theorem~\ref{thm: automatic transversality}.

\begin{thm}\label{thm: immersion}
If $J$ is generic, then for each $\boldsymbol\gamma,\boldsymbol\gamma'$, the set of non-immersed $u\in \mathcal{M}_J^s(\boldsymbol\gamma,\boldsymbol\gamma')$ has real codimension at least two. \cb  More precisely, it is the image of a smooth map $\phi:\mathcal{Z}\to  \mathcal{M}_J^s(\boldsymbol\gamma,\boldsymbol\gamma')$, where  $\mathcal{Z}$ is a manifold of dimension $\dim \mathcal{M}_J^s(\boldsymbol\gamma,\boldsymbol\gamma')-2$. \cb
\end{thm}

\cb Intuitively, $du(p)=0$ with $p\in \dot F$ fixed is a real codimension four condition for a $J$-holomorphic map $u$ and $\dot F$ is real two-dimensional. \cb

\begin{rmk}
\cite[Theorem 4.1]{HT2} states a less general theorem, but its proof \cb (in particular the discussion of the first paragraph of \cite[p.\ 31]{HT2}) implies \cb Theorem~\ref{thm: immersion}.
\end{rmk}

\subsection{Proof of Lemma~\ref{lemma: good basis for coker}} \label{subsection: proof of lemma}

As an application of the automatic transversality technique, we prove the following lemmas, which are slight strengthenings of Lemma~\ref{lemma: good basis for coker} and which will be used in the proof of the chain homotopy.

We are assuming \hyperref[Condition1]{(C1)}--(C4). As before, at the positive end $\gamma''$ of $v_1$, let $f_i(t)$, $i\in \Z-\{0\}$, be the eigenfunctions of the asymptotic operator such that
\begin{enumerate}
\item[(D1)] the corresponding eigenvalues $\lambda_i$ satisfy
$$\dots \leq \lambda_{-2}\leq \lambda_{-1} < 0 < \lambda_1\leq \lambda_2\leq \dots,$$
\item[(D2)] $\{f_i(t)\}_{i\in\Z-\{0\}}$ forms an orthonormal basis of $L^2(\R/\Z;\R^2)$.
\end{enumerate}
At the negative end $\gamma'$ of $v_0$, let $g_i(t)$, $i\in\Z-\{0\}$, be the eigenfunctions of the asymptotic operator such that
\begin{enumerate}
\item[(E1)] the corresponding eigenvalues $\lambda'_i$ satisfy
$$\dots \leq \lambda'_{-2}\leq \lambda'_{-1}< 0 < \lambda'_{1}\leq \lambda'_2\leq \dots,$$
\item[(E2)] $\{g_i(t)\}_{i\in \Z-\{0\}}$ forms an orthonormal basis of $L^2(\R/\Z;\R^2)$.
\end{enumerate}

\begin{lemma}\label{lemma: better basis for coker}
Suppose \hyperref[Condition1]{(C1)}--(C4) hold. Then there exists a basis $\{\sigma_1,\dots,\sigma_k\}$ for $\ker (D_{v_0}^N)^*$ such that the following hold:
\begin{enumerate}
\item The positive ends of $\sigma_i$, $i=1,\dots, k$, are of the form
\begin{equation*}
\sigma_i(s,t)= e^{-\lambda_i s} f_i(t) \quad \mbox{modulo $f_{k+1}, f_{k+2},\cdots$}
\end{equation*}
\item The negative ends of $\sigma_i$, $i=1,\dots, k$, span $\R\langle e^{-\lambda'_{j} s} g_j(t) \rangle_{j=-k}^{-1}$ modulo $g_{-k-1}, g_{-k-2},\cdots$.
\item The negative end of $\sigma_i$, $i=1,\dots, k$ projects nontrivially to $$\R\langle e^{-\lambda'_{j} s} g_j(t) \rangle_{j=-k+i-2}^{-1}$$ if $\lambda_{-k+i-2}'=\lambda'_{-k+i-1}$ and to $$\R\langle e^{-\lambda'_{j} s} g_j(t) \rangle_{j=-k+i-1}^{-1}$$ if $\lambda_{-k+i-2}'<\lambda'_{-k+i-1}.$
\item The negative end of $\sigma_k$ is of the form
$$\sigma_k(s,t)=d_{k,-1} e^{-\lambda'_{-1} s} g_{-1}(t) \quad \mbox{modulo $g_{-k-1}, g_{-k-2},\cdots$},$$
where $d_{k,-1}$ is nonzero.
\item $\sigma_k=Y$, up to a nonzero constant multiple.
\end{enumerate}
\end{lemma}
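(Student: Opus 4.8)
The plan is to combine the automatic transversality bookkeeping of Section~\ref{section: automatic transversality}, the explicit spectral computations of Section~\ref{subsection: calculation of asymptotic eigenvalues}, and the covering structure $v_0=u_0\circ\pi$. As input, since $v_0$ is an unbranched $k$-fold cover of $u_0$ and $\gamma,\gamma',\gamma''$ are hyperbolic, Lemma~\ref{lemma: multiplicative} gives $\op{ind}(D^N_{v_0})=k\op{ind}(u_0)=-k$; moreover $v_0$ is a cylinder with $\#\Gamma_0(v_0)=1$ (since $\gamma'$ is positive hyperbolic and $\gamma''$ is negative hyperbolic), so $2c_1(N,\tau)+\mu_\tau(\gamma'')-\mu_\tau(\gamma')+\#\Gamma_0(v_0)-2=\op{ind}(v_0)-1=-k-1<0$ and Claim~\ref{claim1b} forces $\ker D^N_{v_0}=0$, hence $\dim\ker(D^N_{v_0})^*=k$. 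Because $(\vp\alpha,J)$ is $L$-supersimple, near each puncture of $v_0$ the operator $(D^N_{v_0})^*$ is exactly $\bdry_s+A$ with $A$ the asymptotic operator of $\gamma''$ at the positive end and of $\gamma'$ at the negative end; consequently every nonzero $\sigma\in\ker(D^N_{v_0})^*$ has honest eigenfunction expansions $\sum_{i>0}c_ie^{-\lambda_is}f_i(t)$ at $\gamma''$ and $\sum_{j<0}d_je^{-\lambda'_js}g_j(t)$ at $\gamma'$, with $c_i,d_j$ genuine constants.

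The heart of the argument is that the two asymptotic evaluation maps
\begin{align*}
\op{ev}^+&\colon \ker(D^N_{v_0})^*\to\R^k,\qquad \sigma\mapsto(c_1,\dots,c_k),\\
\op{ev}^-&\colon \ker(D^N_{v_0})^*\to\R^k,\qquad \sigma\mapsto(d_{-1},\dots,d_{-k}),
\end{align*}
are isomorphisms. Since domain and target each have dimension $k$, it suffices to prove injectivity. If $\sigma\ne 0$ and $\op{ev}^+(\sigma)=0$, then the leading eigenvalue of $\sigma$ at $\gamma''$ is $\ge\lambda_{k+1}$, so $\op{wind}_\tau(\sigma,p_+)$ is at least the winding of $f_{k+1}$; feeding this, together with the extremal bound $\op{wind}_\tau(\sigma,p_-)\le\alpha^<(\gamma')$ at the negative end, into the two inequalities of Claim~\ref{claim2a}, and computing $\op{wind}_\tau(f_{k+1})-\alpha^>(\gamma'')$ from the spectral list of Section~\ref{subsubsection: negative hyperbolic case} applied to the $k$-fold cover $\gamma''$ (which is again negative hyperbolic, as $k$ is odd), one contradicts $\op{ind}(v_0)=-k$. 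Injectivity of $\op{ev}^-$ is obtained identically at $\gamma'$. (One may instead decompose $\ker(D^N_{v_0})^*$ into isotypic components for the deck group $\Z/k$ of $\pi$ and run the count in each sector; the trivial sector is $\pi^*\ker(D^N_{u_0})^*=\R\langle Y\rangle$.) Granting this, part (1) holds by defining $\sigma_1,\dots,\sigma_k$ to be the preimages under $\op{ev}^+$ of a basis of $\R^k$ adapted to the eigenvalue blocks $\lambda_1\le\cdots\le\lambda_k$, and parts (2) and (3) become the assertion that the invertible matrix $\op{ev}^-\circ(\op{ev}^+)^{-1}$ is ``as lower-triangular as the eigenvalue multiplicities of $A_{\gamma'}$ permit'', which one reads off by tracking the filtration by order of decay at $\gamma'$ in the same winding count (the dichotomy $\lambda'_{-k+i-2}=\lambda'_{-k+i-1}$ versus $\lambda'_{-k+i-2}<\lambda'_{-k+i-1}$ being exactly the two ways a block can be split).

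For parts (4) and (5) I would analyze $Y=\pi^*Y_0$ directly. Since $\op{ind}(u_0)=-1$ and $u_0$ is transversely cut out in a $1$-parameter family, $\ker(D^N_{u_0})^*=\R\langle Y_0\rangle$ is one-dimensional and the chain of inequalities in Claim~\ref{claim2a} applied to the cylinder $u_0$ is forced to be an equality; this pins the leading term of $Y_0$ at $\gamma''_0$ into the smallest positive eigenspace of the asymptotic operator of $\gamma''_0$ and its leading term at $\gamma'_0$ into the largest negative eigenspace of that of $\gamma'_0$, which by Section~\ref{subsubsection: positive hyperbolic case} is one-dimensional. Pulling back by the $k$-fold cover leaves eigenvalues unchanged and multiplies windings by $k$; comparing with the spectral lists of Sections~\ref{subsubsection: positive hyperbolic case} and~\ref{subsubsection: negative hyperbolic case} identifies $\pi^*(\text{leading term of }Y_0)$ at $\gamma''$ with the $\lambda_k=\lambda_{k+1}$ eigenspace of $A_{\gamma''}$, and at $\gamma'$ with $\R\langle g_{-1}\rangle$, with the next term of $Y$ at $\gamma'$ only entering at winding $\ge k$, i.e.\ modulo $g_{-k-1},g_{-k-2},\dots$. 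Choosing $f_k$ to be the pulled-back eigenfunction, $\op{ev}^+(Y)$ is then a nonzero multiple of the $k$-th basis vector, so $\sigma_k=Y$ up to a nonzero scalar (part (5)); and the description of the negative end of $Y=\sigma_k$ just obtained is exactly part (4).

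The step I expect to be the main obstacle is the injectivity of $\op{ev}^+$ and $\op{ev}^-$: this is an automatic transversality statement run one notch past the usual vanishing threshold, so it requires matching the orderings (D1)--(D2) and (E1)--(E2) of the spectra of the asymptotic operators of the \emph{covered} orbits $\gamma''$ and $\gamma'$ against the filtration of $\ker(D^N_{v_0})^*$ by order of decay, with the correct eigenvalue multiplicities --- and this is precisely where the explicit normal forms of Theorem~\ref{thm: elimination}(3)--(4) and the computations of Section~\ref{subsection: calculation of asymptotic eigenvalues} are indispensable.
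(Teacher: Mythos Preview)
Your approach is essentially the same as the paper's, with a slightly different organization of the same winding-number bookkeeping. The paper proves part~(1) by introducing a filtration of $\ker(D^N_{v_0})^*$ via weighted spaces $D_{v_0}^{\delta_j}$ (with $\delta_j=2\pi j$): it shows $\ker D_{v_0}^{\delta_j}=0$ for each $j$ using Claim~\ref{claim1a}, reads off $\dim\ker(D_{v_0}^{\delta_j})^*=k-2j$, and then row-reduces along the filtration. Your proposal to show directly that $\op{ev}^+$ is an isomorphism by a winding argument via Claim~\ref{claim2a} is the dual formulation of the same computation and is equally valid; your explicit choice of $f_k$ as the pulled-back eigenfunction (exploiting $\lambda_k=\lambda_{k+1}$ for $k$ odd) is what the paper hides under the phrase ``possibly renaming the $\sigma_i$''. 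For parts~(4)--(5) the two arguments are identical: both compute $\op{wind}_{\tilde\tau}(Y_0)=0$ at each end from Claim~\ref{claim2a} applied to $u_0$, pull back by $\pi$, and use that a pullback at $\gamma'$ only hits indices $j\equiv -1\pmod k$. One small wording issue: your description of~(3) as ``lower-triangular'' is not quite the shape of the statement (it asserts nonvanishing of a projection onto the \emph{slow}-decay block, not vanishing of a projection onto the fast one), but the argument you indicate---the winding bound from Claim~\ref{claim2a} with the leading term at $\gamma''$ fixed to be $f_i$---is exactly what the paper invokes.
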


\begin{convention}
\cb From now on, we may suppress the upper index ``$N$" from $D_u^N$ for simplicity. \cb
\end{convention}

\begin{proof}
(1) We consider the linearized normal $\overline\bdry$-operators
$$D_{v_0}^{\delta}=D_{v_0}^{N,\delta}: W^{k+1,p}_\delta(\dot F,N)\to W^{k+1,p}_\delta(\dot F, \Lambda^{0,1}T^*\dot F\otimes N),$$
where $N\to \dot F$ is the normal bundle to $v_0:\dot F\to \widehat{X}^{\tau_0}$ and we are using weights $\chi_\delta(s)$, i.e., $\zeta\in W^{k+1,p}_\delta$ if and only if $\chi_\delta(s) \zeta\in W^{k+1,p}$. Here $\chi_\delta:\R\to \R^{> 0}$ is a smooth function such that $\chi_\delta(s)=1$ for $s\leq 0$ and $\chi_\delta(s)= e^{-\delta s}$ for $s\gg 0$.

Suppose $k$ is odd; the case of $k$ even is similar (but slightly harder) and will be omitted.  By the asymptotic eigenfunction calculations from Section \ref{subsubsection: negative hyperbolic case},
$$\op{ind}(D_{v_0}^{0})=\op{ind}(D_{v_0}^{\delta_0})=-k, \quad \op{ind}(D_{v_0}^{\delta_1})=-k+2, \quad \cdots \quad, \quad \op{ind}(D_{v_0}^{\delta_{\lfloor {k/2}\rfloor}})=-1,$$
where $\delta_j= 2\pi j$. By Claim~\ref{claim1a},
$$\op{ker}(D_{v_0}^{\delta_0})=\dots =\op{ker}(D_{v_0}^{\delta_{\lfloor {k/2}\rfloor}})=0.$$
Hence we obtain
$$\op{dim} \op{ker}((D_{v_0}^{\delta_0})^*)=k,~~ \op{dim}\op{ker}((D_{v_0}^{\delta_1})^*)=k-2,~~ \cdots~~ ,~~ \op{dim} \op{ker}((D_{v_0}^{\delta_{\lfloor {k/2}\rfloor}})^*)=1.$$

Now consider $\sigma_1,\sigma_2\in \op{ker}((D_{v_0}^{\delta_0})^*)$, whose projections span
$$\op{ker}((D_{v_0}^{\delta_0})^*)/ \op{ker}((D_{v_0}^{\delta_1})^*).$$
For $i=1,2$, let us write
$$\sigma_i=c_1(\sigma_i) e^{-\lambda_1s}f_1(t) + c_2(\sigma_i)e^{-\lambda_2s} f_2(t)\quad \mbox{modulo $f_3,f_4,\cdots$}$$
at the positive end. Then $(c_1(\sigma_1),c_2(\sigma_1))$ and $(c_1(\sigma_2),c_2(\sigma_2))$ are linearly independent, since otherwise a nontrivial linear combination of $\sigma_1$ and $\sigma_2$ will be in $\op{ker}((D_{v_0}^{\delta_1})^*)$ and this contradicts the fact that $\op{ker}((D_{v_0}^{\delta_0})^*)/ \op{ker}((D_{v_0}^{\delta_1})^*)$ is $2$-dimensional.  By induction, row reduction, and possibly renaming the $\sigma_i$, we eventually obtain the basis  $\{\sigma_1,\dots,\sigma_k\}$ which satisfies Equation~\eqref{eqn: good basis}.

(2) follows from the argument of (1).

(3) follows from Claim~\ref{claim2a}.

(4), (5) By the regularity of $u_0$ in a $1$-parameter family $\overline{J}^\tau$,  $\op{ker}( (D_{u_0}^N)^*)$ is $1$-dimensional and is generated by $Y_0$ which comes from the variation of $\overline{J}^\tau$. Choose a trivialization $\tilde\tau$ along $\gamma''$ and $\gamma'$ so that $\mu_{\tilde\tau}(\gamma'')=-1$ and $\mu_{\tilde \tau}(\gamma')=0$.  By Claim~\ref{claim2a}, $Y_0$ has $\op{wind}_{\tilde\tau}=0$ at both ends. Hence its pullback $Y=\pi^* Y_0$ to $\op{ker} ((D_{v_0}^N)^*)$ also has $\op{wind}_{\tilde\tau}=0$ at both ends and is a multiple of $\sigma_k$. This proves (5).  Moreover, since $Y$ is a pullback, it does not have any terms $d_{i,j} e^{-\lambda'_{j} s} g_j(t)$ at the negative end, where $j\not= -1$ modulo $k$. This proves (4).
\end{proof}

\begin{lemma}\label{lemma: better basis for coker 2}
Suppose (C1)--(C4) hold. After modifying $\{g_i(t)\}_{i\in \Z-\{0\}}$ subject to (E1) and (E2), there exists a basis $\{\sigma'_1,\dots,\sigma'_{k-1},\sigma_k'=\sigma_k\}$ for $\ker (D_{v_0}^N)^*$ such that the following hold:
\begin{enumerate}
\item The positive ends of $\sigma'_i$, $i=1,\dots, k$, are of the form
\begin{equation*}
\sigma'_i(s,t)= e^{-\lambda_i s} f_i(t) +\sum_{j>i} c_{i,j} e^{-\lambda_j s} f_j(t).
\end{equation*}
\item The negative ends of $\sigma'_i$, $i=1,\dots, k-1$, are of the form
\begin{equation*}
\sigma'_i(s,t)= d_{i,-k+i-1} e^{-\lambda'_{-k+i-1} s} g_{-k+i-1}(t) + \sum_{j< -k+i-1} d_{i,j} e^{-\lambda'_j s} g_j(t),
\end{equation*}
where $d_{i,-k+i-1}$ is nonzero.
\item The negative end of $\sigma'_k$ is of the form
$$\sigma'_k(s,t)=d_{k,-1} e^{-\lambda'_{-1} s} g_{-1}(t) \quad \mbox{modulo $g_{-k-1}, g_{-k-2},\cdots$},$$
where $d_{k,-1}$ is nonzero.
\item $\sigma'_k=Y$, up to a nonzero constant multiple.
\end{enumerate}
\end{lemma}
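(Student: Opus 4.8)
The plan is to start from the basis $\{\sigma_1,\dots,\sigma_k\}$ produced by Lemma~\ref{lemma: better basis for coker} and perform a finite sequence of elementary modifications — both to the $\sigma_i$ (column operations) and to the eigenbasis $\{g_i(t)\}$ at the negative end — to achieve the ``staircase'' normal forms (1)--(3), while keeping (4) untouched since $\sigma_k'=\sigma_k=Y$ is preserved throughout. The key observation is that Lemma~\ref{lemma: better basis for coker}(1)--(3) already tells us that at the positive end $\sigma_i$ starts with $e^{-\lambda_i s} f_i(t)$ modulo higher-index $f_j$, and at the negative end $\sigma_i$ projects nontrivially to the span of $e^{-\lambda'_j s} g_j(t)$ for $j$ down to $-k+i-1$ (or $-k+i-2$ in the multiple-eigenvalue case) but lies in the span for $j$ up to $-1$. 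So each $\sigma_i$ already has a well-defined ``leading negative index'' $\ell(i)\in\{-k+i-2,-k+i-1\}$, and these leading indices are (weakly) increasing in $i$.

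First I would run a Gram--Schmidt / row-reduction downward on $i$ to clear out, from the negative end of each $\sigma_i$, all terms $e^{-\lambda'_j s} g_j(t)$ with $j > \ell(i)$: since $\sigma_{i+1},\dots,\sigma_k$ have strictly larger leading negative indices, we can subtract suitable multiples of them from $\sigma_i$ to kill the components of $\sigma_i$ along $g_{\ell(i)+1},\dots,g_{-1}$. This does not disturb property (1) in the desired $\sigma_i' = e^{-\lambda_i s} f_i(t) + \sum_{j>i} c_{i,j} e^{-\lambda_j s} f_j(t)$ form because the $\sigma_{i+1},\dots,\sigma_k$ we are subtracting only contribute $f_j$-terms with $j > i$ at the positive end. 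Second, to handle the multiple-eigenvalue positions where $\ell(i) = -k+i-2$ rather than $-k+i-1$: here I would modify the eigenbasis $\{g_j(t)\}$ within the (2-dimensional or higher) eigenspace of the repeated eigenvalue $\lambda'_{-k+i-1} = \lambda'_{-k+i-2}$ so that the leading negative term of $\sigma_i'$ becomes exactly $d_{i,-k+i-1} e^{-\lambda'_{-k+i-1} s} g_{-k+i-1}(t)$; this is permissible precisely because (E1)--(E2) only fix the $g_j$ up to orthogonal change of basis inside each eigenspace, and the winding-number constraints from Claim~\ref{claim2a} guarantee the leading term is nonzero in that eigenspace. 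Combining these two moves gives (2) for $i=1,\dots,k-1$; (3) is just the restatement of Lemma~\ref{lemma: better basis for coker}(4) for $\sigma_k'=\sigma_k$, which is unaffected since we never add $\sigma_i$ ($i<k$) to $\sigma_k$, only the reverse.

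Finally I would clean up the positive ends: after the negative-end row reductions above, $\sigma_i'$ is some invertible upper-triangular combination of the original $\sigma_i$'s at the positive end, so it reads $\sigma_i'(s,t) = a_i e^{-\lambda_i s} f_i(t) + \sum_{j>i} c_{i,j} e^{-\lambda_j s} f_j(t)$ with $a_i \ne 0$ (upper-triangularity forces the diagonal entries to be products of the original nonzero diagonal entries), so rescaling each $\sigma_i'$ by $a_i^{-1}$ for $i < k$ gives (1) exactly; $\sigma_k$ already has the right leading coefficient as an eigenfunction pullback, or can be rescaled without affecting (4) since ``up to a nonzero constant multiple'' is built into the statement. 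The main obstacle I expect is bookkeeping the interaction between the two kinds of moves — one must check that the eigenbasis modification of the $g_j$'s in step two does not re-introduce unwanted $g_{j}$-terms ($j > \ell(i')$) into the already-cleaned $\sigma_{i'}'$ for $i' \ne i$, which requires doing the eigenbasis changes in the correct order (from the most negative eigenvalue upward, i.e., from $i$ large to $i$ small, or interleaved with the row reduction) and verifying that each change is supported in a single eigenspace disjoint from the leading positions of the other $\sigma$'s; the hyperbolicity of $\gamma'$ and the explicit eigenvalue list of Section~\ref{subsubsection: negative hyperbolic case}, where eigenvalues come in pairs, keeps the number of cases small and makes this a finite, if slightly tedious, check.
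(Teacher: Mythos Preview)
Your proposal is correct and follows essentially the same approach as the paper: downward induction starting from $\sigma_k'=\sigma_k$, at each step writing $\sigma_i'=\sigma_i+\sum_{j>i}k_j\sigma_j'$ to clear the $g_{-k+i},\dots,g_{-1}$ components at the negative end, and when $\lambda'_{-k+i-2}=\lambda'_{-k+i-1}$ rotating the pair $g_{-k+i-1},g_{-k+i-2}$ within their common eigenspace to force $d_{i,-k+i-1}\neq 0$ (the nonvanishing of $(d_{i,-k+i-1},d_{i,-k+i-2})$ coming from Claim~\ref{claim2a}). One small simplification over what you wrote: the rescaling step is unnecessary, since $\sigma_i'=\sigma_i+\sum_{j>i}k_j\sigma_j'$ already has leading positive-end coefficient $a_i=1$ by the upper-triangular form of (1); and your bookkeeping worry resolves itself because the eigenbasis change at step $i$ only touches $g_{-k+i-1},g_{-k+i-2}$, which lie strictly below the leading index $-k+i'-1$ of every already-constructed $\sigma_{i'}'$ with $i'>i$, hence only perturb tail terms that (2) does not constrain.
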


\begin{proof}
Let $\{\sigma_1,\dots,\sigma_k\}$ be as in Lemma~\ref{lemma: better basis for coker}.
We construct $\sigma_i'$ by induction, starting with $\sigma_k'=\sigma_k$, which satisfies (3) and (4) by Lemma~\ref{lemma: better basis for coker}. Suppose $\sigma_{i+1}'$ satisfies (1) and (2) and is of the form $\sigma_{i+1}+\sum_{j>i+1} k_j \sigma_j'$. We take $\sigma_i'$ of the form $\sigma_i+\sum_{j>i} k_j \sigma'_j$ so that (1) and (2) are satisfied, with the possible exception of $d_{i,-k+i-1}$ being nonzero.  If $\lambda_{-k+i-2}'<\lambda'_{-k+i-1}$, then $d_{i,-k+i-1}\not=0$ is a consequence of Claim~\ref{claim2a}. On the other hand, if $\lambda_{-k+i-2}'=\lambda'_{-k+i-1}$, then $(d_{i,-k+i-1},d_{i,-k+i-2})\not =0$ by Claim~\ref{claim2a}, and modifying $g_{-k+i-1}$ and $g_{-k+i-2}$ subject to (E1) and (E2) gives $d_{i,-k+i-1}\not=0$.
\end{proof}

\section{The definition of $HC(\mathcal{D})$} \label{section: def of HC}

\subsection{The differential}

\cb
Let $(M,\alpha)$ be $L$-supersimple and $L$-monotone. Recall that an orbit $\gamma$ of $R_\alpha$ is {\em bad} if it is an even multiple cover of a negative hyperbolic orbit and is {\em good} if it is not bad.  Let $\mathcal P_{\alpha, \op{bad}}^L$ (resp.\ $\mathcal P_{\alpha, \op{good}}^L$) be the set of bad (resp.\ good) Reeb orbits of $R_\alpha$ of action $<L$. We write $\mathcal{P}_\alpha^L:= \mathcal P_{\alpha, \op{good}}^L \sqcup \mathcal P_{\alpha, \op{bad}}^L$.  Given $J\in \mathcal{J}^{<L,\tiny \mbox{reg}}_\star(\alpha)$, let $CC^{L}(M,\alpha,J)$ be the  $\Q$-vector space generated by $\mathcal{P}_{\alpha,\op{good}}^L$. \cb

If $\gamma$ is an $m$-fold cover of a simple orbit, then we define the {\em multiplicity $m(\gamma)$ of $\gamma$} to be $m$.
The differential $\bdry$ of $CC^{L}(M,\alpha,J)$ is given by
\begin{equation} \label{differential definition}
\bdry \gamma= \sum_{\gamma'\in \mathcal{P}_{\alpha,\op{good}}^L} \#(\mathcal{M}^{\op{ind}=1,\op{cyl}}_{J}(\gamma,\gamma')/\R) {1\over m(\gamma')} \cdot \gamma',
\end{equation}
where $\#$ refers to the signed count using the coherent orientation system from \cite{BM}; see Section~\ref{subsection: orientation d squared} for more details on orientations.

\begin{rmk} \label{rmk: integer coefficients}
Although there is a denominator ${1\over m(\gamma')}$ in the definition of $\bdry$, the coefficient of $\gamma'$ is always an integer; the same holds for coefficients in  chain maps. For example, when $(u,{\bf r})\in \mathcal{M}^{\op{ind}=1,\op{cyl}}_{J}(\gamma,\gamma')$ and $u$ is not multiply-covered \cb (which is automatic by Lemma~\ref{lemma: multiplicative}), \cb the contribution of all the $(u,{\bf r})$ with the same image towards
$$\#(\mathcal{M}^{\op{ind}=1,\op{cyl}}_{J}(\gamma,\gamma')/\R) {1\over m(\gamma')}$$
as we range over ${\bf r}$ is $\pm m(\gamma)$.
%In general, if $u$ is a $k$-fold cover of a simple curve and if $u$ is regular, then the contribution of all the $(u,{\bf r})$ with the same image is
%$\pm \frac{m(\gamma)}{k}$.
\end{rmk}

\begin{rmk}
\cb One can also define the differential by
$$\bdry' \gamma= \sum_{\gamma'\in \mathcal{P}_{\alpha,\op{good}}^L} \#(\mathcal{M}^{\op{ind}=1,\op{cyl}}_{J}(\gamma,\gamma')/\R) {1\over m(\gamma)} \cdot \gamma'.$$
The homologies with respect to $\bdry$ and $\bdry'$ are isomorphic only over $\Q$. \cb
\end{rmk}

\begin{rmk}
In view of Remark~\ref{rmk: integer coefficients}, $CC^L(M,\alpha,J)$ can be defined over $\Z$.  Similarly, the chain maps $\Phi_{X,\alpha,\overline{J}}$ can be defined over $\Z$.  {\em What is not defined over $\Z$ are the chain homotopy maps $K_\pm$.}
\end{rmk}

\begin{thm} \label{thm: differential}
If $J$ is generic, $\alpha$ is $L$-supersimple and $L$-monotone, and $(\alpha,J)$ is an $L$-supersimple pair, then $\partial$ is defined (i.e., the moduli space $\mathcal{M}^{\op{ind}=1,\op{cyl}}_{J}(\gamma,\gamma')/\R$ in Equation~\eqref{differential definition} is transversely cut out, compact, and oriented) and $\bdry^2=0$.
\end{thm}

\begin{proof}

\cb
We give a proof modulo orientation considerations that are postponed until Section~\ref{subsection: orientation d squared}.  We first state a couple of preliminary lemmas.

\begin{lemma}\label{lemma: no negative levels}
The moduli space $\mathcal{M}^i:=\mathcal{M}^{\op{ind}=i}_J(\gamma,\bs\gamma')$, where $\bs\gamma'=(\gamma'_1,\dots,\gamma_\ell')$ and $\gamma,\gamma'_1,\dots,\gamma_\ell'\in \mathcal{P}_\alpha^L$, is empty if $i<0$ and consists of possibly multiple covers of trivial cylinders if $i=0$.
\end{lemma}

\begin{proof}[Proof of Lemma~\ref{lemma: no negative levels}]
A simply-covered $J$-holomorphic curve is regular by the genericity of $J$.  Hence there are no simply-covered curves of $\op{ind} \leq 0$ besides trivial cylinders.  Lemma~\ref{lemma: multiplicative} then implies that $\mathcal{M}^{i<0}$ is empty and $\mathcal{M}^0$ consists of possibly multiple covers of trivial cylinders.
\end{proof}

\begin{lemma} \label{lemma: differential transversality}
For $i=1,2$, the moduli space $\mathcal{M}^i:=\mathcal{M}^{\op{ind}=i,\op{cyl}}_{J}(\gamma,\gamma')$ is transversely cut out if $\gamma,\gamma'\in\mathcal{P}_\alpha^L$.  All the curves of $\mathcal{M}^1$ are simply-covered.
\end{lemma}

\begin{proof}[Proof of Lemma~\ref{lemma: differential transversality}]
When $i=1$, any $u \in \mathcal{M}^1$ is simply-covered by Lemma~\ref{lemma: multiplicative} and hence is transversely cut out.

When $i=2$, either $u\in\mathcal{M}^2$ is simply-covered or double covers a simple $u'$ with $\op{ind}(u')=1$ and no branch points. By Theorem~\ref{thm: immersion}, $u'$ is an immersion, so $u$ is also an immersion. By Theorem~\ref{thm: automatic transversality}, $u$ is regular.
\end{proof}

By Lemmas~\ref{lemma: no negative levels} and \ref{lemma: differential transversality} and the additivity of $\op{ind}$, $\mathcal{M}^{\op{ind}=1,\op{cyl}}_{J}(\gamma,\gamma')/\R$ is compact and transversely cut out. This implies the finiteness of $\#(\mathcal{M}^{\op{ind}=1,\op{cyl}}_{J}(\gamma,\gamma')/\R)$.

For $\bdry^2=0$, we consider $\op{ind}=2$ moduli space $\mathcal{M}:=\mathcal{M}^{\op{ind}=2,\op{cyl}}_{J}(\gamma,\gamma')$, which is transversely cut out by Lemma~\ref{lemma: differential transversality}. By Lemma~\ref{lemma: no negative levels}, every element $u^\infty$ of $\bdry \mathcal{M}$ is a two-level building $u_1\cup u_2$, where $u_2$ is above $u_1$, $u_i$, $i=1,2$, $\op{ind}(u_i)=1$, and $u_1, u_2$ are a priori not necessarily cylindrical.  By the monotonicity of $\alpha$, every holomorphic plane in $u^\infty$ must have $\op{ind}\geq 3$, which is a contradiction. This implies that all the irreducible components of $u^\infty$ are cylinders.

It remains to consider the case where $u_2$ is a curve from $\gamma$ to $\gamma''$ and $u_1$ is a curve from $\gamma''$ to $\gamma'$, where $\gamma''$ is a bad Reeb orbit.  By Lemma~\ref{lemma: differential transversality}, $u_1$ and $u_2$ are simply-covered.
Since $\gamma''$ is an even multiple cover of a negative hyperbolic orbit, the gluing occurs in canceling pairs.  (Recall from \cite{BM} that the orientation is reversed under a $\Z/2$-deck transformation if $\gamma''$ is an even multiple of a negative hyperbolic orbit.)
\cb
\end{proof}

\subsection{Chain maps}  \label{subsection: chain maps}

Let $(X^4,\alpha)$ be a compact, connected, exact symplectic cobordism, where $d\alpha$ is symplectic, $\bdry X=M_+-M_-$, and $\alpha_\pm= \alpha|_{M_\pm}$ is a contact form on $M_\pm$.  Let $(\widehat X,\widehat\alpha)$ be the completion of $(X,\alpha)$, obtained by attaching the symplectization ends $[0,\infty)\times M_+$ and $(-\infty,0]\times M_-$, and let $\overline{J}$ be an almost complex structure which tames $(\widehat X,\widehat\alpha)$ and which restricts to $\alpha_\pm$-tame almost complex structures $J_\pm$ at the positive and negative ends.
Also let $\mathcal{M}_{\overline{J}}(\boldsymbol\gamma_+,\boldsymbol\gamma_-)$ be the moduli space of $\overline{J}$-holomorphic maps in $\widehat X$ from $\boldsymbol\gamma_+$ to $\boldsymbol\gamma_-$ with markings, defined in a manner analogous to that of $(\R\times M,J)$.

\cb

\begin{thm} \label{thm: chain maps}
If $\overline{J}$ is generic, $(M_+,\alpha_+)$ and $(M_-,\alpha_-)$ are $L$-supersimple and $L$-monotone, and $(\alpha_+,J_+)$ and $(\alpha_-,J_-)$ are $L$-supersimple pairs, then $(X,\alpha,\overline{J})$ induces a chain map \cb
\begin{gather*}
\Phi_{(X,\alpha,\overline{J})}: CC^L(M_+,\alpha_+,J_+)\to CC^L(M_-,\alpha_-,J_-),\\
\gamma_+\mapsto \sum_{\gamma_-\in\mathcal{P}_{\alpha_-,\op{good}}^{L}}\#(\mathcal{M}^{\op{ind}=0,\op{cyl}}_{\overline{J}}(\gamma_+,\gamma_-)){1\over m(\gamma_-)}\cdot\gamma_-.
\end{gather*}
In particular, the moduli spaces $\mathcal{M}^{\op{ind}=0,\op{cyl}}_{\overline{J}}(\gamma_+,\gamma_-)$ are compact, transversely cut out, and oriented.  \cb
\end{thm}

\begin{proof}
The proof is given in three steps.

\s\n
{\em Step 1.}
We first start with the following analogs of Lemmas~\ref{lemma: no negative levels} and \ref{lemma: differential transversality}:

\begin{lemma}\label{lemma: no negative levels 2}
The moduli space $\mathcal{M}^{\op{ind}=i}_{\overline{J}}(\gamma_+,\bs\gamma_-)$, where $\bs\gamma_- = ( \gamma_{-,1},\dots,\gamma_{-,\ell})$, $\gamma_+\in \mathcal{P}^{L_+}_{\alpha_+}$, and $\gamma_{-,1},\dots,\gamma_{-,\ell}\in \mathcal{P}^{L_-}_{\alpha_-}$, is empty if $i<0$.
\end{lemma}

The proof of Lemma~\ref{lemma: no negative levels 2} is the same as that of Lemma~\ref{lemma: no negative levels}.

\begin{lemma}\label{lemma: chain maps transversality}
For $i=0,1$, the moduli space $\mathcal{M}^i:=\mathcal{M}^{\op{ind}=i,\op{cyl}}_{\overline{J}}(\gamma_+,\gamma_-)$ is transversely cut out for $\gamma_+\in \mathcal{P}^{L_+}_{\alpha_+}$ and $\gamma_-\in \mathcal{P}^{L_-}_{\alpha_-}$, away from curves $u\in \mathcal{M}^0$ that are even multiple covers of cylinders between (odd multiples of) negative hyperbolic orbits. All the curves of $\mathcal{M}^1$ are simply-covered.
\end{lemma}

\begin{proof}[Proof of Lemma~\ref{lemma: chain maps transversality}]
When $i=1$, any $u\in \mathcal{M}^1$ is simply-covered by Lemma~\ref{lemma: multiplicative} and hence regular.

Suppose $i=0$.  Since simple curves in $\mathcal{M}^0$ are regular, it remains to prove that $v\in \mathcal{M}^0$ which is an unbranched $m$-fold cover of a simple $u$ from $\gamma$ to $\gamma'$ is regular. By Theorem~\ref{thm: immersion}, we may assume that $u$ is immersed.  Let $\dot F_1$ and $\dot F_0$ be the domains of $v$ and $u$ and let $\pi: \dot F_1\to \dot F_0$ be the covering map such that $v= u\circ\pi$.

Let $N$ be a normal bundle of $v$ as described in Section~\ref{section: automatic transversality} and let $\tau$ be a trivialization of $N$ so that either $\mu_\tau(\gamma)=\mu_\tau(\gamma')=0$ or $\mu_\tau(\gamma)=\mu_\tau(\gamma')=1$.

Suppose that $\mu_\tau(\gamma)=\mu_\tau(\gamma')=1$. If $m$ is even, then $\gamma_+$ and $\gamma_-$ are even multiples of $\gamma$ and $\gamma'$ which are odd multiples of negative hyperbolic orbits. This contradicts our assumption. Hence $m$ is odd and $\#\Gamma_0(v)=0$. Theorem~\ref{thm: automatic transversality} then implies that $v$ is regular.

Next suppose that $\mu_\tau(\gamma)=\mu_\tau(\gamma')=0$.  By assumption $u$ is regular.  If $v$ is not regular, then $0\not=\zeta\in \op{ker} (D_v^N)^*$ satisfies Equations~\eqref{eqn: 2a1} and \eqref{eqn: 2a2}, which implies the asymptotic condition
\begin{equation} \label{eqn: equal to zero}
\op{wind}_\tau(\zeta,p_+)=\op{wind}_\tau(\zeta,p_-)=0,
\end{equation}
where $p_+$ (resp.\ $p_-$) is the puncture corresponding to the positive (resp.\ negative) end.  We now consider the ``trace map''
$$\pi_*: \op{ker} (D_v^N)^* \to \op{ker} (D_u^N)^*$$
which maps $\zeta$ to $\pi_*\zeta$, where $\pi_*\zeta(x)= \sum_{y\in \pi^{-1}(x)} \zeta(y)$. By Equation~\eqref{eqn: equal to zero}, $\pi_*\zeta\not=0$, which is a contradiction. This proves the regularity of $v$.
\end{proof}

Let $(u,{\bf r})\in \mathcal{M}^{\op{ind}=0,\op{cyl}}_{\overline{J}}(\gamma_+,\gamma_-)$.  If $u$ is not multiply-covered, then the contribution of all $(u,{\bf r})$ towards $\#(\mathcal{M}^{\op{ind}=0,\op{cyl}}_{\overline{J}}(\gamma_+,\gamma_-)){1\over m(\gamma_-)}$ as we range over ${\bf r}$ is $\pm m(\gamma_+)$.  On the other hand, if $u$ is an $a$-fold cover of a simple curve, the contribution is $\pm m(\gamma_+)/a$.

\s\n {\em Step 2:  $\Phi=\Phi_{(X,\alpha,\overline{J})}$ is defined, i.e., $\mathcal{M}^0$ is transversely cut out and compact, and hence finite.}  By Lemma~\ref{lemma: chain maps transversality}, $\mathcal{M}^0$ is transversely cut out. By the SFT compactness theorem~\cite{BEHWZ}, if $\mathcal{M}^0$ is not compact, then there is a sequence $u_1,u_2,\dots \in \mathcal{M}^0$ that limits to an SFT building $u^\infty$ of $\geq 2$ levels.  By Lemma~\ref{lemma: no negative levels} all the levels of $u^\infty$ that map to $\mathbb{R} \times M_\pm$ have $\op{ind}> 0$ and by Lemma~\ref{lemma: no negative levels 2} the level of $u^\infty$ that maps to the cobordism has $\op{ind}\geq 0$. Hence $u^\infty$ is a $1$-level building in $\mathcal{M}^0$ and we get a contradiction.

\s\n {\em Step 3: $\bdry\circ \Phi=\Phi\circ \bdry$.} First, $\mathcal{M}^1$ is regular by Lemma~\ref{lemma: chain maps transversality}.  We consider the boundary $\bdry \mathcal{M}^1$ of $\mathcal{M}^1$. If $u^\infty\in \bdry \mathcal{M}^1$, then
by Lemmas~\ref{lemma: no negative levels} and \ref{lemma: no negative levels 2} we have $u^\infty=v_0\cup v_1$ or $v_{-1}\cup v_0$, where $v_0$ is a cylinder that maps to $\widehat{X}$ and $v_i$, $i\not=0$,
is a cylinder that maps to $\R\times M_\pm$; the levels are arranged from bottom to top as $i$ increases; $\op{ind}(v_i)=1$, $i\not=0$, and $\op{ind}(v_0)=0$;
and by \cb Lemma~\ref{lemma: differential transversality} \cb $v_i$, $i\not=0$, is simple and $v_0$ may be multiply-covered. (By the monotonicity of $\alpha_\pm$, every holomorphic plane in $u^\infty$ must have $\op{ind}\geq 3$.  Since $\op{ind}>0$ for every nontrivial curve in $\R\times M_\pm$ and $\op{ind}\geq 0$ for every nontrivial curve in $\widehat{X}$, there can be no planes. This implies that each irreducible component of $u^\infty$ is a cylinder.)
By Lemmas~\ref{lemma: differential transversality} and \ref{lemma: chain maps transversality}, each component of $u^\infty$ is regular since $\gamma_+$ and $\gamma_-$ are good by assumption and one of the ends of $v_0$ cannot limit to a bad orbit.
%It remains to show that those $u^\infty$'s with connecting orbits being bad cancel in pairs.

Suppose that $u^\infty=v_0\cup v_1$; the considerations for $u_\infty=v_{-1}\cup v_0$ are analogous.
%Note that $v_1$ is simple by Lemma~\ref{lemma: multiplicative}.
% If $v_0$ is multiply-covered, then the positive end of $v_0$ cannot be a bad orbit, since otherwise the negative end $\gamma_-$ of $v_0$ must also be a bad orbit.  Hence $v_0$ is regular by Step 1.
%We conclude that all the elements of $\bdry\mathcal{M}^1$ are two-level buildings, each of whose levels is regular.
Suppose that $v_0$ limits to a bad orbit at its positive end.  Then, as above, $v_0$ is not an even multiple cover of a cylinder between (odd multiples of) negative hyperbolic orbits since otherwise $\gamma_-$ is bad.  By \cite{BM} this is sufficient to ensure that there is an even number of ways to glue $v_0$ and $v_1$ and they result in canceling pairs. %But this is evident, as otherwise $\gamma_-$ is bad contradicting to the assumption.

The orientation considerations are similar to those of Theorem~\ref{thm: differential}.
\end{proof}
\cb

\subsection{Definition of $HC(\mathcal{D})$} \label{subsection: def}

We are now in a position to define the {\em cylindrical contact homology group $HC(\mathcal{D})$.}
Given $(M,\xi)$, let
$$\mathcal{D}=(\alpha,\{L_i\},\{\vp_i\},\{J_i\},\{\overline{J}_i\} )$$
be a tuple consisting of a nondegenerate $\alpha$ for $(M,\xi)$ with no contractible Reeb orbits, sequences $\{L_i\}$ and $\{\vp_i\}_{i=1}^{\infty}$ given at the end of Section~\ref{section: elimination of elliptic orbits}, a sequence $\{(\vp_i\alpha,J_i)\}$ of $L_i$-supersimple pairs, and a sequence $\{\overline{J}_i\}$ of generic almost complex structures that are tamed by exact symplectic cobordisms on $\R\times M$ from $\vp_i\alpha$ to $\vp_{i+1}\alpha$ and that agree with $J_i$ and $J_{i+1}$ at the positive and negative ends.

The symplectic cobordisms together with $\overline{J}_i$ induce chain maps
$$\Phi_i: CC^{L_i}(\vp_i\alpha,J_i)\to CC^{L_{i+1}}(\vp_{i+1}\alpha,J_{i+1}),$$
and we define $HC(\mathcal{D})$ as the direct limit of the induced maps $(\Phi_i)_*$ on homology.

\section{The evaluation map} \label{section: evaluation map}

The goal of this section is to introduce and discuss the properties of the evaluation map. \cb We consider an $L$-supersimple pair $(\alpha, J)$ as defined in Section~\ref{subsection: L-supersimple pairs}. \cb  Suppose that $\gamma$ is a positive hyperbolic orbit of action $<L$; the situation of $\gamma$ negative hyperbolic is similar.

Let $\gamma$ be an $m(\gamma)$-fold cover of a simple orbit $\gamma_0$, let $\gamma_0\times D^2_{\delta_0/3}$ be a neighborhood of $\gamma_0$, where $\delta_0$ is as in Section~\ref{subsection: L-supersimple pairs}, and let $(\R/\Z)\times D^2_{\delta_0/3}\to \gamma_0\times D^2_{\delta_0/3}$ be its $m(\gamma)$-fold cover with coordinates $(t,z=x+iy)$ such that $\{z=0\}$ corresponds to $\gamma$.  Also let $\R\times (\R/\Z)\times D^2_{\delta_0/3}$ be the cylinder over $(\R/\Z)\times D^2$ with coordinates $(s,t,z=x+iy)$.

Let
\begin{equation} \label{eqn: ordering eigenvalues}
\dots \leq \lambda_{-2}\leq \lambda_{-1} < 0<\lambda_1\leq \lambda_2\leq \dots
\end{equation}
be the eigenvalues of $A=-j_0{\bdry\over \bdry t}-S$ with $S= \begin{pmatrix}  0 & \varepsilon \\ \varepsilon & 0 \end{pmatrix}$ and let $f_i(t)$ be an eigenfunction corresponding to $\lambda_i$ with $L^2$-norm $1$ so that $\{f_i(t)\}_{i\not=0}$ is an orthonormal basis for $L^2(S^1,\R^2)$.

%Let us use coordinates $(s,t,x,y)$ on the neighborhood $\R\times \R/\Z\times D^2_{\delta_0/3}$ of $\R\times \gamma$, on which $J$ satisfies (J1) and (J2).

\begin{fact}
If $J\in \mathcal{J}_\star^{<L,\tiny\mbox{reg}}$ and
$$u:(-\infty,R]\times S^1\to \R\times M$$
is a $J$-holomorphic half-cylinder which is negatively asymptotic to $\gamma$ for some $R$, then there exists $R'\ll 0$ such that $u(s,t)$, after reparametrization of the domain and lifting to the $m(\gamma)$-fold cover $\R\times (\R/\Z)\times D^2_{\delta_0/3}$, can be written on $\{s\leq R'\}$ as:
\begin{equation} \label{eqn: Fourier}
\widetilde u(s,t)=\left(s,t,\sum_{i=1}^\infty c_i e^{\lambda_i s} f_i(t)\right)\in \R\times (\R/\Z)\times D^2_{\delta_0/3}.
\end{equation}

\end{fact}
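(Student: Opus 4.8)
The plan is to exploit the special structure of an $L$-supersimple pair $(\vp\alpha, J)$, namely the normalizations (J1)--(J2) near hyperbolic orbits, which make $\overline\bdry_J$ \emph{linear} for curves close to and graphical over $\R\times\gamma$. First I would recall Siefring's asymptotic analysis \cite{Si}: any finite-energy $J$-holomorphic half-cylinder $u:(-\infty,R]\times S^1\to \R\times M$ negatively asymptotic to a nondegenerate orbit $\gamma$ decays exponentially, so after choosing $R'\ll 0$ the image of $u|_{\{s\le R'\}}$ lies inside the tubular neighborhood $\R\times(\R/\Z)\times D^2_{\delta_0/3}$ and, after lifting to the $m(\gamma)$-fold cover, can be written as a graph $\widetilde u(s,t) = (s, t, \eta(s,t))$ over $\R\times\gamma$ with $\eta(s,t)\in D^2_{\delta_0/3}$ and $|\eta(s,t)|\to 0$ exponentially as $s\to-\infty$ (the reparametrization of the domain is needed to make the $s$- and $t$-coordinates of $\widetilde u$ equal to the ambient $s,t$; this uses that $J(\bdry_s) = HR_{\vp\alpha}$ and the explicit form of $H$ in the neighborhood). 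This reduces the statement to an ODE/PDE claim about $\eta$.

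Next I would write out the $\overline\bdry_J$-equation in these coordinates. Because of (J2) --- on $(\R/\Z)\times D^2_{\delta_0/3}$ we have $J:\bdry_s\mapsto\bdry_t + X_H$ and $\bdry_x\mapsto\bdry_y$, with $H = c(\gamma)-\varepsilon xy$ so $X_H = -\varepsilon(x\bdry_x - y\bdry_y)$ --- the equation for a graph over $\R\times\gamma$ is precisely the \emph{linear} equation
\[
\frac{\bdry\eta}{\bdry s} + j_0\frac{\bdry\eta}{\bdry t} + S(t)\,\eta = 0,
\]
with $S = \begin{pmatrix} 0 & \varepsilon \\ \varepsilon & 0\end{pmatrix}$ the symmetric matrix from Section~\ref{subsubsection: positive hyperbolic case}; equivalently $\bdry_s\eta = A\eta$ with $A = -j_0\bdry_t - S(t)$ the asymptotic operator of $\gamma$. (For $\gamma$ negative hyperbolic one uses the twisted neighborhood model $([0,1]\times D^2_{\delta_0})/\!\sim$ and the corresponding $S(t)$ from Section~\ref{subsubsection: negative hyperbolic case}; the argument is identical.) The point emphasized in the introduction is exactly that supersimplicity forces this linearity: there are no nonlinear correction terms, so no Siefring-type higher-order analysis is needed here.

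Finally I would solve the linear equation by separation of variables. Since $\{f_i(t)\}_{i\ne 0}$ is an orthonormal basis of $L^2(S^1,\R^2)$ of eigenfunctions of $A$ with eigenvalues $\lambda_i$ ordered as in \eqref{eqn: ordering eigenvalues}, expanding $\eta(s,t) = \sum_{i\ne 0} c_i(s) f_i(t)$ gives decoupled ODEs $c_i'(s) = \lambda_i c_i(s)$, hence $c_i(s) = c_i e^{\lambda_i s}$. Convergence of the Fourier series (in $C^0_{\mathrm{loc}}$, indeed exponentially fast) on $\{s\le R'\}$ follows from elliptic regularity for the operator $\bdry_s - A$ together with the exponential decay of $\eta$: the decay of $\eta$ in $W^{1,2}(S^1)$ at rate governed by $\lambda_{-1}<0$ forces $c_i = 0$ for all $i<0$, leaving $\widetilde u(s,t) = (s,t,\sum_{i\ge 1} c_i e^{\lambda_i s} f_i(t))$, which is \eqref{eqn: Fourier}. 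The main obstacle --- really the only substantive point --- is justifying that the graphical normalization can be arranged so that the equation is \emph{exactly} linear (not just linear to leading order): this requires checking that the domain reparametrization straightening the $(s,t)$-coordinates is compatible with $J$-holomorphicity, which is where (J1)--(J2) and the precise form of $H$ and $\beta$ in Theorem~\ref{thm: elimination}(3)--(4) are used; once linearity is in hand the rest is the standard Fourier/eigenfunction computation already carried out in Section~\ref{subsection: calculation of asymptotic eigenvalues}.
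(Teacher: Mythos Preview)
Your proposal is correct and follows precisely the reasoning the paper itself sketches (in the introduction) but does not spell out: the paper states this as a Fact without proof, relying on the remark that for an $L$-supersimple pair the $\overline\bdry_J$-equation is exactly linear for graphs over $\R\times\gamma$, so that the Fourier coefficients $c_i(s)$ are constant. Your write-up supplies the standard details --- exponential decay to get into the graphical regime, the explicit linear equation $\bdry_s\eta = A\eta$ from (J2), separation of variables, and elimination of the $i<0$ modes by decay --- which is exactly what the paper has in mind.
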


By abuse of notation we will usually not distinguish between $u$ and $\widetilde u$.
We refer to Equation~\eqref{eqn: Fourier} as the ``Fourier series'' for $u$. The real constants $c_i$ will be referred to as the ``Fourier coefficients''.  We also define the {\em order $k$ Fourier polynomial} $P_k(u)$ of $u$ as:
$$P_k(u)= \sum_{i=1}^k c_i e^{\lambda_i s} f_i(t).$$

Let $J\in \mathcal{J}_\star^{<L,\tiny\mbox{reg}}$ and let $\mathcal{A}_{\alpha_+}(\gamma_+),\mathcal{A}_{\alpha_+}(\gamma'_+) < L$.  We define the {\em order $k$ evaluation map at the negative end}
$$ev^k_-(\gamma_+,\gamma_+',J):\mathcal{M}^{\op{cyl}}_{J}(\gamma_+,\gamma_+')\to \R^k$$
$$ (u,r_+,r_-)\mapsto (c_1,\dots,c_k),$$
where $u$ agrees with a $J$-holomorphic half-cylinder $(-\infty,R]\times S^1\to \R\times M$ which is negatively asymptotic to $\gamma_+'$ for some $R$ and has Fourier coefficients $c_1,\dots,c_k$. Here we parametrize $\R/\Z$ such that the asymptotic marker $r_-$ at the negative end corresponds to $t=0$.

\begin{fact}
The map $ev^k_-(\gamma_+,\gamma_+',J)$ is smooth.
\end{fact}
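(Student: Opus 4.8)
The plan is to establish smoothness of $ev^k_-$ by exhibiting it as a composition of smooth maps, with the key analytic input being the smooth dependence of the asymptotic data (the Fourier coefficients $c_1,\dots,c_k$) on the holomorphic curve. First I would recall that $\mathcal{M}^{\op{cyl}}_{J}(\gamma_+,\gamma_+')$ is a smooth manifold near any $(u,r_+,r_-)$ we care about (by the transversality already established in this section, or more precisely one works on the universal moduli space / uses a Banach manifold chart), and that near such a point the curves in the moduli space are parametrized, in the exponential chart $\zeta \mapsto \exp_u \zeta$ with $\zeta$ a section of the normal bundle, by $\op{ker} D_u^N$ together with the finite-dimensional reparametrization data. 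So it suffices to show that, for a family $u_\varrho = \exp_u \zeta_\varrho$ depending smoothly on a parameter $\varrho$ in the Banach space $W^{k+1,p}_\delta$ (with some small negative exponential weight $\delta$ at the negative puncture so that genuinely asymptotic behavior is seen), the resulting Fourier coefficients $c_i(\varrho)$, $i = 1,\dots,k$, depend smoothly on $\varrho$.

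The main step is to extract $c_i$ as a continuous \emph{linear} functional of the curve in an appropriate function space, after peeling off the lower-order exponentials. Writing the negative half-cylinder as a graph $\eta(s,t)$ over $\R \times \gamma_+'$ valued in $D^2_{\delta_0/3}$, the key reason for using $L$-supersimple pairs (emphasized in the outline of proof) is that $\overline\bdry_J$ is genuinely \emph{linear} on such graphical curves near $\gamma_+'$: $\eta$ solves $\partial_s \eta + j_0 \partial_t \eta + S(t)\eta = 0$, equivalently $\partial_s \eta = A\eta$ in the notation of this section. Hence the honest Fourier expansion $\eta(s,t) = \sum_{i\geq 1} c_i e^{\lambda_i s} f_i(t)$ holds exactly for $s \leq R'$, not merely asymptotically — this is the content of the Fact immediately preceding the statement, including the assertion $c_i(s) = c_i$ for all $s \ll 0$ made in the outline. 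Fixing any $s_0 \leq R'$, the coefficient $c_i$ is then literally
$$
c_i = e^{-\lambda_i s_0}\int_{\R/\Z} \langle \eta(s_0, t), f_i(t)\rangle \, dt,
$$
a bounded linear functional of $\eta|_{\{s = s_0\}} \in L^2(\R/\Z;\R^2)$, hence of $\eta \in W^{k+1,p}$ on a neighborhood of $\{s = s_0\}$ via the trace/restriction map. Composed with the smooth (indeed real-analytic in $\varrho$, since $\exp$ and the projection to a graph are smooth) dependence $\varrho \mapsto \eta_\varrho$, and with the smooth choice of chart on the moduli space, this shows each $c_i$, and therefore $ev^k_-(\gamma_+,\gamma_+',J) = (c_1,\dots,c_k)$, is smooth.

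The one point requiring care — and the step I expect to be the main obstacle — is the interface between the \emph{reparametrization} freedom in the definition of the moduli space and the normalization used to read off $c_i$. A curve $u$ near the boundary of its domain is only defined up to biholomorphism of $(-\infty,R]\times S^1$ preserving the asymptotic marker $r_-$; the marker condition pins down the $t=0$ line, and one must check that the remaining freedom (the $s$-translation, which is already quotiented in $\mathcal{M}/\R$ but \emph{not} in $\mathcal{M}^{\op{cyl}}_J$ itself, plus rotations of $S^1$) acts on the $c_i$ by a smooth — in fact explicit — formula: $s$-translation by $a$ multiplies $c_i$ by $e^{\lambda_i a}$, and the marker removes the rotational ambiguity. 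So one fixes a slice for this action (e.g. via the marker $r_-$ and a normalization of the domain coordinate), verifies the transition functions between such slices are smooth, and then the local formula above glues to a globally well-defined smooth map. Equivalently, one can first prove $ev^k_-$ is smooth as a map on the space of \emph{parametrized} curves with markers and then descend. With the linearity of $\overline\bdry_J$ near the orbit in hand, no Siefring-type asymptotic analysis is needed and everything reduces to the trace theorem plus the implicit function theorem on the moduli space, so the argument is routine modulo bookkeeping.
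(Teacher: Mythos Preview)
The paper does not actually give a proof of this statement: it is asserted as a \emph{Fact} without argument. Your proposal is a correct and standard way to fill in the details. The essential point---which you identify---is that the $L$-supersimple hypothesis makes the $\overline\bdry_J$-equation \emph{linear} for curves graphical over $\R\times\gamma_+'$ inside $\R\times(\R/\Z)\times D^2_{\delta_0/3}$, so that the coefficients $c_i$ are obtained by the exact formula
\[
c_i \;=\; e^{-\lambda_i s_0}\int_{\R/\Z}\langle \eta(s_0,t),f_i(t)\rangle\,dt
\]
for any fixed $s_0\leq R'$, rather than as a limit requiring Siefring-type asymptotic control. This expresses $c_i$ as a bounded linear functional of the restriction $\eta(s_0,\cdot)$, and hence as a smooth function of the curve once one has a smooth local chart on $\mathcal{M}^{\op{cyl}}_J(\gamma_+,\gamma_+')$ and checks (as you do) that $R'$ may be chosen uniformly on a neighborhood in the moduli space.

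Your discussion of the reparametrization ambiguity is also correct and is the only genuine bookkeeping: the asymptotic marker $r_-$ fixes the $t$-origin, so the identification of the negative end with $(-\infty,R']\times S^1$ is canonical up to $s$-translation, and the effect of the latter on the $c_i$ is the smooth action $c_i\mapsto e^{\lambda_i a}c_i$ already recorded in the paper. So nothing is missing; the paper simply took this as understood.
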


The moduli space $\mathcal{M}^{\op{cyl}}_{J}(\gamma_+,\gamma_+')$ admits the usual $\R$-translation which corresponds to the $\R^+$-action on $\R^k$ given by:
$$(c_1,\dots,c_k)\mapsto (c_1 e^{\lambda_1 s},\dots, c_k e^{\lambda_k s}).$$
{\em Provided there is no $u$ with $(c_1,\dots,c_k)=0$,}  $ev^k_-(\gamma_+,\gamma_+',J)$ descends to the quotient
\begin{equation}
\widetilde{ev}^k_-(\gamma_+,\gamma_+',J):\mathcal{M}^{\op{cyl}}_{J}(\gamma_+,\gamma_+')/\R \to (\R^k-\{0\})/\R^+\simeq S^{k-1}.
\end{equation}

\begin{rmk} \label{identification}
We will make more precise the identification $(\R^k-\{0\})/\R^+\simeq S^{k-1}$: Each path
$$\mathfrak{c}: \R\to \R^k, \quad s\mapsto (c_1 e^{\lambda_1 s},\dots, c_k e^{\lambda_k s})$$
\cb with $(c_1,\dots,c_k)\not =0$ \cb is transverse to the spheres $S^{k-1}_r=\{ |x|\in \R^k ~|~ |x|=r\}$ since $\langle \mathfrak{c}'(s), \mathfrak{c}(s)\rangle >0$ for all $s\in \R$. We will take the representative of $\mathfrak{c}$ to be its intersection with $S^{k-1}_r$, where $r$ is taken to be $1$ unless specified otherwise.
\end{rmk}

The maps
$$ev^k_+(\gamma_-',\gamma_-,J):\mathcal{M}^{\op{cyl}}_{J}(\gamma_-',\gamma_-)\to \R^k,$$
$$\widetilde{ev}^k_+(\gamma_-',\gamma_-,J): \mathcal{M}^{\op{cyl}}_{J}(\gamma_-',\gamma_-)/\R\to S^{k-1}$$
are defined similarly.

The main results of this subsection concern the transversality properties of the order $k$ evaluation map, which generalizes \cite[Proposition~3.2]{HT2} in a special case.
%Recall the superscript $\delta_0$ from Section~\ref{subsection: moduli spaces}.
Let
$$\pi_\R: \mathcal{M}(\gamma_+,\gamma_+')\to \mathcal{M}(\gamma_+,\gamma_+')/\R$$
be the quotient map by translations in the $s$-direction.

\begin{thm}\label{thm: transversality of evaluation map}
Given $J\in \mathcal{J}^{<L,\tiny\mbox{reg},\delta_0}_\star$, a compact domain \cb $K\subset \mathcal{M}_{J}^{\op{ind}=\ell,\op{cyl},s}(\gamma_+,\gamma_+')/\R$, and a submanifold $\widetilde Z\subset S^{k-1}$, \cb there exist an arbitrarily close $J'\in \mathcal{J}^{<L,\tiny\mbox{reg},\delta_0'}_\star$ with $\delta'_0<\delta_0$ and a compact domain $K'\subset \mathcal{M}_{J'}^{\op{ind}=\ell,\op{cyl},s}(\gamma_+,\gamma_+')/\R$ such that:
\begin{enumerate}
\item $\pi^{-1}_\R(K')$ contains all the elements of $\mathcal{M}_{J'}^{\op{ind}=\ell,\op{cyl},s}(\gamma_+,\gamma_+')$ that are sufficiently close to $\pi^{-1}_\R(K)$; and
\item \cb if $k\geq \ell$, then the restriction of the evaluation map $ev^k_-(\gamma_+,\gamma_+',J')$ to $\pi^{-1}_\R(K')$ descends to $\widetilde{ev}^k_-(\gamma_+,\gamma_+',J')$, which is transverse to $\widetilde Z$. \cb
\end{enumerate}
\end{thm}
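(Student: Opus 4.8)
The plan is to view $\widetilde{ev}^k_-$ as the composition of $\pi_\R$-descent with the "leading Fourier coefficients" read-off, and to achieve transversality to $Z$ by a Sard--Smale argument inside the class $\mathcal{J}^{<L,\mathrm{reg},\delta_0}_\star$, following the strategy of \cite[Proposition 2.2]{HT2} but tracking $k$ coefficients rather than one. First I would set up the universal moduli space: fix the compact domain $K$ and consider the bundle over a Banach manifold of almost complex structures $\mathcal{J}^{<L,\mathrm{reg}}_\star$ (perturbing $J$ only on the region where $\xi'=TD^2_{\delta_0}$, so that supersimplicity is preserved and $\overline\bdry_J$ remains linear on the ends) whose fiber is the $\overline\bdry_J$-section; the zero set $\widetilde{\mathcal{M}}$ of this universal section is, by the usual Floer--Hofer--Salamon argument applied to \emph{simple} curves, a Banach manifold, and the projection to $\mathcal{J}$ is Fredholm. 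The key point is that the universal evaluation map $\widetilde{ev}^k_-:\widetilde{\mathcal{M}}\to \R^k$ is a \emph{submersion} over the locus where the Fourier vector $(c_1,\dots,c_k)$ is nonzero: a variation of $J$ supported near $\R\times\gamma'_+$ in $D^2_{\delta_0/3}$ produces an inhomogeneous term that, paired against the asymptotic eigenfunctions $f_1(t),\dots,f_k(t)$, moves the first $k$ Fourier coefficients arbitrarily — this is exactly where the explicit eigenfunction computations of Section~\ref{subsection: calculation of asymptotic eigenvalues} and the automatic-transversality machinery of Lemma~\ref{lemma: better basis for coker} enter, since they guarantee that the cokernel of $D_u$ does not obstruct prescribing these coefficients. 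Then $Z$ being a submanifold of $S^{k-1}$, $(\widetilde{ev}^k_-)^{-1}(\R^+\cdot Z)$ is a Banach submanifold of $\widetilde{\mathcal{M}}$, and the Sard--Smale theorem yields a residual set of $J'$ near $J$ for which $ev^k_-(\gamma_+,\gamma_+',J')$ restricted to the $J'$-moduli space is transverse to $\R^+\cdot Z$; dividing by the $\R$-action (which is free and for which $\R^+\cdot Z$ is invariant) gives transversality of $\widetilde{ev}^k_-$ to $Z$.

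The compact-domain bookkeeping in (1) is handled as in \cite{HT2}: since $K$ is compact and SFT compactness (Theorem~\ref{thm: SFT compactness fo modified J}) controls $\mathcal{M}^{\mathrm{cyl},s}_{J}(\gamma_+,\gamma_+')/\R$, I would take $K'$ to be a slightly larger compact domain whose $\pi_\R$-preimage swallows a neighborhood of $\pi_\R^{-1}(K)$ in the $J'$-moduli space for $J'$ close enough to $J$; this uses that the moduli spaces vary continuously (indeed Gromov-compactly) under small perturbations of $J$, together with the fact that elements near $K$ cannot run off to a broken configuration within a bounded region of moduli space. The decrease $\delta_0'<\delta_0$ is a technical device allowing the perturbation of $J$ to be supported strictly inside the region governed by (J1)--(J2) while keeping $(\vp\alpha,J')$ an $L$-supersimple pair; one just shrinks the radius so that the new $\xi'$ still equals $TD^2$ on $D^2_{\delta_0'/3}$ and the linearity of $\overline\bdry_{J'}$ on curves graphical over $\R\times\gamma'_+$ is retained, so the Fourier expansion of the Fact still holds and $ev^k_-$ is still defined.

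The main obstacle I expect is proving the submersivity of the universal evaluation map onto the first $k$ Fourier coefficients simultaneously — i.e., that perturbations of $J$ localized near the negative puncture genuinely surject onto $\R^k$ after projecting out the cokernel of $D_u$. For $k=1$ this is \cite[Proposition 2.2]{HT2}; the higher-order statement requires knowing that the eigenfunctions $f_1,\dots,f_k$ remain "linearly detectable" by admissible perturbations, which is where the hyperbolicity of $\gamma'_+$ (hence the explicit, non-degenerate eigenvalue picture of Sections~\ref{subsubsection: positive hyperbolic case}--\ref{subsubsection: negative hyperbolic case}) and the good bases for $\ker(D_{v_0}^N)^*$ from Lemmas~\ref{lemma: better basis for coker} and \ref{lemma: better basis for coker 2} are essential. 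One must be careful that the unique-continuation/positivity-of-intersections input used to show cokernel elements of $D_u$ have controlled winding (Claim~\ref{claim2a}) forces the pairing of a generic localized perturbation with $f_1,\dots,f_k$ to be unobstructed; assembling this into a clean surjectivity statement, uniformly over the compact domain $K'$, is the technical heart of the proof.
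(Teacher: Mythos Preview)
Your Sard--Smale framework is right, and the location of the perturbation (the annular shell between $D^2_{\delta'_0/3}$ and $D^2_{\delta_0/3}$) is essentially correct. But the mechanism you propose for universal submersivity is misidentified. You repeatedly invoke the cokernel of $D_u$ and the bases of $\ker(D_{v_0}^N)^*$ from Lemmas~\ref{lemma: better basis for coker} and~\ref{lemma: better basis for coker 2}, and Claim~\ref{claim2a}, as the obstruction to moving the Fourier coefficients. This is a confusion: the curves $u$ here are \emph{simple} and $J\in\mathcal{J}^{<L,\mathrm{reg}}_\star$, so $D_u$ is surjective and there is no cokernel to worry about. Lemmas~\ref{lemma: better basis for coker}--\ref{lemma: better basis for coker 2} concern $D_{v_0}$, the operator for the multiply-covered curve in the cobordism; they play no role in this theorem.

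What actually proves submersivity is a direct computation exploiting the linearity of $\overline\bdry_J$ near $\R\times\gamma'_+$. For each $i=1,\dots,k$ one writes down an explicit $s$-invariant perturbation $Y$ of $J$ supported on an annulus $\{R'\leq s\leq R\}$ in the image of $u$ such that the linearized equation for $(\xi,Y)$ reduces to the scalar ODE $\rho'(s)-\lambda_i\rho(s)=\mu(s)$ along the $f_i$-mode; the solution $\rho(s)=e^{\lambda_i s}\int_s^R e^{-\lambda_i\sigma}\mu(\sigma)\,d\sigma$ vanishes for $s\geq R$ and equals $ce^{\lambda_i s}$ for $s\leq R'$ with $c\neq 0$, so $\xi$ shifts exactly the $i$-th Fourier coefficient. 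Two genuine issues you do not address are: (i) extending $Y$ from the image of $u|_{R'\leq s\leq R}$ to an $s$-invariant almost complex structure on $\R\times M$ requires $u|_{s=s_0}$ to be an embedding for $s_0\ll 0$, which needs a separate argument since $\gamma'_+$ may be a multiple cover (this is where the eigenfunction formulas of Section~\ref{subsubsection: positive hyperbolic case}--\ref{subsubsection: negative hyperbolic case} actually enter); and (ii) when $u$ nontrivially intersects $\R\times\gamma'_+$, the $s$-invariant $Y$ hits $u$ again away from the negative end, producing a second forcing term whose effect on the Fourier coefficients must be shown to be negligible via a decay estimate. Neither of these is handled by cokernel bookkeeping.
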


\begin{rmk}
The proof is modeled on but is substantially easier than that of \cb \cite[Proposition~3.2]{HT2}. \cb  This is due to the fact that the $J$-holomorphic equation is linear near each $\R\times \gamma$.  This allows us to dispense with the quadratic estimates.
\end{rmk}

\begin{proof}
\cb
Let $\mathcal{J}_{J}$ be the subset of $\mathcal{J}_\star^{<L,reg,\delta_0}$ consisting of $J'$ that are $C^1$-close to $J$. Also let $\pi:\widetilde{\mathcal{M}}\to \mathcal{J}_{J}$ be the bundle with fiber $\pi^{-1}(J')=\mathcal{M}^{\op{cyl}}_{J'}(\gamma_+,\gamma_+')/\R$, $\widetilde \pi: \mathcal M \to \widetilde{\mathcal{M}}$ be the bundle with fiber $\widetilde \pi ^{-1} ([u], J') = \pi_{\R, J'}^{-1}([u])$, where
$$\pi_{\R, J'}: \mathcal{M}^{\op{cyl}}_{J'}(\gamma_+,\gamma_+')\to \mathcal{M}^{\op{cyl}}_{J'}(\gamma_+,\gamma_+')/\R$$
is the quotient map by translations in the $s$-direction, and let $\mathcal{M}^s\subset \mathcal{M}$ and $\widetilde{\mathcal{M}}^s\subset \widetilde{\mathcal{M}}$ be the subsets of simple curves. Define the map
$$Ev^k_-(\gamma_+,\gamma_+'): \mathcal M^s \to \R^k,$$
$$(u,J')\mapsto ev^k_-(\gamma_+,\gamma_+',J')(u).$$
Choose a smooth section $\mathfrak s$ of $\widetilde \pi: \mathcal M \to \widetilde{\mathcal{M}}$ and denote $\mathcal N^s := \mathfrak s(\widetilde{\mathcal{M}}^s)$.  

We will show that the restriction $Ev^k_-(\gamma_+,\gamma_+')|_{\mathcal N^s}$ is regular at all points over $K$; hence $Ev^k_-(\gamma_+,\gamma_+')|_{\mathcal N^s}$ is transverse to any submanifold $Z\subset \R^k$.  Theorem~\ref{thm: transversality of evaluation map} then follows from Sard's theorem: To show that $ev^k_-(\gamma_+,\gamma_+',J')$ avoids the origin ${\bf 0}\in \R^k$, let $Z = \{{\bf 0}\}$. Then $Ev^k_-(\gamma_+,\gamma_+')|_{\mathcal N^s}^{-1}({\bf 0})$ has codimension $k$. Since each fiber of $\pi\circ \widetilde \pi$ has dimension $\ell-1\leq k-1$, for a generic $J' \in \mathcal J_J$,
$$(\pi\circ \widetilde \pi)^{-1}(J') \cap Ev^k_-(\gamma_+,\gamma_+')|_{\mathcal N^s}^{-1}({\bf 0}) = \emptyset,$$
and $ev^k_-(\gamma_+,\gamma_+',J')$ descends to $\widetilde{ev}^k_-(\gamma_+,\gamma_+',J')$.
To show that $\widetilde{ev}^k_-(\gamma_+,\gamma_+',J')$ is transverse to $\widetilde Z$, we choose $ Z$ to be the preimage of $\widetilde Z$ under the projection $\R^k \setminus \{{\bf 0}\} \to S^{k-1}$.
\cb

Let $u\in K$ and $(\xi,Y)\in T_{(u,J)}\mathcal{M}^s$. With respect to the usual coordinates $(t,x,y)$ on $(\R/\Z)\times D^2_{\delta_0}$, the almost complex structure $J$ maps ${\bdry\over \bdry s}\mapsto {\bdry\over \bdry t}+X_H$ and ${\bdry\over \bdry x}\mapsto {\bdry\over \bdry y}$ on $D^2_{\delta_0/3}$.  We assume that $Y$ corresponds to the path $J+\tau Y$, $\tau\in[-\varepsilon,\varepsilon]$, which maps ${\bdry\over \bdry s}\mapsto {\bdry\over \bdry t}+X_H$ and leaves $TD^2_{\delta_0/3}$ invariant.  We write $j_0+\tau Y_0$ for the restriction of $J+\tau Y$ to $TD^2_{\delta_0/3}$.

If $\overline\bdry_{J+\tau Y}(u+\tau\xi)=0$ for $\tau\in[-\varepsilon,\varepsilon]$, then we claim that $(\xi,Y)$ satisfies
\begin{equation}\label{eqn: perturbation}
{\bdry \xi\over \bdry s} +j_0 {\bdry \xi\over \bdry t} -j_0\left(X_H(\xi)+Y_0(s,t,u(s,t))\left({\bdry u\over \bdry s}\right)\right)=0.
\end{equation}
Since we are only concerned with the negative end of $u$, we assume without loss of generality that $u$ is graphical over $(-\infty,0]\times (\R/\Z)$, has image in $(-\infty,0]\times (\R/\Z)\times D^2_{\delta_0/3}$, and is written as $(s,t,u(s,t))$.  Similarly we write $\xi$ as $(s,t,\xi(s,t))$.
The almost complex structure $J+\tau Y$ maps
$$\left(1,0,{\bdry (u+\tau \xi)\over \bdry s}\right)\mapsto \left(0,1, j_0{\bdry (u+\tau \xi) \over \bdry s}+ X_H(u+\tau\xi) +\tau Y_0 \left({\bdry (u+\tau \xi)\over \bdry s}\right)\right),$$
which must equal $(0,1,{\bdry (u+\tau \xi)\over \bdry t})$. Equating the third coordinates, using $j_0{\bdry u\over \bdry s}+X_H(u)= {\bdry u\over \bdry t}$, and differentiating with respect to $\tau$, we obtain Equation~\eqref{eqn: perturbation}.

\s\n
{\bf Case 1.}
First suppose that $u$ does not intersect $\R\times \gamma'_+$.

We solve for $(\xi,Y)\in T_{(u,J)}\mathcal{M}^s$ in Equation~\eqref{eqn: perturbation}, i.e., on the negative end, where
\begin{equation} \label{kiku}
j_0  Y_0 (s,t,u(s,t))\left({\bdry u\over \bdry s}\right)= \mu(s) f_i(t)
\end{equation}
and $\xi(s,t)=\rho(s) f_i(t)$ for $i\geq 1$, $\mu$ and $\rho$ are smooth in $s$, $\mu(s)\geq 0$ is a function with support on $[R',R]$ for some $R',R\in\R$ and total integral $1$, and $\rho(s)=0$ for $s>R$.

Equation~\eqref{eqn: perturbation} then becomes
$$\rho'(s) f_i(t) - \rho(s) \lambda_i f_i(t) -\mu(s) f_i(t)=0,$$
$$\rho'(s) -\lambda_i\rho(s)=\mu(s).$$
We then pick the solution
\begin{equation} \label{heatwave}
\rho(s)= e^{\lambda_i s} \int_s^R e^{-\lambda_i \sigma}\mu(\sigma)d\sigma.
\end{equation}
Then $\xi(s,t)=\rho(s) f_i(t)$ is equal to $0$ on $s=R$ and can be written as
\begin{equation} \label{heatwave2}
\xi(s,t)= c e^{\lambda_i s} f_i(t),\quad c\geq e^{-\lambda_i R}/\lambda_i
\end{equation}
on $s\leq R'$. (Note that $R$ is a large negative number.)

We then consider the extension of $(\xi,Y)$. We extend $\xi$ to all of the domain of $u$ by setting $\xi=0$. Let $u_T$ be the curve obtained from $u$ by translating in the $s$-direction by $T$ units. By the following claim, for sufficiently negative $R'$ and $R$, $Y$ can be extended from the annulus $\op{Im} u|_{R'\leq s\leq R}$ to all of $\R\times M$ so that $Y$ is $s$-invariant and has support on
$$\coprod_{T\in \R} \op{Im} u_T|_{R'\leq s\leq R}\subset \R\times (\R/\Z)\times (D^2_{\delta_0/3}-D^2_{\delta_0'/3})$$
for some $0<\delta_0'<\delta_0$.

\begin{claim}\label{claim: injective}
For $s_0\ll 0$,  $u|_{s=s_0}(t)= \sum_{i=1}^\infty c_i e^{\lambda_i s_0} f_i(t)$ is an embedding.
\end{claim}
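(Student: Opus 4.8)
The plan is to show that the tail map $t \mapsto u|_{s=s_0}(t) = \sum_{i\geq 1} c_i e^{\lambda_i s_0} f_i(t)$ from $\R/\Z$ into $D^2_{\delta_0/3}$ is injective for $s_0$ sufficiently negative, and then that its derivative is nowhere zero. The key point is that, because the $\overline\bdry_J$-equation is \emph{linear} for curves close to and graphical over $\R\times\gamma$ (this is the defining feature of an $L$-supersimple pair, cf.\ Section~\ref{subsection: L-supersimple pairs}), the Fourier expansion \eqref{eqn: Fourier} holds exactly — there are no higher-order corrections — so the curve near its negative end is governed completely by the eigenfunctions of the asymptotic operator $A$ computed in Section~\ref{subsubsection: positive hyperbolic case}.

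First I would isolate the leading term. Let $i_0 \geq 1$ be the smallest index with $c_{i_0}\neq 0$ (such $i_0$ exists as long as $u$ is not a trivial cylinder, which we may assume since $u$ is negatively asymptotic to $\gamma$ and is not contained in $\R\times\gamma$; if $u\subset\R\times\gamma$ the claim is vacuous or handled trivially). Then
\begin{equation*}
u|_{s=s_0}(t) = e^{\lambda_{i_0} s_0}\Bigl( c_{i_0} f_{i_0}(t) + \sum_{i>i_0} c_i e^{(\lambda_i-\lambda_{i_0}) s_0} f_i(t)\Bigr),
\end{equation*}
and since $\lambda_i - \lambda_{i_0} > 0$ for $i > i_0$ (using that eigenvalues only accumulate at $+\infty$ and choosing $i_0$ minimal among indices with nonzero coefficient — note multiple eigenvalues are grouped, so if $\lambda_{i_0}=\lambda_{i_0+1}$ one takes the full leading eigenspace contribution), as $s_0\to-\infty$ the rescaled map $e^{-\lambda_{i_0} s_0} u|_{s=s_0}$ converges in $C^\infty(\R/\Z)$ to $c_{i_0} f_{i_0}(t)$ (or to the leading-eigenvalue block $c_{i_0}f_{i_0} + c_{i_0+1}f_{i_0+1}$ if the eigenvalue is repeated). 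From the explicit formulas \eqref{eqn: eigenfunction1}, \eqref{eqn: eigenfunction2} (and the $|\lambda|\leq\varepsilon$ case), every eigenfunction $f_i$, and every nonzero combination within a fixed eigenspace, is either a nowhere-vanishing closed loop in $\R^2\setminus\{0\}$ with winding number exactly the one dictated by the Conley–Zehnder data, hence an immersion; in the positive hyperbolic case the two lowest eigenfunctions $f_{\pm 1}$ are constants $(1,\pm 1)$, so already $f_{i_0}(t)$ (suitably interpreted) is an embedded or immersed loop. By compactness of $\R/\Z$ and openness of being an embedding among $C^1$ loops, once the rescaled map is $C^1$-close to this limiting immersion it is itself an embedding; this gives the claim for $s_0\ll 0$.

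The one subtlety — and what I expect to be the main obstacle — is the case of a \emph{repeated} leading eigenvalue, since then the $C^\infty$ limit of the rescaled loop is a nonzero element of a $2$-dimensional eigenspace $\R\langle f_{i_0}, f_{i_0+1}\rangle$ rather than a single $f_{i_0}$, and one must check that \emph{every} nonzero such combination is an embedding of $\R/\Z$ into $\R^2\setminus\{0\}$. This follows from the explicit description: for $|\lambda|>\varepsilon$ the pair spans the real and imaginary parts of $e^{2\pi int}v$, whose nonzero real linear combinations are loops of the form $t\mapsto \mathrm{Re}(e^{2\pi i n t} w)$ for $w\neq 0$, which wind $n$ times around $0$ and are embeddings since $n\neq 0$ and $w\neq 0$; one checks the parametrization is injective by a direct argument (two times $t_1\neq t_2$ giving the same value forces $e^{2\pi i n(t_1-t_2)}=1$, contradicting injectivity on $[0,1)$ for $n=\pm 1$, and for $|n|\geq 2$ one uses that the leading eigenvalue of $A$ in the positive hyperbolic case has $|\lambda|=\varepsilon$, so in fact $n=0$ there and the relevant lowest block is $1$-dimensional — so the repeated-eigenvalue case only arises at higher eigenvalues, which are not the leading term unless all lower Fourier coefficients vanish). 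In any event, after taking $s_0$ negative enough that the rescaled loop lies in a $C^1$-neighborhood of the compact set of normalized leading-block loops — all of which are embeddings with uniformly bounded-below injectivity radius and nonvanishing derivative — the inverse function theorem and a standard compactness argument upgrade $C^1$-closeness to embeddedness, completing the proof.
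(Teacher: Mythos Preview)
Your proposal misses the actual difficulty and contains a genuine gap. You argue that the rescaled loop $e^{-\lambda_{i_0}s_0}u|_{s=s_0}$ converges to the leading eigenfunction (or leading block), and that this limit is an embedded loop in $\R^2\setminus\{0\}$. But this is false already in the simplest case: for positive hyperbolic $\gamma$ the lowest eigenfunction is the \emph{constant} $f_1(t)=(1,-1)$, which is neither an immersion nor an embedding of $S^1$ into $D^2$. More generally, the eigenfunctions $f_{2n},f_{2n+1}$ wind $n$ times around $0$, so their nonzero combinations are never embedded for $|n|\geq 2$. Your parenthetical escape (``unless all lower Fourier coefficients vanish'') does not save you, since that case can and does occur.

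The real issue, which the paper identifies immediately, is that $\gamma'_+$ may be an $m$-fold cover of a simple orbit $\gamma_0$. The curve $u|_{s=s_0}$ as a loop in $M$ lives in $\gamma_0\times D^2$, and the map is $t\mapsto (mt\bmod 1,\,z(t))$: embeddedness means distinguishing $t_1$ from $t_1+k/m$ via the $z$-coordinate. If the leading eigenfunction is an $a$-fold cover of an eigenfunction for $\gamma_0^{m/a}$ with $a\mid m$ and $a>1$ (e.g.\ the constant $f_1$, which is an $m$-fold cover), it cannot distinguish such points. The paper's proof uses that $u$ is \emph{simple}, hence its negative end is not multiply-covered: this forces the existence of some $c_j\neq 0$ whose eigenfunction $f_j$ has a periodicity coprime to those of the earlier nonzero terms, and it is this coprimality that breaks the $\Z/m$ symmetry and yields the embedding for $s_0\ll 0$. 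Your argument never invokes simplicity of $u$, which is essential.
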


\begin{proof}[Proof of Claim~\ref{claim: injective}]
The reason the claim is not trivial is that $\gamma_+'$ may be an $m$-fold cover of a simple orbit $\gamma_0$.  Since $u$ is simple by assumption, the negative end of $u$ is not multiply-covered.  Hence there exists an integer $j>0$ such that:
\begin{itemize}
\item for each $c_i\not=0$ with $i< j$, $f_i(t)$ is an $a$-fold cover of an asymptotic eigenfunction for $\gamma_0^{m/a}$, where $a>1$, and
\item $c_j\not=0$ and $f_j(t)$ is a $b$-fold cover of an asymptotic eigenfunction for $\gamma_0^{m/b}$, where $(a,b)=1$.
\end{itemize}
By an explicit calculation of asymptotic eigenfunctions (i.e., Equations~\eqref{eqn: eigenfunction1}--\eqref{eqn: eigenfunction4}), it follows that $u|_{s=s_0}$ is an embedding for $s_0\ll 0$.
\end{proof}

Since $u$ does not intersect $\R\times \gamma'_+$, the support of $Y$ intersects $u$ only near the negative end of $u$. \cb  Hence the pair $(\xi,Y)$, described above and satisfying Equation~\eqref{heatwave2}, is indeed an element of $T_{(u,J)} \mathcal{M}^s$.  This implies that $Ev^k_-(\gamma_+,\gamma_+')$ is regular at $(u,J)$.  Since all the perturbations $(\xi,Y)$ constructed above have the form that $\xi$ is supported near the negative end, it follows that such $\xi$'s are independent from the tangent vector $\zeta$ at $u$ corresponding to the translation in the $s$-direction.  Hence $Ev^k_-(\gamma_+,\gamma_+')|_{\mathcal N^s}$ is regular at $(u,J)$  and the theorem holds in this case. \cb

\s\n
{\bf Case 2.} Suppose that $u:\dot F\to \R\times M$ nontrivially intersects $\R\times \gamma'_+$. Let $Y$ be as in Case 1 and let $u^* Y$ be the pullback of $Y$ to $u^*\op{End}(T(\R\times M))$.  Then $u^* Y$ can be written as $Y'+ Y''$, where $Y'$ is supported on the negative end of $\dot F$ and $Y''$ is supported on a neighborhood of $\Theta\subset \dot F$. Here $\Theta$ is the preimage of $u(\dot F)\cap (\R\times \gamma'_+)$.

Let $\xi'$ be the solution to $\overline\bdry_{J+\tau Y'}(u+\tau\xi')=0$ up to first order in $\tau$, as constructed in Case 1 (the notation in Case 1 is $(\xi,Y)$) and whose negative end satisfies Equation~\eqref{heatwave2}.

Let $\xi''$ be the solution to $\overline\bdry_{J+\tau Y''}(u+\tau\xi'')=0$ up to first order in $\tau$, which is $L^2$-orthogonal to the kernel of the linearized $\overline\bdry$-operator $D_{u,J}$.  We now estimate that the solution $\xi''$ corresponding to $Y''$ is much smaller than $\xi'$ along $s=R'$ if $R'\ll R\ll 0$.  We will use Morrey space norms which are defined in Section~\ref{subsection: banach spaces} and ideas that are used in Section~\ref{subsection: linearized section s_0}. The constant $c>0$ may change from line to line. We first observe that $\xi''= D_{u,J}^{-1}(\zeta)$, where $\|\zeta\|\leq c\|Y\|$.  Since $D_{u,J}^{-1}$ is bounded, $\|\xi''\|_*\leq c \|Y\|$. Then, by Lemma~\ref{lemma: sobolev embedding}, $|\xi''|_{C^0}\leq c \|Y\|$. Hence $|\xi''|_{s=R}$ has the same order of magnitude as $|\xi'|_{C^0}$.  However, since $\xi''$ decays exponentially as $s\to -\infty$,
\begin{equation} \label{rat zapper}
|\xi''|_{s=R'}\leq c e^{-\lambda(R-R')} |\xi''|_{s=R},
\end{equation}
where $\lambda=\op{min}(\lambda_1,|\lambda_{-1}|)$. This implies that $|\xi''|_{s=R'}\ll |\xi'|_{s=R'}$.

Since $\xi'+\xi''$ is a solution corresponding to $Y'+Y''$, the theorem follows.
\end{proof}

Let us abbreviate \cb $\mathcal{M}^k = \mathcal{M}_{J}^{\op{ind}=k,\op{cyl}}(\gamma_+,\gamma_+')$ \cb and $\widetilde{ev}^k_-=\widetilde{ev}^k_-(\gamma_+,\gamma_+',J)$.  We also use the superscript ``$\mbox{sing}$'' to denote the subset of $\mathcal{M}^k$ consisting of curves with singularities.

\begin{thm} \label{thm: niceness of ev}
For a generic $J\in \mathcal{J}^{<L,\tiny\mbox{reg}}_\star$, $\widetilde{ev}^k_-:\mathcal{M}^k/\R\to S^{k-1}$, \cb $k\geq 1$, \cb satisfies the following:
\begin{enumerate}
\item The restriction of $\widetilde{ev}^k_-$ to $(\mathcal{M}^k -\mathcal{M}^{k,\op{sing}})/\R$ is an immersion.
\item $\widetilde{ev}^k_-(\mathcal{M}^{k,\op{sing}}/\R)$ has codimension at least $2$ and is disjoint from $(0,\dots, 0,\pm 1)$.
\item In particular, $(0,\dots,0,\pm 1)$ are regular values of $\widetilde{ev}^k_-$.
\end{enumerate}
\end{thm}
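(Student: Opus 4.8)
The plan is to get (1) from the automatic transversality estimates of Theorem~\ref{thm: automatic transversality}, to get the two assertions in (2) from Theorem~\ref{thm: immersion} (for the codimension bound) and Theorem~\ref{thm: transversality of evaluation map} (for avoiding $(0,\dots,0,\pm1)$), and then to read off (3) formally. Here ``$J$ generic'' means $J$ in the comeager subset of $\mathcal{J}^{<L,\mathrm{reg}}_\star$ cut out as the intersection of the sets where the conclusions of Theorems~\ref{thm: immersion} and \ref{thm: transversality of evaluation map} hold along a compact exhaustion of $\mathcal{M}/\R$; this is comeager since transversality and the immersion condition are open and the two cited theorems provide density. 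As in Theorem~\ref{thm: transversality of evaluation map} I restrict to the simple part of $\mathcal{M}$; an immersed multiply-covered curve in $\mathcal{M}$ is a pullback, so $ev^k_-$ maps it into a proper linear subspace of $\R^k$ and a direct count shows such curves do not meet a neighborhood of $(0,\dots,0,\pm1)$.

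For (1), let $[u]\in(\mathcal{M}-\mathcal{M}^{\op{sing}})/\R$, so $u$ is an immersed simple cylinder from $\gamma_+$ to $\gamma_+'$ with $\op{ind}(u)=k\geq 2$; by Theorem~\ref{thm: automatic transversality} $u$ is regular, so $\op{coker} D_u^N=0$ and $\dim_\R\ker D_u^N=k$. Since $ev^k_-$ intertwines the $\R$-translation on $\mathcal{M}$ with the flow $(c_i)\mapsto(c_ie^{\lambda_is})$ on $\R^k$ whose quotient is $S^{k-1}$, it suffices to show that the differential of $ev^k_-$ at $u$, i.e.\ the map $\ker D_u^N\to\R^k$ sending a variation $\eta$ to its first $k$ Fourier coefficients at the negative end $\gamma_+'$, is injective (hence an isomorphism, by equidimensionality). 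This is the winding estimate of Theorem~\ref{thm: automatic transversality} and Lemma~\ref{lemma: better basis for coker}: if $0\neq\eta\in\ker D_u^N$ had vanishing first $k$ Fourier coefficients at $\gamma_+'$, then, choosing $\tau$ with $\mu_\tau(\gamma_+')=0$, the eigenfunction list of Section~\ref{subsubsection: positive hyperbolic case} forces $\op{wind}_\tau(\eta,p_-)\geq\op{wind}_\tau(f_{k+1})=\lceil k/2\rceil$, while $\op{wind}_\tau(\eta,p_+)\leq\mu_\tau(\gamma_+)/2$; feeding these, $c_1(N,\tau)=c_1(u^*\xi,\tau)$, and the index formula \eqref{eqn: Fredholm index of u} into Claim~\ref{claim1a} yields $k/2\geq\lceil k/2\rceil$, a contradiction for the parity of $k$ with $\lambda_k<\lambda_{k+1}$ and a forced chain of equalities for the parity with $\lambda_k=\lambda_{k+1}$. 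In the latter case the only surviving possibility has leading term a multiple of $f_{k+1}$ alone, and this is excluded after interchanging $f_k\leftrightarrow f_{k+1}$ within the two-dimensional eigenspace $\{f_k,f_{k+1}\}$ --- exactly the eigenbasis manipulation of Lemma~\ref{lemma: better basis for coker 2}. Hence $\eta=0$, $d(ev^k_-)_u$ is an isomorphism, and $\widetilde{ev}^k_-$ is an immersion --- indeed a local diffeomorphism onto $S^{k-1}$ --- at $[u]$. (The case $\gamma_+'$ negative hyperbolic is analogous, using Section~\ref{subsubsection: negative hyperbolic case}.)

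For (2), by Lemma~\ref{lemma: multiplicative} every non-immersed curve in $\mathcal{M}$ is simple or an unbranched cover of a simple non-immersed cylinder of smaller index; by Theorem~\ref{thm: immersion} the non-immersed simple curves form a set of real codimension $\geq 2$, and their covers sweep out an even smaller set, so $\dim(\mathcal{M}^{\op{sing}}/\R)\leq k-3$ (in particular $\mathcal{M}^{\op{sing}}=\emptyset$ for $k=2$). Since $\widetilde{ev}^k_-$ is smooth, $\widetilde{ev}^k_-(\mathcal{M}^{\op{sing}}/\R)$ has dimension $\leq k-3=\dim S^{k-1}-2$, which is the codimension statement. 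For disjointness from $(0,\dots,0,\pm1)$, apply Theorem~\ref{thm: transversality of evaluation map} with $Z=\{(0,\dots,0,\pm1)\}$, a $0$-dimensional submanifold, along a compact exhaustion of $\mathcal{M}^{\op{sing}}/\R$: for generic $J$ the descended evaluation map is transverse to $Z$ there, and since $(k-3)+0<k-1$ transversality forces the preimage of $Z$ to be empty. The one delicate point --- that the compact domain on which transversality has been arranged is not lost when $J$ is perturbed --- is handled by conclusion (1) of Theorem~\ref{thm: transversality of evaluation map}, the bootstrapping statement ``$\pi_\R^{-1}(K')$ contains all nearby curves''.

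Finally, (3) is immediate: by (2) the preimage $(\widetilde{ev}^k_-)^{-1}(\{(0,\dots,0,\pm1)\})$ lies in the open set $(\mathcal{M}-\mathcal{M}^{\op{sing}})/\R$, on which, by (1), $\widetilde{ev}^k_-$ is an immersion between manifolds of the same dimension $k-1$, hence a submersion; so $d\widetilde{ev}^k_-$ is onto at each preimage point, i.e.\ $(0,\dots,0,\pm1)$ are regular values. The step I expect to be the genuine obstacle is the ``equality parity'' subcase of (1): there the winding inequality just barely fails, and one must really use the freedom to rotate the asymptotic eigenbasis inside a multiple eigenspace --- equivalently, follow the kernel of the weighted operator $D_u^{N,\delta}$ across the double eigenvalue $\lambda_k=\lambda_{k+1}$, as in the proof of Lemma~\ref{lemma: better basis for coker} --- to kill a spurious one-dimensional kernel of the leading-coefficient map. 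Keeping track of the compact domains under perturbations of $J$ is a secondary, purely technical, nuisance.
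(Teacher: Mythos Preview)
Your strategy --- automatic transversality for (1), Theorem~\ref{thm: immersion} plus the transversality of evaluation for (2), then (3) formally --- matches the paper. But your execution of the double-eigenvalue subcase of (1) contains a real error, and your handling of multiply-covered curves in (2) is incomplete.

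For (1), the proposed fix ``interchange $f_k\leftrightarrow f_{k+1}$'' fails. The evaluation map $ev^k_-$ is defined with respect to a \emph{fixed} orthonormal eigenbasis; rotating within the $\lambda_k=\lambda_{k+1}$ eigenspace produces a \emph{different} evaluation map, and knowing that $\eta$ is not in the kernel of the new $d(ev^k_-)$ says nothing about the original one. The basis modification of Lemma~\ref{lemma: better basis for coker 2} that you cite is carried out for one fixed curve $v_0$, whereas here a single rotation would have to work for every immersed $u\in\mathcal{M}$ simultaneously, and there is no reason such a rotation exists. The paper instead argues constructively, transplanting the weighted-operator proof of Lemma~\ref{lemma: good basis for coker} to $\ker D_u$: since $\op{coker}D_u^\delta=0$ for the relevant weights (Claim~\ref{claim2b}), the dimensions of the nested kernels $\ker D_u^{\delta_j}$ are forced by the index, and at each double eigenvalue the resulting two-dimensional quotient must inject into the two corresponding Fourier coordinates (else a nontrivial combination would lie in the deeper stratum, contradicting the dimension count). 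Row reduction then yields the basis $\{e_i\}$ with $e_i\equiv f_i$ modulo $f_{k+1},f_{k+2},\dots$, and surjectivity of $d(ev^k_-)$ follows. This is the weighted-operator mechanism you gesture at in your final paragraph, but it is not the same as, and cannot be replaced by, an eigenbasis swap.

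For (2), Theorem~\ref{thm: transversality of evaluation map} is stated only for simple curves, so it does not apply directly to multiply-covered singular curves. The paper treats these by induction on $k$: a $b$-fold unbranched cover of a simple index-$a$ curve has its $\widetilde{ev}^k_-$ factoring through the inclusion $i:S^{a-1}\hookrightarrow S^{k-1}$ of Equation~\eqref{eqn: inclusion}, and since by the inductive hypothesis the singular locus of the simple index-$a$ moduli space misses $(0,\dots,0,\pm1)\in S^{a-1}$, its image under $i$ misses $(0,\dots,0,\pm1)\in S^{k-1}$. Your ``direct count'' is not a substitute: the paper explicitly notes that the non-singular covers \emph{can} pass through $(0,\dots,0,\pm1)$, so the image of $i$ does meet these points and one genuinely needs the inductive control on the singular part.
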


Note that $\widetilde{ev}^k_-$ exists by Theorem~\ref{thm: transversality of evaluation map}(2), since $\ell=k$ in our case.

\begin{proof}
(1) Let $u\in \mathcal{M}^k-\mathcal{M}^{k,\op{sing}}$. \cb Since $u$ is immersed, if $\op{ind}(u)=k\geq 2$, then $u$ is regular by Theorem~\ref{thm: automatic transversality}. If $k=1$, then $u$ is regular by Lemma~\ref{lemma: differential transversality}. \cb By the argument of Lemma~\ref{lemma: good basis for coker}, there exists a basis $\{e_1,\dots,e_k\}$ for $\ker D_u$, such that the negative ends of $e_i$ are of the form
$$e_i(s,t)=e^{\lambda_i s} f_i(t) \quad \mbox{modulo } f_{k+1}, f_{k+2},\dots.$$
This implies the surjectivity of $(\widetilde{ev}^k_-)_*:T_{[u]}(\mathcal{M}^k/\R)\to T_{\widetilde{ev}^k_-([u])} S^{k-1}$.

(2) $\mathcal{M}^{k, s,\op{sing}}\subset \mathcal{M}^{k,s}$ has (real) codimension at least two by Theorem~\ref{thm: immersion}. Hence $\widetilde{ev}^k_-(\mathcal{M}^{k,s,\op{sing}}/\R)$ has codimension at least two and is disjoint from $\{(0,\dots, 0,\pm 1)\}$ by Theorem~\ref{thm: transversality of evaluation map}. (We will be a little sloppy: The issue here is to find a fixed radius $\delta_0'$ which works for all of $\overline{\mathcal{M}^k/\R}$. Strictly speaking, Theorem~\ref{thm: transversality of evaluation map} holds for a large compact subset $K\subset \mathcal{M}^{k,s,\op{sing}}/\R$.  We then apply Theorem~\ref{thm: transversality of evaluation map} to the strata of the boundary $\overline{\mathcal{M}^k/\R}$ to obtain Theorem~\ref{thm: niceness of compactification} below.  This in turn allows us to use $\mathcal{M}^{k, s,\op{sing}}/\R$ instead of $K$.)

It remains to consider the multiply-covered curves in $\mathcal{M}^k$.  Writing $\gamma_+=\gamma_{+,0}^b$ and $\gamma_+'=(\gamma_{+,0}')^b$, let $\mathcal{S}'=\mathcal{M}^{\op{ind}=a,\op{cyl}}_J(\gamma_{+,0},\gamma_{+,0}'),$
where $ab=k$, $b>1$, and let $\mathcal{S}\subset\mathcal{M}^k$ be the set of $b$-fold covers of curves in $\mathcal{S}'$.  By induction, suppose that
$$\widetilde{ev}^a_-(\gamma_{+,0},\gamma_{+,0}',J):\mathcal{S}'/\R\to S^{a-1}$$
satisfies (2) with $k$ replaced by $a$. (As the initial step of the induction, observe that if $k=2$ then $\mathcal{M}^k$ has no singular curves by Theorem~\ref{thm: immersion} and (2) holds.) Then $\widetilde{ev}^a_-$ can be ``lifted'' to $\widetilde{ev}^k_-:\mathcal{S}/\R \to S^{ab-1}$ as follows: Given $v\in \mathcal{S}$, suppose it is the $b$-fold cover of $u\in \mathcal{S}'$. Then $\widetilde{ev}^k_-(v)= i\circ\widetilde{ev}^a_-(u)$, where the inclusion $i:S^{a-1}\to S^{ab-1}$ is induced by \cb the map
\begin{equation} \label{eqn: inclusion}
\R^a\to \R^{ab}, \quad (x_1,\dots,x_a)\mapsto (0,\dots, 0,x_1,0,\dots,0,x_2,0,\dots,0,x_3,0,\dots).
\end{equation}
Here,
\begin{itemize}
\item the zeros are inserted in the same positions for all $(x_1,\dots,x_a)$ and
\item $0,\dots,0$ stands for zero or more $0$'s.
\end{itemize}
\cb While it is possible for $\widetilde{ev}^k_-(\mathcal{S})$ to pass through $(0,\dots,0,\pm 1)\in S^{k-1}$ \cb (this may happen if the last entry of the $ab$-tuple is $x_a$ in Equation~\eqref{eqn: inclusion}), \cb $\widetilde{ev}^k_-(\mathcal{S}^{\op{sing}})$ will not pass through $(0,\dots,0,\pm 1)$ by the induction hypothesis.  This proves (2).

(1) and (2) then imply (3).
\end{proof}

Finally we consider the extension $\overline{ev}^k_-:\overline{\mathcal{M}^k/\R}\to S^{k-1}$ of $\widetilde{ev}^k_-$ to the compactification of $\mathcal{M}^k/\R$.

\begin{thm} \label{thm: niceness of compactification}
For a generic $J\in \mathcal{J}^{<L,\tiny\mbox{reg}}_\star$ and $k\geq 2$, $\overline{ev}^k_-:\overline{\mathcal{M}^k/\R}\to S^{k-1}$ satisfies the following:
\begin{enumerate}
\item $\overline{ev}^k_-(\bdry(\mathcal{M}^k/\R))$ is disjoint from $(0,\dots,0,\pm 1)$.
\item If $\mathcal{S}$ is a (nonempty) stratum of $\bdry(\mathcal{M}^k/\R)$ consisting of $l$-level buildings $u_1\cup\dots\cup u_l$ with $\op{ind}(u_i)=a_i$, then \cb $0< a_i < k$, \cb $\op{dim}(\overline{ev}^k_-(\mathcal{S}))= a_1-1$, and $\mathcal{S}^{\op{sing}}$ has codimension at least $2$ in $\mathcal{S}$.
\end{enumerate}
\end{thm}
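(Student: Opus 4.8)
The plan is to reduce the study of $\overline{ev}^k_-$ on a boundary stratum to that of the evaluation map on the bottom level of the corresponding building, and then feed that lower-dimensional moduli space into Theorems~\ref{thm: transversality of evaluation map} and~\ref{thm: niceness of ev}, running the whole argument as an induction on the index $k$ (equivalently, on the action). First I would record the structure of the boundary: by Theorem~\ref{thm: SFT compactness fo modified J} together with the monotonicity/no--planes argument from the proof of Theorem~\ref{thm: differential}, every element of $\bdry(\mathcal{M}/\R)$ is a building $u_1\cup\dots\cup u_l$ with $l\geq 2$ all of whose levels are cylinders, arranged from bottom to top so that $u_1$ is negatively asymptotic to $\gamma_+'$ and $u_l$ is positively asymptotic to $\gamma_+$; writing $a_i=\op{ind}(u_i)$ one has $a_i\geq 1$ (every nontrivial curve in a symplectization has positive index) and $\sum_i a_i=k$, so $a_1\leq k-1$, and there are only finitely many such combinatorial types of action $<L$. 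The crucial point is that $\overline{ev}^k_-$ restricted to a stratum $\mathcal{S}$ factors through the projection to the bottom level: the negative asymptotics of the whole building, hence the Fourier data $(c_1,\dots,c_k)$, are those of $u_1$, and $\R$--translation of $u_1$ acts exactly by the $\R^+$--action on $\R^k$, so $\overline{ev}^k_-|_{\mathcal{S}}$ is the composite $\mathcal{S}\to\mathcal{M}_J^{\op{ind}=a_1,\op{cyl}}(\gamma^{(1)},\gamma_+')/\R\xrightarrow{\ \widetilde{ev}^k_-\ }S^{k-1}$, in which the first arrow is onto.

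\emph{Part (2).} For a non-singular simple bottom level $u_1$ of index $a_1\geq 1$, automatic transversality (Theorem~\ref{thm: automatic transversality}: for a cylinder $\op{ind}(u_1)\geq 1>2g(F)-2+\#\Gamma_0(u_1)$, since $g(F)=0$ and $\#\Gamma_0\leq 2$) gives regularity, so $\mathcal{M}_{u_1}/\R$ has dimension $a_1-1$, and the basis construction from the proof of Theorem~\ref{thm: niceness of ev}(1) (a basis of $\ker D_{u_1}$ whose negative ends are $e^{\lambda'_is}g_i(t)$ modulo $g_{a_1+1},g_{a_1+2},\dots$) shows $\widetilde{ev}^k_-$ is an immersion there; hence $\op{dim}\overline{ev}^k_-(\mathcal{S})=a_1-1$. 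For the singular codimension, $\mathcal{S}^{\op{sing}}$ is the union over $i$ of the loci where $u_i$ is non-immersed; for a simple $u_i$ this has $\op{codim}\geq 2$ by Theorem~\ref{thm: immersion}, while a multiply-covered $u_i$ is forced to be an unbranched cover of a simple cylinder (branch points would raise the genus), and one reduces to the underlying simple curve exactly as in the treatment of multiple covers in the proof of Theorem~\ref{thm: niceness of ev}(2); combining these over the product stratum gives $\op{codim}_{\mathcal{S}}\mathcal{S}^{\op{sing}}\geq 2$.

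\emph{Part (1).} It suffices to show that no cylinder $u_1$ of index $a_1<k$ can have $\widetilde{ev}^k_-(u_1)=(0,\dots,0,\pm 1)$, since that point has zero in each of its first $k-1$ coordinates. If $u_1$ is simple, then $\widetilde{ev}^k_-(u_1)=(0,\dots,0,\pm1)$ would force $(c_1,\dots,c_{a_1})=0$ (as $a_1\leq k-1$), which is impossible for a generic $J$, because the descent of $ev^{a_1}_-$ on $\mathcal{M}_J^{\op{ind}=a_1,\op{cyl}}$ to $\widetilde{ev}^{a_1}_-$ — i.e.\ the nonvanishing of $(c_1,\dots,c_{a_1})$ — is exactly what Theorem~\ref{thm: transversality of evaluation map} arranges (this also covers the $a_1=1$ case via the winding bound). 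If $u_1$ is a $b$-fold unbranched cover of a simple $u_1'$ of index $a_1'=a_1/b$, then by the inclusion $\R^{a_1'}\hookrightarrow\R^{a_1}\hookrightarrow\R^k$ of Equation~\eqref{eqn: inclusion} the coefficients $c_1',\dots,c_{a_1'}'$ occupy positions $\leq a_1-b+1<k$ among $(c_1,\dots,c_k)$; hence $\widetilde{ev}^k_-(u_1)=(0,\dots,0,\pm1)$ would force $(c_1',\dots,c_{a_1'}')=0$, again impossible. As there are finitely many strata, a single generic $J$ (also satisfying Theorem~\ref{thm: niceness of ev} on the interior) handles all of them simultaneously.

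\emph{Main obstacle.} The genuine work is the non-compactness bookkeeping: Theorem~\ref{thm: transversality of evaluation map} only controls a compact domain $K'$, so one must organize the proof as the advertised induction on $k$ — the bottom level of any boundary stratum has strictly smaller index, so by the inductive hypothesis $\overline{ev}^{a_1}_-$ on $\overline{\mathcal{M}^{\op{ind}=a_1}/\R}$ already misses its poles and $\widetilde{ev}^{a_1}_-$ is globally defined — combined with the observation, used already in the proof of Theorem~\ref{thm: niceness of ev}(2), that SFT convergence forces the potential preimage of a pole in $\mathcal{M}/\R$ to lie in the compact part on which Theorem~\ref{thm: transversality of evaluation map} is available. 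Pinning down the multiply-covered strata and the position count in Part (1) are the remaining technical points, but none of them is conceptually hard once the factorization through the bottom level is in place.
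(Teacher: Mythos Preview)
Your proposal is correct and follows essentially the same route as the paper: factor $\overline{ev}^k_-$ on a boundary stratum through the bottom level, apply Theorem~\ref{thm: niceness of ev} to the $\op{ind}=a_1$ moduli space $\mathcal{N}$ containing $u_1$, and then extend $\widetilde{ev}^{a_1}_-:\mathcal{N}/\R\to S^{a_1-1}$ to $\widetilde{ev}^k_-:\mathcal{N}/\R\to S^{k-1}$; the induction on $k$ that you flag in your ``Main obstacle'' is exactly the bootstrap the paper alludes to in the parenthetical of the proof of Theorem~\ref{thm: niceness of ev}(2). One simplification: your separate case analysis for multiply-covered $u_1$ in Part~(1), with its position bookkeeping ``$\leq a_1-b+1$'', is not needed --- once Theorem~\ref{thm: niceness of ev} at level $a_1$ tells you $\widetilde{ev}^{a_1}_-$ is globally defined on $\mathcal{N}/\R$ (multiple covers included), you get $(c_1,\dots,c_{a_1})\neq 0$ uniformly, and since $a_1\leq k-1$ the pole $(0,\dots,0,\pm1)$ is missed.
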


As usual, $u_1$ is the lowest level and $u_l$ is the highest level.

\begin{proof}\cb
By Lemma~\ref{lemma: no negative levels},  we have $0 < a_i  < k$. \cb
We apply Theorem~\ref{thm: niceness of ev} to the $\op{ind}=a_1$ moduli space $\mathcal{N}$ containing $u_1$ as given in the statement of (2) and the evaluation map $\widetilde{ev}^{a_1}_-: \mathcal{N}/\R\to S^{a_1-1}$.  We then define the map $\widetilde{ev}^k_-: \mathcal{N}/\R\to S^{k-1}$ \cb by composing $\widetilde{ev}^{a_1}_-$ with $i: S^{a_1-1}\to S^{k-1}$ of the form given by Equation~\eqref{eqn: inclusion} \cb to obtain (1) and (2). The details are left to the reader.
\end{proof}

\section{Chain homotopy} \label{section: chain homotopy}

\subsection{Chain homotopy} \label{subsection: chain homotopy}

Let $X^4$ be a compact connected $4$-manifold such that $\bdry X=M_+-M_-$, let $\alpha|_{M_\pm}$ be a contact form on $M_\pm$, and let $\{ \alpha^\tau\} _{0\leq \tau \leq 1}$ be a $1$-parameter family of $1$-forms on $X$ such that $d\alpha^\tau$ is symplectic and $\alpha^\tau |_{M_{\pm}}=\alpha_{\pm}$, for all $\tau \in [0,1].$ Also let $\{(\widehat {X}^\tau=\widehat{X}, \widehat {\alpha}^\tau)\}_{0\leq \tau \leq 1}$ be a $1$-parameter family of completions of $\{(X^\tau,  \alpha^\tau)\}_{0\leq \tau \leq 1}$ and let $\overline{J}^\tau$ be an almost complex structure which tames $(\widehat {X}^\tau,\widehat {\alpha} ^\tau)$ and which restricts to $\alpha_\pm$-tame almost complex structures $J_\pm$ near the positive and negative ends. \emph{From now on we further require that all contact forms are $L$-hypertight.}

\begin{thm} \label{thm: chain homotopy}
Suppose that $(M_-,\alpha_-)$ and $(M_+,\alpha_+)$ are $L$-supersimple and $L$-hypertight, $\{\overline{J}^\tau\}_{0\leq \tau \leq 1}$ is generic as a family, and $(\alpha_+,J_+)$ and $(\alpha_-,J_-)$ are $L$-supersimple pairs. Then the chain maps $\Phi_{(X^1,\alpha^1,\overline{J}^1)}$ and $\Phi_{(X^0,\alpha^0,\overline{J}^0)}$ defined in Theorem \ref{thm: chain maps} induce the same map on homology.
\end{thm}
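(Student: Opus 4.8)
The plan is to build the chain homotopy $K\colon CC^L(M_+,\alpha_+,J_+)\to CC^L(M_-,\alpha_-,J_-)$ by counting, in the one-parameter family $\{(\widehat X^\tau,\widehat\alpha^\tau,\overline J^\tau)\}_{\tau\in[0,1]}$, the $\op{ind}=-1$ cylinders that appear at isolated parameter values $\tau_0\in(0,1)$, so that $\bdry K+K\bdry=\Phi_{(X^1,\alpha^1,\overline J^1)}-\Phi_{(X^0,\alpha^0,\overline J^0)}$. By genericity of the family one may assume that for each pair of good orbits $\gamma_+,\gamma_-$ with action $<L$ the parametrized moduli space $\bigcup_\tau \mathcal M^{\op{ind}=0,\op{cyl}}_{\overline J^\tau}(\gamma_+,\gamma_-)$ is a smooth $1$-manifold, and that $\op{ind}=-1$ cylinders occur only at finitely many $\tau_0$ and are transversely cut out in the family. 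First I would set up $K$ on the locus where everything is simple and transversely cut out: there $K$ counts these isolated $\op{ind}=-1$ cylinders (weighted by $1/m(\gamma_-)$ as in the definitions of $\bdry$ and $\Phi$), and the usual argument — compactifying the parametrized $\op{ind}=0$ moduli space and reading off its boundary — gives the chain homotopy identity, with the two ends of $[0,1]$ contributing $\Phi^1-\Phi^0$, the degenerations at interior $\tau$ where an $\op{ind}=1$ curve in a symplectization splits off contributing $\bdry K$ and $K\bdry$, and bad-orbit breakings cancelling in pairs as in Theorems~\ref{thm: differential} and \ref{thm: chain maps}.

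The essential new phenomenon, and the reason the rest of the paper exists, is that in a one-parameter family the $\op{ind}=0$ cylinder $v_0$ over the cobordism can itself degenerate to a multiply covered cylinder: precisely the situation (C1)--(C4) of the Prototypical Gluing Problem, where $v_0$ is a $k$-fold unbranched cover ($k>1$) of a transversely-cut-out $\op{ind}=-1$ cylinder $u_0$, glued to an $\op{ind}=k$ family $\mathcal M$ of cylinders $v_1$ in the symplectization. Naively the count of $v_0\cup v_1$ limits would be ill-defined, so the key step is the obstruction-bundle calculation: I would invoke the bundle $\mathcal O\to[R,\infty)\times\overline{\mathcal M/\R}$ with fiber $\op{Hom}(\op{Ker}((D^N_{v_0})^*)/\R\langle Y\rangle,\R)$ and the section $\mathfrak s$ whose zeros count the glued curves, then homotope $\mathfrak s$ to the linearized model section $\mathfrak s_0(T,v_1)(\sigma_i)=e^{-2\lambda_i T}c_i$ of Equation~\eqref{eqn: eqn for s_0}, using Lemma~\ref{lemma: better basis for coker} (equivalently Lemma~\ref{lemma: good basis for coker}) for the good basis $\{\sigma_1,\dots,\sigma_k\}$ of $\ker(D^N_{v_0})^*$. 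Here the $L$-supersimple hypothesis is what makes $\overline\bdry_{J_\pm}$ linear near $\R\times\gamma''$, so the evaluation map $ev^k$ and its Fourier coefficients $c_i$ are genuinely well defined, with $c_i(s)\equiv c_i$ for $s\ll0$.

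Next I would use the transversality package for the evaluation map: by Theorem~\ref{thm: transversality of evaluation map} and Theorems~\ref{thm: niceness of ev} and \ref{thm: niceness of compactification}, for generic $J$ the map $\widetilde{ev}^k_-\colon\mathcal M/\R\to S^{k-1}$ is transverse with $(0,\dots,0,\pm1)$ regular values and with the singular and boundary strata mapping to sets of codimension $\geq2$ that avoid $(0,\dots,0,\pm1)$. Consequently $\mathfrak s_0^{-1}(0)=[R,\infty)\times(\widetilde{ev}^k_-)^{-1}(\{(0,\dots,0,\pm1)\})$, and crucially the zeros of the homotopy $\mathfrak s_\zeta$ stay away from $[R,\infty)\times\bdry(\mathcal M/\R)$ (this is the content of Section~\ref{subsection: linearized section s_0}), so the signed count of zeros is a well-defined cobordism invariant. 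Choosing a generic embedded path $\nu\subset S^{k-1}$ from $(0,\dots,0,1)$ to $(0,\dots,0,-1)$ and analyzing $(\widetilde{ev}^k_-)^{-1}(\nu)$ produces exactly the missing term $K'\circ\bdry$ of the chain homotopy, so that the corrected $K$ satisfies $\bdry K+K\bdry=\Phi^1-\Phi^0$. Finally I would check that the remaining variants listed in Sections~\ref{section: chain homotopy} and \ref{section: gluing} (e.g.\ the degeneration $v_{-1}\cup v_0$ with $v_0$ over the symplectization, or $\gamma''$ negative rather than positive hyperbolic) are handled by the same mechanism with cosmetic changes, and that orientations match up as in Theorems~\ref{thm: differential} and \ref{thm: chain maps}. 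The main obstacle is precisely the obstruction-bundle gluing analysis of Section~\ref{section: gluing} — showing that $\mathfrak s$ is genuinely homotopic through sections with no boundary zeros to the linear model $\mathfrak s_0$ — which is where the bulk of the analytic work lies and where the $L$-supersimple normalization (linearity of $\overline\bdry_J$ near the hyperbolic orbits, so that no quadratic estimates are needed) does the heavy lifting.
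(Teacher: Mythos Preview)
Your proposal is correct and follows essentially the same approach as the paper: the parametrized $\op{ind}=0$ moduli space, the obstruction-bundle analysis for the $k$-fold covers via $\mathfrak s\simeq\mathfrak s_0$, the evaluation-map transversality of Theorems~\ref{thm: transversality of evaluation map}--\ref{thm: niceness of compactification}, and the path $\nu$ producing the higher-order chain-homotopy terms. One minor point: the paper actually uses two distinct maps $K_+$ and $K_-$ (from top versus bottom symplectization breakings) with the identity $\Phi^1-\Phi^0=K_+\bdry_+ +\bdry_-K_-$ rather than a single $K$, and it separately treats the case where curves in $\mathcal M^0$ are themselves multiply covered; otherwise your outline matches.
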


To prove this theorem, we show there exist linear maps $$K_\pm: CC^L(M_+,\alpha_+,J_+)\to CC^L(M_-,\alpha_-,J_-)$$ such that for any $\gamma_+\in \mathcal{P}^L_{\alpha_+,\op{good}},$ one has \cb
\begin{equation} \label{equation: chain homotopy}
\Phi_{(X^1,\alpha^1,\overline{J}^1)}(\gamma_+)-\Phi_{(X^0,\alpha^0,\overline{J}^0)}(\gamma_+)=\partial_- K_-(\gamma_+)-K_+\partial_+(\gamma_+).
\end{equation}
Let us write
$$\mathcal{M}^i:= \coprod_{0\leq \tau \leq 1} \mathcal{M}^{\op{ind}=i,\op{cyl}}_{\overline{J}^\tau}(\gamma_+,\gamma_-).$$
We consider the $1$-dimensional moduli space $\mathcal{M}^0$ and explain how each component of $\partial \mathcal{M}^0$ contributes to Equation~\eqref{equation: chain homotopy}.

We first establish some notation.  An SFT building $u^\infty$ will be written as:
$$u^\infty=v_{-b}\cup \dots \cup v_{-1}\cup v_0\cup v_1\cup\dots \cup v_a$$
where the levels are arranged from bottom to top as we go from left to right; $v_j$, $j<0$, maps to $(\R\times M,J_-)$; $v_0$ maps to $\widehat{X}^\tau$ for some $\tau$; $v_j$, $j>0$, maps to $(\R\times M, J_+)$; the $v_j$'s are all holomorphic cylinders; and the pregluing of all the $v_j$ yields a cylinder of $\op{ind}=0$.  The only possible level with negative Fredholm index is $v_0$ \cb by Lemma \ref{lemma: no negative levels}.

\cb
\begin{lemma} \label{lemma: regularity for chain homotopy}
Let $\gamma_+\in \mathcal{P}^{L_+}_{\alpha_+}$ and $\gamma_-\in \mathcal{P}^{L_-}_{\alpha_-}$.
\be
\item $\mathcal{M}^{-1}$ is transversely cut out as a family and all the curves of $\mathcal{M}^{-1}$ are simply-covered.
\item If at least one of $\gamma_+$ and $\gamma_-$ is good, then $\mathcal{M}^{0}$ is transversely cut out as a family and all the curves of each component of $\mathcal{M}^{0}$ have the same multiplicity.
\ee
\end{lemma}
\cb

\begin{proof}
(1) There are no simply-covered curves of $\op{ind}<-1$ by the regularity of $J$. Hence, by Lemma~\ref{lemma: multiplicative}, each $u\in \mathcal{M}^{-1}$ must be simply-covered and regular \cb in a family. \cb

(2) Since simply-covered curves are regular in a generic family, it remains to consider the case where $v\in \mathcal{M}^0$ is an unbranched $m$-fold cover of a simple $u$ with $\op{ind}=0$. (Since we are assuming that $v$ is a cylinder, $v$ cannot be a branched cover of a simple $u$ with nonempty branch locus.)  The proof of the regularity of $v$ (in a family) is similar to that of Lemma~\ref{lemma: chain maps transversality}: Using the notation there, the only case to consider in more detail is when $\mu_\tau(\gamma)=\mu_\tau(\gamma')=0$ and $\ker(D^N_u)^*$ is generated by $Y_0$ which comes from the variation of $\overline{J}^\tau$. Equation~\eqref{eqn: equal to zero} implies that $\ker (D^N_v)^*$ is at most $1$-dimensional; this must be generated by the pullback of $Y_0$ under the covering map, implying the regularity of $v$ in a family.  The second assertion of (2) is immediate.
\end{proof}

\begin{rmk}
While we have shown that $\mathcal{M}^0$ is regular, the same cannot be said about the levels of $\bdry\mathcal{M}^0$.  In fact, a careful analysis of $\bdry\mathcal{M}^0$ is the key part of the proof of the chain homotopy.
\end{rmk}

\cb

\s\n
{\bf Simple $\mathcal{M}^0$ case.} Suppose first that no curve of $\mathcal{M}^0$ is a multiple cover.

The boundary $\partial \mathcal{M}^0=\overline{\mathcal{M}^0}-\mathcal{M}^0$ admits a decomposition
$$\bdry_1\mathcal{M}^0\coprod \bdry_2 \mathcal{M}^0\coprod \bdry_3\mathcal{M}^0,$$
each of which will be discussed below.

\s\n
{\em Type 1.} $\partial_1 \mathcal{M}^0$ corresponds to the case when $\tau=0$ or $1$, i.e.,
$$\partial_1 \mathcal{M}^0=\mathcal{M}^{\op{ind}=0,\op{cyl}}_{\overline{J}^0}(\gamma_+,\gamma_-)\coprod\mathcal{M}^{\op{ind}=0,\op{cyl}}_{\overline{J}^1}(\gamma_+,\gamma_-).$$
They contribute to $$ m(\gamma_-)\cdot \left<  \Phi_{(X^1,\alpha^1,\overline{J}^1)}(\gamma_+)-\Phi_{(X^0,\alpha^0,\overline{J}^0)}(\gamma_+), \gamma_-\right>.$$

\s\n
{\em Type 2.}
$\partial_2 \mathcal{M}^0$ corresponds to the case when curves in $\mathcal{M}^0$ converge to two-level holomorphic buildings:   $v_0 \cup v_1$ or $v_{-1} \cup v_0$, where $v_{-1}$, $v_0$ and $v_1$ are all immersions and $v_0$ is a $k$-fold cover of a simple holomorphic cylinder $u_0$ of index $-1$. We denote the sets of these two types of two-level buildings by $\partial_2^+ \mathcal{M}^0$ and $\partial_2^- \mathcal{M}^0$ respectively, so that $\partial_2 \mathcal{M}^0=\partial_2^+\mathcal{M}^0\coprod \partial_2^- \mathcal{M}^0$.

\s\n
{\em Type 3.} $\bdry_3\mathcal{M}^0$ consists of higher codimension strata.

{\em Case I}. $v_{-1} \cup v_0 $ or $v_0\cup v_1$, where $v_{\pm 1}$ is a singular curve in $\mathbb{R}\times M_\pm$. %and each $v_j$ is a cylinder.

{\em Case II}. $v_{-l} \cup \cdots  \cup v_0$ or $v_0 \cup \cdots \cup v_{l},$ where $l>1$.% and each $v_j$ is a cylinder.
%
%{\em Case III}. $v_{-l} \cup \cdots  \cup v_0$ or $v_0 \cup \cdots \cup v_{l},$ where $l>0$ and some component of $v_j$ is a genus zero curve that is not a cylinder.

{\em Case III}. $v_{-l} \cup \cdots \cup  v_0 \cup \cdots \cup v_{l'}$, where $l,l'>0$.

\begin{lemma} \label{lemma: boundary three}
$\bdry_3\mathcal{M}^0=\varnothing$.
\end{lemma}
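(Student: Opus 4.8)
\textbf{Proof proposal for Lemma~\ref{lemma: boundary three}.}

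The plan is to rule out each of Cases I, II, III of $\bdry_3\mathcal{M}^0$ by a Fredholm-index count combined with the transversality results already established. Throughout we use Lemma~\ref{lemma: multiplicative} (so that covering a curve can only raise the index, since $b\geq 0$), Theorem~\ref{thm: automatic transversality} together with Theorem~\ref{thm: immersion} (so that simple immersed curves of the relevant index are regular and the non-immersed locus has codimension at least two), and $L$-noncontractibility (so that there are no holomorphic planes; every irreducible component is a cylinder). The governing bookkeeping identity is that the pregluing of the building $u^\infty$ has $\op{ind}=0$, so the Fredholm indices of the levels must sum to $0$ after accounting for the one-parameter family: more precisely, since $\overline{J}^\tau$ is generic as a family, the $\widehat X^{\tau}$-level $v_0$ is permitted to have $\op{ind}(v_0)=-1$ in a generic one-parameter situation, while the symplectization levels $v_j$, $j\neq 0$, must have $\op{ind}(v_j)\geq 1$ if they are nontrivial, so that the total is $0$ only when there is exactly one symplectization level on one side, each of index $1$, and $\op{ind}(v_0)=-1$. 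This is precisely the Type~2 configuration; every element of Cases I--III violates it.

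First I would dispose of Case I: here one of the symplectization-end curves $v_{\pm1}$ is singular (non-immersed). A simple non-immersed curve in $\R\times M_\pm$ lies in a moduli space of real codimension at least two by Theorem~\ref{thm: immersion}, so after dividing by the $\R$-action and matching asymptotics with $v_0$, such a configuration is cut out with negative expected dimension inside the $1$-dimensional space $\mathcal{M}^0$; hence it is empty for generic $\{\overline{J}^\tau\}$. (If $v_{\pm 1}$ is a multiple cover of a non-immersed simple curve, Lemma~\ref{lemma: multiplicative} shows its index is even larger, making matters worse.) Next, Case II: a building $v_0\cup v_1\cup\dots\cup v_l$ with $l>1$ symplectization levels above $v_0$ (or symmetrically below). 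Each nontrivial $v_j$, $j\neq 0$, is a cylinder; if it is simple it is regular by automatic transversality (its ends are hyperbolic, hence $\#\Gamma_0=0$ when it is negative hyperbolic, and the index inequality $\op{ind}>2g-2+\#\Gamma_0$ holds whenever $\op{ind}\geq 1$ for a genus-zero cylinder), so $\op{ind}(v_j)\geq 1$; if it is a cover of a simple cylinder, Lemma~\ref{lemma: multiplicative} again forces $\op{ind}(v_j)\geq 1$ (and $v_0$ can be treated as in Step~1 of Theorem~\ref{thm: chain maps}, being an unbranched cover of $u_0$). But then $\op{ind}(v_0)+\sum_{j\neq 0}\op{ind}(v_j)\geq -1+l\geq 1>0$, contradicting that the glued curve has $\op{ind}=0$; equivalently, this stratum has codimension $\geq l > 1$ inside $\mathcal{M}^0$ and is therefore empty in the $1$-dimensional space. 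Case III, with symplectization levels both above and below $v_0$, is handled identically: now $\op{ind}(v_0)+\sum_{j<0}\op{ind}(v_j)+\sum_{j>0}\op{ind}(v_j)\geq -1 + l + l' \geq 1 > 0$, again a contradiction.

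The one point demanding care—and the step I expect to be the main obstacle—is Case I, the singular symplectization curve: there one cannot simply add Fredholm indices, because the non-immersed curve is still index-$1$ as a Fredholm problem; rather one must invoke Theorem~\ref{thm: immersion} to see that the \emph{stratum} of singular curves has real codimension $\geq 2$, and then argue that the fibered-sum (matching the negative end of $v_1$ to the positive end of $v_0$, modulo $\R$) of a codimension-$\geq 2$ stratum with the rest of the building cannot meet the $1$-dimensional $\mathcal{M}^0$. This is where one also has to be mildly careful, as in the parenthetical remark in the proof of Theorem~\ref{thm: niceness of ev}, about uniformity of the neighborhood radius $\delta_0'$ over the relevant compact pieces of the boundary strata; but this is exactly the content of Theorems~\ref{thm: transversality of evaluation map}--\ref{thm: niceness of compactification}, which may be applied stratum by stratum. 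Cases II and III are then pure index arithmetic given the compactness theorem of Section~\ref{subsection: compactness} and the absence of planes.
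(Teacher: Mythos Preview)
Your index arithmetic for Cases II and III has a genuine gap: you assume $\op{ind}(v_0)\geq -1$, but this is only true for \emph{simple} curves in a generic one-parameter family. The cobordism level $v_0$ can be a $k$-fold unbranched cover of a simple cylinder $u_0$ with $\op{ind}(u_0)=-1$, and then Lemma~\ref{lemma: multiplicative} gives $\op{ind}(v_0)=-k$, not $-1$. So in Case~II one can have, say, $\op{ind}(v_0)=-3$, $\op{ind}(v_1)=1$, $\op{ind}(v_2)=2$, with total index $0$; nothing in your count rules this out. The same objection applies to Case~III. This is precisely the situation the paper is designed around (see (C3)--(C4) in the prototypical gluing problem), and it is why pure index arithmetic cannot close the argument.

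The paper's proof is accordingly quite different in nature. Cases~I and~II are handled by Proposition~\ref{prop: can avoid boundary and singular points}, which does \emph{not} show that such buildings fail to exist; rather it shows that no curve in $\mathcal{M}^0$ can be $\delta$-close to them. The mechanism is the obstruction-bundle/evaluation-map machinery of Section~\ref{section: gluing}: one writes the gluing equation as Equation~\eqref{asagao} and invokes Theorems~\ref{thm: niceness of ev} and~\ref{thm: niceness of compactification} to see that $\overline{ev}^k_-(\mathcal{M}^{\op{sing}}/\R)$ and $\overline{ev}^k_-(\bdry(\mathcal{M}/\R))$ miss $(0,\dots,0,\pm 1)$, and then Lemma~\ref{no soln} shows there is no solution near such buildings. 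Case~III is eliminated in Section~\ref{subsection: other cases}~B by an analogous obstruction-bundle argument with evaluation maps at both ends (Lemma~\ref{close to the end}). Your dimension-count intuition for Case~I is morally relevant---the singular stratum has codimension $\geq 2$---but the conclusion that gluings near it do not occur is not a generic-transversality statement about $\{\overline{J}^\tau\}$; it requires controlling the specific section $\mathfrak{s}$, and that control comes from the evaluation map.
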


\begin{proof}
%Note that, by Lemma~\ref{lemma: use of non-torsion condition}, in all the cases there is only one irreducible component of $v_0$ with negative Fredholm index and it is a cylinder.
Cases I and II cannot occur by Proposition \ref{prop: can avoid boundary and singular points}. %Case III can be eliminated by an argument similar to that of Case II. Indeed, since $\op{ind}(v_{\pm 1}) \leq k-1$, the proof of Proposition \ref{prop: can avoid boundary and singular points} still works and can be used to eliminate Case III.
Case III is eliminated in Section \ref{subsection: other cases}.
\end{proof}

Now we go back to analyze the contributions from the Type 2 boundary. We will focus on $\partial_2^+\mathcal{M}^0$ and show that it corresponds to the term $\langle K_+ \partial_+(\gamma_+),\gamma_-\rangle$. $\partial_2^-\mathcal{M}^0$ can be dealt with similarly and corresponds to the term $\langle \partial_-K_-(\gamma_+),\gamma_-\rangle$.

We first consider the moduli spaces of the form $\mathcal{M}_{\overline{J}^{\tau_l}}^{\op{ind}=-k,\op{cyl}}(\gamma'_+,\gamma_-)$ for $k\geq 1$.  There are only finitely many $\tau_l \in[0,1]$ such that $\mathcal{M}_{\overline{J}^{\tau_l}}^{\op{ind}=-k,\op{cyl}}(\gamma'_+,\gamma_-)$ is non-empty since $\{\overline{J}^\tau\}$ is generic; and, for each such $\tau_l$, if $(v,{\bf r})\in \mathcal{M}_{\overline{J}^{\tau_l}}^{\op{ind}=-k,\op{cyl}}(\gamma'_+,\gamma_-)$, then $v$ is a $k$-fold cover of a simple curve $u$, where $(u,{\bf r'})\in \mathcal{M}_{\overline{J}^{\tau_l}}^{\op{ind}=-1,\op{cyl}}((\gamma'_+)^{1/k},\gamma^{1/k}_-)$, $\gamma'_+$ is the $k$-fold cover of $(\gamma'_+)^{1/k}$, and $\gamma_-$ is the $k$-fold cover of $\gamma_-^{1/k}$.

\begin{rmk} \label{count}
For each $\overline{J}^{\tau_l}$-holomorphic cylinder from $(\gamma'_+)^{1/k}$ to $\gamma_-^{1/k}$ of index $-1$, without markers but after quotienting by automorphisms, there are $m((\gamma'_+)^{1/k})\cdot m(\gamma_-^{1/k})$ elements of $\mathcal{M}_{\overline{J}^{\tau_l}}^{\op{ind}=-1,\op{cyl}}((\gamma'_+)^{1/k},\gamma_-^{1/k})$ and $k\cdot m((\gamma'_+)^{1/k})\cdot m(\gamma_-^{1/k})$ elements of $\mathcal{M}_{\overline{J}^{\tau_l}}^{\op{ind}=-k,\op{cyl}}(\gamma'_+,\gamma_-)$.
\end{rmk}

\begin{lemma} \label{lemma: boundary in chain homotopy}
For each integer $k\geq 1$ and $R\gg 0$, there exists a $(k-1)$-dimensional vector bundle
$$ \mathcal{O}_{+,k} \to  [R,\infty)\times  \coprod_{\gamma'_+\in \mathcal{P}^L_{\alpha_+}}
\left(\coprod_{0\leq \tau \leq 1} \mathcal{M}_{\overline{J}^\tau}^{\op{ind}=-k,\op{cyl}}(\gamma'_+,\gamma_-) \times \mathcal{M}_{J_+}^{\op{ind}=k,\op{cyl}}(\gamma_+,\gamma'_+)/\mathbb{R}  \right),$$
called the {\em obstruction bundle}, and an {\em obstruction section} $\mathfrak{s}_{+,k}$ of $\mathcal{O}_{+,k},$ for which there exists a neighborhood $\mathcal{N}\subset \overline{\mathcal{M}^0}$ of $\bdry_2^+\mathcal{M}^0$ such that
$$\mathcal{N}-\partial_2^+ \mathcal{M}^0=\coprod_{k\geq 1}  (\mathfrak{s}_{+,k})^{-1}(0).$$
\end{lemma}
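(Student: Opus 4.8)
The plan is to carry out an obstruction bundle gluing following the setup of Hutchings-Taubes~\cite{HT2}, simplified by the $L$-supersimple hypothesis, which guarantees that $\overline\bdry_{J_+}$ is linear for curves close to and graphical over $\R\times\gamma'_+$. Since the relevant actions are $<L$, only finitely many $k$, $\gamma'_+$, and $\tau_l\in[0,1]$ give non-empty moduli spaces $\mathcal{M}_{\overline{J}^{\tau_l}}^{\op{ind}=-k,\op{cyl}}(\gamma'_+,\gamma_-)$; fix such data, so that, as in Remark~\ref{count}, each $v_0\in\mathcal{M}_{\overline{J}^{\tau_l}}^{\op{ind}=-k,\op{cyl}}(\gamma'_+,\gamma_-)$ is a $k$-fold cover of a simple index $-1$ cylinder $u_0$. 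This is precisely the prototypical gluing problem of the introduction, with $\gamma'_+$ in the role of $\gamma''$.

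\emph{Pregluing and the obstruction bundle.} First I would, for each gluing length $T\geq R$ and each $v_1\in\mathcal{M}_{J_+}^{\op{ind}=k,\op{cyl}}(\gamma_+,\gamma'_+)/\R$, form the preglued cylinder $v_T=v_0\#_T v_1$ by translating $v_1$ down and splicing it to $v_0$ along the cylindrical neck over $\gamma'_+$ with cutoff functions, matching asymptotic markers so that the Fourier expansions~\eqref{eqn: Fourier} line up; then $v_T$ is $\overline{J}^{\tau_l}$-holomorphic up to an error supported in the neck and exponentially small in $T$. Because $u_0$ is transversely cut out in the $\tau$-family, $\ker D_{u_0}=0$ and $\op{coker}(D_{u_0})=\R\langle Y_0\rangle$; pulling back, $\ker D_{v_0}=0$ and $\op{coker}(D_{v_0})=\R\langle\sigma_1,\dots,\sigma_k\rangle$ with $\sigma_k=Y=\pi^*Y_0$ by Lemma~\ref{lemma: better basis for coker}, while $D_{v_1}$ is surjective. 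A standard neck-stretching argument then shows that $D_{v_T}$ has a uniformly bounded right inverse modulo an approximate cokernel canonically identified with $\op{coker}(D_{v_0})$; since to leading order the $\tau$-variation of $\overline{J}^\tau$ pairs nontrivially with $Y$, varying $\tau$ near $\tau_l$ absorbs the $\sigma_k$-direction, leaving the rank $k-1$ bundle $\mathcal{O}_{+,k}$ with fiber $\op{Hom}(\op{coker}(D_{v_0})/\R\langle Y\rangle,\R)$.

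\emph{The section.} Next I would solve, by a contraction mapping, $\Pi^\perp\overline\bdry_{\overline{J}^\tau}(\exp_{v_T}\zeta)=0$ for $\zeta$ in the complement of the approximate kernel and $\tau$ near $\tau_l$, where $\Pi^\perp$ projects off the approximate cokernel. Here the $L$-supersimple hypothesis is what makes the argument routine: the nonlinear terms vanish identically on the part of the curve lying over $\R\times\gamma'_+$, so the quadratic estimates that occupy much of \cite{HT2} are trivial on the neck, and only the already-transverse compact parts of $v_0$ and $v_1$ contribute. The obstruction section $\mathfrak{s}_{+,k}(T,v_0,v_1)$ records the residual component of this equation in $\op{Hom}(\op{coker}(D_{v_0})/\R\langle Y\rangle,\R)$, obtained by pairing with $\sigma_1,\dots,\sigma_{k-1}$ as in~\eqref{eqn: eqn for s_0}; when it vanishes, $(\exp_{v_T}\zeta,\tau)$ is an honest element of $\mathcal{M}^0$.

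\emph{Matching with $\overline{\mathcal{M}^0}$.} The step I expect to be the main obstacle is showing that, as $(T,v_0,v_1)$ ranges over the zeros of the $\mathfrak{s}_{+,k}$ (over all $k$ and $\gamma'_+$), the resulting curves exhaust $\mathcal{N}-\partial_2^+\mathcal{M}^0$ for some neighborhood $\mathcal{N}$ of $\partial_2^+\mathcal{M}^0$ in $\overline{\mathcal{M}^0}$. Injectivity, and the fact that the construction lands inside $\mathcal{M}^0$, follow from uniqueness in the contraction mapping; the real content is \emph{surjectivity}: any sequence in $\mathcal{M}^0$ converging to a point of $\partial_2^+\mathcal{M}^0$ is eventually of the form $\exp_{v_T}\zeta$. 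This I would prove by combining SFT compactness (Theorem~\ref{thm: SFT compactness fo modified J}) with an a priori exponential decay estimate on the neck, which forces such a curve to be a small perturbation of some $v_T$; the covering number of the limiting level $v_0$ then reads off $k$, which produces the disjoint union over $k$, and distinctness of the limiting buildings gives disjointness of the $\mathfrak{s}_{+,k}$-zero sets for different $k$ and $\gamma'_+$. The detailed estimates are deferred to Section~\ref{section: gluing}.
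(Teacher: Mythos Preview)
Your proposal is correct and follows essentially the same approach as the paper: the paper defers the proof to Section~\ref{section: gluing}, where the obstruction bundle with fiber $\op{Hom}(\ker D_{v_0}^*/\R\langle Y\rangle,\R)$ and section $\mathfrak{s}$ are constructed via pregluing, a $\Theta_\pm$-decomposition, and a contraction mapping argument, with the bijection onto a neighborhood of $\partial_2^+\mathcal{M}^0$ supplied by Theorem~\ref{thm: bijectivity of gluing map}. One small caveat: the formula you cite, \eqref{eqn: eqn for s_0}, is for the \emph{linearized} section $\mathfrak{s}_0$ obtained after a homotopy, whereas the genuine obstruction section $\mathfrak{s}$ is defined by pairing $\sigma$ with the full error term $\frac{\partial\beta_+}{\partial s}\eta_+ + \mathcal{R}_-$ (see Section~\ref{subsection: obstruction bundle}).
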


\begin{rmk}
When $k=1$, we have the usual gluing of $v_0\cup v_1$, where $\op{ind}(v_0)=-1$, $\op{ind}(v_1)=1$, $v_0$ maps to some $\widehat{X}^{\tau_0}$,  $v_1$ maps to $\R\times M$,  and $v_0,v_1$ are simple.
\end{rmk}

The explicit definitions of $\mathcal{O}_{+,k}$ and $\mathfrak{s}_{+,k}$ are given in Section~\ref{subsection: obstruction bundle} ---  there they are written as $\mathcal{O}$ and $\mathfrak{s}$ with the understanding that $k$ and $(v_0,{\bf r}_0)$ are fixed.  The proof of Lemma \ref{lemma: boundary in chain homotopy} follows from Theorem \ref{thm: bijectivity of gluing map}.

Recall that we are viewing $((v_0,{\bf r}_0),(v_1,{\bf r}_1))\sim((v'_0,{\bf r}'_0),(v'_1,{\bf r}'_1))$ if there exist automorphisms $\pi_1$ and $\pi_2$ of the domains $\dot F_1=\R\times S^1$ and $\dot F_2=\R\times S^1$ such that $v_1=v_1'\circ \pi_1$, $v_2=v_2'\circ \pi_2$, and $\pi_1$ and $\pi_2$ take positive (resp.\ negative) punctures to positive (resp.\ negative) punctures \cb and take markers to markers.

\begin{rmk}[\em Source of $\Q$-coefficients.]
It is tempting to instead identify
\begin{equation} \label{eqn: equivalence reln}
((v_0,{\bf r}_0),(v_1,{\bf r}_1))\sim'((v'_0,{\bf r}'_0),(v'_1,{\bf r}'_1))
\end{equation}
if we do not require $\pi_1$ and $\pi_2$ map markers to markers, but only satisfy
\begin{equation} \label{eqn: equivalence reln 2}
(r_{0+}-r_{0-})+(r_{1+}-r_{1-})  = (r'_{0+}-r'_{0-})+ (r'_{1+}-r'_{1-}).
\end{equation}
Here the asymptotic markers are viewed as elements of $S^1\in \R/\Z$.  In the $k=1$ case, the pairs identified by $\sim'$ represent the same boundary point of $\mathcal{M}^0$.  However, for $k>1$, unless $((v_0,{\bf r}_0),(v_1,{\bf r}_1))=((v'_0,{\bf r}'_0),(v'_1,{\bf r}'_1))$, the upper level of the $2$-level buildings identified by $\sim'$ do not continue to the same component of $(\overline{ev}^k_-)^{-1}(\nu)$; cf.\ the proof of Theorem~\ref{thm: chain homotopy}.  This is the source of ``branching'', which in turn forces us to use $\Q$-coefficients.
\end{rmk}
\cb

\s
Define $n^k_{+,\tau_l}(\gamma_+,\gamma'_+;\gamma_-)$ as the signed count
$$\#\left( \mathfrak{s}_{+,k}^{-1}(0) \cap \left(\{T\}\times \mathcal{M}_{\overline{J}^{\tau_l}}^{\op{ind}=-k,\op{cyl}}(\gamma'_+,\gamma_-)\times  \mathcal{M}_{J_+}^{\op{ind}=k,\op{cyl}}(\gamma_+,\gamma'_+)/\mathbb{R} \right)\right) {1\over m(\gamma'_+)}$$
for generic $T\in [R,\infty)$.  Then, in view of Lemma~\ref{lemma: boundary three},
\begin{align} \label{equation: Phi1-Phi0(gamma+),gamma-}
\left\langle \left(\Phi_{(X^1,\alpha^1,\overline{J}^1)}-\Phi_{(X^0,\alpha^0,\overline{J}^0)}\right)(\gamma_+),\gamma_- \right>
= & \sum_{k=1}^{\infty}\sum_{\gamma'_+\in \mathcal{P}^L_{\alpha_+}} \sum_l n^k_{+,\tau_l}(\gamma_+,\gamma'_+;\gamma_-)\frac{1}{m(\gamma_-)} \\
\nonumber  & + ~~~ \textrm{ terms coming from } \partial_2^- \mathcal{M}^0.
\end{align}

Recall the evaluation map
\[\overline{ev}^k_-=\overline{ev}^k_-(\gamma_+,\gamma'_+;J_+):\overline{\mathcal{M}_{J_+}^{\op{ind}=k,\op{cyl}}(\gamma_+,\gamma'_+)/\R} \to  S^{k-1}.\]
from Section \ref{section: evaluation map}.   Also recall that $\overline{ev}^k_-(v_1,r_{1+},r_{1-})$ depends on the marker $r_{1-}$ at the negative end.  Finally, the asymptotic eigenfunctions $f_1,\dots,f_k$ and the cokernel element $Y$ depend on the parametrization given by $(v_0,r_{0+},r_{0-})$.

\begin{lemma} \label{lemma: zeros of s= zeros of ev}
For any positive integer $k$,
\begin{enumerate}
\item if $(\gamma'_+)^{1/k}$ is negative hyperbolic, then $n^k_{+,\tau_l}(\gamma_+,\gamma'_+;\gamma_-)$ is given by %\marginpar{formatting changed}
\begin{equation} \label{equation: count}
 \#\mathcal{M}^{\op{ind}=-k,\op{cyl}}_{\overline{J}^{\tau_l}}({\gamma'}_+,{\gamma}_-)\cdot
 \# \left( \left(\widetilde{ev}^k_-(\gamma_+,\gamma'_+;J_+)\right)^{-1} (\{(0,\dots,0,\pm 1)\})\right){1\over {m(\gamma'_+)}};
\end{equation}
\item if $(\gamma'_+)^{1/k}$ is positive hyperbolic, then we replace $(0,\dots,0,\pm 1)$ by $(\pm 1, 0,\dots,0)$.
\end{enumerate}
\end{lemma}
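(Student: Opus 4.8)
The plan is to evaluate the signed count $n^k_{+,\tau_l}(\gamma_+,\gamma'_+;\gamma_-)$ by replacing the obstruction section $\mathfrak{s}_{+,k}$ with the explicit linearized model $\mathfrak{s}_0$ of Equation~\eqref{eqn: eqn for s_0}, identifying its zero set as a product, and then factoring the signs. First I would invoke the homotopy $\mathfrak{s}_\zeta$, $\zeta\in[0,1]$, from $\mathfrak{s}_1=\mathfrak{s}_{+,k}$ to $\mathfrak{s}_0$ constructed in Section~\ref{subsection: obstruction bundle}, together with the fact, proved in Section~\ref{subsection: linearized section s_0} using Theorem~\ref{thm: niceness of compactification}, that the zeros of $\mathfrak{s}_\zeta$ remain in the interior $[R,\infty)\times\mathcal{M}^{\op{ind}=-k,\op{cyl}}_{\overline{J}^{\tau_l}}(\gamma'_+,\gamma_-)\times(\mathcal{M}^{\op{ind}=k,\op{cyl}}_{J_+}(\gamma_+,\gamma'_+)/\R)$, away from the stratum where the upper level degenerates. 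Restricting to a generic slice $\{T\}$ as in the definition of $n^k_{+,\tau_l}$, the zero set of $\mathfrak{s}_\zeta$ is a compact $1$-manifold cobounding $(\mathfrak{s}_{+,k})^{-1}(0)$ and $(\mathfrak{s}_0)^{-1}(0)$ on that slice, so their signed counts agree and I am reduced to counting zeros of $\mathfrak{s}_0$.

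Next I would read off $(\mathfrak{s}_0)^{-1}(0)$. By the transversality of $u_0$ in the $1$-parameter family (C4) and Remark~\ref{count}, $\mathcal{M}^{\op{ind}=-k,\op{cyl}}_{\overline{J}^{\tau_l}}(\gamma'_+,\gamma_-)$ is a finite set carrying a well-defined signed count; fix one of its elements $v_0$ and a generic $T$. Using the basis $\{\sigma_1,\dots,\sigma_k\}$ of $\ker(D^N_{v_0})^*$ from Lemma~\ref{lemma: good basis for coker}, with $\sigma_k=Y$ and positive ends $e^{-\lambda_i s}f_i(t)$ modulo $f_{k+1},f_{k+2},\dots$, the section $\mathfrak{s}_0(T,v_0,v_1)$ acts on $\sigma_i$, $i=1,\dots,k-1$, by $e^{-2\lambda_i T}c_i$, where $(c_1,\dots,c_k)=ev^k_-(v_1)$. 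Since $e^{-2\lambda_i T}>0$, this vanishes precisely when $c_1=\dots=c_{k-1}=0$, i.e., since $(c_1,\dots,c_k)\neq 0$, precisely when $\widetilde{ev}^k_-(v_1)=(0,\dots,0,\pm1)\in S^{k-1}$. When $(\gamma'_+)^{1/k}$ is positive hyperbolic instead, the analogous good-basis statement --- proved the same way but now using the eigenfunction computation of Section~\ref{subsubsection: positive hyperbolic case} and the winding estimate of Claim~\ref{claim2a}, which place $Y$ in the lowest slot $\sigma_1=Y$ --- makes $\mathfrak{s}_0$ act on $\sigma_2,\dots,\sigma_k$, so the zero condition becomes $c_2=\dots=c_k=0$, i.e., $\widetilde{ev}^k_-(v_1)=(\pm1,0,\dots,0)$. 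The eigenfunctions and $Y$ appearing here are adapted to the parametrization given by the marker $r_{0-}$ of $v_0$, which differs from the fixed parametrization of a neighborhood of $\gamma'_+$ used to define $\widetilde{ev}^k_-(\gamma_+,\gamma'_+;J_+)$ in Section~\ref{section: evaluation map} only by a $t$-translation, hence by a rotation of $S^{k-1}$ fixing the two poles in question; consequently the preimage $(\widetilde{ev}^k_-(\gamma_+,\gamma'_+;J_+))^{-1}(\{(0,\dots,0,\pm1)\})$ (resp.\ of $\{(\pm1,0,\dots,0)\}$) is independent of $v_0$, is finite and transversely cut out by Theorems~\ref{thm: niceness of ev}(3) and \ref{thm: niceness of compactification}(1), and is disjoint from the singular locus by Theorem~\ref{thm: niceness of ev}(2). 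Thus $(\mathfrak{s}_0)^{-1}(0)$ restricted to the relevant slice equals $\{T\}\times\mathcal{M}^{\op{ind}=-k,\op{cyl}}_{\overline{J}^{\tau_l}}(\gamma'_+,\gamma_-)\times(\widetilde{ev}^k_-(\gamma_+,\gamma'_+;J_+))^{-1}(\{(0,\dots,0,\pm1)\})$, with the pole replaced by $(\pm1,0,\dots,0)$ in the positive hyperbolic case.

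Finally I would check that the signed count factors as the product of the two signed counts on the right-hand side, so that the factor $\tfrac{1}{m(\gamma'_+)}$ in the definition of $n^k_{+,\tau_l}$ passes through unchanged and the asserted formula follows. For this I would use the orientation of the obstruction bundle $\mathcal{O}_{+,k}$ fixed in Section~\ref{subsection: obstruction bundle}, which by construction is compatible with the Bourgeois--Mohnke coherent orientation on $\mathcal{M}^{\op{ind}=-k,\op{cyl}}_{\overline{J}^{\tau_l}}(\gamma'_+,\gamma_-)$ and with the orientation on $\mathcal{M}^{\op{ind}=k,\op{cyl}}_{J_+}(\gamma_+,\gamma'_+)/\R$ used to orient $(\widetilde{ev}^k_-)^{-1}(\cdot)$: on each slice $\{T\}\times\{v_0\}\times\mathcal{M}^{\op{ind}=k,\op{cyl}}_{J_+}(\gamma_+,\gamma'_+)/\R$ the section $\mathfrak{s}_0$ is a positive rescaling of the $\R^{k-1}$-valued evaluation map cutting out $(\widetilde{ev}^k_-)^{-1}(\text{pole})$, so the local sign of a zero $(T,v_0,v_1)$ equals the sign of $v_0$ times the local intersection sign of $\widetilde{ev}^k_-$ with the pole at $[v_1]$; summing over $v_0$ and over both poles gives the asserted product. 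I expect the main obstacle to be precisely this orientation bookkeeping --- reconciling the coherent orientations of \cite{BM}, the $1$-parameter-family orientation on the negative-index moduli space, the chosen orientation of the obstruction bundle, and the orientation on $\mathcal{M}/\R$ entering $\widetilde{ev}^k_-$, and verifying that the two copies $(0,\dots,0,\pm1)$ of the pole contribute with compatible signs --- whereas the analytic content, that $\mathfrak{s}_{+,k}$ genuinely is homotopic to $\mathfrak{s}_0$ with zeros confined to the interior, is already supplied by Sections~\ref{subsection: obstruction bundle}--\ref{subsection: linearized section s_0} and Theorem~\ref{thm: bijectivity of gluing map}.
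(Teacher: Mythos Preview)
Your approach is essentially the same as the paper's: the paper's proof simply cites Proposition~\ref{prop: zeros do not escape} (the homotopy $\mathfrak{s}_\zeta$ with zeros confined away from $\partial K$, together with Lemma~\ref{lemma: s_0} identifying $\mathfrak{s}_0^{-1}(0)$) and Section~\ref{subsection: other cases}~A for the positive hyperbolic variant, with the sign bookkeeping deferred to Section~\ref{section: orientations}. One small correction: the homotopy $\mathfrak{s}_\zeta$ is constructed in Section~\ref{subsection: linearized section s_0}, not Section~\ref{subsection: obstruction bundle}.
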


\begin{proof}
The proof of Lemma~\ref{lemma: zeros of s= zeros of ev} follows from \cb Lemma \ref{lemma: s_0}, \cb Proposition \ref{prop: zeros do not escape}, and Section~\ref{subsection: other cases} A.
%
%We will explain the count in Equation~\eqref{equation: count}.  For each element of
%$$\left(\widetilde{ev}^k_-(\gamma_+,\gamma'_+;J_+)\right)^{-1} (\{(0,\dots,0,\pm 1)\}),$$
%without markers but after quotienting by automorphisms, there are $m(\gamma_+)m(\gamma_+')$ actual elements when markers are considered. For each element $(T,v_1,v_0)$ of ${\frak s}_{+,k}^{-1}(0)$, without markers but after quotienting by automorphisms, there are
%$$m(\gamma_+)m(\gamma_+') { k m({\gamma'}_+^{1/k})m(\gamma_-^{1/k})\over m(\gamma'_+)}=m(\gamma_+)m(\gamma_+') {m({\gamma'}_+^{1/k})m(\gamma_-^{1/k})\over m({\gamma'}_+^{1/k})}$$
%actual elements of $\mathfrak{s}_{+,k}^{-1}(0)$ when markers are considered. The count then follows from Remark~\ref{count}.
\end{proof}

Now we are in a position to finish the proof of Theorem \ref{thm: chain homotopy} in the simple $\mathcal{M}^0$ case.

\begin{proof}[ Proof of Theorem~\ref{thm: chain homotopy} in the simple $\mathcal{M}^0$ case.]
Let us abbreviate
$$%\overline{ev}^k_-= \overline{ev}^k_-(\gamma_+,\gamma'_+;J_+),\quad
\mathcal{M}=\mathcal{M}_{J_+}^{\op{ind}=k,\op{cyl}}(\gamma_+,\gamma'_+)$$
and denote the restriction of $\overline{ev}^k_-$ to the boundary $\bdry (\mathcal{M}/\R)$ by $\bdry \overline{ev}^k_-$. We assume $(\gamma'_+)^{1/k}$ is negative hyperbolic; the case of $(\gamma'_+)^{1/k}$ positive hyperbolic is similar. By Theorems \ref{thm: niceness of ev} and \ref{thm: niceness of compactification}, there exists a generic embedded  \cb oriented path $\nu:[0,1]\to S^{k-1}$ from $(0,\dots,0,-1)$ to $(0,\dots,0,+1)$ such that $\nu \pitchfork \overline{ev}^k_-$ and $\nu\pitchfork \bdry\overline{ev}^k_-$. In particular, $\bdry \overline{ev}^k_-\cap \nu$ consists of finitely many points.
Here the path $\nu$ is oriented with the standard orientation from $[0,1]$, and $\bdry \nu = \{(0,\dots,0,\pm1)\}$ is oriented as the boundary. We postpone further details about orientation to Section~\ref{subsection: orientations chain homotopy}.
 \cb

Let
$$\mathcal{M}_+^{k-1}(\zeta_+,\gamma'_+)
=\bigg \{  w \in \mathcal{M}^{\op{ind}=k-1,\op{cyl}}_{J_+}(\zeta_+,\gamma'_+) ~~\Big | ~~ \widetilde{ev}^k_-(\zeta_+,\gamma'_+;J_+)(w)\in \bdry \overline{ev}^k_- \cap \nu \bigg \}.$$
For $k=1$ we define %\marginpar{slightly changed wording}
$$K_+^{k-1}(\zeta_+,\gamma_-)=\sum_l \#\mathcal{M}^{\op{ind}=-1,\op{cyl}}_{\overline{J}^{\tau_l}}(\zeta_+,\gamma_-)$$
and for $k \geq 2$ we define
$$K_+^{k-1}(\zeta_+,\gamma_-)=\sum_l \sum_{\gamma'_+\in \mathcal{P}^L_{\alpha_+}}
\#\mathcal{M}^{\op{ind}=-k,\op{cyl}}_{\overline{J}^{\tau_l}}({\gamma'}_+,{\gamma}_-)
\# (\mathcal{M}_{+}^{k-1}(\zeta_+,\gamma'_+)/\R) {1\over m({\gamma'}_+)}.$$
We then define
\begin{equation}
K_+(\zeta_+)=\sum_{k=1}^\infty \sum_{\gamma_-\in \mathcal{P}^L_{\alpha_-}} K_+^{k-1}(\zeta_+,\gamma_-)\frac{1}{m(\gamma_-)}\cdot \gamma_-.
\end{equation}

We claim that
\begin{equation} \label{rats}
(\bdry \overline{ev}^k_-)^{-1}(\nu)=\left.\left(\coprod_{\zeta_+} \mathcal{M}_+^{k-1}(\zeta_+,\gamma'_+)/\R \times
(\mathcal{M}_{J_+}^{\op{ind}=1,\op{cyl}}(\gamma_+,\zeta_+)/\mathbb{R})  \right)\right/ \sim',
\end{equation}
where $\sim'$ is defined as in Equations~\eqref{eqn: equivalence reln} and \eqref{eqn: equivalence reln 2} with all the subscripts increased by $1$.
Indeed, ``$\supseteq$" follows from the fact that the evaluation map $\overline{ev}_-^k$ only depends on the behavior of holomorphic curves near the negative end; and ``$\subseteq$" follows from the fact that generically there is no other boundary component by Theorem \ref{thm: niceness of compactification}. Observe that $ \{(0,\dots,0,\pm 1)\} \cap \bdry \overline{ev}^k_-=\emptyset$.

By examining the boundary of the $1$-dimensional manifold $(\overline{ev}^k_-)^{-1}(\nu)$,
\begin{align} \label{equation: ev-=}
& \# \left( (\overline{ev}^k_-)^{-1}(\{(0,\dots,0,\pm 1)\})\right)  \\
&\hspace{1cm}=\sum_{\zeta_+\in \mathcal{P}^L_{\alpha_+}} \#(\mathcal{M}_+^{k-1}(\zeta_+,\gamma'_+)/\R) \#(\mathcal{M}_{J_+}^{\op{ind}=1,\op{cyl}}(\gamma_+,\zeta_+)/\mathbb{R}){1\over m(\zeta_+)}, \nonumber
\end{align}
where the right-hand side comes from Equation~\eqref{rats}. %\marginpar{formatting changed}
Then Lemma \ref{lemma: zeros of s= zeros of ev} and Equation~\eqref{equation: ev-=} imply that the first term on the right-hand side of Equation~\eqref{equation: Phi1-Phi0(gamma+),gamma-} is equal to $\left< K_+ \partial_+ (\gamma_+),\gamma_- \right>$.  This finishes the proof of Theorem \ref{thm: chain homotopy}, modulo the discussion of orientations from Section~\ref{subsection: orientations chain homotopy}.
\end{proof}

\s\n
{\bf Non-simple $\mathcal{M}^0$ case.} We explain the necessary modifications when some curve $u\in\mathcal{M}^0$ is multiply-covered.

\cb By Lemma~\ref{lemma: chain maps transversality}, $u\in \mathcal{M}^0$ is part of a regular $1$-dimensional family and all the curves in the connected component of $\mathcal{M}^0$ containing $u$ are $m$-fold covers of a regular $1$-dimensional family $\coprod_{0\leq \tau \leq 1} \mathcal{M}^{\op{ind}=0,\op{cyl}}_{\overline{J}^\tau}(\gamma,\gamma')$. \cb

Let $\mathcal{M}=\mathcal{M}_{J_+}^{\op{ind}=k,\op{cyl}}(\gamma_+,\gamma'_+)$ as before and let $\mathcal{M}'\subset \mathcal{M}$ be the subset of curves which are $b$-fold covers, where $b|k$. We then use the inclusion given by Equation~\eqref{eqn: inclusion} to argue that for generic $J$ the evaluation map $(ev_-')^k: \mathcal{M}'\to \R^k$ does not pass through zero and hence descends to $(\widetilde{ev}'_-)^k: \mathcal{M}'/\R\to S^{k-1}$.

The rest of the argument is the same.  Suppose the family $\mathcal{M}^0$ has boundary which consists of a two-level building $v_0\cup v_1$, where $\op{ind}(v_0)=-k$, $\op{ind}(v_1)=-k$, $v_0$ is a $k$-fold cover, and $v_1$ is a $b$-fold cover with $b|k$.  As before, the only pairs $v_0\cup v_1$ that glue satisfy $(\widetilde{ev}'_-)^k(v_1)=(0,\dots,0,\pm 1)$ and we continue the family along some generic $\nu\subset S^{k-1}$ from $(0,\dots,0,1)$ to $(0,\dots,0,-1)$, where the evaluation map we are taking is $\widetilde{ev}_-^k:\mathcal{M}/\R \to S^{k-1}$.

\subsection{$HC(\mathcal{D}) $ is independent of $\mathcal{D}$}

To show that $HC(\mathcal{D})$ is independent of $\mathcal{D}$, we study the composition of chain maps induced by a composition of symplectic cobordisms.

For $j=1,2$, let $(X^j,\alpha^j)$ be an exact symplectic cobordism as in Section \ref{subsection: chain maps}, with $M_-^1=M_+^2$ and $\alpha_-^1= \alpha_+^2$. We define $X^{12}$ to be $X^1\cup X^2$ identified along $M_-^1=M_+^2$ and define the $1$-form $\alpha^{12}$ on $X^{12}$ by $\alpha^{12}|_{X^1}=\alpha^1$ and $\alpha^{12}|_{X^2}=\alpha^2$.

Let $(\widehat{X}^j,\widehat{\alpha}^j)$ be the completion of $(X^j,\alpha^j)$ and let $\overline{J}^j$ be an almost complex structure which tames $(\widehat X^j,\widehat\alpha^j)$ and restricts to $\alpha^j_\pm$-tame almost complex structures $J^j_\pm$ at the positive and negative ends. Similarly, let $(\widehat{X}^{12}, \widehat{\alpha}^{12})$ be the completion of $(X^{12},\alpha^{12})$ and let $\overline{J}^{12}$ be a generic almost complex structure on $\widehat{X}^{12}$ which tames $(\widehat{X}^{12}, \widehat{\alpha}^{12})$ and coincides with $\overline{J}^1_+$ and $\overline{J}^2_-$ at the positive and negative ends.

\begin{thm} \label{thm: composition of chain maps}
Suppose that $(\widehat{X}^j, \widehat{\alpha}^{j}, \overline{J}^{j})$ and $(\widehat{X}^{12}, \widehat{\alpha}^{12}, \overline{J}^{12})$ satisfy the assumptions of Theorem \ref{thm: chain maps}. Then $\Phi_{(\widehat{X}^{12}, \widehat{\alpha}^{12}, \overline{J}^{12}) }$ and $\Phi_{(\widehat{X}^2, \widehat{\alpha}^{2}, \overline{J}^{2}) } \circ \Phi_{(\widehat{X}^1, \widehat{\alpha}^{1}, \overline{J}^{1}) }$ induce the same map on homology.
\end{thm}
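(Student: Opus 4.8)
The plan is to run the standard neck-stretching (``splitting'') argument: combine the homotopy invariance furnished by Theorem~\ref{thm: chain homotopy} with a compactness-and-gluing analysis in the long-neck limit.

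First I would assemble a $1$-parameter family of cobordism data. After a harmless preliminary homotopy (which changes none of the induced maps on homology, by Theorem~\ref{thm: chain homotopy}), arrange that $J^1_-=J^2_+=:J_0$ as $\alpha_0$-tame almost complex structures on $\R\times M_0$, where $M_0:=M^1_-=M^2_+$ and $\alpha_0:=\alpha^1_-=\alpha^2_+$. For $R\ge 0$ let $(\widehat{X}^{12,R},\widehat{\alpha}^{12,R})$ be the completed exact cobordism obtained from $X^{12}$ by inserting a neck $[-R,R]\times M_0$, and let $\overline{J}^{12,R}$ be an almost complex structure taming it, restricting to $J^1_+$ and $J^2_-$ at the ends, to the $\R$-invariant $J_0$ on the inserted neck, and agreeing with $\overline{J}^1$ and $\overline{J}^2$ away from the neck; after a small perturbation supported in the interiors of $X^1$ and $X^2$, it may be taken so that all moduli spaces of index $\le 1$ cylinders of action $<L$ are transversely cut out. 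Identifying all the $\widehat{X}^{12,R}$ with one fixed completed manifold, the family $\{(\widehat{X}^{12,R},\widehat{\alpha}^{12,R},\overline{J}^{12,R})\}_{R\in[0,R_0]}$ is an admissible homotopy of cobordisms, so Theorem~\ref{thm: chain homotopy} gives $(\Phi_{(\widehat{X}^{12,R})})_*=(\Phi_{(\widehat{X}^{12})})_*$ for every finite $R$. (Here I also take the contact forms on $M^1_+$, $M_0$, $M^2_-$ to be $L$-noncontractible, as required by Theorem~\ref{thm: chain homotopy}; this holds in the intended application to $HC(\mathcal{D})$.)

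Second, I would show that $\Phi_{(\widehat{X}^{12,R})}=\Phi_{(\widehat{X}^2,\overline{J}^2)}\circ\Phi_{(\widehat{X}^1,\overline{J}^1)}$ on the chain level for $R\gg 0$. By SFT compactness (Theorem~\ref{thm: SFT compactness fo modified J}), any sequence $u_n\in\mathcal{M}^{\op{ind}=0,\op{cyl}}_{\overline{J}^{12,R_n}}(\gamma_+,\gamma_-)$ with $R_n\to\infty$ subconverges to a holomorphic building; the index inequalities ($\op{ind}\ge 0$ for nontrivial curves in a cobordism, $\op{ind}>0$ in a symplectization, via Lemma~\ref{lemma: multiplicative}) together with the ``no planes'' argument of Theorem~\ref{thm: chain maps} (no contractible Reeb orbits) force the limit to be a two-level building $v^{(1)}\cup v^{(2)}$ with $v^{(1)}$ in $\widehat{X}^1$ and $v^{(2)}$ in $\widehat{X}^2$, both index-$0$ cylinders, matched along a single orbit $\gamma_0\in\mathcal{P}^L_{\alpha_0}$. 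Conversely, every such matched pair is regular --- the simple levels by the genericity of $\overline{J}^1,\overline{J}^2$, the unbranched multiple covers of immersed simple curves by Step~1 of the proof of Theorem~\ref{thm: chain maps}, and the matching automatically because index-$0$ curves carry no moduli --- and, since $\gamma_0$ is hyperbolic, it glues \emph{rigidly and without obstruction} for $R\gg 0$: this is a special and considerably easier case of the gluing analysis of Section~\ref{section: gluing} (no obstruction bundle, in contrast with the prototypical problem where $\op{ind}(v_0)<0$). Combining the two, for $R\gg 0$ the moduli space $\mathcal{M}^{\op{ind}=0,\op{cyl}}_{\overline{J}^{12,R}}(\gamma_+,\gamma_-)$ is in orientation-compatible bijection with $\coprod_{\gamma_0}\mathcal{M}^{\op{ind}=0,\op{cyl}}_{\overline{J}^1}(\gamma_+,\gamma_0)\times\mathcal{M}^{\op{ind}=0,\op{cyl}}_{\overline{J}^2}(\gamma_0,\gamma_-)$, and the bookkeeping of asymptotic markers with the weights $1/m(\gamma_0)$ and $1/m(\gamma_-)$ --- carried out exactly as in the definitions of $\bdry$ and $\Phi_{(X,\alpha,\overline{J})}$, cf.\ Remark~\ref{rmk: integer coefficients} and the proof of Theorem~\ref{thm: chain maps} --- yields $\#\,\mathcal{M}^{\op{ind}=0,\op{cyl}}_{\overline{J}^{12,R}}(\gamma_+,\gamma_-)=\sum_{\gamma_0}\#\,\mathcal{M}^{\op{ind}=0,\op{cyl}}_{\overline{J}^1}(\gamma_+,\gamma_0)\,\#\,\mathcal{M}^{\op{ind}=0,\op{cyl}}_{\overline{J}^2}(\gamma_0,\gamma_-)/m(\gamma_0)$. (Gluings through a bad orbit $\gamma_0$ occur in canceling pairs, as in the proof of $\bdry^2=0$, consistent with the sum effectively running over good orbits.) This is precisely the composition formula, so $\Phi_{(\widehat{X}^{12,R})}=\Phi_{(\widehat{X}^2)}\circ\Phi_{(\widehat{X}^1)}$.

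The main obstacle is the gluing step just invoked: showing that for $R\gg 0$ the gluing map is a bijection onto a neighborhood of each broken configuration, with orientations and marker weights matching the composition formula. But since both levels have nonnegative Fredholm index this is the rigid, unobstructed case --- strictly easier than the obstruction-bundle gluing occupying the rest of the paper --- and I would obtain it as a degenerate specialization of that analysis. Granting it, $(\Phi_{(\widehat{X}^{12})})_*=(\Phi_{(\widehat{X}^{12,R})})_*=(\Phi_{(\widehat{X}^2)})_*\circ(\Phi_{(\widehat{X}^1)})_*$, which proves the theorem.
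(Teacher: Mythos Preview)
Your proposal is correct and follows essentially the same approach as the paper: the paper's proof is a one-sentence sketch (``one can construct a $1$-parameter family of completed symplectic cobordisms between $\widehat{X}^{12}$ and $\widehat{X}^1\cup\widehat{X}^2$'' and then run the argument of Theorem~\ref{thm: chain homotopy}), and your neck-stretching family $\{\widehat{X}^{12,R}\}$ together with the compactness/gluing analysis at $R\to\infty$ is exactly the unpacking of that sketch. Your identification of the $R\gg 0$ gluing as the unobstructed, index-nonnegative degenerate case of Section~\ref{section: gluing} is also correct---since $\overline{J}^1,\overline{J}^2$ are fixed generic almost complex structures (not $1$-parameter families), there are no simple index $-1$ curves in either cobordism and hence no negative-index covers, so no obstruction bundle arises.
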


\begin{proof}
The proof of this is essentially the same as the proof of Theorem \ref{thm: chain homotopy}, as one can construct a $1$-parameter family of completed symplectic cobordisms between $\widehat{X}^{12}$ and $\widehat{X}^1 \cup \widehat{X}^2$.
\end{proof}

Theorem \ref{thm: chain homotopy} and Theorem \ref{thm: composition of chain maps} imply:

\begin{cor}
$HC(\mathcal{D})$ is independent of the auxiliary data $\mathcal{D}$.
\end{cor}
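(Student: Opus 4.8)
The plan is to run the standard ``continuation map'' formalism. Theorem~\ref{thm: chain maps} supplies a chain map $\Phi_{(X,\alpha,\overline J)}$ for every (suitably perturbed, generic) exact symplectic cobordism, Theorem~\ref{thm: composition of chain maps} says these maps compose correctly on homology, and Theorem~\ref{thm: chain homotopy} says homotopic cobordisms induce the same map on homology. Granting these three inputs, producing a canonical isomorphism $HC(\mathcal{D}^1)\xrightarrow{\ \sim\ }HC(\mathcal{D}^2)$ for two choices of auxiliary data for the same $(M,\xi)$ is a routine direct-limit argument; the real work is in arranging the cobordisms so that the three theorems apply.

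\emph{Normalizations and connecting cobordisms.} First I would make two harmless reductions. (a) A direct limit is unchanged under passage to a cofinal subsequence and under refinement: inserting an intermediate level $\vp\alpha$ with $L_i<L<L_{i+1}$ together with the requisite data changes nothing, because by Theorem~\ref{thm: composition of chain maps} the ``long'' continuation map $\vp_i\alpha\to\vp_{i+1}\alpha$ is homotopic to the composite of the two ``short'' ones; hence after refining the two $L$-sequences to a common cofinal set we may assume $\mathcal{D}^1$ and $\mathcal{D}^2$ use the same sequence $\{L_i\}$. (b) Rescaling a contact form by a positive constant $c$ replaces $R_\alpha$ by $c^{-1}R_\alpha$, multiplies every action by $c$, and gives a tautological identification $CC^{cL}(M,c\vp\alpha,J')\cong CC^{L}(M,\vp\alpha,J)$ respecting differentials and continuation maps; combined with (a) this lets us rescale $\alpha^1,\alpha^2$ freely, so writing $\alpha^2=h\alpha^1$ we arrange $\alpha^1\ge\alpha^2$ pointwise. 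When $\alpha_+\ge\alpha_-$ there is a standard exact symplectic cobordism from $\alpha_+$ to $\alpha_-$ on $\R\times M$ (Liouville form $\phi(s,x)\alpha_-$ interpolating between the two symplectization profiles), and a truncated symplectization $([a,b]\times M,d(e^s\alpha^1))$ is an exact cobordism from $e^b\alpha^1$ to $e^a\alpha^1$. Perturbing near the ends so that they become the relevant $L_i$-supersimple pairs, choosing generic compatible $\overline J$, and shrinking the $L_i$'s as in move (a) if needed, Theorem~\ref{thm: chain maps} yields chain maps $\Psi_i:CC^{L_i}(\vp^1_i\alpha^1,J^1_i)\to CC^{L_i}(\vp^2_i\alpha^2,J^2_i)$ and, going the other way (with a further rescaling $c'$ of $\alpha^1$ to restore pointwise domination), $\Psi'_i:CC^{L_i}(\vp^2_i\alpha^2,J^2_i)\to CC^{L_i}(c'\vp^1_i\alpha^1,J''_i)$; here the energy inequality $\mathcal{A}_{\alpha_-}(\gamma_-)\le\mathcal{A}_{\alpha_+}(\gamma_+)$ from Section~\ref{subsection: compactness} is what makes these maps respect the $L_i$-truncation.

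\emph{Assembling the isomorphism.} The maps $(\Psi_i)_*$ are compatible with the continuation maps of $\mathcal{D}^1$ and $\mathcal{D}^2$: both $\Psi_{i+1}\circ\Phi^1_i$ and $\Phi^2_i\circ\Psi_i$ are, by Theorem~\ref{thm: composition of chain maps}, homotopic to continuation maps of exact cobordisms $\vp^1_i\alpha^1\to\vp^2_{i+1}\alpha^2$, and any two such cobordisms can be joined through a family of exact cobordisms, so by Theorem~\ref{thm: chain homotopy} they induce equal maps on homology; passing to the direct limit the $(\Psi_i)_*$ assemble to a homomorphism $HC(\mathcal{D}^1)\to HC(\mathcal{D}^2)$ and the $(\Psi'_i)_*$ to one $HC(\mathcal{D}^2)\to HC(\mathcal{D}^{1'})$, where $\mathcal{D}^{1'}$ is $\mathcal{D}^1$ rescaled by $c'$. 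By Theorem~\ref{thm: composition of chain maps}, $\Psi'_i\circ\Psi_i$ is homotopic to the continuation map of the stacked cobordism $\vp^1_i\alpha^1\to c'\vp^1_i\alpha^1$, which is homotopic through exact cobordisms to a truncated symplectization of $\alpha^1$; the latter's continuation map is the rescaling identification, since the only $\op{ind}=0$ cylinders in an $\R$-invariant cobordism are the trivial ones, each counted with sign $+1$. Composing with the canonical identification $HC(\mathcal{D}^{1'})\cong HC(\mathcal{D}^1)$ shows $(\Psi'_i)_*\circ(\Psi_i)_*$ is the identity of $HC(\mathcal{D}^1)$, and the symmetric argument handles the other composite, so $HC(\mathcal{D}^1)\to HC(\mathcal{D}^2)$ is an isomorphism. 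The same two theorems show it is independent of all choices made (connecting cobordisms, rescalings, refinements) and that the isomorphisms $HC(\mathcal{D}^1)\to HC(\mathcal{D}^2)\to HC(\mathcal{D}^3)$ compose to $HC(\mathcal{D}^1)\to HC(\mathcal{D}^3)$, giving exactly the natural-isomorphism-class statement of Theorem~\ref{main thm}.

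\emph{Main obstacle.} All of the homological algebra of the direct limit is routine; the content is entirely in the three cited theorems plus the cobordism bookkeeping of the proof: existence of the interpolating exact cobordisms between two fixed contact forms, connectedness through exact cobordisms of the spaces of such cobordisms, and the verification that perturbing near the ends preserves the hypotheses of Theorems~\ref{thm: chain maps}, \ref{thm: chain homotopy}, \ref{thm: composition of chain maps} ($L_i$-supersimple pairs at the ends, genericity of $\overline J$, and the energy/filtration bounds). The step deserving most care is keeping track of the filtration levels: after the rescalings needed to make one form dominate the other pointwise, one must re-refine the $L_i$-sequences so that the continuation maps genuinely land in the intended $CC^{L_i}$, and check this is compatible with the tautological rescaling identifications.
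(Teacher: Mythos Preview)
Your proposal is correct and is exactly the approach the paper intends: the paper's own proof of the corollary is a one-liner stating that it follows from Theorem~\ref{thm: chain homotopy} and Theorem~\ref{thm: composition of chain maps}, leaving the direct-limit and rescaling bookkeeping entirely to the reader. You have simply spelled out that bookkeeping in the standard way.
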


\section{Gluing}\label{section: gluing}

In this section we construct the gluing map that is used in Section \ref{section: chain homotopy}. In Section \ref{section: chain homotopy}, we see that a sequence of curves in $\coprod_{0\leq \tau \leq 1}\mathcal{M}^{\op{ind}=0,\op{cyl}}_{\overline{J}^{\tau}}(\gamma'_+,\gamma_-)$ can degenerate into a holomorphic building
$$([v_-],[v_+])\in \left(\mathcal{M}^{\op{ind}=-k,\op{cyl}}_{\overline{J}^{\tau_0}}(\gamma'_+,\gamma_-)\times (\mathcal{M}_{J_+}^{\op{ind}=k,\op{cyl}}(\gamma_+,\gamma'_+)/\mathbb{R})\right) /\sim$$
for some $\tau_0$.
For convenience we write $\mathcal{M}=\mathcal{M}_{J_+}^{\op{ind}=k,\op{cyl}}(\gamma_+,\gamma'_+)$.  For most of this section we are assuming Conditions \hyperref[Condition1]{(C1)}--(C4) of the prototypical gluing problem, where $u_0,v_0,v_1,\gamma,\gamma'',\gamma'$ are now called $u_-,v_-,v_+,\gamma_+,\gamma'_+,\gamma_-$.  At the end we will treat similar, but slightly different, cases in Section~\ref{subsection: other cases}.

Certainly not every $[v_+]\in \mathcal{M}/\mathbb{R}$ can be glued with $[v_-]$ to give a holomorphic map. In this section we closely follow \cite{HT2} (with the appropriate modifications) and define an obstruction bundle
$$\mathcal{O} \to [R,\infty)\times \mathcal{M}/\mathbb{R}, \quad R\gg 0,$$
and a section $\mathfrak{s}$ of $\mathcal{O}$ such that $\mathfrak{s}^{-1}(0)\cap (\{T\}\times \mathcal{M}/\R)$ are exactly the curves $[v_+]$ that glue with $[v_-]$ for $T\gg 0$.  The expressions will be substantially simpler since $(\alpha_+,J_+)$ is $L$-supersimple.  The proofs of the results that carry over with minimal changes will be omitted.

{\em Assume for the moment that all the curves in $\mathcal{M}$ are immersed.} In Section~\ref{subsection: the general case} we will explain how to modify the argument in the general case.

From now on we implicitly choose a smooth section of $\mathcal{M}\to\mathcal{M}/\mathbb{R}$ and representatives of $([v_-],[v_+])$, and write $(v_-,v_+)$ instead of $([v_-],[v_+])$.

Also, in this section the constant $c>0$ may change from line to line when we are making estimates.

\subsection{Pregluing}\label{subsection:pregluing}

Fix a constant $T_0\gg 0$.  Also let $T\gg 2T_0$, which is allowed to vary.
Let $v_{+,T}(s,t):=v_+(s-2T,t)$.

Recall the coordinates $(s,t,x,y)$ on the neighborhood $\R\times\R/\Z \times D^2_{\delta_0/3}$ of $\R\times \gamma_+'$ on which $J$ satisfies (J1) and (J2). For sufficiently large $T_0$, $v_-(s,t)$ can be written in terms of these coordinates as $(s,t,\eta_-(s,t))$ on $s\geq T_0$ and $v_{+,T}(s,t)$ can be written as $(s,t,\eta_{+,T}(s,t))$ on $s\leq T$.

Fix constants $0<h<1$ and $r\gg h^{-1}$.  We take $T_0> 5r$.
Choose a cutoff function $\beta: \mathbb{R} \to [0,1]$ such that $\beta(s)=0$ for $s\leq 0$ and $\beta(s)=1$ for $s\geq 1$.  Let $\beta_{-,T}(s)=\beta({T-s\over hr})$ and $\beta_{+,T_0}(s)=\beta({s-T_0\over hr})$.

\begin{figure}[ht]
\begin{center}
\psfragscanon
\psfrag{s}{\tiny $s$}
\psfrag{A}{\tiny $T_0$}
\psfrag{B}{\tiny $T_0+hr$}
\psfrag{C}{\tiny $T-hr$}
\psfrag{D}{\tiny $T$}
\psfrag{E}{\tiny $\beta_{+,T_0}$}
\psfrag{F}{\tiny $\beta_{-,T}$}
\includegraphics[width=7cm]{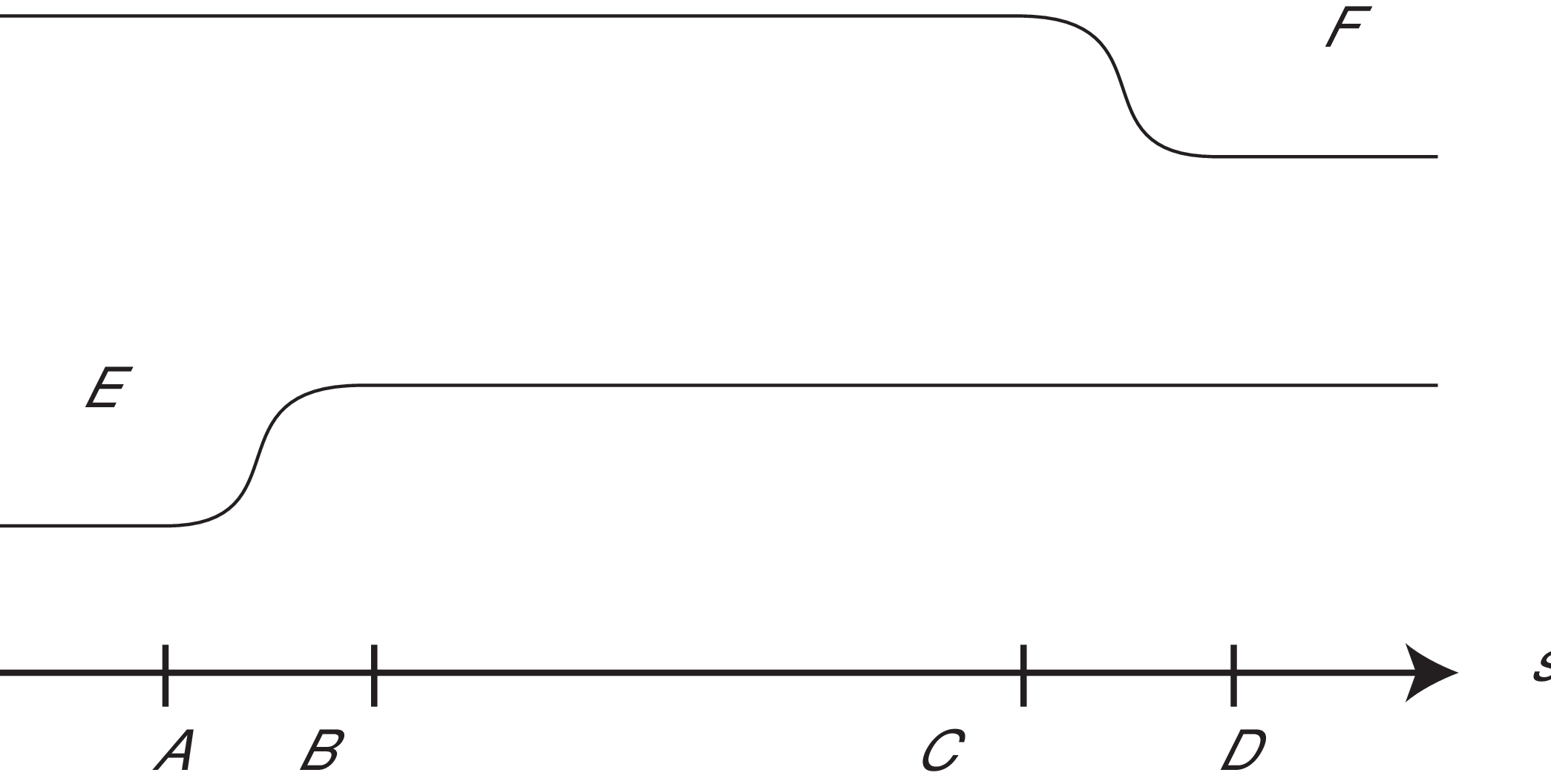}
\end{center}
\caption{Cutoff functions.} \label{fig: cutoff functions}
\end{figure}

We define the {\em pregluing of $v_+$ and $v_-$} by
$$v_*(s,t)=\left\{\begin{array}{ll} v_{+,T}(s,t) & \op{for } s\geq T, \\
(s,t, \beta_{+,T_0}(s)\eta_{+,T}(s,t)+\beta_{-,T}(s)\eta_-(s,t) )&   \op{for } T_0\leq s \leq T, \\
v_{-}(s,t) & \op{for } s\leq T_0. \end{array}\right.$$
We often suppress the $T$ or $T_0$ in $\beta_{+,T_0}$, $\beta_{-,T}$, and $\eta_{+,T}$.

\subsection{Gluing}\label{subsection: gluing}

Let $\psi_{+}$ be a section of the normal bundle of $v_{+,T}$ and $\psi_{-}$ be a section of the normal bundle of $v_{-}$. We deform $v_*$ to
\begin{equation}\label{equation: deform v*}
v=\exp_{v_*} (\beta_+\psi_++\beta_-\psi_-),
\end{equation}
where $\exp_{v_*}$ is an exponential map which identifies the normal bundle to $v_*$ with a tubular neighborhood of $v_*$. The exponential maps $\exp_{v_{+,T}}$ and $\exp_{v_-}$ can be chosen such that
$$\exp_{v_{+,T}}\psi_+= (s,t,\eta_+ +\psi_+) \quad \mbox{and} \quad \exp_{v_-}\psi_-=(s,t,\eta_- +\psi_-)$$
for $s\leq T$ and $s\geq T_0$, respectively. Similarly, we assume that
$$\exp_{v_*} (\beta_+\psi_++\beta_-\psi_-)=(s,t,\eta_*+\beta_+\psi_++\beta_-\psi_-)$$
for $T_0\leq s \leq T$.
Here $\eta_*=\beta_+\eta_+ +\beta_-\eta_-$.

Let $\tau\in[0,1]$ be close to $\tau_0$. The equation $\overline{\partial}_{\overline{J}^\tau}v=0$ is equivalent to:
\begin{equation} \label{eqn: d-bar rewritten}
\beta_- \left(D_- \psi_-  +(\tau-\tau_0)Y' +\frac{\partial \beta_+}{\partial s}\eta_+ +\mathcal{R}_-\right)+\beta_+\left(D_+\psi_++\frac{\partial \beta_-}{\partial s}\eta_- + \mathcal{R}_+\right)=0,
\end{equation}
where $D_-$ is the linearization of $\overline{\partial}_{\overline{J}^{\tau_0}}$ for the normal bundle of $v_-$,  $D_+$ is the linearization of $\overline{\partial}_{J_+}$ for the normal bundle of $v_{+,T}$, $Y'$ comes from the variation of the almost complex structure from $\overline{J}^\tau$ to $\overline{J}^{\tau_0}$, and by abuse of notation we write $\frac{\partial \beta_+}{\partial s}\eta_+$ instead of $\frac{\partial \beta_+}{\partial s}\eta_+\otimes (ds-idt)$.

We will now describe the terms $\mathcal{R}_-$ and $\mathcal{R}_+$. Recall from \cite[Definition~5.1]{HT2} that $F(\psi)$ is {\em type $1$ quadratic} if it can be written as
$$F(\psi)=P(\psi) + Q(\psi)\cdot \nabla\psi,$$
where $|P(\psi)|\leq c|\psi|^2$ and $|Q(\psi)|\leq c|\psi|$ for some constant $c>0$.
On $s\leq T_0$,
$$\mathcal{R}_-=F_-(\psi_-,\tau-\tau_0),$$
where $F_-(\psi_-,\tau-\tau_0)$ is type $1$ quadratic with respect to $\psi=(\psi_-,\tau-\tau_0)$.  On $s\geq T$,
$$\mathcal{R}_+= F_+(\psi_+),$$
where $F_+(\psi_+)$ is type $1$ quadratic. [In local coordinates,
\begin{align*}
{\bdry(u+\xi)\over \bdry s} + J_\tau(u+\xi) {\bdry (u+\xi)\over \bdry t} & =  \left({\bdry u\over\bdry s} + J_{\tau_0}(u){\bdry u\over \bdry t}\right) +
   \left({\bdry \xi\over \bdry s} + J_{\tau_0}(u){\bdry \xi\over \bdry t}\right)\\
  & \hspace{-1cm}+  (J_\tau(u+\xi)-J_{\tau_0}(u)){\bdry u\over \bdry t} +(J_\tau(u+\xi)-J_{\tau_0}(u)){\bdry \xi\over \bdry t}.
\end{align*}
The first term on the right is zero; the second is part of the linearization $D_u \xi$; the third contributes $(\tau-\tau_0)Y'$ and a term $\nabla J_{\tau_0}(\xi){\bdry u\over \bdry t}$ towards $D_u\xi$, and the remainder is quadratic and higher in $(\xi, \tau-\tau_0)$; the fourth is bounded by $|(\xi, \tau-\tau_0)|\cdot |\nabla \xi|$.]

\begin{claim} \label{claim: R plus and R minus on neck}
If $(\alpha_\pm,J_\pm)$ is $L$-supersimple and $T_0\gg 0$, then $Y'=0$ on $s\geq T_0$ and we can take $\mathcal{R}_\pm$ to be
\begin{equation} \label{eqn: expression fo R pm between T_0 and T}
\mathcal{R}_-=\frac{\partial \beta_+}{\partial s}\psi_+, \quad \mathcal{R}_+=\frac{\partial \beta_-}{\partial s}\psi_-
\end{equation}
for $s\geq T_0$ and $s\leq T$, respectively.
\end{claim}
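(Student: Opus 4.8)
The plan is to exploit the one essential feature of an $L$-supersimple pair: on the neighborhood $\R\times(\R/\Z)\times D^2_{\delta_0/3}$ of $\R\times\gamma'_+$ described by (J1)--(J2), the operator $\overline\bdry_{J_+}$ restricted to maps that are graphical over the trivial cylinder is the \emph{linear} operator $(s,t,\eta)\mapsto D\eta=\partial_s\eta+j_0\partial_t\eta+S(t)\eta$ (equivalently $D=\partial_s-A$). Once this is in hand there are no quadratic terms to track on the neck, and the identity~\eqref{eqn: d-bar rewritten} collapses to the asserted form.

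First I would fix $T_0\gg0$ so that, by the asymptotic description of $v_\pm$ near $\gamma'_+$ in Section~\ref{section: evaluation map}, both $v_-|_{\{s\geq T_0\}}$ and $v_{+,T}|_{\{s\leq T\}}$ lie in the above neighborhood and are graphical, written $(s,t,\eta_-)$ and $(s,t,\eta_{+,T})$, and also so that $\{s\geq T_0\}$ lies in the positive symplectization end $[0,\infty)\times M_+$ of $\widehat X^\tau$. On that end the family $\overline J^\tau$ is $\tau$-independent and equal to $J_+$; since $Y'$ is by definition the first-order-in-$\tau$ variation of $\overline\bdry_{\overline J^\tau}$ along $v_-$ (pulled back to $v_0=v_-$), this already gives $Y'=0$ for $s\geq T_0$. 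In particular the normal linearizations $D_-$ and $D_+$ both reduce to the model $D$ on the neck after the standard identification of normal bundles with $TD^2$.

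Next I would compute $\overline\bdry_{\overline J^\tau}v$ directly on $T_0\leq s\leq T$. There $v=(s,t,\eta_*+\beta_+\psi_++\beta_-\psi_-)$ with $\eta_*=\beta_+\eta_++\beta_-\eta_-$ and $\overline J^\tau=J_+$, so by linearity $\overline\bdry_{J_+}v=D(\beta_+\eta_++\beta_-\eta_-+\beta_+\psi_++\beta_-\psi_-)$. Using that $A$ commutes with multiplication by functions of $s$ alone, so $D(\beta_\pm g)=\beta_\pm Dg+\tfrac{\partial\beta_\pm}{\partial s}g$, together with $D\eta_+=\overline\bdry_{J_+}v_{+,T}=0$ and $D\eta_-=\overline\bdry_{\overline J^{\tau_0}}v_-=0$ on $\{s\geq T_0\}$, the computation expresses $\overline\bdry_{J_+}v$ as the sum of the terms $\beta_\pm D_\pm\psi_\pm$, the terms $\tfrac{\partial\beta_\pm}{\partial s}\eta_\pm$, and the leftover terms $\tfrac{\partial\beta_+}{\partial s}\psi_+$ and $\tfrac{\partial\beta_-}{\partial s}\psi_-$. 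Since $\tfrac{\partial\beta_+}{\partial s}$ is supported on $[T_0,T_0+hr]$ where $\beta_-\equiv1$ and $\tfrac{\partial\beta_-}{\partial s}$ on $[T-hr,T]$ where $\beta_+\equiv1$, each leftover term equals itself times the opposite cutoff and so can be absorbed into the $\beta_-(\dots)$, resp.\ $\beta_+(\dots)$, block; this is exactly~\eqref{eqn: d-bar rewritten} with $Y'=0$, $\mathcal{R}_-=\tfrac{\partial\beta_+}{\partial s}\psi_+$, $\mathcal{R}_+=\tfrac{\partial\beta_-}{\partial s}\psi_-$ on the neck. Off the neck the new formulas are consistent with the ones already chosen: for $s\leq T_0$ the block prefactor $\beta_+$ vanishes and $\tfrac{\partial\beta_-}{\partial s}\psi_-\equiv0$, while on the part of $\{s\leq T_0\}$ still inside the supersimple neighborhood the remainder $F_-$ also vanishes by the same linearity; the region $s\geq T$ is symmetric.

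I expect the only delicate point to be this last bookkeeping — choosing $T_0$ (and a slightly smaller threshold below it) so that the overlap region where $\mathcal{R}_\pm$ must patch the interior formula to the exterior $F_\pm$ lies entirely inside the supersimple neighborhood, so that all the competing expressions vanish there. There is no analytic difficulty: the entire content is the linearity of $\overline\bdry_{J_+}$ near $\R\times\gamma'_+$ guaranteed by (J1)--(J2), together with the $\tau$-independence of $\overline J^\tau$ on the symplectization end.
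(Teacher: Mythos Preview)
Your proposal is correct and follows essentially the same approach as the paper: you use that $\overline{J}^\tau=J_+$ on the symplectization end to get $Y'=0$, then exploit the linearity of $D=\partial_s-A$ on the neck to expand $\overline\bdry_{J_+}v$ via the product rule, use $D\eta_\pm=0$, and absorb the leftover $\tfrac{\partial\beta_\pm}{\partial s}\psi_\pm$ terms into the appropriate $\beta_\mp(\cdots)$ blocks using the disjointness of the supports of $\tfrac{\partial\beta_+}{\partial s}$ and $\tfrac{\partial\beta_-}{\partial s}$. The paper's proof is terser---it writes the final grouped identity directly and simply declares $\mathcal{R}_-=0$ for $s\geq T$ and $\mathcal{R}_+=0$ for $s\leq T_0$---but the content is the same.
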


\begin{proof}
Since $\overline{J}^\tau=J_+$ on $s\geq T_0$ and $T_0\gg 0$, we have $Y'=0$.

On $T_0\leq s\leq T$, $\overline{\bdry}_{\overline{J}^\tau}v =0$ can be written as:
\begin{align*}
& \beta_- D_-\psi_- +\beta_+ D_+\psi_+ + {\bdry \beta_-\over \bdry s}(\eta_-+\psi_-) +{\bdry\beta_+\over \bdry s}(\eta_++\psi_+) \\
& = \beta_-\left( D_-\psi_- + {\bdry\beta_+\over \bdry s} (\eta_++\psi_+)\right) +\beta_+\left(D_+\psi_+ +{\bdry \beta_-\over \bdry s}(\eta_-+\psi_-)\right) =0,
\end{align*}
where $D_-=D_+={\bdry\over \bdry s}-A$. (Note that $D_-\eta_-=0$ and $D_+\eta_+=0$.)  This decomposition is consistent with Equation~\eqref{eqn: d-bar rewritten} and hence we can define $\mathcal{R}_\pm$ by Equation~\eqref{eqn: expression fo R pm between T_0 and T} for $T_0\leq s\leq T$.

We also define $\mathcal{R}_-=0$ for $s\geq T$ and $\mathcal{R}_+=0$ for $s\leq T_0$.
\end{proof}

We then rewrite Equation~\eqref{eqn: d-bar rewritten} as
$$\beta_-\Theta_-(\psi_-,\psi_+,\tau)+\beta_+\Theta_+(\psi_-,\psi_+)=0,$$
by setting
\begin{equation}
\label{Theta -}
\Theta_-(\psi_-,\psi_+,\tau)= D_- \psi_-  +(\tau-\tau_0)Y' +\frac{\partial \beta_+}{\partial s}\eta_+ +\mathcal{R}_-,
\end{equation}
\begin{equation}
\label{Theta +}
\Theta_+(\psi_-,\psi_+) = D_+\psi_++\frac{\partial \beta_-}{\partial s}\eta_- + \mathcal{R}_+.
\end{equation}

We want to solve the equations $\Theta_-(\psi_-,\psi_+,\tau)=0$ and $\Theta_+(\psi_-,\psi_+)=0$, subject to $\psi_\pm$ being in  $(\op{ker}(D_\pm))^\perp$ the $L^2$-orthogonal complement of $\op{ker}(D_\pm)$. For any sufficiently small $\psi_-$, one can solve for $\psi_+=\psi_+(\psi_-)\in (\ker D_+)^{\perp}$ in $\Theta_+(\psi_-,\psi_+)=0$, since $D_+$ is surjective; see Lemma~\ref{azalea}.

Next we solve for $(\psi_-,\tau)$ in $\Theta_-(\psi_-,\psi_+(\psi_-),\tau)=0$. This is equivalent to solving the following triple of equations
\begin{equation}\label{equation: D+1-Pi}
D_- \psi_-+(1-\Pi)\left((\tau-\tau_0)Y'+\frac{\partial \beta_+}{\partial s}\eta_+ +\mathcal{R}_-\right)=0,
\end{equation}
\begin{equation}\label{equation: one more}
\Pi_Y\left((\tau-\tau_0)Y'+\frac{\partial \beta_+}{\partial s}\eta_+ +\mathcal{R}_-\right)=0,
\end{equation}
\begin{equation}\label{equation: Pi=0}
\Pi'\left(\frac{\partial \beta_+}{\partial s}\eta_+ +\mathcal{R}_-\right)=0,
\end{equation}
where $\Pi$ is the orthogonal projection to $\ker D_-^*$, $\Pi_Y$ is the orthogonal projection to $\R\langle Y\rangle$, and $\Pi'$ is the orthogonal projection to $\ker D_-^*\cap Y^\perp$.  Here $Y=\Pi Y'$ and $Y^\perp$ is the orthogonal complement of $Y$.  $Y$ is nonzero because $\{ \overline{J}^\tau\}_{0\leq \tau\leq 1}$ is generic.

Given $(T,v_+)$, one can always solve for $\psi_+$, $\psi_-$ and $\tau$ in Equations~\eqref{Theta +}, \eqref{equation: D+1-Pi} and \eqref{equation: one more}, subject to $\psi_+\in (\ker D_+)^{\perp}$; see Lemma~\ref{estimate for psi and tau}. We then set
$$v(T,v_+):=\exp_{v_*} (\beta_+\psi_++\beta_-\psi_-)$$
where the right-hand side of the equation (implicitly) depends on $T$ and $v_+$.

\subsection{Banach spaces} \label{subsection: banach spaces}

The function spaces that we use are {\em Morrey spaces}, following \cite[Section 5.5]{HT2}.\footnote{This is rather nonstandard and we chose to adopt it to avoid redoing the work in \cite{HT2} for $W^{k,p}$-spaces.} Let $u:\dot F\to \R\times M$ be an immersed finite energy holomorphic curve and $N\to \dot F$ be a normal bundle. On $\dot F$ we choose a Riemannian metric so that the ends are cylindrical and on $N$ we use the metric induced from an $\R$-invariant Riemannian metric on $\R\times M$.

The {\em Morrey space} $\mathcal{H}_0(\dot F, \Lambda^{0,1} T^*\dot F\otimes N)$ is the Banach space which is the completion of the compactly supported sections of $\Lambda^{0,1} T^*\dot F\otimes N$ with respect to the norm
$$\| \xi\|= \left( \int_{\dot F} |\xi|^2  \right)^{1/2} + \left( \sup_{x\in \dot F} \sup_{\rho\in(0,1]} \rho^{-1/2} \int_{B_\rho(x)} |\xi|^2  \right)^{1/2},$$
where $B_\rho(x)\subset \dot F$ is the ball of radius $\rho$ about $x$.  Similarly, $\mathcal{H}_1(\dot F, N)$ is the completion of the compactly supported sections of $N$ with respect to $$\| \xi \|_*= \|\nabla \xi\| + \|\xi\|.$$

The analog of the usual Sobolev embedding theorem is the following:

\begin{lemma} \label{lemma: sobolev embedding}
There is a bounded linear map
$$\mathcal{H}_1(\dot F, N)\to C^{0,1/4}(\dot F, N), \quad \xi\mapsto \xi,$$
where $C^{0,1/4}$ denotes the space of H\"older continuous functions with exponent ${1\over 4}$.
\end{lemma}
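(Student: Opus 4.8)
The plan is to reduce the statement to the classical Morrey embedding on a Euclidean ball and then patch the local estimates over $\dot F$; the key point is that the cylindrical structure of the ends of $\dot F$ makes all the local constants uniform. Since the map $\xi\mapsto\xi$ is linear and densely defined (compactly supported sections of $N$ are dense in $\mathcal{H}_1(\dot F,N)$), it suffices to establish the a priori bound $|\xi|_{C^{0,1/4}(\dot F)}\le c\,\|\xi\|_*$ for smooth compactly supported sections $\xi$.

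First I would fix a locally finite cover of $\dot F$ by geodesic balls $B_i=B_{\rho_0}(x_i)$ of a single small radius $\rho_0\le 1$, together with unitary trivializations of $N$ over each $B_i$ in which the metric and the connection $1$-form are uniformly bounded and the transition functions between overlapping balls are uniformly bounded. On the cylindrical ends, where the Riemannian metric on $\dot F$ and the induced metric on $N$ are translation-invariant, a single choice of $\rho_0$ and uniform bounds works for all $i$; on the (compact) complement of the ends this is automatic. The core is then the local Morrey estimate on a Euclidean ball: if $M^2:=\sup_{z,\,\sigma\in(0,1]}\sigma^{-1/2}\int_{B_\sigma(z)}|\nabla\xi|^2<\infty$, then $|\xi(x)-\xi(y)|\le c\,M\,|x-y|^{1/4}$. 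This is the standard averaging argument --- writing $\xi_{B_\sigma(x)}$ for the mean of $\xi$ over $B_\sigma(x)$, the scaled Poincar\'e inequality gives $|\xi_{B_\sigma(x)}-\xi_{B_{2\sigma}(x)}|\le c\,\sigma^{-1}\int_{B_{2\sigma}(x)}|\nabla\xi|$, and Cauchy--Schwarz together with the Morrey bound $\int_{B_{2\sigma}}|\nabla\xi|^2\le c\,\sigma^{1/2}M^2$ (in real dimension $2$) yields $|\xi_{B_\sigma(x)}-\xi_{B_{2\sigma}(x)}|\le c\,M\,\sigma^{1/4}$; summing this geometric series over dyadic radii $\sigma=2^{-j}r$ bounds $|\xi(x)-\xi_{B_r(x)}|$ by $c\,M\,r^{1/4}$ for $r\le\rho_0$, and comparing $\xi(x)$ and $\xi(y)$ through $\xi_{B_{2|x-y|}(x)}$ finishes it. In a trivialization one has $\nabla\xi=d\xi+A\xi$ with $|A|$ bounded; the connection term is absorbed exactly as in \cite[Section~5.5]{HT2}, using a preliminary $C^0$ bound or by running the averaging argument covariantly along geodesics.

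Next I would extract the $C^0$ bound from the full $\mathcal{H}_1$-norm: for any $x$, $|\xi(x)|\le|\xi_{B_{\rho_0}(x)}|+c\,M\,\rho_0^{1/4}\le c\,\rho_0^{-1}\|\xi\|_{L^2(\dot F)}+c\,M\le c\,\|\xi\|_*$, by Cauchy--Schwarz applied to the mean. This is the step that genuinely uses the $L^2$-part of the $\mathcal{H}_1$-norm, and it is also the only place where the noncompactness of $\dot F$ intervenes: once the uniform local H\"older estimate is in place, finiteness of $\|\xi\|_{L^2(\dot F)}$ forces $\xi$ to decay along the cylindrical ends, so the supremum is attained over a compact region and is globally controlled. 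Patching the local estimates over the $B_i$ using the uniform bounds on the transition functions, and using $|\xi(x)-\xi(y)|\le 2\sup|\xi|\le c\,\|\xi\|_*\,|x-y|^{1/4}$ whenever $|x-y|\ge\rho_0/4$, gives $|\xi|_{C^{0,1/4}(\dot F)}\le c\,\|\xi\|_*$; the densely defined bounded map then extends by continuity to all of $\mathcal{H}_1(\dot F,N)$.

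The main obstacle is not any of the estimates above, which are standard, but rather the noncompactness of $\dot F$ --- specifically, making sure the local Morrey and Poincar\'e constants can be taken uniform over the ends and that the global supremum norm is controlled. Both are handled as indicated: the uniformity comes from the $\R$-invariance of the geometry on the cylindrical ends, and the control of the supremum comes from the $L^2$-component of the $\mathcal{H}_1$-norm. This lemma is the analogue, in the present linear setting, of the Morrey-space Sobolev embedding used throughout \cite[Section~5.5]{HT2}.
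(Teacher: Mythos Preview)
The paper does not actually supply a proof of this lemma: it is stated immediately after the definition of the Morrey norms in Section~\ref{subsection: banach spaces} and left without argument, the surrounding text pointing to \cite[Section~5.5]{HT2} for the function-space setup. So there is no ``paper's own proof'' to compare against; your proposal is filling in what the authors omit.

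Your argument is the standard Morrey--Campanato embedding and is correct. The exponent check is right: in real dimension $2$ the Morrey condition $\rho^{-1/2}\int_{B_\rho}|\nabla\xi|^2\le M^2$ reads $\int_{B_\rho}|\nabla\xi|^2\le M^2\rho^{2\alpha}$ with $\alpha=1/4$, which is exactly the H\"older exponent claimed. The dyadic telescoping via the scaled Poincar\'e inequality, the passage to a $C^0$ bound using the $L^2$ part of $\|\cdot\|_*$, and the uniformity of constants coming from the cylindrical ends are all handled correctly. This is precisely the argument one would extract from \cite[Section~5.5]{HT2} (where the same Morrey spaces and the same embedding are used), so your write-up is consistent with what the paper is implicitly invoking.
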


\subsection{Some estimates} \label{subsection: estimates}

Let $\lambda= \min \{\lambda_1, |\lambda_{-1}|\}$.  Let $\mathcal{H}_{+}$ be the $L^2$-orthogonal complement of $\ker D_+$ in $\mathcal{H}_1(\dot F,N)$ corresponding to $v_+$, $\mathcal{H}_{-}$ be $\mathcal{H}_1(\dot F,N)$ corresponding to $v_-$, and $\mathcal{B}_{\pm}$ be the closed ball of radius $\varepsilon$ in $\mathcal{H}_{\pm}$ centered at $0$.

The following lemma closely follows \cite[Proposition 5.6]{HT2}, but the estimates are slightly different.

\begin{lemma} \label{azalea}
There exist $r\gg 0$ and $c,\varepsilon>0$ such that for $T\gg 0$ the following holds:
\begin{enumerate}
\item There is a map $P:\mathcal{B}_-\to \mathcal{H}_+$ such that  Equation~\eqref{Theta +} holds with $\psi_+=P(\psi_-).$
\item $\| P(\psi_-)\|_*\leq cr^{-1}( e^{-\lambda T}+\|\psi_-\|_*)$.
\end{enumerate}
\end{lemma}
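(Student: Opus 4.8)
\textbf{Proof plan for Lemma~\ref{azalea}.}
The plan is to solve $\Theta_+(\psi_-,\psi_+)=0$ for $\psi_+\in\mathcal{H}_+$ by the contraction mapping principle, treating $\psi_-$ and the glued end as fixed inhomogeneous data and using that $D_+$ is surjective with a uniformly bounded right inverse. First I would recall from the regularity of $v_+$ (Theorem~\ref{thm: automatic transversality}, as $v_+$ is a regular index-$k$ cylinder) that $D_+$ is onto, so there is a bounded operator $Q_+:\mathcal{H}_0(\dot F,\Lambda^{0,1}T^*\dot F\otimes N)\to\mathcal{H}_+$ with $D_+Q_+=\mathrm{id}$ and $\|Q_+\|\le c$; moreover, since $v_+$ is a fixed curve (up to $\R$-translation) and $\mathcal{M}$ is compact, this bound is uniform over the relevant family. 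Then, by Claim~\ref{claim: R plus and R minus on neck}, $\mathcal{R}_+=\frac{\partial\beta_-}{\partial s}\psi_-$ on $s\le T$ (and $\mathcal{R}_+=F_+(\psi_+)$ is type~1 quadratic on $s\ge T$), so $\Theta_+(\psi_-,\psi_+)=0$ is equivalent to the fixed-point equation
\begin{equation*}
\psi_+ = -\,Q_+\!\left(\tfrac{\partial\beta_-}{\partial s}\,\eta_- + \tfrac{\partial\beta_-}{\partial s}\,\psi_- + F_+(\psi_+)\right) =: \mathcal{T}(\psi_+).
\end{equation*}

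The second step is to estimate the inhomogeneous term. The cutoff $\beta_{-,T}$ has $\frac{\partial\beta_-}{\partial s}$ supported in $[T-hr,T]$ with $|\partial\beta_-/\partial s|\le c/(hr)$; there, $\eta_-$ is the negative end of $v_-$ over $s\ge T_0$, which decays like $e^{-\lambda s}$ by the asymptotic Fourier expansion at the positive puncture (Section~\ref{subsubsection: positive hyperbolic case}, with $\lambda=\min\{\lambda_1,|\lambda_{-1}|\}$). Computing the Morrey $\mathcal{H}_0$-norm of $\frac{\partial\beta_-}{\partial s}\eta_-$ over a window of width $hr$ with integrand bounded pointwise by $c\,r^{-1}e^{-\lambda T}$ gives a bound of order $cr^{-1}e^{-\lambda T}$ (the $L^2$ and local-Morrey pieces are both controlled this way), and similarly $\|\frac{\partial\beta_-}{\partial s}\psi_-\|\le cr^{-1}\|\psi_-\|_*$. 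Hence $\|\mathcal{T}(0)\|_*\le cr^{-1}(e^{-\lambda T}+\|\psi_-\|_*)$.

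The third step is the contraction estimate: using that $F_+$ is type~1 quadratic together with the Morrey Sobolev embedding $\mathcal{H}_1\hookrightarrow C^{0,1/4}$ of Lemma~\ref{lemma: sobolev embedding}, one gets $\|F_+(\psi_+)-F_+(\psi_+')\|\le c(\|\psi_+\|_*+\|\psi_+'\|_*)\|\psi_+-\psi_+'\|_*$, so $\mathcal{T}$ is a contraction on a ball of radius $\varepsilon$ provided $\varepsilon$ is small and $T$ (equivalently $r$) is large, and $\mathcal{T}$ maps that ball to itself because $\|\mathcal{T}(0)\|_*$ is small. The contraction principle then produces a unique fixed point $\psi_+=P(\psi_-)$ depending smoothly on $\psi_-$ (smoothness from the implicit function theorem applied to the smooth map $(\psi_-,\psi_+)\mapsto\psi_++Q_+(\cdots)$), and the fixed-point identity $\|P(\psi_-)\|_*\le\|\mathcal{T}(0)\|_*+\tfrac12\|P(\psi_-)\|_*$ yields part~(2). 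I expect the main obstacle to be the bookkeeping in the Morrey-norm estimate of the cutoff error terms — making sure the $r^{-1}$ gain is genuine and uniform over $\mathcal{M}$ and over $\tau$ near $\tau_0$ — since this is where the choice $r\gg h^{-1}$, $T_0>5r$ is used; the rest is a routine contraction argument modeled on \cite[Proposition~5.6]{HT2}, simplified by the linearity of $\overline{\bdry}_{J_+}$ near $\R\times\gamma'_+$.
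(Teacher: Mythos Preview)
Your proposal is correct and follows essentially the same approach as the paper: both set up a contraction map $\psi_+\mapsto -D_+^{-1}\bigl(\tfrac{\partial\beta_-}{\partial s}(\eta_-+\psi_-)+\mathcal{R}'_+(\psi_+)\bigr)$, bound the inhomogeneous term by $cr^{-1}(e^{-\lambda T}+\|\psi_-\|_*)$ using $|\partial\beta_-/\partial s|\le cr^{-1}$ and the exponential decay of $\eta_-$, and use the type~1 quadratic estimate for the Lipschitz constant. One minor slip: $\eta_-$ is the \emph{positive} end of $v_-$ (asymptotic to $\gamma_+'$), not its negative end, though your decay estimate for it is correct.
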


\begin{proof}
(1) We are trying to solve for
$$D_+\psi_+ + {\bdry \beta_-\over \bdry s}(\eta_-+\psi_-) +\mathcal{R}'_+(\psi_+)=0,$$
where $\mathcal{R}'_+(\psi_+)$ is type 1 quadratic.  Writing
\begin{equation} \label{def of I}
\mathcal{I}(\psi_+)= -D_+^{-1}\left({\bdry \beta_-\over \bdry s}(\eta_-+\psi_-) +\mathcal{R}'_+(\psi_+)\right),
\end{equation}
where $D_+^{-1}$ is the bounded inverse of $D_+|_{\mathcal{H}_+}$, we define $P(\psi_-)$ as the unique fixed point $\mathcal{I}(\psi_+)=\psi_+$.

The unique fixed point is guaranteed by the contraction mapping theorem, which relies on two estimates.  Our first estimate is
\begin{align} \label{estimate for I}
\|\mathcal{I}(\psi_+)\|_* & \leq cr^{-1}(e^{-\lambda T}+ \|\psi_-\|_*) +c\|\psi_+\|^2_*.
\end{align}
Indeed, since $|\tfrac{\bdry\beta_-}{\bdry s}|< cr^{-1}$ for some $c>0$,
$$\| \tfrac{\bdry \beta_-}{\bdry s}(\eta_-+\psi_-)  \|\leq cr^{-1}(e^{-\lambda T}+ \|\psi_-\|_*),$$
and, since $\mathcal{R}'_+$ is type 1 quadratic,
$$\| \mathcal{R}'_+(\psi_+)\| \leq c\|\psi_+\|^2_*.$$
The estimate follows from the boundedness of $D_+^{-1}$; observe that the domain of $D_+^{-1}$ uses the norm $\|\cdot \|$ and the range of $D_+^{-1}$ uses $\|\cdot \|_*$.  If $r,T\gg 0$ and $\varepsilon>0$ is small, then the right-hand side of Equation \ref{estimate for I} is $<\varepsilon$ whenever $\|\psi_-\|_*, \|\psi_+\|_*<\varepsilon$. Hence $\mathcal{I}$ maps a radius $\varepsilon$ ball into itself.

% DO NOT REMOVE - COMMENT GIVING MORE DETAIL
%How to bound terms of the form $\|\psi_+^2\|$ which appear in $\|\mathcal{R}'_+(\psi_+)\|$:  Since $\|\psi_+\|$ is bounded, $|\psi_+ |_{C^0}$ is also bounded by the embedding theorem (Lemma~\ref{lemma: sobolev embedding}).  Hence we have
%$$\|\psi_+^2\|^2 \leq c |\psi_+|_{C^0}^2 \int |\psi_+|^2\leq c\|\psi_+\|_*^2 \|\psi_+\|^2\leq c \|\psi_+\|_*^4.$$
%Terms of the form $\|\psi_+ \nabla \psi_+\|$ are similar.

Our second estimate is
\begin{align} \label{estimate for I two}
\|\mathcal{I}(\psi_+)-\mathcal{I}(\psi_+')\|_* & = \| D_+^{-1}\left( \mathcal{R}'_+(\psi_+)-\mathcal{R}'_+(\psi'_+)  \right)\|_*\\
\nonumber &\leq c (\|\psi_+\|_* +\|\psi_+'\|_*)\cdot \| \psi_+-\psi'_+\|_*.
\end{align}
This follows from observing that $D_+^{-1}$ is bounded and $\mathcal{R}'$ is type 1 quadratic. When $\|\psi_+\|_*, \|\psi_+'\|_*$ are sufficiently small, $\mathcal{I}$ gives a contraction mapping.  This proves (1).

(2) $\psi_+=P(\psi_-)$ satisfies $\mathcal{I}(\psi_+)=\psi_+$. Hence Equation~\eqref{estimate for I} gives:
$$\|\psi_+\|_* \leq cr^{-1}(e^{-\lambda T}+ \|\psi_-\|_*) +c\|\psi_+\|^2_*.$$
This implies (2) since $c\|\psi_+\|^2_*\ll \|\psi_+\|_*$ for $\varepsilon>0$ small.
\end{proof}

The following lemma closely follows \cite[Proposition 5.7]{HT2}.

\begin{lemma}\label{suzuran}
There exist $r\gg 0$, $c,\varepsilon>0$, and $0<\varepsilon_0<1$ such that for $T\gg 0$ the following holds:
\begin{enumerate}
\item There is a map $P':\mathcal{B}_+\times [-\varepsilon_0,\varepsilon_0]\to \mathcal{H}_-$ such that Equation~\eqref{equation: D+1-Pi} holds with $\psi_-=P'(\psi_+,\tau-\tau_0)$.
\item $\| P'(\psi_+,\tau-\tau_0)\|_*\leq cr^{-1}( e^{-\lambda T}+\|\psi_+\|_*)+c|\tau-\tau_0|$.
\end{enumerate}
Here $\tau-\tau_0$ is the coordinate for $[-\varepsilon_0,\varepsilon_0]$.
\end{lemma}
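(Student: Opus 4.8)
The plan is to solve Equation~\eqref{equation: D+1-Pi} for $\psi_-$ by the contraction mapping theorem, in direct analogy with the proof of Lemma~\ref{azalea} and with \cite[Proposition~5.7]{HT2}, the difference being that the neck-region quadratic corrections present in op.\ cit.\ are absent here because $\overline\partial_{J_+}$ is linear near $\R\times\gamma'_+$. First I would record the linear algebra: since $\op{ind}(D_-)=-k<0$ and $\op{ker} D_-=0$ (this follows from Claim~\ref{claim1b}; cf.\ the proof of Lemma~\ref{lemma: better basis for coker}), $D_-$ restricts to a Banach space isomorphism from $\mathcal{H}_-=\mathcal{H}_1(\dot F,N)$ onto its image $\op{im}(1-\Pi)=(\op{ker} D_-^*)^\perp$, with bounded inverse $D_-^{-1}$ whose domain carries $\|\cdot\|$ and range carries $\|\cdot\|_*$; note that $D_-$, hence $D_-^{-1}$, does not vary with $T$ since $v_-$ is fixed. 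Next I would recall, using Claim~\ref{claim: R plus and R minus on neck}, that for $T_0\gg0$ we may take $\mathcal{R}_-=F_-(\psi_-,\tau-\tau_0)$ on $\{s\leq T_0\}$, where $F_-$ is type~$1$ quadratic in $(\psi_-,\tau-\tau_0)$; $\mathcal{R}_-=\frac{\partial\beta_+}{\partial s}\psi_+$ on $\{T_0\leq s\leq T\}$; and $\mathcal{R}_-=0$ on $\{s\geq T\}$; in particular $\mathcal{R}_-$ depends on $\psi_+$ only through its restriction to the neck. For fixed $(\psi_+,\tau-\tau_0)\in\mathcal{B}_+\times[-\varepsilon_0,\varepsilon_0]$ I then set
\begin{equation*}
\mathcal{I}'(\psi_-):=-D_-^{-1}(1-\Pi)\left((\tau-\tau_0)Y'+\frac{\partial\beta_+}{\partial s}\eta_+ +\mathcal{R}_-(\psi_-,\psi_+,\tau-\tau_0)\right),
\end{equation*}
so that a fixed point of $\mathcal{I}'$ in $\mathcal{B}_-$ is exactly a solution of Equation~\eqref{equation: D+1-Pi}, and I define $P'(\psi_+,\tau-\tau_0)$ to be that fixed point.

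The two estimates needed are the exact counterparts of those in Lemma~\ref{azalea}. For the self-map bound I estimate each summand: $\|(1-\Pi)(\tau-\tau_0)Y'\|\leq c|\tau-\tau_0|$ (no $r^{-1}$, as $Y'$ is not cut off); $\|\frac{\partial\beta_+}{\partial s}\eta_+\|\leq cr^{-1}e^{-\lambda T}$, using $|\partial_s\beta_+|\leq cr^{-1}$ together with the exponential decay $|\eta_{+,T}(s,\cdot)|=|\eta_+(s-2T,\cdot)|\leq ce^{-\lambda(2T-s)}$ valid on the support of $\partial_s\beta_+$, which lies in $\{T_0\leq s\leq T_0+hr\}$, far to the left of $s=T$; $\|\frac{\partial\beta_+}{\partial s}\psi_+\|\leq cr^{-1}\|\psi_+\|_*$; and $\|F_-(\psi_-,\tau-\tau_0)\|\leq c(\|\psi_-\|_*+|\tau-\tau_0|)^2$ by the type~$1$ quadratic bound. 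Combined with the boundedness of $D_-^{-1}$ this gives
\begin{equation*}
\|\mathcal{I}'(\psi_-)\|_*\leq c\left(|\tau-\tau_0|+r^{-1}e^{-\lambda T}+r^{-1}\|\psi_+\|_*+(\|\psi_-\|_*+|\tau-\tau_0|)^2\right),
\end{equation*}
so for $r,T\gg0$ and $\varepsilon,\varepsilon_0>0$ small the right-hand side is $<\varepsilon$ whenever $\|\psi_-\|_*,\|\psi_+\|_*<\varepsilon$ and $|\tau-\tau_0|\leq\varepsilon_0$; hence $\mathcal{I}'$ maps $\mathcal{B}_-$ into itself. For the contraction bound, only the $F_-$ term depends on $\psi_-$, so type~$1$ quadraticity of $F_-$ and boundedness of $D_-^{-1}$ give
\begin{equation*}
\|\mathcal{I}'(\psi_-)-\mathcal{I}'(\psi_-')\|_*\leq c\left(\|\psi_-\|_*+\|\psi_-'\|_*+|\tau-\tau_0|\right)\|\psi_--\psi_-'\|_*,
\end{equation*}
which has Lipschitz constant $<1$ for $\varepsilon,\varepsilon_0$ small. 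The contraction mapping theorem then yields (1), with $P'(\psi_+,\tau-\tau_0)\in\mathcal{B}_-$, and smoothness of $P'$ in $(\psi_+,\tau-\tau_0)$ follows from the smooth dependence of $\mathcal{I}'$ on these parameters together with the uniformity of the contraction. For (2), I apply the self-map estimate to $\psi_-=P'(\psi_+,\tau-\tau_0)$ and absorb the quadratic term into the left-hand side, since $\|\psi_-\|_*\leq\varepsilon$ is small; this produces $\|P'(\psi_+,\tau-\tau_0)\|_*\leq cr^{-1}(e^{-\lambda T}+\|\psi_+\|_*)+c|\tau-\tau_0|$.

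The only real difficulty is the bookkeeping: verifying that every error term feeding into $\mathcal{I}'$ is controlled by one of the small quantities $r^{-1}$, $e^{-\lambda T}$, or a quadratic factor of size $\lesssim\varepsilon+\varepsilon_0$, uniformly in $T$, and correctly invoking Claim~\ref{claim: R plus and R minus on neck} so that on the neck the would-be type~$1$ quadratic term $\mathcal{R}_-$ is replaced by the genuinely linear, $r^{-1}$-small expression $\frac{\partial\beta_+}{\partial s}\psi_+$. This last replacement is precisely where the $L$-supersimple hypothesis is used, and it is what makes the argument strictly easier than \cite{HT2}. Finally, once $P'$ is in hand one recovers the simultaneous solution $(\psi_+,\psi_-)$ of \eqref{Theta +} and \eqref{equation: D+1-Pi} referred to after the statement by substituting $\psi_+=P(\psi_-)$ from Lemma~\ref{azalea} and running one further contraction on $\mathcal{B}_-$, which converges because of the bound in Lemma~\ref{azalea}(2) and the bound just established.
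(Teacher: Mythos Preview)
Your proof is correct and follows essentially the same contraction-mapping argument as the paper: define $\mathcal{I}'(\psi_-)=-D_-^{-1}(1-\Pi)\bigl((\tau-\tau_0)Y'+\tfrac{\partial\beta_+}{\partial s}\eta_++\mathcal{R}_-\bigr)$, establish the self-map bound and the Lipschitz bound using the type~1 quadratic property of $F_-$ and $|\partial_s\beta_+|\leq cr^{-1}$, and read off (2) from the fixed-point equation. Your explicit separation of the neck contribution $\tfrac{\partial\beta_+}{\partial s}\psi_+$ via Claim~\ref{claim: R plus and R minus on neck} is exactly what the paper does implicitly when it writes $\tfrac{\partial\beta_+}{\partial s}(\eta_++\psi_+)+\mathcal{R}'_-(\psi_-,\tau-\tau_0)$ with $\mathcal{R}'_-$ type~1 quadratic.
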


\begin{proof}
(1)  We are trying to solve for
$$ D_-\psi_- +(1-\Pi)\left((\tau-\tau_0)Y' + {\bdry\beta_+\over \bdry s} (\eta_++\psi_+) +\mathcal{R}'_-(\psi_-,\tau-\tau_0)\right)=0,$$
where $\mathcal{R}'_-$ is type 1 quadratic. Let us write
\begin{align} \label{revisit}
\mathcal{I}_{\psi_+,\tau-\tau_0}(\psi_-)= -D_-^{-1}(1-\Pi)\left((\tau-\tau_0)Y' + {\bdry\beta_+\over \bdry s} (\eta_++\psi_+) +\mathcal{R}'_-(\psi_-,\tau-\tau_0)\right).
\end{align}

We first estimate
\begin{align} \label{estimate for I three}
\| \mathcal{I}_{\psi_+,\tau-\tau_0}(\psi_-) \|_* & \leq c|\tau-\tau_0| + cr^{-1}(e^{-\lambda T}+\|\psi_+\|_*)+ c(\|\psi_-\|_*^2 + |\tau-\tau_0|^2 ).
\end{align}
Provided $r,T\gg 0$, $\varepsilon>0$ is small, and $|\tau-\tau_0|$ is small, the right-hand side is $<\varepsilon$ whenever $\|\psi_-\|_*, \|\psi_+\|_*<\varepsilon$. Hence $\mathcal{I}_{\psi_+,\tau-\tau_0}$ maps a radius $\varepsilon$ ball into itself.

Next we estimate
\begin{align} \label{estimate for I four}
\|\mathcal{I}_{\psi_+,\tau-\tau_0}(\psi_-)-\mathcal{I}_{\psi_+,\tau-\tau_0}(\psi_-')\|_* & = c  \|\mathcal{R}'_-(\psi_-,\tau-\tau_0)-\mathcal{R}'_-(\psi'_-,\tau-\tau_0) \|\\
\nonumber &\leq  C\| \psi_--\psi'_-\|_*, \quad 0< C< 1,
\end{align}
provided $|\tau-\tau_0|$, $\|\psi_-\|_*$, and $\|\psi_-'\|_*$ are small.  The above estimates provide a contraction mapping.

(2) Follows from Equation~\eqref{estimate for I three} and $\mathcal{I}_{\psi_+,\tau-\tau_0}(\psi_-)=\psi_-$.
\end{proof}

We also have:

\begin{lemma}\label{estimate for tau}
There exist  $r\gg 0$ and $\varepsilon>0$ such that for $T\gg 0$ the following holds:
\begin{enumerate}
\item There is a map $P'':\mathcal{B}_- \times \mathcal{B}_+ \to [-\varepsilon_0,\varepsilon_0]$ such that Equation~\eqref{equation: one more} holds with $\tau-\tau_0=P''(\psi_-,\psi_+)$.
\item $| P''(\psi_-,\psi_+)| \leq cr^{-1}( e^{-\lambda T}+\|\psi_+\|_*)+c\|\psi_-\|_*^2$.
\end{enumerate}
\end{lemma}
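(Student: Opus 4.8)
The plan is to recognize Equation~\eqref{equation: one more} as a scalar equation in the single unknown $\tau-\tau_0$ and to solve it by the contraction mapping theorem, treating $\psi_-\in\mathcal{B}_-$ and $\psi_+\in\mathcal{B}_+$ as fixed parameters. First I would observe that $\Pi_Y$ is the $L^2$-orthogonal projection onto the line $\R\langle Y\rangle$, with $Y=\Pi Y'\neq 0$ (this nonvanishing is precisely where the genericity of $\{\overline{J}^\tau\}$ enters, as remarked after Equation~\eqref{equation: Pi=0}), and that since $Y'-\Pi Y'$ is orthogonal to $\op{ker} D_-^*\ni Y$ one has $\langle Y',Y\rangle=\|Y\|^2$. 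Hence $\Pi_Y\bigl((\tau-\tau_0)Y'\bigr)=(\tau-\tau_0)Y$, and Equation~\eqref{equation: one more} is equivalent to
$$\tau-\tau_0=-\frac{1}{\|Y\|^2}\left\langle \frac{\partial\beta_+}{\partial s}\eta_+ +\mathcal{R}_-(\psi_-,\psi_+,\tau-\tau_0),\, Y\right\rangle,$$
which I abbreviate as $\tau-\tau_0=G(\tau-\tau_0)$. By Claim~\ref{claim: R plus and R minus on neck} the term $\mathcal{R}_-$ depends on $\tau-\tau_0$ only through the type $1$ quadratic piece $F_-(\psi_-,\tau-\tau_0)$ supported on $\{s\le T_0\}$; on the neck $\mathcal{R}_-=\frac{\partial\beta_+}{\partial s}\psi_+$ and on $\{s\ge T\}$ it vanishes.

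Next I would check that $G$ maps $[-\varepsilon_0,\varepsilon_0]$ into itself. The term $\frac{\partial\beta_+}{\partial s}\eta_+$ is supported on $\{T_0\le s\le T_0+hr\}$, where, because $v_{+,T}(s,t)=v_+(s-2T,t)$ places this portion near the negative end of $v_+$, the graph $\eta_+$ has size $\le c\,e^{-\lambda T}$; with $|\partial_s\beta_+|\le cr^{-1}$ and Cauchy--Schwarz this contributes $\le cr^{-1}e^{-\lambda T}$ to $|G|$. The neck part $\frac{\partial\beta_+}{\partial s}\psi_+$ of $\mathcal{R}_-$ contributes $\le cr^{-1}\|\psi_+\|_*$ after invoking the Morrey embedding of Lemma~\ref{lemma: sobolev embedding}, and the piece $F_-(\psi_-,\tau-\tau_0)$, being type $1$ quadratic, contributes $\le c(\|\psi_-\|_*+|\tau-\tau_0|)^2\le c\varepsilon\|\psi_-\|_*+c\varepsilon_0|\tau-\tau_0|$. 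For $r,T\gg 0$ and $\varepsilon,\varepsilon_0>0$ small the sum is $<\varepsilon_0$, so $G$ preserves $[-\varepsilon_0,\varepsilon_0]$. For the contraction estimate I would use that $G(x)-G(x')=-\|Y\|^{-2}\langle F_-(\psi_-,x)-F_-(\psi_-,x'),Y\rangle$, so type $1$ quadraticity gives $|G(x)-G(x')|\le c(\|\psi_-\|_*+|x|+|x'|)\,|x-x'|<|x-x'|$ once $\varepsilon,\varepsilon_0$ are small. The contraction mapping theorem then produces a unique fixed point $\tau-\tau_0=P''(\psi_-,\psi_+)$ (with smooth dependence on $(\psi_-,\psi_+)$ by its parametrized version), which is assertion~(1); feeding it back into $|P''|=|G(P'')|$ and absorbing the self-term $c|P''|^2\ll|P''|$ yields the bound $|P''|\le cr^{-1}(e^{-\lambda T}+\|\psi_+\|_*)+c\|\psi_-\|_*$ of assertion~(2).

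The only point that needs care --- and it is the same one already flagged after Equation~\eqref{equation: Pi=0} --- is that the coefficient of the linear term in $G$ is exactly $\|Y\|^2\neq 0$, so the scalar solve is nondegenerate and, crucially, uniform as $T\to\infty$; every other estimate is a routine (and, because the unknown is now scalar and $Y'=0$ on the neck, strictly easier) rerun of the arguments in Lemmas~\ref{azalea} and~\ref{suzuran}, so I expect no genuine obstacle here.
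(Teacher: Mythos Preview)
Your approach is correct and is precisely what the paper intends: the paper's own proof consists of the single sentence ``This is proved using the contraction mapping theorem as in the proofs of Lemmas~\ref{azalea} and \ref{suzuran} and is left to the reader,'' and you have carried out exactly that. The reduction to a scalar fixed-point problem via $\langle Y',Y\rangle=\|Y\|^2$, the decomposition of $\mathcal{R}_-$ into the neck piece $\tfrac{\partial\beta_+}{\partial s}\psi_+$ and the type~1 quadratic piece $F_-(\psi_-,\tau-\tau_0)$, and the self-map and contraction estimates are all the right ingredients.
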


\begin{proof}
This is proved using the contraction mapping theorem as in the proofs of Lemmas~\ref{azalea} and \ref{suzuran} and is left to the reader.
%(1) We are trying to solve for
%\begin{equation}
%\Pi_Y\left((\tau-\tau_0)Y'+\frac{\partial \beta_+}{\partial s}(\eta_+ +\psi_+)+\mathcal{R}'_-(\psi_-,\tau-\tau_0)\right)=0,
%\end{equation}
%where $\mathcal R'_-$ is type $1$ quadratic. We rewrite the above equation as
%\begin{equation}
%(\tau-\tau_0)Y=\mathcal I_{\psi_+,\psi_-}(\tau),
%\end{equation}
%with $$\mathcal I_{\psi_+,\psi_-}(\tau)=-\Pi_Y\left( \frac{\partial \beta_+}{\partial s}(\eta_+ +\psi_+)+\mathcal{R}'_-(\psi_-,\tau-\tau_0)\right).$$
%A similar estimate as in Formula~\eqref{estimate for I three} we get
%\begin{equation}
%|\tau-\tau_0|\leq cr^{-1}
%\end{equation}
\end{proof}

Putting Lemmas~\ref{azalea}, \ref{suzuran}, and \ref{estimate for tau} together we obtain:

\begin{lemma}\label{estimate for psi and tau}
There exist $r\gg 0$, $\varepsilon>0$, and $\varepsilon_0>0$ such that for $T\gg 0$ there is a unique solution $(\psi_-,\psi_+,\tau-\tau_0)\in \mathcal{B}_-\times \mathcal{B}_+\times [-\varepsilon_0,\varepsilon_0]$ to the equation $\Theta_+(\psi_-,\psi_+)=0$ and Equations~\eqref{equation: D+1-Pi} and ~\eqref{equation: one more}. Moreover,
$$\|\psi_-\|_*,\|\psi_+\|_*,|\tau-\tau_0| \leq cr^{-1}e^{-\lambda T}.$$
\end{lemma}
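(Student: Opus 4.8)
The plan is to assemble the three preceding lemmas into a single fixed-point problem on the product Banach space $\mathcal{B}_-\times\mathcal{B}_+\times[-\varepsilon_0,\varepsilon_0]$ and apply the contraction mapping theorem once more, now to the combined map, tracking the constants so that the solution has the claimed exponential smallness. First I would define the map
$$
\Psi: \mathcal{B}_-\times\mathcal{B}_+\times[-\varepsilon_0,\varepsilon_0]\to \mathcal{H}_-\times\mathcal{H}_+\times[-\varepsilon_0,\varepsilon_0],
$$
$$
\Psi(\psi_-,\psi_+,\tau-\tau_0)=\bigl(P'(\psi_+,\tau-\tau_0),\,P(\psi_-),\,P''(\psi_-,\psi_+)\bigr),
$$
using $P$ from Lemma~\ref{azalea}, $P'$ from Lemma~\ref{suzuran}, and $P''$ from Lemma~\ref{estimate for tau}. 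A fixed point of $\Psi$ is exactly a triple solving $\Theta_+(\psi_-,\psi_+)=0$ together with Equations~\eqref{equation: D+1-Pi} and \eqref{equation: one more}, which is what Lemma~\ref{estimate for psi and tau} asserts exists and is unique.

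The key steps, in order: (1) Verify $\Psi$ maps a product of small balls into itself. Feeding the output bounds of one lemma into the input of the next, part (2) of Lemmas~\ref{azalea}, \ref{suzuran}, and \ref{estimate for tau} give, for $r,T\gg 0$ and $\varepsilon,\varepsilon_0$ small,
$$
\|P'(\psi_+,\tau-\tau_0)\|_*,\ \|P(\psi_-)\|_*,\ |P''(\psi_-,\psi_+)| \;\le\; c r^{-1} e^{-\lambda T} + \tfrac14(\|\psi_-\|_*+\|\psi_+\|_*+|\tau-\tau_0|),
$$
so a ball of radius $2cr^{-1}e^{-\lambda T}$ (shrunk if necessary below $\varepsilon$) is preserved. (2) Verify $\Psi$ is a contraction: each of $P,P',P''$ is built from $D_\pm^{-1}$ applied to type~1 quadratic remainders (Claim~\ref{claim: R plus and R minus on neck} having already removed the problematic $Y'$ on the neck), and the Lipschitz estimates \eqref{estimate for I two}, \eqref{estimate for I four} and their analogue in Lemma~\ref{estimate for tau} show each component map has Lipschitz constant $<\tfrac12$ on the small ball; composing and summing keeps the total Lipschitz constant $<1$. (3) Invoke the contraction mapping theorem to get a unique fixed point. (4) Read off the size bound: plugging the fixed point back into the self-map estimate in step (1) and absorbing the $\tfrac14$-term gives
$$
\|\psi_-\|_*,\ \|\psi_+\|_*,\ |\tau-\tau_0|\;\le\; c r^{-1} e^{-\lambda T},
$$
which is the ``moreover'' clause. (5) Finally, observe that uniqueness of the fixed point of $\Psi$ is equivalent to uniqueness of the triple $(\psi_-,\psi_+,\tau-\tau_0)$ in $\mathcal{B}_-\times\mathcal{B}_+\times[-\varepsilon_0,\varepsilon_0]$ solving the three equations, since the three equations are precisely the component equations $\psi_-=P'(\cdots)$, $\psi_+=P(\cdots)$, $\tau-\tau_0=P''(\cdots)$.

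The main obstacle I anticipate is purely bookkeeping: the three lemmas have slightly different right-hand sides — $P$'s bound involves $\|\psi_-\|_*$, $P'$'s involves $\|\psi_+\|_*$ and $|\tau-\tau_0|$, $P''$'s involves both $\|\psi_-\|_*$ and $\|\psi_+\|_*$ — so one must be a little careful that the coupled system of inequalities closes up (i.e., that after substitution the coefficients on the unknowns sum to something $<1$, not just each individually small). This is handled by taking $r$ large enough that every $cr^{-1}$ factor is $\le \tfrac14$ and $\varepsilon,\varepsilon_0$ small enough that the quadratic terms are negligible, exactly as in the individual proofs; no new analytic input is needed. Everything else — the exponential decay rate $\lambda=\min\{\lambda_1,|\lambda_{-1}|\}$, the boundedness of $D_\pm^{-1}$ on the Morrey spaces, the type~1 quadratic structure of the remainders — is already in place from the preceding subsections, so the proof is a short assembly argument and I would simply cite Lemmas~\ref{azalea}, \ref{suzuran}, and \ref{estimate for tau} and remark that the contraction mapping theorem applied to $\Psi$ finishes it.
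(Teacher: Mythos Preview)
Your approach is correct and is exactly what the paper intends: the paper gives no proof beyond ``Putting Lemmas~\ref{azalea}, \ref{suzuran}, and \ref{estimate for tau} together we obtain,'' and assembling the three partial fixed-point maps $P,P',P''$ into a single contraction $\Psi$ on the product is the natural way to carry this out.

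One small point in your bookkeeping is worth flagging. The term $c|\tau-\tau_0|$ in Lemma~\ref{suzuran}(2) (and symmetrically the $c\|\psi_-\|_*$ in Lemma~\ref{estimate for tau}(2)) is neither a $cr^{-1}$ factor nor a quadratic term, so your sentence ``taking $r$ large enough that every $cr^{-1}$ factor is $\le\tfrac14$ and $\varepsilon,\varepsilon_0$ small enough that the quadratic terms are negligible'' does not literally cover it. The loop closes anyway: the dependence of $P''$ on $\psi_-$ enters only through the type~1 quadratic remainder $F_-(\psi_-,\tau-\tau_0)$ on $s\le T_0$ (by Claim~\ref{claim: R plus and R minus on neck} there is no other $\psi_-$-contribution to $\mathcal{R}_-$ on the neck), so on the ball the Lipschitz constant of $P''$ in $\psi_-$ is in fact $O(\varepsilon)$, not $O(1)$. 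Substituting this back, the composite cycle $\tau-\tau_0\to\psi_-\to\tau-\tau_0$ has Lipschitz constant $c\cdot O(\varepsilon)<\tfrac12$, and the rest of the matrix of Lipschitz constants is $O(r^{-1})$ as you say. With that observation your argument goes through verbatim.
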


\begin{proof}
We solve Equation~\eqref{Theta +} using Lemma~\ref{azalea} to obtain $\psi_+=P(\psi_-)$. We then plug $\psi_+=P(\psi_-)$ into Equation~\eqref{equation: one more} and apply Lemma~\ref{estimate for tau} to obtain $\tau=P''(\psi_-,P(\psi_-))+\tau_0$. Finally, we plug $\psi_+=P(\psi_-)$ and $\tau=P''(\psi_-,P(\psi_-))+\tau_0$ into Equation~\eqref{equation: D+1-Pi} and solve for $\psi_-$.

To this end, we define a map $\mathcal I':\mathcal B_- \to \mathcal H_-$ which takes $\psi_-$ to $\mathcal I_{\psi_+,\tau-\tau_0}(\psi_-)$ as in Equation~\eqref{revisit} with $\psi_+=P(\psi_-)$ and $\tau=P''(\psi_-,P(\psi_-))+\tau_0$. Now we show that $\mathcal I'$ is a contraction mapping. By Estimate~\eqref{estimate for I three} and Lemmas~\ref{azalea}(2) and \ref{estimate for tau}(2) we get
\begin{align*}
\|\mathcal I'(\psi_-)\|_* & \leq c|\tau-\tau_0| + cr^{-1}(e^{-\lambda T}+\|\psi_+\|_*)+ c\|\psi_-\|_*^2\\
& \leq  c\left(cr^{-1}(e^{-\lambda T}+\|\psi_+\|_*)+c\|\psi_-\|_*^2\right)\\
&\hspace{1.5cm}+ cr^{-1}(e^{-\lambda T}+\|\psi_+\|_*)+ c\|\psi_-\|_*^2\\
& \leq cr^{-1}(e^{-\lambda T}+\|\psi_-\|_*)+c\|\psi_-\|_*^2,
\end{align*}
where the constant $c>0$ changes from line to line. Hence $\mathcal I'$ maps an $\varepsilon$-ball into itself.

Next we estimate
\begin{align}
\label{thurs}
&\|\mathcal I'(\psi_-)-\mathcal I'(\psi_-')\|_* \\
\nonumber \leq & c|\tau-\tau'|+c\| \tfrac{\partial \beta_+}{\partial s}(\psi_+-\psi_+')\|_* +c\|\mathcal R_-'(\psi_-,\tau-\tau_0)-\mathcal R_-'(\psi_-',\tau'-\tau_0)\|_* \\
\nonumber \leq & c|\tau-\tau'|+cr^{-1}\|\psi_+-\psi_+'\|_* +C\cdot(\|\psi_--\psi_-'\|_*+|\tau-\tau'|),
\end{align}
where $\psi_+'=\psi_+(\psi_-')$, $\tau'=\tau(\psi_+',\psi_-')$, and $C\to 0$ as
$$\|\psi_-\|^2+\|\psi_-'\|^2 +|\tau-\tau_0|+|\tau'-\tau_0| \to 0.$$
We now estimate the terms $|\tau-\tau'|$ and $\|\psi_+-\psi_+'\|_*$ in Estimate~\eqref{thurs}. Starting from Equation~\eqref{equation: one more} we estimate
\begin{equation*}
|\tau-\tau'|\leq cr^{-1}\|\psi_+-\psi_+'\|_* + C\|\psi_--\psi_-'\|_*.
\end{equation*}
Along the lines of Estimate~\eqref{estimate for I two} we get
\begin{equation*}
\|\psi_+-\psi_+'\|_* \leq C'\|\psi_--\psi_-'\|_*,
\end{equation*}
where $C'\to 0$ as $\|\psi_-\|_*+\|\psi_-'\|_*\to 0$. Combining these we obtain
\begin{equation}
\|\mathcal I'(\psi_-)-\mathcal I'(\psi_-')\|_*\leq C''\cdot \|\psi_--\psi_-\|_*,
\end{equation}
where $0<C''<1$. Hence $\mathcal I'$ is a contraction mapping, and it has a unique fixed point $\psi_-=\mathcal I'(\psi_-)$.

Finally, the claimed estimates follow from Lemmas~\ref{azalea}, \ref{suzuran} and \ref{estimate for tau}.
\end{proof}

\subsection{Obstruction bundle and obstruction section} \label{subsection: obstruction bundle}

Let $\mathcal{O}$ be the {\em obstruction bundle}
$$\mathcal{O} \to [R,\infty)\times \mathcal{M}/\mathbb{R}, \quad R\gg 0, $$
whose fiber over $(T,v_+)$ is
$$\mathcal{O}_{T,v_+}=\op{Hom}(\op{ker}D^*_-/\mathbb{R}\langle Y \rangle,\mathbb{R}).$$

We define the section $\mathfrak{s}$ of $\mathcal{O}$ as follows:
\begin{equation} \label{equation for s}
\mathfrak{s}(T,v_+)(\sigma)=\left\langle \sigma,\frac{\partial \beta_+}{\partial s}\eta_+ +\mathcal{R}_-\right\rangle,
\end{equation}
where $\sigma\in \op{ker} D^*_-/\R\langle Y \rangle$ and $\langle\cdot,\cdot\rangle$ denotes the $L^2$-inner product. More precisely, given $(T,v_+)$, we solve for $\psi_+$, $\psi_-$, $\tau$ in $\Theta_+(\psi_-,\psi_+)=0$ and in Equations~\eqref{equation: D+1-Pi} and \eqref{equation: one more} and evaluate the right-hand side of Equation~\eqref{equation for s} using these $\psi_+$, $\psi_-$, and $\tau$.

\begin{thm}[{\cite[Lemma~6.3]{HT2}}]
$\mathfrak{s}$ is smooth for $R\gg 0$.
\end{thm}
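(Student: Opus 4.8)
The plan is to show that the obstruction section $\mathfrak{s}$ varies smoothly in the parameters $(T,v_+)$ by tracking the implicit function theorem solutions produced in Lemmas~\ref{azalea}, \ref{suzuran}, and \ref{estimate for tau} and verifying that those solutions depend smoothly on $(T,v_+)$. The key observation is that, given $(T,v_+)$, the triple $(\psi_-,\psi_+,\tau-\tau_0)$ is obtained as the unique fixed point of a contraction mapping on a ball in $\mathcal{B}_-\times\mathcal{B}_+\times[-\varepsilon_0,\varepsilon_0]$; since this contraction mapping is built from the bounded inverses $D_\pm^{-1}$, the cutoff functions $\beta_\pm$, the pregluing data $\eta_\pm$, and the type~1 quadratic remainders $\mathcal{R}_\pm'$, all of which depend smoothly (in the appropriate operator/Banach-space sense) on $(T,v_+)$, the solution is a smooth function of $(T,v_+)$ by the Banach-space implicit function theorem. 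Once the triple is smooth, the right-hand side of Equation~\eqref{equation for s} — an $L^2$-pairing of a fixed basis element $\sigma\in\ker D_-^*/\R\langle Y\rangle$ with $\frac{\partial\beta_+}{\partial s}\eta_+ + \mathcal{R}_-$ — is visibly a smooth function of $(T,v_+)$, hence $\mathfrak{s}$ is a smooth section of $\mathcal{O}$.

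First I would set up the parametrized version of the nonlinear operator. Fix $R\gg 0$. Over $[R,\infty)\times\mathcal{M}/\R$ one has a Banach bundle whose fiber at $(T,v_+)$ is $\mathcal{H}_+\times\mathcal{H}_-\times[-\varepsilon_0,\varepsilon_0]$ (using the chosen smooth section of $\mathcal{M}\to\mathcal{M}/\R$ so that representatives $v_+$ vary smoothly), and the three equations $\Theta_+(\psi_-,\psi_+)=0$, \eqref{equation: D+1-Pi}, \eqref{equation: one more} assemble into a smooth section $\mathcal{F}$ of the corresponding target bundle. The fiberwise linearization of $\mathcal{F}$ at the solution is, up to the small type~1 quadratic corrections, the block operator $(D_+,D_-,\Pi_Y Y'\,\cdot)$, which is invertible for $T\gg 0$ by exactly the estimates underlying Lemmas~\ref{azalea}--\ref{estimate for tau} (surjectivity of $D_+$, invertibility of $D_-$ on $\mathcal{H}_-$ with the $(1-\Pi)$-projection, and $Y=\Pi Y'\neq 0$). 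Therefore the implicit function theorem applies uniformly and the solution $(\psi_-,\psi_+,\tau-\tau_0)(T,v_+)$ is smooth in $(T,v_+)$.

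Next I would verify the smooth dependence of the ingredients. The dependence on $v_+$ enters through $v_{+,T}$, hence through $\eta_{+,T}$, the normal bundle of $v_{+,T}$, the exponential map $\exp_{v_*}$, and the operator $D_+$; each of these depends smoothly on $v_+$ because $v_+$ ranges over the smooth finite-dimensional moduli space $\mathcal{M}$ and because, for $T_0\gg 0$, near $\R\times\gamma_+'$ everything is written in the fixed linear coordinates of (J1), (J2). The dependence on $T$ enters through the cutoff functions $\beta_{\pm,T}$ and through $v_{+,T}(s,t)=v_+(s-2T,t)$; both are smooth in $T$. The remainders $\mathcal{R}_\pm$ are, on the neck, given by the explicit formulas of Claim~\ref{claim: R plus and R minus on neck}, namely $\frac{\partial\beta_+}{\partial s}\psi_+$ and $\frac{\partial\beta_-}{\partial s}\psi_-$, and elsewhere are the type~1 quadratic expressions from the local computation following Equation~\eqref{eqn: d-bar rewritten}, all smooth in their arguments. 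Composing, $\mathfrak{s}(T,v_+)(\sigma)=\bigl\langle\sigma,\frac{\partial\beta_+}{\partial s}\eta_+ + \mathcal{R}_-\bigr\rangle$ is smooth in $(T,v_+)$ for each fixed $\sigma$, and since $\sigma$ ranges over a smoothly varying finite-dimensional space $\ker D_-^*/\R\langle Y\rangle$ (which varies smoothly with $v_-$, hence is constant here as $v_-$ is fixed), $\mathfrak{s}$ is a smooth section of $\mathcal{O}$.

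The main obstacle is the bookkeeping of the functional-analytic framework: one must be careful that the Morrey spaces $\mathcal{H}_0$, $\mathcal{H}_1$ of Section~\ref{subsection: banach spaces} form genuine smooth Banach bundles over the parameter space $[R,\infty)\times\mathcal{M}/\R$ (this requires comparing normal bundles and metrics over nearby $v_+$ via the exponential identification, and checking that the reparametrization $s\mapsto s-2T$ acts smoothly on these spaces), and that the operators $D_+^{-1}$, $D_-^{-1}(1-\Pi)$ are uniformly bounded and depend smoothly on $(T,v_+)$ — this last point is where the estimates of \cite[Section~5.5]{HT2} are used, and it is the reason the authors simply cite \cite[Lemma~6.3]{HT2}: the argument is identical to theirs once one records that the $L$-supersimple hypothesis only simplifies the expressions (via Claim~\ref{claim: R plus and R minus on neck}, $Y'=0$ on the neck) without changing the structure of the proof.
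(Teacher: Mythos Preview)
Your proposal is correct and is precisely the argument underlying \cite[Lemma~6.3]{HT2}, which is all the paper invokes; the paper gives no independent proof and simply cites that reference. Your final paragraph already identifies this: the implicit function theorem applied to the contraction-mapping setup of Lemmas~\ref{azalea}--\ref{estimate for tau}, with the $L$-supersimple hypothesis merely simplifying the neck terms via Claim~\ref{claim: R plus and R minus on neck}, is exactly the content of the cited lemma.
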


As in Lemma~\ref{lemma: good basis for coker},
$$\dim \op{ker} (D_-) =0, \quad \dim \ker (D^*_-)=\dim \op{coker}(D_-)=k.$$
Therefore, we expect $\mathfrak{s}^{-1} (0)$ to intersect $\{T\} \times \mathcal{M}/\mathbb{R}$ at finitely many points, for generic $T\geq 10r$.

To any $(T,v_+)\in [R,\infty)\times \mathcal{M}/\mathbb{R}$ we can associate $v(T,v_+)$ and $\tau(T,v_+)$, as described at the end of Section~\ref{subsection: gluing}.  When $(T,v_+)\in \mathfrak{s}^{-1} (0)$, we can view $(v(T,v_+),\tau(T,v_+))$ as an element of $\coprod_{0\leq \tau \leq 1}\mathcal{M}^{\op{ind}=0,\op{cyl}}_{\overline{J}^{\tau}}(\gamma'_+,\gamma_-)$.  Hence we have the gluing map
$$G: \mathfrak{s}^{-1}(0) \to \coprod_{0\leq \tau \leq 1}\mathcal{M}^{\op{ind}=0,\op{cyl}}_{\overline{J}^{\tau}}(\gamma'_+,\gamma_-),$$
$$ (T,v_+)\mapsto (v(T,v_+),\tau(T,v_+)).$$

\subsection{Bijectivity of the gluing map}\label{subsection: bijectivity}

Let $K$ be a subset of $\mathcal{M}/\R$.  Given $\delta>0$, let $\widetilde{\mathcal{G}}_\delta (v_-,K)$ be the set of (not necessarily holomorphic) maps that are $\delta$-close to some holomorphic building $v_-\cup v_+$ in the sense of \cite[Definition~7.1]{HT2}, where $v_+\in K$. Let $\mathcal{G}_\delta (v_-,K)$ be the subset of
$$\coprod_{0\leq \tau \leq 1}\mathcal{M}^{\op{ind}=0,\op{cyl}}_{\overline{J}^{\tau}}(\gamma_+,\gamma_-) $$
consisting of pairs $(v,\tau)$, where $v$ is a $\overline{J}^\tau$-holomorphic curve in $\widetilde{\mathcal{G}}_\delta (v_-,K)$.  Also let $\mathcal{U}_{\delta,K}\subset [R,\infty)\times K$ be the set of $(T,v_+)$ such that $v(T,v_+)\in \widetilde{\mathcal{G}}_\delta (v_-,K)$.

\begin{thm}[{\cite[Theorem~7.3]{HT2}}] \label{thm: bijectivity of gluing map}
Suppose $K\subset \mathcal{M}/\R$ is compact.
\begin{enumerate}
\item For $R'>0$ sufficiently large with respect to $\delta$,  $[R',\infty)\times K\subset  \mathcal{U}_{\delta,K}$.
\item For $\delta>0$ sufficiently small, there exists a compact set $K\subset K'\subset \mathcal{M}/\R$ such that the restriction
$$G|_{ \mathcal{U}_{\delta,K'}}:\mathfrak{s}^{-1}(0) \cap \mathcal{U}_{\delta,K'}\to \mathcal{G}_\delta(v_-,\mathcal{M}/\R)$$
of $G$ satisfies $\op{Im}(G|_{ \mathcal{U}_{\delta,K'}})\supset \mathcal{G}_\delta(v_-, K)$. Moreover, $G|_{ \mathcal{U}_{\delta,K'}}$ is a homeomorphism onto its image.
\end{enumerate}
\end{thm}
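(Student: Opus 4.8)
The plan is to follow the structure of the proof of \cite[Theorem~7.3]{HT2}, adapting the estimates to the present setting, where they are considerably simpler because the $\overline\bdry_{J_+}$-equation is linear near $\R\times\gamma'_+$ (so there are no quadratic corrections on the neck, by Claim~\ref{claim: R plus and R minus on neck}). Part (1) is the ``surjectivity of pregluing onto a neighborhood'' statement: given $\delta>0$, I would show that for $T$ sufficiently large (relative to $\delta$) the glued curve $v(T,v_+)$ produced by the construction of Section~\ref{subsection: gluing} is automatically $\delta$-close to the building $v_-\cup v_+$ in the sense of \cite[Definition~7.1]{HT2}. This is immediate from the exponential estimates $\|\psi_-\|_*,\|\psi_+\|_*,|\tau-\tau_0|\leq cr^{-1}e^{-\lambda T}$ of Lemma~\ref{estimate for psi and tau} together with the fact that $\eta_\pm$ decay exponentially at the relevant ends: the deformation $\beta_+\psi_++\beta_-\psi_-$ and the cut-off interpolation $\eta_*=\beta_+\eta_++\beta_-\eta_-$ are both $O(e^{-\lambda T})$-small on the neck, so $v(T,v_+)$ lies in $\widetilde{\mathcal{G}}_\delta(v_-,K)$ once $e^{-\lambda R'}<c^{-1}\delta$. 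Since $K$ is compact, the constants $c,r,\lambda$ can be chosen uniformly over $K$, giving $[R',\infty)\times K\subset\mathcal{U}_{\delta,K}$.

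For part (2) I would establish three things, in order. First, \emph{well-definedness of $G$ on $\mathfrak{s}^{-1}(0)\cap\mathcal{U}_{\delta,K'}$}: when $\mathfrak{s}(T,v_+)=0$, Equation~\eqref{equation: Pi=0} is satisfied automatically (the $\ker D_-^*\cap Y^\perp$ component of $\frac{\partial\beta_+}{\partial s}\eta_++\mathcal{R}_-$ vanishes precisely when $\mathfrak{s}=0$), so the triple \eqref{equation: D+1-Pi}, \eqref{equation: one more}, \eqref{equation: Pi=0} is solved and $v(T,v_+)$ is genuinely $\overline{J}^{\tau(T,v_+)}$-holomorphic of $\op{ind}=0$; thus $G$ lands in $\coprod_\tau\mathcal{M}^{\op{ind}=0,\op{cyl}}_{\overline{J}^\tau}(\gamma'_+,\gamma_-)$. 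Second, \emph{surjectivity onto $\mathcal{G}_\delta(v_-,K)$}: given a holomorphic $(v,\tau)$ that is $\delta$-close to some $v_-\cup v_+$ with $v_+\in K$, I would decompose $v$ near the neck into the pregluing of a nearby $v_+'\in K'$ (its ``upper part'', recovered by the large-$T$ asymptotics, which forces the choice of $T$ and a slightly enlarged compact set $K'$) plus deformation sections $\psi_\pm$ lying in $(\ker D_\pm)^\perp$, obtained by a further projection/Newton iteration exactly as in \cite[Section~7]{HT2}; uniqueness of the fixed points in Lemmas~\ref{azalea}, \ref{suzuran}, and \ref{estimate for tau} then identifies $(v,\tau)$ with $G(T,v_+')$ and forces $\mathfrak{s}(T,v_+')=0$. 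Third, \emph{$G$ is a homeomorphism onto its image}: injectivity follows from the uniqueness clause of Lemma~\ref{estimate for psi and tau} (the data $(T,v_+)$ are recovered from $v(T,v_+)$ via its neck asymptotics), and continuity of $G$ and of $G^{-1}$ follows from the smooth dependence of the fixed-point solutions on $(T,v_+)$ and the implicit function theorem, using that the linearized operators $D_\pm$ vary continuously.

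The main obstacle will be the bookkeeping in surjectivity: namely, given a holomorphic curve $v$ merely known to be $\delta$-close to $v_-\cup v_+$, one must canonically extract the ``correct'' parameters $(T,v_+')$ and the splitting into $\psi_-\in\mathcal H_-$ and $\psi_+\in\mathcal H_+=(\ker D_+)^\perp$ so that the uniqueness statement applies — this requires fixing a slice of $\mathcal M\to\mathcal M/\R$, choosing the neck length $T$ so that the cut-off functions land in the right range, and enlarging $K$ to $K'$ to accommodate the $O(\delta)$-drift of $v_+$. This is where the compactness of $K$ and the uniform exponential decay are essential, and where the argument parallels \cite[Section~7]{HT2} most closely; the simplification here is that, because $\mathcal R_\pm$ is given explicitly by Equation~\eqref{eqn: expression fo R pm between T_0 and T} on the neck (no quadratic terms), the quadratic estimates of \cite{HT2} are replaced by linear ones, so the Newton iteration converges with essentially no extra work. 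I would therefore present part (1) and the three sub-steps of part (2) in that order, citing \cite{HT2} for the parts that transfer verbatim and writing out only the places where $L$-supersimplicity trivializes the estimate.
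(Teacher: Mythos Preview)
Your plan is correct and matches the paper's approach exactly: the paper gives no proof of this theorem, simply citing \cite[Theorem~7.3]{HT2} in the theorem heading and moving on, with the implicit understanding (stated at the start of Section~\ref{section: gluing}) that ``the proofs of the results that carry over with minimal changes will be omitted.'' Your outline---follow the structure of \cite[\S7]{HT2}, using the estimates of Lemmas~\ref{azalea}--\ref{estimate for psi and tau} and noting that Claim~\ref{claim: R plus and R minus on neck} eliminates the quadratic terms on the neck---is precisely the adaptation the paper has in mind. One small slip: the target of $G$ should be $\coprod_\tau\mathcal{M}^{\op{ind}=0,\op{cyl}}_{\overline{J}^\tau}(\gamma_+,\gamma_-)$, not $(\gamma'_+,\gamma_-)$, since the glued curve goes from $\gamma_+$ to $\gamma_-$.
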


\subsection{The linearized section $\mathfrak{s}_0$} \label{subsection: linearized section s_0}

Recall the evaluation map $\widetilde{ev}^k_-: \mathcal{M}/\R\to S^{k-1}$ and also that $\dim\mathcal{M}/\R=k-1$.

We define a homotopy of sections $\mathfrak{s}_\zeta$, $\zeta\in[0,1]$, as follows: For $\zeta\in[{1\over 2},1]$,
\begin{equation} \label{homotopy of s}
\mathfrak{s}_\zeta(T,v_+)(\sigma)=\left\langle \sigma,\frac{\partial \beta_+}{\partial s}\eta_+ +\beta(2\zeta-1)\cdot\mathcal{R}_-\right\rangle,
\end{equation}
where $\beta$ is the cutoff function from Section~\ref{subsection:pregluing}.
More precisely, given $(T,v_+)$, we solve for $\psi_+$, $\psi_-$, $\tau$ in $\Theta_+(\psi_-,\psi_+)=0$ and in Equations~\eqref{equation: D+1-Pi} and \eqref{equation: one more} with $\mathcal{R}_-$ replaced by $\beta(2\zeta-1)\mathcal{R}_-$, and evaluate the right-hand side of Equation~\eqref{homotopy of s} using these $\psi_+$, $\psi_-$, and $\tau$.  Observe that the estimates from Section~\ref{subsection: estimates} carry over with $\mathcal{R}_-$ replaced by $\beta(2\zeta-1)\mathcal{R}_-$, allowing us to define $\mathfrak{s}_\zeta$.  For $\zeta\in[0,{1\over 2}]$,
\begin{equation} \label{homotopy of s two}
\mathfrak{s}_\zeta(T,v_+)(\sigma)=\left\langle \sigma_\zeta,\frac{\partial \beta_+}{\partial s}\eta_+ \right\rangle,
\end{equation}
where $\sigma_\zeta$ is the linear interpolation between $\sigma$ and $\tilde\sigma$, and $\tilde\sigma$ is $\sigma$ with the $f_{k+1}(t),f_{k+2}(t),\dots$ terms truncated.  Note that we are only concerned with the positive end of $\sigma$.

Here $\mathfrak{s}_1=\mathfrak{s}$ and $\mathfrak{s}_0(T,v_+)(\sigma)=\left\langle \tilde\sigma,\frac{\partial \beta_+}{\partial s}\eta_+ \right\rangle$.

\begin{lemma} \label{lemma: s_0}
With respect to the basis $\{\sigma_1,\dots,\sigma_k\}$ from Lemma~\ref{lemma: good basis for coker},
\begin{equation} \label{eqn: expression for s zero}
\mathfrak{s}_0(T,v_+)(\sigma_i)=e^{-2\lambda_i T}  c_i
\end{equation}
where $\widetilde{ev}^k_-(v_+)=(c_1,\dots,c_k)$ and $i=1,\dots,k-1$, and
\begin{equation}
\mathfrak{s}_0^{-1}(0)=[R,\infty)\times (\widetilde{ev}^k_-)^{-1}(\{(0,\dots,0,\pm 1)\}).
\end{equation}
\end{lemma}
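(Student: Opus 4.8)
The plan is to evaluate $\mathfrak{s}_0(T,v_+)(\sigma_i)$ directly from the formula $\mathfrak{s}_0(T,v_+)(\sigma)=\langle\tilde\sigma,\frac{\partial\beta_+}{\partial s}\eta_+\rangle$ recorded just above, by combining (a) the \emph{exact} Fourier expansion of $v_+$ near its negative end $\gamma'_+$ --- available precisely because $(\alpha_+,J_+)$ is $L$-supersimple, so that $\overline\bdry_{J_+}$ is linear near $\R\times\gamma'_+$ and the Fourier ``coefficients'' satisfy $c_i(s)=c_i$ for $s\ll 0$ --- with (b) the explicit description of the basis $\{\sigma_i\}$ supplied by Lemma~\ref{lemma: good basis for coker}. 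Once Equation~\eqref{eqn: expression for s zero} is established, the description of $\mathfrak{s}_0^{-1}(0)$ is immediate.

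First I would locate the relevant supports. The derivative $\frac{\partial\beta_+}{\partial s}=\frac{\partial\beta_{+,T_0}}{\partial s}$ is supported in $[T_0,T_0+hr]$. Writing $(c_1,c_2,\dots)$ for the Fourier coefficients of $v_+$ at $\gamma'_+$, so that $\widetilde{ev}^k_-(v_+)=(c_1,\dots,c_k)$, and recalling $v_{+,T}(s,t)=v_+(s-2T,t)$: since $T\gg T_0$, for $s\le T$ we have $s-2T\ll 0$, so on $\{s\le T\}$ (in particular where $\frac{\partial\beta_+}{\partial s}\ne 0$) the half-cylinder $v_{+,T}$ lies where $\overline\bdry_{J_+}$ is linear and
\[
\eta_{+,T}(s,t)=\sum_{i\ge 1}c_i\,e^{\lambda_i(s-2T)}f_i(t)=\sum_{i\ge 1}e^{-2\lambda_iT}c_i\,e^{\lambda_is}f_i(t).
\]
On the other side, by Lemma~\ref{lemma: good basis for coker} the positive end of $\sigma_i$ ($i=1,\dots,k-1$) is $e^{-\lambda_is}f_i(t)$ modulo $f_{k+1},f_{k+2},\dots$, so after truncation $\tilde\sigma_i=e^{-\lambda_is}f_i(t)$ on all of $\{s\ge T_0\}$ (computed in the trivialization $(ds-idt)\otimes\tau$ of $\Lambda^{0,1}T^*\dot F\otimes N$; the resulting $L^2$-pairing of $N$-valued $(0,1)$-forms carries a fixed positive constant which we absorb, since it affects neither the zero set nor $\widetilde{ev}^k_-$). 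Feeding the two expansions into the pairing and using that $\{f_j\}_{j\ne0}$ is orthonormal in $L^2(\R/\Z;\R^2)$ gives
\begin{align*}
\mathfrak{s}_0(T,v_+)(\sigma_i)
&=\int_{T_0}^{T_0+hr}\!\!\int_{\R/\Z}\Big\langle e^{-\lambda_is}f_i(t),\ \sum_{j\ge1}e^{-2\lambda_jT}c_j\,e^{\lambda_js}f_j(t)\Big\rangle\,\frac{\partial\beta_+}{\partial s}(s)\,dt\,ds\\
&=e^{-2\lambda_iT}c_i\int_{T_0}^{T_0+hr}\frac{\partial\beta_+}{\partial s}(s)\,ds=e^{-2\lambda_iT}c_i,
\end{align*}
since $\beta_+(T_0+hr)-\beta_+(T_0)=1$; this is Equation~\eqref{eqn: expression for s zero}.

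For the zero set: by the formula just proved, $(T,v_+)\in\mathfrak{s}_0^{-1}(0)$ if and only if $c_i=0$ for all $i=1,\dots,k-1$ (the factors $e^{-2\lambda_iT}$ never vanish), i.e.\ $ev^k_-(v_+)=(0,\dots,0,c_k)$. Since $\widetilde{ev}^k_-$ is everywhere defined on $\mathcal{M}/\R$ there is no $v_+$ with $(c_1,\dots,c_k)=0$, so $c_k\ne0$ on this locus; under the identification $\R^k/\R^+\simeq S^{k-1}$ of Remark~\ref{identification} the class of $(0,\dots,0,c_k)$ is exactly $(0,\dots,0,\pm1)$ according to the sign of $c_k$. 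Hence $\mathfrak{s}_0^{-1}(0)\cap(\{T\}\times\mathcal{M}/\R)=\{T\}\times(\widetilde{ev}^k_-)^{-1}(\{(0,\dots,0,\pm1)\})$, and taking the union over $T\in[R,\infty)$ gives the claimed identity.

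The one genuinely delicate point is the first claim of Step~1: that near the negative end of $v_+$ the Fourier series converges to $v_+$ with \emph{constant} coefficients on the whole neck $\{s\le T\}$, rather than merely being an asymptotic expansion with $s$-dependent coefficients $c_i(s)$. This is exactly the feature of $L$-supersimple pairs emphasized in the introduction, and it is what lets us sidestep Siefring-type asymptotics here. Everything else --- orthonormality of the $f_j$, the normalization $\int\frac{\partial\beta_+}{\partial s}=1$, and invoking the well-definedness of $\widetilde{ev}^k_-$ on $\mathcal{M}/\R$ to exclude $(c_1,\dots,c_k)=0$ --- is routine.
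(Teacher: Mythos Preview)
Your proof is correct and follows essentially the same approach as the paper's: compute $\langle\tilde\sigma_i,\frac{\partial\beta_+}{\partial s}\eta_+\rangle$ directly using the Fourier expansion $\eta_+(s,t)=\sum_j c_j e^{\lambda_j(s-2T)}f_j(t)$, the truncated form $\tilde\sigma_i=e^{-\lambda_i s}f_i(t)$, orthonormality of $\{f_j\}$, and $\int\frac{\partial\beta_+}{\partial s}\,ds=1$. Your version is more explicit about the supports and about why $(c_1,\dots,c_k)\neq 0$ via the well-definedness of $\widetilde{ev}^k_-$, but the argument is the same.
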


\begin{proof}
We compute
\begin{align*}
\mathfrak{s}_0(T,v_+)(\sigma_i)& =\left\langle \tilde\sigma_i, {\bdry\beta_+\over \bdry s} \eta_+\right\rangle \\ & = \left\langle e^{-\lambda_is}f_i(t),  {\bdry\beta_+\over \bdry s}\left(\sum_j c_j e^{\lambda_j(s-2T)}f_j(t)\right)\right\rangle\\
&= e^{-2\lambda_i T}  c_i.
\end{align*}
Recall that $\{f_j\}_{j\in \Z-\{0\}}$ is an $L^2$-orthonormal basis and that $\int_{-\infty}^\infty {\bdry\beta_+\over \bdry s} ds=1$. The calculation of $\mathfrak{s}_0^{-1}(0)$ immediately follows from Equation~\eqref{eqn: expression for s zero}.
\end{proof}

The following proposition allows us to substitute $\mathfrak{s}$ for the linearized section $\mathfrak{s}_0$:

\begin{prop} \label{prop: zeros do not escape}
Let $K\subset \mathcal{M}/\R$ be a compact $(k-1)$-dimensional submanifold with boundary such that $\widetilde{ev}^k_-(\bdry K)\cap \{(0,\dots,0,\pm 1)\}=\varnothing$.  Then for $R\gg 0$, there are no zeros of $\mathfrak{s}_\zeta$, $\zeta\in[0,1]$, on $[R,\infty)\times \bdry K$.
\end{prop}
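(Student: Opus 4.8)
The plan is to compare the whole homotopy $\mathfrak{s}_\zeta$ to the linearized section $\mathfrak{s}_0$ of Lemma~\ref{lemma: s_0}, after a $T$-dependent rescaling of the obstruction bundle. Recall from Lemma~\ref{lemma: good basis for coker} the basis $\{\sigma_1,\dots,\sigma_k\}$ with $\sigma_k=Y$; since the $\sigma_i$ do not depend on $(T,v_+)$, a section $\mathfrak{s}$ of $\mathcal{O}$ is recorded by the tuple $(\mathfrak{s}(\sigma_1),\dots,\mathfrak{s}(\sigma_{k-1}))\in\R^{k-1}$, and $\mathfrak{s}(T,v_+)=0$ if and only if this tuple vanishes. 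First I would introduce
$$\widehat{\mathfrak{s}}_\zeta(T,v_+)=\big(e^{2\lambda_1 T}\,\mathfrak{s}_\zeta(T,v_+)(\sigma_1),\dots,e^{2\lambda_{k-1}T}\,\mathfrak{s}_\zeta(T,v_+)(\sigma_{k-1})\big)\in\R^{k-1},$$
a nowhere-vanishing reparametrization of $\mathfrak{s}_\zeta$. By Lemma~\ref{lemma: s_0}, $\widehat{\mathfrak{s}}_0(T,v_+)=(c_1,\dots,c_{k-1})$, the first $k-1$ coordinates of $\widetilde{ev}^k_-(v_+)\in S^{k-1}$ in the normalization of Remark~\ref{identification}. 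Since $\widetilde{ev}^k_-(\bdry K)$ is compact and, by hypothesis, disjoint from $\{(0,\dots,0,\pm1)\}$, there is $\epsilon_0>0$ with $|\widehat{\mathfrak{s}}_0(T,v_+)|\ge\epsilon_0$ for all $v_+\in\bdry K$ and all $T$. Hence it is enough to show that
$$\sup_{\zeta\in[0,1]}\ \sup_{v_+\in\bdry K}\ \big|\widehat{\mathfrak{s}}_\zeta(T,v_+)-\widehat{\mathfrak{s}}_0(T,v_+)\big|\longrightarrow 0\quad\text{as }T\to\infty,$$
for then $|\widehat{\mathfrak{s}}_\zeta(T,v_+)|\ge\epsilon_0/2$ for all $\zeta$ once $T\ge R$ with $R\gg0$, so $\mathfrak{s}_\zeta$ has no zero on $[R,\infty)\times\bdry K$.

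To prove this uniform estimate I would expand $\mathfrak{s}_\zeta(T,v_+)(\sigma_i)-\mathfrak{s}_0(T,v_+)(\sigma_i)$ using Equations~\eqref{equation for s}, \eqref{homotopy of s}, \eqref{homotopy of s two} into two pieces. The first is the \emph{truncation} piece $\langle\sigma_i-\widetilde\sigma_i,\tfrac{\bdry\beta_+}{\bdry s}\eta_+\rangle$ (or, for $\zeta\in[0,\tfrac12]$, its linear interpolant): since the positive end of $\sigma_i$ is $e^{-\lambda_i s}f_i$ modulo $f_{k+1},f_{k+2},\dots$ and $\eta_+(s,t)=\sum_{j\ge1}c_j e^{\lambda_j(s-2T)}f_j(t)$ on the neck, $L^2$-orthonormality of the $f_j$ and $\int\tfrac{\bdry\beta_+}{\bdry s}\,ds=1$ leave only the modes $f_j$ with $j\ge k+1$, which after multiplication by $e^{2\lambda_i T}$ carry factors $e^{-2(\lambda_j-\lambda_i)T}$; since the hyperbolic asymptotic eigenvalues (Sections~\ref{subsubsection: positive hyperbolic case}, \ref{subsubsection: negative hyperbolic case}) come in consecutive equal pairs but never triples, $\lambda_i\le\lambda_{k-1}<\lambda_{k+1}$ for $i\le k-1$, and using also the $L^2$-convergence of the asymptotic expansions of $\sigma_i$ and of $v_+$ this piece tends to $0$ uniformly on $\bdry K$. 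The second is the \emph{remainder} piece $\beta(2\zeta-1)\langle\sigma_i,\mathcal{R}_-\rangle$, present only for $\zeta\in[\tfrac12,1]$; here I would invoke the estimates of Section~\ref{subsection: estimates} together with Claim~\ref{claim: R plus and R minus on neck}: on $[T_0,T_0+hr]$ one has $\mathcal{R}_-=\tfrac{\bdry\beta_+}{\bdry s}\psi_+$, while on $s\le T_0$ one has $Y'=0$ and $\mathcal{R}_-$ type-$1$ quadratic in $(\psi_-,\tau-\tau_0)$. The point is that, since $Y'$ vanishes on the neck, $\psi_\pm$ are driven there only by the cutoff errors $\tfrac{\bdry\beta_+}{\bdry s}\eta_+$ and $\tfrac{\bdry\beta_-}{\bdry s}\eta_-$ (together with the coupled quadratic terms $\mathcal{R}_\pm$), whose sizes are comparable, up to the fixed factors $e^{\lambda_j T_0}$, to the terms $c_j e^{-2\lambda_j T}$ making up $\mathfrak{s}_0$; because $\mathcal{R}_-$ depends quadratically on these data, $e^{2\lambda_i T}\langle\sigma_i,\mathcal{R}_-\rangle$ is of strictly smaller exponential order than $\widehat{\mathfrak{s}}_0$ and $\to0$ uniformly. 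The dependence on $\zeta$ enters only through the bounded multiplier $\beta(2\zeta-1)$ and the linear interpolation between $\sigma_i$ and $\widetilde\sigma_i$, which yields uniformity in $\zeta$.

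The hard part will be this last step: as in the corresponding computation of \cite{HT2}, one must organize the estimates so as to keep track of the precise exponential rates at which $\psi_+$ and $\psi_-$ decay away from the two cutoff windows $[T_0,T_0+hr]$ and $[T-hr,T]$ (using, in particular, that at each window the positive- and negative-eigenvalue modes decouple), and confirm that the resulting bound on $\langle\sigma_i,\mathcal{R}_-\rangle$ is quadratically small \emph{relative to the main term} for every $i=1,\dots,k-1$ and \emph{uniformly} over $v_+\in\bdry K$, where individual Fourier coefficients $c_j$ may be small. This bookkeeping is considerably simpler here than in \cite{HT2} because $\overline\bdry_{J_+}$ is linear near $\R\times\gamma_+'$, so the relevant neck equations are literally $\bdry_s\psi=A\psi$ and can be solved mode-by-mode against the eigenfunctions $f_j$ --- the same simplification exploited in the proof of Theorem~\ref{thm: transversality of evaluation map}. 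Finally, the case $(\gamma_+')^{1/k}$ positive hyperbolic is handled in the same way, with $(\pm1,0,\dots,0)$ in place of $(0,\dots,0,\pm1)$.
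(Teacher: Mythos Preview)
Your overall plan---compare $\mathfrak{s}_\zeta$ to the explicit $\mathfrak{s}_0$ after rescaling by $e^{2\lambda_i T}$---is natural, and for $\zeta\in[0,\tfrac12]$ it coincides with the paper's one-line argument. For $\zeta\in[\tfrac12,1]$, however, there is a real gap, and the paper proceeds quite differently.

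First a slip: $Y'=0$ on $s\ge T_0$ (the symplectization end), not on $s\le T_0$; the region $s\le T_0$ lies in the cobordism, and it is precisely there that $\mathcal{R}_-=F_-(\psi_-,\tau-\tau_0)$ is the genuinely nonlinear, type-1-quadratic term.

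The gap is in the remainder estimate for that piece. Lemma~\ref{estimate for psi and tau} gives only $\|\psi_-\|_*,\,|\tau-\tau_0|\le cr^{-1}e^{-\lambda T}$ with $\lambda=\lambda_1$, hence $\|F_-\|\le c\,r^{-2}e^{-2\lambda_1 T}$. After your rescaling, $e^{2\lambda_i T}|\langle\sigma_i,F_-\rangle|$ is bounded only by $c\,r^{-2}e^{2(\lambda_i-\lambda_1)T}$, which for $\lambda_i>\lambda_1$ (e.g.\ $i\ge3$ in the negative hyperbolic case, where $\lambda_3\approx3\pi>2\lambda_1\approx2\pi$) \emph{blows up} as $T\to\infty$. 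The quadratic smallness of $F_-$ is relative to the slowest rate $\lambda_1$, not to $\lambda_i$; your sentence ``because $\mathcal{R}_-$ depends quadratically on these data, $e^{2\lambda_i T}\langle\sigma_i,\mathcal{R}_-\rangle$ is of strictly smaller exponential order'' is exactly where the argument breaks. On $s\le T_0$ the operator $D_-$ is not diagonal in the end-basis $\{f_j\}$, so there is no mode-by-mode decoupling that would let you upgrade the decay of $\psi_-$ from rate $\lambda_1$ to rate $\lambda_i$.

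The paper sidesteps this entirely. For $\zeta\in[\tfrac12,1]$ it argues by contradiction: if $\mathfrak{s}_\zeta(T,v_+)=0$ then $\Theta_-=0$, i.e.\ one has a solution of
\[
\overline\bdry_{\overline J^\tau}\exp_{v_-}(\psi_-)=-\tfrac{\bdry\beta_+}{\bdry s}(\eta_++\psi_+).
\]
Explicit Fourier computations on the neck (Claims~\ref{peach 1}--\ref{Fourier polynomial error term}, using the $L$-supersimple linearity) show that the right-hand side, projected to $W=\R\big\langle\tfrac{\bdry\beta_+}{\bdry s}e^{\lambda_j s}f_j\big\rangle_{j=1}^{k}$, stays bounded away from the line $Z_k=\R\big\langle\tfrac{\bdry\beta_+}{\bdry s}e^{\lambda_k s}f_k\big\rangle$; this is where the hypothesis on $\widetilde{ev}^k_-(\bdry K)$ enters. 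A separate geometric lemma (Lemma~\ref{no soln}) then shows that the image of $(\psi_-,\tau)\mapsto\overline\bdry_{\overline J^\tau}\exp_{v_-}(\psi_-)$ is a codimension-$(k-1)$ submanifold tangent to $\op{Im}D_-+\R\langle\sigma_k\rangle$, hence meets $W$ only along $Z_k$ up to small error. That contradiction requires no rescaled estimate at all.
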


\begin{proof}
For our purposes we take a smooth section of $\mathcal{M}\to\mathcal{M}/\R$ over $K$ and a constant $C>0$ so that, for each $v_+$ representing $[v_+]\in K$,
\begin{enumerate}
\item[(K1)] $v_+|_{s\leq 0}$ has image in $\R\times (\R/\Z)\times D^2_{\delta_0/3}$ and is graphical over a subset of $\R\times \gamma_+'$;
\item[(K2)] if we write $v_+ =\left(s,t,\displaystyle \sum_{i>0} c_i e^{\lambda_i s} f_i(t)\right)$ on $s\leq 0$, then $\displaystyle\sum_{i=1}^k c_i^2={C^2\delta_0^2}$; and
\item[(K3)] $\displaystyle\sum_{i=1}^k c_i^2\gg \displaystyle\sum_{i>k} c_i^2$.
\end{enumerate}
We then take $0< \delta\ll {C\delta_0}$.

Let $R>0$ be sufficiently large with respect to $\delta$. Arguing by contradiction, suppose $(T,v_+)\in \mathfrak{s}^{-1}(0)\cap ([R,\infty)\times \bdry K)$.
Given $(T,v_+)\in [R,\infty)\times \bdry K$, consider $(v(T,v_+),\tau(T,v_+))$. Observe that $v(T,v_+)= \op{exp}_{v_*}(\psi_+)$ for $s\geq 2T$; $v(T,v_+)= \op{exp}_{v_*}(\psi_-)$ for $s\leq T_0$; and $v(T,v_+)$ can be written as
$$(s,t,\beta_+(\eta_++\psi_+)+\beta_-(\eta_-+\psi_-))$$
on $T_0\leq s\leq 2T$ by our choice of section. Recall that $v_{+,T}$ is $v_+$ translated up by $2T$.

By Lemmas~\ref{estimate for psi and tau} and \ref{lemma: sobolev embedding},
\begin{equation} \label{eqn: C zero bounds}
|\psi_-|_{C^0},~~~|\psi_+|_{C^0}, ~~~ |\tau-\tau_0| <cr^{-1}e^{-\lambda T},
\end{equation}
for some constant $c>0$. In view of the normalization of $v_+$ along $s=0$ given by (K2) and (K3), we have
\begin{equation} \label{coffee}
|\psi_+|_{C^0} \ll |\eta_+|_{C^0}
\end{equation}
along $s=2T$, provided $R\gg 0$.

We now make some explicit calculations in local coordinates for $\psi_+$ on the $s\leq 2T$ region. By Claim~\ref{claim: R plus and R minus on neck}, $\psi_+$ satisfies the equation
\begin{equation} \label{fig}
D_+\psi_+ + {\bdry \beta_-\over \bdry s} (\eta_-+\psi_-)=0.
\end{equation}
For the term $\eta_- + \psi_-$ in Equation~\eqref{fig} we have:

\begin{claim} \label{peach 1}
If $(T,v_+)\in \mathfrak{s}^{-1}(0)$, then
$$\eta_-+\psi_-=\displaystyle\sum_{i < 0} d_i e^{\lambda_i s}f_i (t)$$
for $s\geq T_0+hr$.
\end{claim}

\begin{proof}[Proof of Claim~\ref{peach 1}]
First note that $\eta_-$ is holomorphic.  On $s\geq T_0$, $\psi_-$ satisfies the equation
$$D_-\psi_-+{\bdry \beta_+\over \bdry s} (\eta_++\psi_+)=0$$
by Claim~\ref{claim: R plus and R minus on neck}.
Restricted to $s\geq T_0+hr$, this becomes $D_-\psi_-=0$.  Since both $\eta_-$ and $\psi_-$ have exponential decay as $s\to \infty$, $\eta_-+\psi_-$ can be written as $\displaystyle \sum_{i < 0} d_i e^{\lambda_i s}f_i (t)$ for $s\geq T_0+hr$.
\end{proof}

Next let us write
$$\psi_+(s,t)=\sum_{i\in\Z-\{0\}} b_i(s)e^{\lambda_i s} f_i(t).$$
Then we have the following:

\begin{claim} \label{momo}
If $(T,v_+)\in \mathfrak{s}^{-1}(0)$, then on $s<2T$:
\begin{enumerate}
\item for $i>0$, $b_i(s)$ is constant; and
\item for $i<0$, $b_i(s)=- d_i \int_{-\infty}^s {\bdry \beta_-\over \bdry s}ds.$
\end{enumerate}
\end{claim}

\begin{proof}[Proof of Claim~\ref{momo}]
Since $\psi_+$ has exponential decay as $s\to -\infty$ and ${\bdry \beta_-\over \bdry s}=0$ on $s< T-hr$, $b_i(s)$ must be constant for all $i$ and equal to zero for $i<0$ on $s<T-hr$ by Equation~\eqref{fig}.  By a similar argument which uses Claim \ref{peach 1}, $b_i(s)$ is constant on $T_0+hr<s<2T$ for $i>0$.
Therefore, $b_i(s)$ is constant on $s<2T$ for $i>0$, which gives (1). Solving for the Fourier coefficients for $i<0$ gives (2).
\end{proof}

\begin{claim} \label{claim: nonzero Fourier polynomial}
If $R\gg 0$, then $\widetilde{ev}^k_-(\eta_+ + \psi_+)$ is close to  $\widetilde{ev}^k_-(\eta_+)$ for all $[T,v_+)\in [R,\infty)\times \bdry K$.  In particular, $\widetilde{ev}^k_-(\eta_+ + \psi_+)\not=(0,\dots, 0, \pm 1)$.
\end{claim}

We will be using the identification $(\R^k-\{0\})/\R^+\simeq S^{k-1}_{C\delta_0}$; see Remark~\ref{identification}.  This is to take advantage of (K2).

\begin{proof}[Proof of Claim~\ref{claim: nonzero Fourier polynomial}]
If we write $\eta_+(s,t)=\displaystyle \sum_{i>0} c_i e^{\lambda_i (s-2T)} f_i(t)$, then the order $k$ Fourier polynomial for the negative end of $\eta_+ + \psi_+$ is
$$P_{k}(\eta_+ + \psi_+)=\displaystyle\sum_{i=1}^{k} (c_ie^{-2\lambda_i T} + b_i)e^{\lambda_i s} f_i(t).$$
By (K2), $\widetilde{ev}^k_-(\eta_+)=(c_1,\dots,c_k)$.  By using Equation~\eqref{coffee} along $s=2T$, we see that $\sum_{i=1}^k c_i^2\gg \sum_{i=1}^k (b_i e^{2\lambda_i T})^2$.
The claim then follows.
\end{proof}

\begin{claim} \label{Fourier polynomial error term}
For any $\varepsilon>0$, there exists $R\gg 0$ such that
$$|(\eta_+ + \psi_+)- P_{k}(\eta_++\psi_+)|  <\varepsilon |P_{k}(\eta_++\psi_+)|$$
pointwise on $s\leq 2T$ for all $[T,v_+)\in [R,\infty)\times \bdry K$.
\end{claim}

\begin{proof}[Proof of Claim~\ref{Fourier polynomial error term}]
By (K3),
$$|(\eta_+ + \psi_+)- P_{k}(\eta_++\psi_+)|  <\varepsilon |P_{k}(\eta_++\psi_+)|$$
holds along $s=2T$.  Since the error $(\eta_+ + \psi_+)- P_{k}(\eta_++\psi_+)$ has faster exponential decay than $P_k(\eta_++\psi_+)$ as $s$ decreases, the claim follows.
\end{proof}

Let us now consider $\Theta_-(\psi_-,\psi_+)=0$, which can be written as
\begin{equation} \label{asagao}
\overline{\bdry}_{\overline{J}^\tau} \op{exp}_{v_-}(\psi_-)   =-\frac{\partial \beta_+}{\partial s}(\eta_+ +\psi_+).
\end{equation}
Consider the finite-dimensional vector space $W = \R \langle {\bdry\beta_+\over \bdry s} e^{\lambda_i s} f_i(t) \rangle_{i=1}^{k}$. We write $Z_k=\R \langle {\bdry\beta_+\over \bdry s} e^{\lambda_k s} f_k(t) \rangle$.

\begin{lemma} \label{no soln}
If $B\subset W$ is a sufficiently small ball centered at $0$, then for all $g\in B- Z_k$ there is no solution to the equation
\begin{equation} \label{watermelon}
\overline{\bdry}_{\overline{J}^\tau} \op{exp}_{v_-}(\psi_-) =g.
\end{equation}
The same is also true for all $g$ of the form $g'+g''$, where $g'\in B-Z_k$, $g''\in \mathcal{H}_0(\dot F, \Lambda^{0,1} T^*F\otimes N)$, $\|g'\|< \varepsilon \|g''\|$, and $\varepsilon>0$ is a fixed sufficiently small number.
\end{lemma}

\begin{proof}[Proof of Lemma~\ref{no soln}]
First we claim that if $g\in B\cap Z_k$, then there exists a pair $(\psi_+,\tau)$ which solves Equation~\eqref{watermelon}:  Let us write $v_-=u_-\circ \pi$. Then $g\in B\cap Z_k$ can be written as $g=\pi^* g_0$ for some $g_0$. There is a solution $(\psi_{-,0},\tau)$ of
$$\overline{\bdry}_{\overline{J}^\tau} \op{exp}_{u_-}(\psi_{-,0})=g_0,$$
since $\op{ker}( D_{u_-}^*)$ is $1$-dimensional and is generated by $Y_0$ which comes from the variation of $\overline{J}^\tau$. If we write $\psi_-=\pi^* \psi_{-,0}$, then the pair $(\psi_-,\tau)$ solves Equation~\eqref{watermelon}.

The claim, interpreted suitably, implies the lemma: Consider the map
$$\overline{\bdry}^\mathcal{P}:\mathcal{H}_1(\dot F,N)\oplus \R\to \mathcal{H}_0(\dot F,\Lambda^{0,1}T^*\dot F\otimes N),$$
$$(\psi_-,\tau-\tau_0)\mapsto \mathcal{P}( \overline{\bdry}_{\overline{J}^\tau}\op{exp}_{v_-}(\psi_-)),$$
where $\mathcal{P}$ refers to a suitable parallel transport. For a sufficiently small ball $\mathcal{B}$ of $\mathcal{H}_1(\dot F,N)\oplus \R$ centered at $(0,0)$, $\overline{\bdry}^\mathcal{P}(\mathcal{B})$ is a submanifold of codimension $k-1$ in $\mathcal{H}_0(\dot F,\Lambda^{0,1}T^*\dot F\otimes N)$: this is because the linearization $L_{(\psi_-,\tau-\tau_0)}$ of $\overline{\bdry}^\mathcal{P}$ at any point $(\psi_-,\tau-\tau_0)\in\mathcal{B}$ has Fredholm index $-k+1$, $L_{(0,0)}$ has maximal rank, i.e., codimension $k-1$, and hence $L_{(\psi_-,\tau-\tau_0)}$ also has codimension $k-1$. If $(\psi_-,\tau-\tau_0)\in \mathcal{B}$, then $L_{(\psi_-,\tau-\tau_0)}$ has image which is close to that $L_{(0,0)}$.  Since
$$L_{(0,0)}=T_{(0,0)} \overline{\bdry}^\mathcal{P}(\mathcal{B})=\op{Im}(D_-)+\R\langle \sigma_k\rangle,$$
the tangent space $T_{(\psi_-,\tau-\tau_0)} \overline{\bdry}^{\mathcal{P}}(\mathcal{B})$ is close to $\op{Im}(D_-)+\R\langle \sigma_k\rangle$ when $(\psi_-,\tau-\tau_0)$ is small. If we write $g= g^\flat+g^\sharp\in B-Z_k$ with $g^\flat\in Z_k$ and $0\not = g^\sharp\in \R\langle {\bdry\beta_+\over \bdry s} e^{\lambda_i s} f_i(t) \rangle_{i=1}^{k-1}$ and solve for $(\psi_-,\tau-\tau_0)$ in $\overline{\bdry}_{\overline{J}^\tau} \op{exp}_{v_-}(\psi_-) =g^\flat$, then the absolute value of the angle between $g^\sharp$ and $T_{(\psi_-,\tau-\tau_0)} \overline{\bdry}^{\mathcal{P}}(\mathcal{B})$ is bounded below by a positive constant. Hence Equation~\eqref{watermelon} does not admit a solution for $g\in B-Z_k$.
\end{proof}

By Claims~\ref{claim: nonzero Fourier polynomial} and  \ref{Fourier polynomial error term}, the right-hand side of Equation~\eqref{asagao} is an element of $W$, plus a small error.  Hence Equation~\eqref{asagao} has no solution by Lemma~\ref{no soln}.  Unwinding the definition of $\mathfrak{s}$, $\mathfrak{s}(T,v_+)\not=0$ and we have a contradiction.

The argument for $\mathfrak{s}_\zeta$, $\zeta\in[{1\over 2},1]$, is done in exactly the same way.  The estimates from Lemmas~\ref{azalea}(2) and \ref{suzuran}(2) carry over and the nonexistence of the solution to Equation~\eqref{asagao} also holds for $\Theta_-(\psi_-,\psi_+)=0$ with $\mathcal{R}_-$ replaced by $\beta(2\zeta-1)\mathcal{R}_-$.

Finally, $\mathfrak{s}_\zeta$, $\zeta\in[0,{1\over 2}]$, is close to $\mathfrak{s}_0$ for $R\gg 0$.  Hence there are no zeros of $\mathfrak{s}_\zeta$, $\zeta\in[0,{1\over 2}]$, on $[R,\infty)\times \bdry K$. This completes the proof of Proposition~\ref{prop: zeros do not escape}.
\end{proof}

\subsection{The general prototypical gluing} \label{subsection: the general case}

So far we have been assuming that all the curves in $\mathcal{M}$ are immersed.   In general we have the following proposition, which allows us to reduce to the immersed case.

\begin{prop} \label{prop: can avoid boundary and singular points}
For $\delta>0$ sufficiently small, there are no holomorphic curves in $\coprod_{0\leq \tau \leq 1}\mathcal{M}^{\op{ind}=0,\op{cyl}}_{\overline{J}^{\tau}}(\gamma_+,\gamma_-) $ that are:
\begin{enumerate}
\item $\delta$-close to $(v_-, v_1,\dots, v_l)$, where $v_1\cup\dots \cup v_l$ is an $l$-level building of $\bdry (\mathcal{M}/\R)$ and $l>1$ or
\item $\delta$-close to $(v_-,v_+)$, where $v_+\in \mathcal{M}^{\op{sing}}$.
\end{enumerate}
\end{prop}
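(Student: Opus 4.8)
The plan is to argue by contradiction, playing the gluing obstruction off against the transversality of the evaluation map. Since being $\delta$-close is monotone in $\delta$, a failure of the proposition would produce $\delta_n\to 0$ and holomorphic curves $w_n\in\coprod_{0\leq\tau\leq1}\mathcal{M}^{\op{ind}=0,\op{cyl}}_{\overline{J}^\tau}(\gamma_+,\gamma_-)$ with $w_n$ being $\delta_n$-close to a fixed bad configuration; write $v^{\op{up}}$ for its ``upper'' part (the $l$-level building $v_1\cup\dots\cup v_l$ with $l>1$ in case~(1), the singular curve $v_+\in\mathcal{M}^{\op{sing}}$ in case~(2)), a holomorphic building, respectively curve, from $\gamma_+$ to $\gamma'_+$ representing a point of $\overline{\mathcal{M}/\R}$. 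Since $\op{ind}(v_-)=-k<0$ and $\{\overline{J}^\tau\}$ is generic, $v_-$ occurs only at finitely many parameters, so we may assume $\tau(w_n)\to\tau_0$ with $v_-\in\mathcal{M}^{\op{ind}=-k,\op{cyl}}_{\overline{J}^{\tau_0}}(\gamma'_+,\gamma_-)$; by Theorem~\ref{thm: immersion} we may also assume the simple curve underlying $v_-$ is immersed, so that the analysis of Section~\ref{section: gluing} applies to the lower level.

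The heart of the argument is to extract from $w_n$ two competing constraints on its behavior near $\R\times\gamma'_+$. Because $(\alpha_+,J_+)$ is $L$-supersimple, $\overline\bdry_{J_+}$ is linear for curves close to and graphical over $\R\times\gamma'_+$; since $w_n$ converges in the SFT sense to a building with a node at $\gamma'_+$, for $n$ large $w_n$ has a long graphical neck near $\R\times\gamma'_+$, of the form $(s,t,\sum_{i\neq 0} c^{(n)}_i e^{\lambda_i s}f_i(t))$, and cutting along it splits $w_n$ into a lower piece which after reparametrization is $\op{exp}_{v_-}(\psi^{(n)}_-)$ with $\psi^{(n)}_-\to 0$ and an upper piece converging $C^\infty_{loc}$ to $v^{\op{up}}$. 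The \emph{first constraint} comes from the upper piece: its negative-end evaluation $\widetilde{ev}^k_-$ converges to $\widetilde{ev}^k_-(v_+)$ in case~(2) and to $\overline{ev}^k_-$ of the boundary point of $\overline{\mathcal{M}/\R}$ carried by $v^{\op{up}}$ in case~(1); by Theorem~\ref{thm: niceness of ev}(2) the former is bounded away from $\{(0,\dots,0,\pm1)\}$ (for generic $J_+$, as $v_+$ is singular), while by Theorem~\ref{thm: niceness of compactification}(1) so is the latter. (Here, if $\gamma'_+$ is positive hyperbolic, $(0,\dots,0,\pm1)$ is replaced by $(\pm1,0,\dots,0)$.)

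The \emph{second constraint} comes from holomorphicity of $w_n$ together with the lower piece: extending $\op{exp}_{v_-}(\psi^{(n)}_-)$ across the neck by the pregluing of Section~\ref{subsection:pregluing}, holomorphicity forces $\overline\bdry_{\overline{J}^{\tau(w_n)}}\op{exp}_{v_-}(\psi^{(n)}_-)=g_n$ with $g_n=-\tfrac{\partial\beta_+}{\partial s}(\eta^{(n)}_++\psi^{(n)}_+)$ supported on the neck, whose component in $W=\R\langle\tfrac{\partial\beta_+}{\partial s}e^{\lambda_i s}f_i\rangle_{i=1}^k$ equals, up to a common positive scalar and a small error controlled exactly as in Claims~\ref{claim: nonzero Fourier polynomial} and~\ref{Fourier polynomial error term} (after normalizing the upper piece along a fixed slice as in (K1)--(K3) of the proof of Proposition~\ref{prop: zeros do not escape}), the order-$k$ evaluation of the upper piece. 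Lemma~\ref{no soln}, applied with $v_-$ as in that proof, then shows that a solution $\psi^{(n)}_-$ can exist only if $g_n$ lies in $Z_k$ up to small error, i.e. only if $\widetilde{ev}^k_-$ of the upper piece converges to $(0,\dots,0,\pm1)$ (resp. $(\pm1,0,\dots,0)$). For $n$ large this contradicts the first constraint, proving the proposition. I expect the main obstacle to be this second constraint: converting the SFT convergence of $w_n$ into the precise pregluing normal form so that Lemma~\ref{no soln} applies, and in particular securing the uniform-in-$n$ control of the error in the $W$-component of $g_n$ when the upper piece degenerates onto a singular curve (case~(2)) or a multi-level building (case~(1)) rather than onto a point of a fixed compact family — this is where the dominance estimate (K3) and the exponential decay of the higher Fourier modes must be re-established on the relevant strata, using Theorems~\ref{thm: niceness of ev} and~\ref{thm: niceness of compactification}.
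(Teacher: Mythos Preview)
Your approach is essentially the same as the paper's: argue by contradiction, use Theorems~\ref{thm: niceness of ev}(2) and~\ref{thm: niceness of compactification}(1) to see that the evaluation of the upper piece stays away from $(0,\dots,0,\pm1)$, and use Lemma~\ref{no soln} together with Claims~\ref{claim: nonzero Fourier polynomial} and~\ref{Fourier polynomial error term} to see that holomorphicity of the lower piece forces the evaluation to be near $(0,\dots,0,\pm1)$. The paper's version works with a single $\delta$-close curve rather than a sequence, but this is cosmetic.

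The one technical point you correctly flag as the main obstacle---converting the $\delta$-closeness into the pregluing normal form so that $\psi_-,\psi_+$ satisfy $\Theta_\pm=0$ on the neck, \emph{irrespective of the regularity of the $v_i$}---is dispatched in the paper by a short direct Fourier argument (invoking Lemmas~7.6 and~7.7 of \cite{HT2}): on the graphical neck $T_0+hr\leq s\leq T-hr$ one simply sets $\eta_++\psi_+=\sum_{i>0}c_ie^{\lambda_is}f_i$ and $\eta_-+\psi_-=\sum_{i<0}c_ie^{\lambda_is}f_i$, and then solves mode-by-mode across the cutoff regions. This works because $(\alpha_+,J_+)$ is $L$-supersimple, so the $\overline\bdry$-equation is genuinely linear there; in particular it requires no regularity of the upper building and no contraction-mapping step, which is why case~(1) goes through with $v_+$ only a \emph{pregluing} of $v_1\cup\dots\cup v_l$. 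For case~(2) the paper also notes that one replaces the normal bundle by $u^*T\widehat X^\tau$ when $v_+$ is not immersed.
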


\begin{rmk}
The proof of Proposition~\ref{prop: can avoid boundary and singular points}(1) does not require any $v_i$ to be regular.
\end{rmk}

\begin{proof}
(1) Suppose there is a curve $u$ satisfying (1).  We will apply the proof of Proposition~\ref{prop: zeros do not escape} to obtain a contradiction. Let $v_{+}$ be a pregluing of $v_1\cup\dots \cup v_l$ with gluing parameters $T_1,\dots,T_{l-1}$ and let $v_*$ be a pregluing of $v_-\cup v_{+}$ with gluing parameter $T$. We focus on the neck portion $T_0\leq s\leq 2T$ between $v_-$ and $v_{+,T}$, which we take to be preglued in the same way as in Sections~\ref{subsection:pregluing} and \ref{subsection: gluing}.

Let $\psi_-$ be a section of the normal bundle of $v_-$, defined on $s\leq T_0$, so that $u= \op{exp}_{v_-}(\psi_-)$.  Similarly, let $\psi_+$ be a section of the normal bundle of $v_{+,T}$, defined on $s\geq 2T$, so that $u=\op{exp}_{v_{+,T}}(\psi_+)$.  (Assume without loss of generality that $v_{+}$ is immersed, since we only care about the negative end of $v_+$.)  If $\delta>0$ is sufficiently small, then the sections $\psi_-$ and $\psi_+$ exist and $u$ is graphical over $[T_0,2T]\times \R/\Z$ on the neighborhood $[T_0,2T]\times \R/\Z\times D^2_{\delta_0/3}$ with coordinates $(s,t,x,y)$.  Also let $\eta_+$, $\eta_-$ be as defined before.

\begin{claim}
There exist extensions of $\psi_+$ to $s\leq 2T$ and $\psi_-$ to $s\geq T_0$, viewed as functions to $D^2_{\delta_0/3}$, such that
$$\Theta_-(\psi_-,\psi_+)=D_-(\eta_-+\psi_-)+{\bdry \beta_+\over \bdry s}(\eta_++\psi_+)=0,$$
$$\Theta_+(\psi_-,\psi_+)=D_+(\eta_++\psi_+)+{\bdry\beta_-\over \bdry s}(\eta_-+\psi_-)=0,$$
on $s\geq T_0$ and $s\leq 2T$, respectively.
\end{claim}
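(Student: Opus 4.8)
The plan is to exploit, exactly as in the proof of Proposition~\ref{prop: zeros do not escape}, that $(\alpha_+,J_+)$ is an $L$-supersimple pair: since $\mathcal{A}_{\alpha_+}(\gamma'_+)<L$ we have $\overline{J}^\tau=J_+$ near $\R\times\gamma'_+$, and there $\overline{\bdry}_{J_+}$ is \emph{linear} on curves graphical over $\R\times\gamma'_+$, the equation being $(\bdry_s-A)(\cdot)=0$ with $A=-j_0\bdry_t-S(t)$ the asymptotic operator of $\gamma'_+$. First I would observe that, for $\delta>0$ small, $u$ restricted to the neck is $(s,t,\zeta(s,t))$ with image in $D^2_{\delta_0/3}$ and $(\bdry_s-A)\zeta=0$, so that $\zeta(s,t)=\sum_{i\in\Z-\{0\}}c_i\,e^{\lambda_is}f_i(t)$ on the neck by separation of variables in the orthonormal eigenbasis $\{f_i\}$; the same reasoning gives $D_-=D_+=\bdry_s-A$ and $D_-\eta_-=D_+\eta_+=0$ on the neck (this is Claim~\ref{claim: R plus and R minus on neck}). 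Moreover $\psi_-$ is already given on $s\le T_0$ with $\zeta=\eta_-+\psi_-$ there, and $\psi_+$ on $s\ge 2T$ with $\zeta=\eta_++\psi_+$ there.

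Writing $a:=\eta_++\psi_+$ (to be defined for $s\le 2T$) and $b:=\eta_-+\psi_-$ (to be defined for $s\ge T_0$), and using $D_\pm\eta_\pm=0$ together with $Y'=0$, $\mathcal{R}_-=\beta_+'\psi_+$, $\mathcal{R}_+=\beta_-'\psi_-$ on the neck, the equations $\Theta_-=0$ and $\Theta_+=0$ become the coupled \emph{linear} transport system
$$(\bdry_s-A)\,b=-\beta_+'\,a,\qquad (\bdry_s-A)\,a=-\beta_-'\,b\qquad\text{on the neck,}$$
with boundary data $b|_{s=T_0}=\zeta|_{s=T_0}$ and $a|_{s=2T}=\zeta|_{s=2T}$ forced by the given $\psi_\pm$. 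Since $\beta_+'$ and $\beta_-'$ have disjoint supports, one near each end of the interpolation window, this system can be integrated one Fourier mode at a time: with $a=\sum a_i(s)f_i$, $b=\sum b_i(s)f_i$ and $a_i=e^{\lambda_is}\tilde a_i$, $b_i=e^{\lambda_is}\tilde b_i$ it reads $\tilde a_i'=-\beta_-'\tilde b_i$, $\tilde b_i'=-\beta_+'\tilde a_i$, which one solves by quadrature through the two windows; the compatibility needed at the far endpoint of each mode's ODE holds automatically because the boundary data are values of the single global solution $\zeta$ of $(\bdry_s-A)\zeta=0$ (so $e^{-\lambda_iT_0}\zeta_i(T_0)=e^{-\lambda_i 2T}\zeta_i(2T)=c_i$). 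In fact one may simply take $b=\zeta$ and $a=(1-\beta_-)\zeta$, which solves the system outright, using $(1-\beta_-)\equiv 0$ on $\op{supp}\,\beta_+'$. This produces $a,b$, hence the sought extensions $\psi_+:=a-\eta_+$ and $\psi_-:=b-\eta_-$.

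It then remains to run the routine checks. The extensions match the given $\psi_\pm$ at the seams and are smooth there, since $\beta_\pm'$ vanish to infinite order at the ends of their windows and hence both sides solve $(\bdry_s-A)(\cdot)=0$ near each seam. The deformation really recovers $u$: one computes
$$(\bdry_s-A)(\beta_+a+\beta_-b)=\beta_+'a+\beta_-'b+\beta_+(\bdry_s-A)a+\beta_-(\bdry_s-A)b=(1-\beta_-)\beta_+'a+(1-\beta_+)\beta_-'b=0,$$
using $\beta_-\equiv 1$ on $\op{supp}\,\beta_+'$ and $\beta_+\equiv 1$ on $\op{supp}\,\beta_-'$; since $\beta_+a+\beta_-b=b=\zeta$ at $s=T_0$, uniqueness for the linear equation gives $\beta_+a+\beta_-b=\zeta$ throughout, i.e.\ $u=\exp_{v_*}(\beta_+\psi_++\beta_-\psi_-)$ on the neck. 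Finally the closed-form mode expressions give $|a_i|,|b_i|\le c|c_i|$ and $\zeta$ is $O(\delta)$, so $\eta_\pm+\psi_\pm$ have image in $D^2_{\delta_0/3}$ and, via Lemma~\ref{lemma: sobolev embedding}, $\psi_\pm$ lie in the Morrey balls $\mathcal{B}_\pm$ of Section~\ref{subsection: estimates}, where the estimates of Lemmas~\ref{azalea}--\ref{estimate for psi and tau} apply.

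I expect the only real work to be bookkeeping rather than anything conceptual: keeping track of the precise region on which $v_{+,T}$ is genuinely graphical over $\R\times\gamma'_+$ (so that ``extend $\psi_+$ to $s\le 2T$'' is interpreted over that region and matched smoothly to the honest normal section beyond it), and converting the pointwise mode-by-mode bounds into the Morrey-norm statements needed downstream --- both of which run in parallel to Sections~\ref{subsection: estimates} and \ref{subsection: linearized section s_0}. Linearity of $\overline{\bdry}_{J_+}$ near $\R\times\gamma'_+$ --- the defining feature of $L$-supersimple pairs --- is exactly what makes the splitting explicit and removes any analytic difficulty.
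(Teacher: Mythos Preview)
Your explicit choice $b=\zeta$, $a=(1-\beta_-)\zeta$ does solve $\Theta_\pm=0$ on the neck, but it is not the extension the claim requires, and the proof of Proposition~\ref{prop: can avoid boundary and singular points} breaks downstream. The phrase ``functions to $D^2_{\delta_0/3}$'' means the extensions must remain bounded on the full half-lines: for $s>T_0+hr$ the equation $\Theta_-=0$ reduces to $(\bdry_s-A)(\eta_-+\psi_-)=0$, and staying in $D^2_{\delta_0/3}$ as $s\to\infty$ forces the positive Fourier modes of $\eta_-+\psi_-$ to vanish. Your $b=\zeta=\sum_{i\neq 0}c_ie^{\lambda_is}f_i$ carries all the positive modes of $u$, so $|\psi_-|$ grows like $e^{\lambda_1 s}$ and cannot lie in $\mathcal{B}_-$. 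The coupled system on the neck, with only the two endpoint conditions you impose, is in fact \emph{underdetermined}: per mode one finds the single relation $\tilde a_i^{\mathrm{mid}}+\tilde b_i^{\mathrm{mid}}=c_i$, leaving a one-parameter family, so your ``compatibility holds automatically'' remark misses the point. The decay requirements at $\pm\infty$ are what single out the paper's splitting $\eta_++\psi_+=\sum_{i>0}c_ie^{\lambda_is}f_i$ and $\eta_-+\psi_-=\sum_{i<0}c_ie^{\lambda_is}f_i$ on the middle of the neck, with the ODE interpolation through the cutoff windows.

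This positive/negative mode split is moreover what makes the rest of the argument work. With your choice $\eta_++\psi_+=(1-\beta_-)\zeta$ vanishes identically on $\op{supp}\beta_+'$ (since $\beta_-\equiv 1$ there), so Equation~\eqref{asagao} degenerates to $\overline\bdry_{\overline J^\tau}\exp_{v_-}(\psi_-)=0$, which merely says $u$ is holomorphic and yields no contradiction via Lemma~\ref{no soln}. With the paper's choice, $\eta_++\psi_+$ on $\op{supp}\beta_+'$ carries precisely the positive modes of $u$, whose order-$k$ part is close to $\widetilde{ev}^k_-(\eta_+)\neq(0,\dots,0,\pm1)$, and Claims~\ref{claim: nonzero Fourier polynomial}, \ref{Fourier polynomial error term} together with Lemma~\ref{no soln} produce the required contradiction.
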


\begin{proof}
This is proved in Lemmas 7.6 and 7.7 of \cite{HT2}.

More directly, if
$$u(s,t)=\sum_{i\in\Z-\{0\}} c_i e^{\lambda_i s} f_i(t)$$
on $T_0+hr\leq s\leq T-hr$, then we set
$$\eta_++\psi_+=\sum_{i>0} c_i e^{\lambda_i s} f_i(t),\quad s\leq T-hr,$$
$$\eta_-+\psi_-= \sum_{i<0} c_i e^{\lambda_i s} f_i(t),\quad s\geq T_0+hr.$$
We solve $\Theta_+(\psi_-,\psi_+)=0$ on $T-hr \leq s\leq 2T$ by decomposing into eigenmodes: Writing
$$\eta_+ + \psi_+=\sum_{i\in\Z-\{0\}} c_i(s) e^{\lambda_i s} f_i(t),$$
$c_i(s)=c_i$ for $i>0$. On the other hand, for $i<0$, we have $c_i'(s)+{\bdry \beta_-\over \bdry s} c_i=0$ and the solution $c_i(s)=-c_i \int_{-\infty}^s {\bdry \beta_-\over \bdry s} ds$ has the right boundary conditions $c_i(T-hr)=0$ and $c_i(2T)=c_i$. The equation $\Theta_-(\psi_-,\psi_+)$ is solved similarly.
\end{proof}

Returning to the proof of Proposition~\ref{prop: can avoid boundary and singular points}, now that we have sections $\psi_-$ and $\psi_+$ {\em irrespective of the regularity of $v_i$}, we reconsider Equation~\eqref{asagao}, which is satisfied by the pair $(\psi_-,\psi_+)$. By Theorem \ref{thm: niceness of compactification}, $\overline{ev}^k_-(\bdry (\mathcal{M}/\R))$ does not pass through $(0,\dots,0,\pm 1)$.  Hence Claims~\ref{claim: nonzero Fourier polynomial} and \ref{Fourier polynomial error term} and Lemma~\ref{no soln} imply that there is no solution of Equation~\eqref{asagao}, a contradiction. Notice that in this argument we only need $v_+$ to be holomorphic near the negative end, so we only need to preglue $v_1 \cup \cdots \cup v_l$ instead of glue.

(2) is similar since $\overline{ev}^k_-(\mathcal{M}^{\op{sing}}/\R)$ does not intersect $(0,\dots,0,\pm 1)$ by Theorem~\ref{thm: niceness of ev}. Note that when a curve has a singular point, there is no normal bundle $N$; however, the entire argument carries over to the situation of $u^*T\widehat X^\tau$ with minimal change.
\end{proof}

Let $K_1\subset K_2\subset \dots \subset (\mathcal{M}-\mathcal{M}^{\op{sing}})/\R$ be an exhaustion of $(\mathcal{M}-\mathcal{M}^{\op{sing}})/\R$ by compact submanifolds of dimension $k-1$ with boundary.

\begin{cor} \label{cor: gluing}
For sufficiently small $\delta>0$, sufficiently large $j>0$, and sufficiently large $R>0$,
$$G|_{ \mathcal{U}_{\delta,K_j}}:\mathfrak{s}^{-1}(0) \cap \mathcal{U}_{\delta,K_j}\stackrel\sim\longrightarrow \mathcal{G}_\delta(v_-,\overline{\mathcal{M}/\R})$$
is a homeomorphism.
\end{cor}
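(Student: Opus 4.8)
The plan is to derive Corollary~\ref{cor: gluing} from the immersed-case gluing theorem, Theorem~\ref{thm: bijectivity of gluing map}, together with the exclusion result Proposition~\ref{prop: can avoid boundary and singular points}. Proposition~\ref{prop: can avoid boundary and singular points} already says that no holomorphic curve in the relevant $\op{ind}=0$ family degenerates onto a multi-level boundary stratum of $\overline{\mathcal{M}/\R}$ or onto $\mathcal{M}^{\op{sing}}/\R$, so all of the gluing really takes place over the open immersed locus $(\mathcal{M}-\mathcal{M}^{\op{sing}})/\R$, where the constructions of Sections~\ref{subsection:pregluing}--\ref{subsection: bijectivity} are valid. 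The one genuine step is to upgrade this pointwise exclusion to a statement that is uniform over the exhaustion $\{K_j\}$, after which the corollary follows by bookkeeping.

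First I would show that for $\delta>0$ small one has $\mathcal{G}_\delta(v_-,\overline{\mathcal{M}/\R})=\mathcal{G}_\delta(v_-,K_j)$ for all sufficiently large $j$. The inclusion $\supseteq$ is trivial. For $\subseteq$, suppose it fails: then for every $j$ there is a holomorphic curve $v^{(j)}\in\mathcal{G}_\delta(v_-,\overline{\mathcal{M}/\R})$ that is $\delta$-close to a building $v_-\cup v_+^{(j)}$ with $v_+^{(j)}\in\overline{\mathcal{M}/\R}\setminus K_j$. Since $\overline{\mathcal{M}/\R}$ is compact, after passing to a subsequence $v_+^{(j)}\to v_+^\infty$, and since the $K_j$ exhaust $(\mathcal{M}-\mathcal{M}^{\op{sing}})/\R$, the limit $v_+^\infty$ lies in $\bdry(\mathcal{M}/\R)\cup\mathcal{M}^{\op{sing}}/\R$. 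By SFT compactness the $v^{(j)}$ subconverge to a holomorphic building, which, because $v^{(j)}$ is $\delta$-close to $v_-\cup v_+^{(j)}$ and $v_+^{(j)}\to v_+^\infty$, must be $v_-\cup v_+^\infty$; hence for $j$ large $v^{(j)}$ is $\delta'$-close to $v_-\cup v_+^\infty$ for any prescribed $\delta'>0$. Taking $\delta'$ small, this contradicts Proposition~\ref{prop: can avoid boundary and singular points}(1) when $v_+^\infty$ lies in a positive-codimension boundary stratum (so $v_-\cup v_+^\infty$ has more than two levels) and Proposition~\ref{prop: can avoid boundary and singular points}(2) when $v_+^\infty\in\mathcal{M}^{\op{sing}}/\R$. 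Fix such a $\delta$ and choose $j_0$ with $\mathcal{G}_\delta(v_-,\overline{\mathcal{M}/\R})=\mathcal{G}_\delta(v_-,K_{j_0})$.

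Next, shrinking $\delta$ and enlarging $R$ if necessary, Theorem~\ref{thm: bijectivity of gluing map} applied to $K=K_{j_0}$ (which sits inside the immersed locus) produces a compact $K_{j_0}\subset K'\subset\mathcal{M}/\R$ with $[R,\infty)\times K'\subset\mathcal{U}_{\delta,K'}$ and with $G|_{\mathcal{U}_{\delta,K'}}$ a homeomorphism onto its image, whose image contains $\mathcal{G}_\delta(v_-,K_{j_0})=\mathcal{G}_\delta(v_-,\overline{\mathcal{M}/\R})$. Since $\mathcal{M}^{\op{sing}}$ has codimension at least two by Theorem~\ref{thm: immersion}, $K'$ may be taken inside the immersed locus, hence inside some $K_j$ with $j\ge j_0$, so $\mathcal{U}_{\delta,K'}\subset\mathcal{U}_{\delta,K_j}$. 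It remains to check that passing to the larger domain $\mathfrak{s}^{-1}(0)\cap\mathcal{U}_{\delta,K_j}$ changes neither the image nor injectivity. For the image: each $(T,v_+)\in\mathfrak{s}^{-1}(0)\cap\mathcal{U}_{\delta,K_j}$ has $G(T,v_+)=\exp_{v_*}(\beta_+\psi_++\beta_-\psi_-)$ with $\|\psi_\pm\|_*$ small by Lemma~\ref{estimate for psi and tau}, hence $\delta$-close to $v_-\cup v_+$, so $G(T,v_+)\in\mathcal{G}_\delta(v_-,\overline{\mathcal{M}/\R})$; combined with the surjectivity just obtained, the image is exactly $\mathcal{G}_\delta(v_-,\overline{\mathcal{M}/\R})$. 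For injectivity and continuity of the inverse: if $G(T_1,v_{+,1})=G(T_2,v_{+,2})=v$, then $v$ is simultaneously $\delta$-close to $v_-\cup v_{+,1}$ and to $v_-\cup v_{+,2}$, which for $\delta$ small forces $v_{+,1}=v_{+,2}$ and $T_1=T_2$ by the same reading-off of the breaking data near the neck that proves the injectivity assertion in Theorem~\ref{thm: bijectivity of gluing map}, and this read-off is the continuous inverse of $G$.

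The step I expect to be the main obstacle is the uniform exclusion: converting the pointwise statement of Proposition~\ref{prop: can avoid boundary and singular points} into the equality $\mathcal{G}_\delta(v_-,\overline{\mathcal{M}/\R})=\mathcal{G}_\delta(v_-,K_j)$ requires the SFT-compactness extraction above together with some care about how ``$\delta$-closeness to a holomorphic building'' behaves as $v_+^{(j)}$ limits into a deeper boundary stratum, where the internal gluing parameters blow up.
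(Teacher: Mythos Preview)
Your proposal is correct and follows exactly the approach the paper intends: the paper's proof is the single sentence ``Immediate from Proposition~\ref{prop: can avoid boundary and singular points} and Theorem~\ref{thm: bijectivity of gluing map},'' and your argument is precisely a careful unpacking of why these two ingredients suffice. Your compactness-based reduction $\mathcal{G}_\delta(v_-,\overline{\mathcal{M}/\R})=\mathcal{G}_\delta(v_-,K_j)$ is the natural way to make the word ``immediate'' honest, and the concern you flag at the end about uniformity of the exclusion is handled (implicitly in the paper) by the compactness of the boundary strata and of $\mathcal{M}^{\op{sing}}/\R$.
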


\begin{proof}
Immediate from Proposition~\ref{prop: can avoid boundary and singular points} and Theorem~\ref{thm: bijectivity of gluing map}.
\end{proof}

\subsection{Other cases} \label{subsection: other cases}

In this subsection we treat two other cases:

\s\n
A.  Suppose  \hyperref[Condition1]{(C1)}--(C4) holds with (C1) modified to (C1') so that $\gamma''=\gamma_+'$ is positive hyperbolic and $\gamma'=\gamma_-$ is negative hyperbolic.  Then Lemma~\ref{lemma: good basis for coker} is modified so that $\sigma_1$ is a nonzero constant multiple of $Y$ modulo $f_{k+1},f_{k+2},\dots$; \cb this modification also carries over to Lemma~\ref{lemma: better basis for coker}. \cb The rest of the discussion carries over with $(0,\dots,0,\pm 1)\in S^{k-1}$ replaced by $(\pm 1, 0,\dots, 0)\in S^{k-1}$.

\s\n
B. Consider the $3$-level SFT building $v_{-1}\cup v_0\cup v_1$ where:
\begin{enumerate}
\item[(C1'')] $v_1$ is a cylinder from $\gamma_+$ to $\gamma'_+$, $v_0$ is a cylinder from $\gamma'_+$ to $\gamma'_-$, and $v_{-1}$ is a cylinder from $\gamma'_-$ to $\gamma_-$; we assume that $\gamma'_+$ is negative hyperbolic and $\gamma'_-$ is positive hyperbolic;
\item[(C2'')] $v_0$ maps to a cobordism $(\widehat{X}^{\tau_0},\widehat{\alpha}^{\tau_0},\overline{J}^{\tau_0})$ for some $\tau_0\in (0,1)$ and $v_{-1}$ and $v_1$ map to symplectizations;
\item[(C3'')] $\op{ind}(v_{-1})=b$, $\op{ind}(v_0)=-k$, $\op{ind}(v_1)=a$, where $a,b>0$ and $a+b=k>1$;
\item[(C4)] $v_0$ is a $k$-fold unbranched cover of a transversely cut out (in a $1$-parameter family) cylinder $u_0$ with $\op{ind}(u_0)=-1$ and $v_1$ is regular; and we write $v_0=u_0\circ \pi$, where $\pi$ is the covering map.
\end{enumerate}

The gluing setup is similar to the previous situation:  Let
$$\mathcal{M}_1=\mathcal{M}^{\op{ind}=a,\op{cyl}}_{J_+}(\gamma_+,\gamma'_+),\quad \mathcal{M}_{-1}=\mathcal{M}^{\op{ind}=b,\op{cyl}}_{J_-}(\gamma_-',\gamma_-).$$
We assume that all the curves in $\mathcal{M}_1$ and $\mathcal{M}_{-1}$ are immersed; modifications can be made using the analog of Proposition~\ref{prop: can avoid boundary and singular points}.  The obstruction bundle is
$$\mathcal{O}\to [R,\infty)^{\times 2} \times (\mathcal{M}_{-1}/\R)\times (\mathcal{M}_1/\R),$$
whose fiber over $(T_-,T_+,v_{-1},v_1)$ is $\op{Hom}(\ker D_{v_0}^*/\R\langle Y\rangle,\R)$. The pregluing is done in exactly the same way with gluing parameters $T_+$ and $T_-$, such that the pregluing between $v_1$ and $v_0$ is the same as that of Section~\ref{subsection:pregluing} with parameter $T_+$ instead of $T$ and the pregluing between $v_{-1}$ and $v_0$ is done symmetrically about $s=0$ with parameter $T_-$.  Let us write $\eta_1$ for the end corresponding to $v_1(s-2T_+,t)$ and $\eta_{-1}$ for the end corresponding to $v_{-1}(s+2T_-,t)$. Define the cut-off functions $\beta_1(s)=\beta \left(\frac{s-T_0}{hr} \right)$, $\beta_{-1}(s)=\beta \left( \frac{-T_0-s}{hr} \right)$, and $\beta_0(s)=\beta \left(\frac{s+T_{-1}}{hr}\right) \beta \left(\frac{T_1-s}{hr}\right)$.
We then define the pregluing of $v_{-1} \cup v_0 \cup v_{1}$ by
$$v_*(s,t)=\left\{\begin{array}{ll}
v_{1,T_1}(s,t) & \op{for } T_1\leq s, \\
(s,t, \beta_{1}(s)\eta_{1}(s,t)+\beta_{0}(s)\eta_0(s,t) )& \op{for } T_0\leq s < T, \\
v_{0}(s,t) & \op{for }  -T_0\leq s < T_0, \\
(s,t,\beta_{-1}(s)\eta_{-1}(s,t)+\beta_0(s)\eta_0(s,t)) & \op{for } -T_{-1}\leq s <-T_0,\\
v_{-1,T_{-1}}(s,t) & \op{for } s<-T_{-1}.
\end{array} \right.$$
Assume for convenience $v_{-1},v_0,$ and $v_1$ are all immersed. Otherwise, one can go through the proof of Proposition \ref{prop: can avoid boundary and singular points} to deal with the rest \cb of the \cb cases. Let $\psi_1,\psi_0,$ and $\psi_{-1}$ be sections of the normal bundles of $\eta_{-1},\eta_0$, and $\eta_1$, respectively. We deform the $v_*$ as before by $$v=\exp_{v_*}(\beta_1\psi_1+\beta_0\psi_0+\beta_{-1}\psi_{-1}).$$

The obstruction section $\mathfrak{s}$ is defined in a similar manner as in Section~\ref{subsection: obstruction bundle}.

The linearized section is
\begin{equation} \label{s zero part two}
\mathfrak{s}_0(T_-,T_+,v_{-1},v_1)(\sigma)= \left\langle \tilde\sigma, {\bdry\beta_{1}\over \bdry s}\eta_1  -{\bdry\beta_{-1}\over \bdry s} \eta_{-1}\right\rangle,
\end{equation}
where $\tilde\sigma$ is $\sigma$ with the $f_{k+1}(t),f_{k+2}(t),\dots$ terms truncated at the positive end and $g_{-k-1}(t),g_{-k-2}(t),\dots$ truncated at the negative end.  We use the basis
$$\{\sigma_1',\dots,\sigma_{k-1}',\sigma_k\}$$
from Lemma~\ref{lemma: better basis for coker 2} to analyze $\mathfrak{s}_0^{-1}(0)$. By Theorems~\ref{thm: transversality of evaluation map} and \ref{thm: niceness of ev} and our genericity assumption, there exist evaluation maps
$$\widetilde{ev}^a_-(\gamma_+,\gamma'_+):\mathcal{M}_1/\R\to S^{a-1},$$
$$\widetilde{ev}^b_+(\gamma'_-,\gamma_-):\mathcal{M}_{-1}/\R\to S^{b-1}.$$
The evaluation maps will be abbreviated $\widetilde{ev}(\eta_1)$ and $\widetilde{ev}(\eta_{-1})$ when the ends are understood.

\begin{lemma} \label{close to the end}
Let $K\subset (\mathcal{M}_{-1}/\R)\times (\mathcal{M}_1/\R)$ be a compact $(k-2)$-dimensional submanifold with boundary. Then for $R\gg 0$, there are no zeros of $\mathfrak{s}_0$ on $[R,\infty)^{\times 2}\times \bdry K$.
\end{lemma}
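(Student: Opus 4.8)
The plan is to argue exactly as for Lemma~\ref{lemma: s_0} together with the boundary half of Proposition~\ref{prop: zeros do not escape}, the only genuinely new ingredient being that there are now two gluing parameters $T_+,T_-$ which must be decoupled.

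First I would use the basis $\{\sigma'_1,\dots,\sigma'_{k-1},\sigma_k\}$ of $\ker D_{v_0}^*$ provided by Lemma~\ref{lemma: better basis for coker 2}; since $\sigma_k=Y$ up to a nonzero scalar, the classes of $\sigma'_1,\dots,\sigma'_{k-1}$ form a basis of $\ker D_{v_0}^*/\R\langle Y\rangle$. Writing the negative end of $v_{1,T_+}$ near $\R\times\gamma'_+$ as $\eta_1(s,t)=\sum_{j>0}c_j e^{\lambda_j(s-2T_+)}f_j(t)$ and the positive end of $v_{-1,T_-}$ near $\R\times\gamma'_-$ as $\eta_{-1}(s,t)=\sum_{j<0}e_j e^{\lambda'_j(s+2T_-)}g_j(t)$, and using the $L^2$-orthonormality of $\{f_j\}$ and $\{g_j\}$ together with $\int\partial_s\beta_1\,ds=1$ and $\int\partial_s\beta_{-1}\,ds=-1$, I would substitute into \eqref{s zero part two} to obtain, for $i=1,\dots,k-1$, the closed form
\[
\mathfrak{s}_0(T_-,T_+,v_{-1},v_1)(\sigma'_i)=\sum_{j=i}^{k}c_{i,j}\,c_j\,e^{-2\lambda_j T_+}+\sum_{j=-k}^{-k+i-1}d_{i,j}\,e_j\,e^{2\lambda'_j T_-},
\]
where $c_{i,i}=1$ and $d_{i,-k+i-1}\neq0$ are the coefficients of Lemma~\ref{lemma: better basis for coker 2}. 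In particular $\mathfrak{s}_0$ is a sum of a piece depending only on $(T_+,v_1)$ and a piece depending only on $(T_-,v_{-1})$, and in each piece the ``diagonal'' term $c_i e^{-2\lambda_i T_+}$, resp.\ $d_{i,-k+i-1}e_{-k+i-1}e^{2\lambda'_{-k+i-1}T_-}$, carries the largest exponential factor, since $0<\lambda_i\le\lambda_j$ for $j\ge i$ and $\lambda'_j\le\lambda'_{-k+i-1}<0$ for $j\le-k+i-1$.

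The crux is then to show that for $R\gg0$ every zero of $\mathfrak{s}_0$ with $T_\pm\ge R$ lies in an arbitrarily small neighbourhood (shrinking as $R\to\infty$) of the locus where $\widetilde{ev}^a_-(v_1)=(0,\dots,0,\pm1)\in S^{a-1}$ and $\widetilde{ev}^b_+(v_{-1})=(\pm1,0,\dots,0)\in S^{b-1}$; these are the pole configurations dictated by $\gamma'_+$ being negative hyperbolic and $\gamma'_-$ being positive hyperbolic, as in Section~\ref{subsection: other cases}~A. This is the two-parameter analogue of the computation of $\mathfrak{s}_0^{-1}(0)$ in Lemma~\ref{lemma: s_0}: normalising the $i$-th component of $\mathfrak{s}_0$ by its leading exponential weight and letting $R\to\infty$, an elementary but bookkeeping-heavy case analysis in the two variables $T_+,T_-$ shows that the diagonal contributions from the two ends scale at different rates, so that on the zero set the subdominant leading Fourier coefficients $c_1,\dots,c_{a-1}$ of $v_1$ and $e_{-2},\dots,e_{-b}$ of $v_{-1}$ are forced to $0$; the off-diagonal terms $c_{i,j}c_j$ ($j>i$) and $d_{i,j}e_j$ ($j<-k+i-1$) contribute only exponentially smaller errors. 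This step is where essentially all of the work lies, and it is the main obstacle.

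Finally, $K$ is chosen --- exactly as in the hypothesis on $\bdry K$ in Proposition~\ref{prop: zeros do not escape}, now imposed on both factors --- so that $\widetilde{ev}^a_-\times\widetilde{ev}^b_+$ sends $\bdry K$ into the complement of the compact set $\{(0,\dots,0,\pm1)\}\times\{(\pm1,0,\dots,0)\}$; this is possible by Theorems~\ref{thm: niceness of ev} and~\ref{thm: niceness of compactification} applied to $\widetilde{ev}^a_-$ and $\widetilde{ev}^b_+$ separately. Combined with the confinement statement of the previous paragraph, this shows that for $R$ sufficiently large $\mathfrak{s}_0$ has no zeros on $[R,\infty)^{\times 2}\times\bdry K$, which is the assertion.
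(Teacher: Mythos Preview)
Your closed-form computation of $\mathfrak{s}_0(T_-,T_+,v_{-1},v_1)(\sigma'_i)$ is correct and matches the paper. The gap is in the strategy that follows.

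You model the argument on the two-level case (Lemma~\ref{lemma: s_0} and Proposition~\ref{prop: zeros do not escape}), where $\mathfrak{s}_0$ genuinely has zeros sitting over the poles of $\widetilde{ev}^k_-$, and one must only show that $\bdry K$ avoids them. But the three-level situation is not analogous: the paper proves that for $R\gg0$ the section $\mathfrak{s}_0$ has \emph{no zeros at all} on $[R,\infty)^{\times 2}\times K$ --- on all of $K$, not just $\bdry K$ --- and uses no hypothesis whatsoever on how $\widetilde{ev}^a_-\times\widetilde{ev}^b_+$ behaves on $\bdry K$. So your confinement step (``zeros lie near the poles'') is vacuously true, your added hypothesis on $\bdry K$ is unnecessary, and the sentence ``$K$ is chosen so that\ldots'' is not something you are allowed to do since $K$ is given.

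The paper's actual mechanism is a direct magnitude contradiction. Group the weighted Fourier data into
\[
C_1=(c_je^{-2\lambda_jT_+})_{j=1}^{a},\quad C_2=(c_je^{-2\lambda_jT_+})_{j=a+1}^{k},\quad
D_{-1}=(d_je^{2\lambda'_jT_-})_{j=-b}^{-1},\quad D_{-2}=(d_je^{2\lambda'_jT_-})_{j=-k}^{-b-1}.
\]
Since $(c_1,\dots,c_a)\neq0$ and $(d_{-b},\dots,d_{-1})\neq0$ (the evaluation maps exist), compactness of $K$ gives $|C_2|\ll|C_1|$ and $|D_{-2}|\ll|D_{-1}|$ uniformly for $R\gg0$. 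Now observe from the triangular shape of Lemma~\ref{lemma: better basis for coker 2} that for $i\le a$ the negative-end contribution to $\mathfrak{s}_0(\sigma'_i)$ involves only $D_{-2}$, while for $a+1\le i\le k-1$ the positive-end contribution involves only $C_2$. Picking the index $j_0\in\{1,\dots,a\}$ for which the positive-end pairing is largest and assuming $\mathfrak{s}_0(\sigma'_{j_0})=0$ forces $|C_1|\le c|D_{-2}|$; the symmetric choice $j_1\in\{a+1,\dots,k-1\}$ forces $|D_{-1}|\le c|C_2|$. Chaining these with $|C_2|\ll|C_1|$ and $|D_{-2}|\ll|D_{-1}|$ gives a contradiction. (The borderline case $\lambda_a=\lambda_{a+1}$ is handled separately by a genericity argument.) Your ``bookkeeping-heavy case analysis'' would, if carried out correctly, have to rediscover exactly this cross-inequality; the description you give in terms of forcing $c_1,\dots,c_{a-1}$ and $e_{-2},\dots,e_{-b}$ to zero does not capture it.
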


\begin{proof}
Suppose $R\gg 0$. Let $(v_{-1},v_1)\in K$ and let us denote their translated ends by
$$\eta_1(s,t)= \sum_{i=1}^\infty c_i e^{\lambda_i(s-2T_+)} f_i(t), \quad \eta_{-1}(s,t)=\sum_{i=-\infty}^{-1} d_i e^{\lambda'_i(s+2T_-)}g_i(t).$$

Suppose that $\lambda_a<\lambda_{a+1}$.
% The following is true, but not used in proof. (Equivalently, $\lambda'_{-k+a-1}< \lambda'_{-k+a}$ or $\lambda'_{-b-1}< \lambda'_{-b}$.)
Let us write
\begin{align*}
\tilde\sigma'_i(s,t)&=\left\{
\begin{array}{ll}
\sum_{i\leq j\leq k} c_{i,j}e^{-\lambda_j s}f_j(t), & c_{i,i}=1,\\
\sum_{-k \leq j\leq -k+i-1} d_{i,j}e^{-\lambda'_j s}g_j(t),&  d_{i,-k+i-1}\not=0,
\end{array} \right.
\end{align*}
at the positive and negative ends. By a computation similar to that of Lemma~\ref{lemma: s_0},
\begin{align}
\label{udon}
\left\langle \tilde\sigma'_i, {\bdry\beta_{1}\over \bdry s} \eta_1\right\rangle & = \sum_{i\leq j\leq k} c_{i,j} c_j e^{-2\lambda_j T_+},\\
\label{soba}
\left\langle \tilde \sigma'_i, {\bdry\beta_{-1}\over \bdry s}  \eta_{-1}\right\rangle & = \sum_{-k\leq j\leq -k+i-1}d_{i,j} d_j e^{2\lambda_j' T_-}.
\end{align}

Now let us write
$$ c_{i,*}=(c_{i,1},\dots, c_{i,k}), \quad d_{i,*}=(d_{i,-k},\dots, d_{i,-1}),$$
where we are setting $c_{i,1}=\dots= c_{i,i-1}=0$ and $d_{i,-k+i}=\dots = d_{i,-1}=0$, and
$$ C_1=\left(c_1 e^{-2\lambda_1T_+},\dots, c_a e^{-2\lambda_a T_+}\right), \quad C_2=\left (c_{a+1}e^{-2\lambda_{a+1} T_+},\dots, c_k e^{-2\lambda_k T_+}\right),$$
$$ D_{-2}=\left (d_{-k} e^{2\lambda'_{-k} T_-},\dots, d_{-k+a-1}e^{2\lambda'_{-k+a-1}T_-}\right),$$
$$ D_{-1}=\left(d_{-b}e^{2\lambda'_{-b}T_-},\dots,d_{-1}e^{2\lambda'_{-1}T_-}\right).$$
Recall that $a+b=k$. Since $\lambda_a<\lambda_{a+1}$ and $(c_1,\dots,c_a)$ and $(d_{-b},\dots,d_{-1})$ are nonzero, it follows that $|C_2|\ll |C_1|$ and $|D_{-2}|\ll |D_{-1}|$ for $R\gg 0$.

Suppose $i=j_0$ maximizes \eqref{udon}, where we are ranging over $i=1,\dots, a$. We claim that if $\mathfrak{s}_0(\sigma'_{j_0})=0$, then $|C_1|\leq c |D_{-2}|$ for some constant $c>0$ which does not depend on $(T_-,T_+,v_{-1},v_1)$, provided $R\gg 0$. First observe that $\{c_{i,*}\}_{i=1}^k$ and $\{d_{i,*}\}_{i=1}^k$ form bases of $\R^k$ and that \eqref{udon} and \eqref{soba} can be written as:
$$\langle c_{i,*}, (C_1,C_2) \rangle \quad \mbox{and} \quad  \langle d_{i,*},(D_{-2},D_{-1})\rangle,$$
where we are using the standard inner product on $\R^k$.  It is not hard to see that there exist a constant $c>0$, which only depends on $\{c_{i,*}\}_{i=1}^k$, and an integer $i=j_0'$, which depends on $(C_1,C_2)$, such that
$$|\langle c_{j_0',*}, (C_1,C_2) \rangle|\geq c \cdot |c_{j_0',*}|\cdot |(C_1,C_2)|.$$
(Roughly speaking, some vector $c_{j_0',*}$ makes an angle with $(C_1,C_2)$ which is bounded away from ${\pi\over 2}$.)  If $i=j_0$ maximizes \eqref{udon} and $\mathfrak{s}_0(\sigma'_{j_0})=0$, then
$$c \cdot |c_{j_0',*}|\cdot |(C_1,C_2)| \leq |\langle c_{j_0,*}, (C_1,C_2) \rangle| \leq  |\langle d_{j_0,*},(D_{-2},D_{-1})\rangle|.$$
Hence, by Lemma~\ref{lemma: better basis for coker 2} and the fact that $|D_{-2}|\ll |D_{-1}|$,
$$ c\cdot |c_{j_0',*}| \cdot |C_1| \leq |d_{j_0,*}| \cdot |D_{-2}|,$$
which implies the claim.

Combining the inequalities  $|C_2|\ll |C_1|$, $|C_1|\leq c |D_{-2}|$, and $|D_{-2}|\ll |D_{-1}|$, we obtain $|C_2|\ll |D_{-1}|$. On the other hand, applying the above argument to $\tilde\sigma'_{j_1}$, $a+1\leq j_1< k$, such that $\mathfrak{s}_0(\sigma'_{j_1})=0$, we obtain $|D_{-1}| \leq c|C_2|$, a contradiction.
This proves the lemma when $\lambda_a<\lambda_{a+1}$.

When $\lambda_a=\lambda_{a+1}$, the only case that is not treated by the above argument is when
\begin{align*}
\eta_1(s,t)&=(c_a  f_a(t)+c_{a+1} f_{a+1}(t))e^{\lambda_a(s-2T_+)},\\
\eta_{-1}(s,t)&=(d_{-b-1}g_{-b-1}(t)+d_{-b} g_{-b}(t)) e^{\lambda'_{-b-1}(s+2T_-)},
\end{align*}
up to a small error.  By the transversality of the evaluation maps, there is only a finite number of possible values for $(c_a,c_{a+1})$ and $(d_{-b-1},d_{-b})$; moreover their genericity implies that, for each $(T_-,T_+,v_{-1},v_1)$, there exists $(r_a,r_{a+1})$ such that $\mathfrak{s}_0(T_-,T_+,v_{-1},v_1)(r_a\sigma'_a+ r_{a+1}\sigma'_{a+1})\not=0$.  This proves the lemma.
\end{proof}

The existence of a homotopy $\mathfrak{s}_\zeta$ from $\mathfrak{s}_1=\mathfrak{s}$ to $\mathfrak{s}_0$ without zeros on $[R,\infty)^{\times 2}\times \bdry K$ is proved as in Proposition~\ref{prop: zeros do not escape}, using Lemma~\ref{close to the end}. The higher codimension strata are eliminated using the argument of Section~\ref{subsection: the general case}.

\section{Orientations} \label{section: orientations}

In this section we discuss orientations for the moduli spaces. Section~\ref{subsection: orientation d squared} and Sections~\ref{subsubsection: a lemma}--\ref{subsubsection: k=1 case} are standard.

\subsection{$\bdry^2=0$} \label{subsection: orientation d squared}

\subsubsection{Signs in the definition of $\bdry$}

We first discuss orientations involved in the definition of $\bdry$.  Let $u\in \mathcal{M}^{\op{ind}=\ell,\op{cyl}}_{J}(\gamma,\gamma')$, where we are suppressing asymptotic markers from the notation. For simplicity let us assume that $u$ is immersed. Let $D=D_{u}$ be the linearized normal $\overline\bdry$-operator
and let
$$\det(D)=\Lambda^{\op{top}} \ker D \otimes \Lambda^{\op{top}} (\op{coker} D)^*$$
be its determinant line.  We define an equivalence relation $\sim$ on $\det(D)-\{0\}$ as follows: $\xi_1\sim\xi_2$ if $\xi_1= c \xi_2$ for $c\in \R^+$. The equivalence class of $\xi\in \det(D)-\{0\}$ is denoted by $[\xi]$.  Let $\frak{o}(D)$ be the orientation of $\det(D)$ given by \cite{BM}. An orientation of $\det(D)$ can be viewed as an equivalence class of $\det(D)-\{0\}$.

When $\op{ind}(u)=1$, we can write $\frak{o}(D)=[\op{sgn}(u)\cdot \bdry_s]$, where $\bdry_s$ refers to the section of the normal bundle corresponding to the infinitesimal translation in the $s$-direction and $\op{sgn}(u)=\pm 1$.

\s\n
{\bf Sign assignment.} In the definition of $\bdry$, we assign the sign $\op{sgn}(u)$ to $[u]\in \mathcal{M}^{\op{ind}=1,\op{cyl}}_{J}(\gamma,\gamma')/\R$.

\subsubsection{$\bdry^2=0$}

Next we discuss orientations in the proof of $\bdry^2=0$. We are gluing/pregluing $u_1\in \mathcal{M}^{\op{ind}=1,\op{cyl}}_J(\gamma'',\gamma')$ and $u_2\in \mathcal{M}^{\op{ind}=1,\op{cyl}}_J(\gamma,\gamma'')$. Assume $u_1$ and $u_2$ are regular. Let $u_1\# u_2$ be a pregluing of $u_1$ and $u_2$ and let $D_{u_1\# u_2}$ be the linearized normal $\overline\bdry$-operator for $u_1\# u_2$. We assume that the ``neck length" $T$ is sufficiently large. Then by the gluing property for coherent orientations from \cite{BM}, there is an isomorphism
$$\det(D_{u_1\# u_2})\simeq \det(D_{u_1})\otimes \det(D_{u_2})$$
which is natural up to a positive constant and
\begin{equation} \label{widetilde otimes}
\mathfrak{o}(D_{u_1\# u_2})= \mathfrak{o}(D_{u_1})\widetilde\otimes \mathfrak{o}(D_{u_2}),
\end{equation}
where the right-hand side is defined as follows: We use cutoff functions $\beta_i: \R\to [0,1]$, $i=1,2$, that are analogous to $\beta_\pm$ from Section~\ref{subsection:pregluing}. Given $\xi_i\in \ker D_{u_i}$, we translate by $\pm T$ and damp it out at the positive or negative end (as appropriate) by multiplying by $\beta_i$.  This yields $\xi_i'=\beta_i (\xi_i)_T$.  Here the subscript $T$ indicates a translation {\em which depends on $T$}. We then view $\xi'_i$ as an element of the domain of $D_{u_1\# u_2}$ and take the $L^2$-orthogonal projection to $\ker D_{u_1\# u_2}$ to obtain $\widehat \xi_i$.
Finally, if
$$\mathfrak{o}(D_{u_1})=[\op{sgn}(u_1)\partial_s^1] \quad \mbox{and} \quad \mathfrak{o}(D_{u_2})=[\op{sgn}(u_2)\partial_s^2],$$
where $\partial_s^i$, $i=1,2$, is the vector field that corresponds to translation in $s$-direction inside the moduli space corresponding to $u_i$, then
$$\mathfrak{o}(D_{u_1})\widetilde\otimes \mathfrak{o}(D_{u_2}):=[\op{sgn}(u_1)\widehat{\partial}_s^1\wedge \op{sgn}(u_2)\widehat{\partial}_s^2].$$

Now let $u_1\cup u_2$ and $u^\flat_1\cup u^\flat_2$ be the two boundary points of a component $\mathcal{N}$ of $\overline{\mathcal{M}/\R}$.
Let $a$ be a nonsingular vector field of $\mathcal{N}$ that points away from $u_1\cup u_2$ and towards $u_1^\flat\cup u_2^\flat$, and let $\bdry_s^{12}$ be the vector field of $\mathcal{M}$ that corresponds to translation in the $s$-direction.  If we denote $\mathfrak{o}(D_{u_1\#u_2})=[\op{sgn}(a)\partial_s^{12}\wedge a]$, then $\widehat{\partial}_s^1+\widehat{\partial}_s^2$ corresponds to $\partial_s^{12}$ and $\widehat{\partial}_s^1-\widehat{\partial}_s^2$ corresponds to $a$. Therefore,
$$[\op{sgn}(u_1)\widehat{\partial}_s^1\wedge \op{sgn}(u_2)\widehat{\partial}_s^2]=[\op{sgn}(a)(\widehat{\partial}_s^1+\widehat{\partial}_s^2)\wedge(\widehat{\partial}_s^1-\widehat{\partial}_s^2)],$$
and $\op{sgn}(u_1)\op{sgn}(u_2)=-\op{sgn}(a)$.  The analogous computation for $u_1^\flat\cup u^\flat_2$ implies
\begin{equation}
\op{sgn}(u^\flat_1) \op{sgn}(u^\flat_2)=\op{sgn}(a)=-\op{sgn}(u_1)\op{sgn}(u_2).
\end{equation}
This is the desired relation for $\bdry^2=0$.

\subsection{Chain homotopy} \label{subsection: orientations chain homotopy}

\subsubsection{A lemma} \label{subsubsection: a lemma}

In this subsection we will make frequent use of the following standard lemma (cf.\ \cite[p.\ 676]{FO3}, for example):

\begin{lemma} \label{lemma: comparison}
If $\phi: V\to W$ is a Fredholm map and $E$ is a finite-dimensional subspace of $W$ such that $W= \op{Im} \phi + E$, then there is an isomorphism
$$\Phi_E:\det\phi\stackrel\sim\longrightarrow\det \phi^{-1}(E)\otimes \det E^*,$$
which is natural up to a positive constant.
\end{lemma}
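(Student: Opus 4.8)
\textbf{Proof plan for Lemma~\ref{lemma: comparison}.}

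The statement is the standard "stabilization of the determinant line" lemma, so the plan is to write down the short homological-algebra argument. First I would set up the data: since $\phi:V\to W$ is Fredholm and $W=\op{Im}\phi+E$, the subspace $U:=\phi^{-1}(E)\subset V$ is the preimage of a finite-dimensional subspace, hence $\phi$ restricts to a Fredholm map $\phi|_U:U\to E$ with the same kernel as $\phi$ (because $\ker\phi\subset U$) and with cokernel $E/\op{Im}(\phi|_U)$. The key observation is that the hypothesis $W=\op{Im}\phi+E$ together with $E\cap\op{Im}\phi=\op{Im}(\phi|_U)$ gives a natural short exact sequence
\begin{equation*}
0\longrightarrow \op{coker}(\phi|_U)=E/\op{Im}(\phi|_U)\longrightarrow \op{coker}\phi= W/\op{Im}\phi\longrightarrow 0,
\end{equation*}
i.e. $\op{coker}(\phi|_U)\cong\op{coker}\phi$ canonically. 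Hence $\ker\phi=\ker(\phi|_U)$ and $\op{coker}\phi\cong\op{coker}(\phi|_U)$, which already identifies $\det\phi$ with $\det(\phi|_U)$.

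Next I would assemble the determinant lines. By definition $\det(\phi|_U)=\Lambda^{\op{top}}\ker(\phi|_U)\otimes\Lambda^{\op{top}}(\op{coker}(\phi|_U))^*$, and there is the general fact that for a linear map $\psi:U\to E$ between finite-dimensional spaces one has a canonical (up to positive scalar) isomorphism $\det\psi\otimes\Lambda^{\op{top}}E\cong\Lambda^{\op{top}}U$, equivalently $\det\psi\cong\Lambda^{\op{top}}U\otimes\Lambda^{\op{top}}E^*$; this is just the observation that a choice of splitting $U=\ker\psi\oplus U'$ and $E=\op{Im}\psi\oplus\op{coker}\psi$ induces compatible isomorphisms and that changing the splitting changes the isomorphism by a positive determinant. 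Applying this with $\psi=\phi|_U$, $U=\phi^{-1}(E)$ gives
\begin{equation*}
\det(\phi|_U)\;\cong\;\Lambda^{\op{top}}\phi^{-1}(E)\otimes\Lambda^{\op{top}}E^*\;=\;\det\phi^{-1}(E)\otimes\det E^*.
\end{equation*}
Composing with the identification $\det\phi\cong\det(\phi|_U)$ from the first paragraph yields the desired $\Phi_E$.

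Finally I would address naturality: every isomorphism used was either canonical (the short exact sequence of cokernels, the equality of kernels) or canonical up to multiplication by a positive real number (the splitting-dependent identification of determinant lines of a finite-rank map), so the composite $\Phi_E$ is well-defined up to a positive constant, as claimed. The only mildly delicate point — and the one I would state carefully rather than belabor — is the "up to positive scalar" clause: one must check that two choices of complementary splittings differ by a change-of-basis matrix of positive determinant on each factor, which follows from $GL^+$ being connected, so the induced map on $\Lambda^{\op{top}}$ is multiplication by a positive number. I do not expect any real obstacle here; the lemma is purely formal linear algebra and the write-up is a few lines once the exact sequence $\op{coker}(\phi|_{\phi^{-1}(E)})\cong\op{coker}\phi$ is noted.
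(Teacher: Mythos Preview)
Your proposal is correct and matches what the paper does: the paper simply cites the lemma as standard and writes down the explicit map by choosing splittings $\phi^{-1}(E)=\ker\phi\oplus F$ and $E=\op{coker}\phi\oplus\phi(F)$, which is exactly the decomposition your argument produces once you identify $\op{coker}(\phi|_{\phi^{-1}(E)})\cong\op{coker}\phi$. One minor quibble: your appeal to $GL^+$ being connected is not the right justification for naturality---a change of splitting need not have positive determinant. The correct reason is that for a short exact sequence $0\to A\to B\to C\to 0$ the isomorphism $\Lambda^{\op{top}}B\cong\Lambda^{\op{top}}A\otimes\Lambda^{\op{top}}C$ is defined via arbitrary lifts of a basis of $C$, and different lifts give the \emph{same} element because the discrepancy lies in $A$ and is killed by wedging; so the identification is in fact exactly canonical, which is stronger than you need.
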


More explicitly, if $\phi^{-1}(E)= \ker \phi \oplus F$ and $E= \op{coker}\phi \oplus \phi(F)$, and $\ker\phi$, $F$, $\op{coker}\phi$, and $\phi(F)$ have bases $\{v_1,\dots,v_m\}$, $\{f_1,\dots,f_\ell\}$, $\{w_1,\dots,w_n\}$, and $\{\phi(f_1),\dots,\phi(f_\ell)\}$, then the isomorphism $\Phi_E$ is given by:
\begin{align*}
[v_1\wedge\dots\wedge v_m\otimes w_n^*\wedge\dots\wedge w_1^*]\mapsto & [v_1\wedge\dots\wedge v_m\wedge f_1\wedge\dots\wedge f_\ell\otimes\\
 & \quad \phi(f_\ell)^*\wedge\dots\wedge \phi(f_1)^*\wedge w_n^*\wedge\dots\wedge w_1^*],
\end{align*}
where $\{w_1^*,\dots,w_n^*\}$ is the dual basis to $\{w_1,\dots,w_n\}$ and $\{\phi(f_1)^*,\dots,\phi(f_\ell)^*\}$ is the dual basis to $\{\phi(f_1),\dots,\phi(f_\ell)\}$.

\subsubsection{The $k=1$ case} \label{subsubsection: k=1 case}

We mostly use the notation from Section~\ref{subsection: chain homotopy}.  Let
$$\mathcal{M}^0=\coprod_{0\leq \tau\leq 1}\mathcal{M}^{\op{ind}=0,\op{cyl}}_{\overline{J}^\tau}(\gamma_+,\gamma_-)$$
and let $\pi: \mathcal{M}^0\to [0,1]$ be the projection to $\tau\in[0,1]$.

Consider $$v_0 \cup v_1 \in \left. \left(\mathcal{M}_{\overline{J}^{\tau_l}}^{\op{ind}=-k,\op{cyl}}(\gamma'_+,\gamma_-)\times \mathcal{M}_{J_+}^{\op{ind}=k,\op{cyl}}(\gamma_+,\gamma'_+)/\mathbb{R}\right) \right/\sim.$$
Let
$$D_{v_0}: \mathcal{H}_1(\dot F_0, N_{v_0})\to \mathcal{H}_0(\dot F_0, \Lambda^{0,1} T^*\dot F_0\otimes N_{v_0}),$$
$$D_{v_1}: \mathcal{H}_1(\dot F_1, N_{v_1})\to \mathcal{H}_0(\dot F_1, \Lambda^{0,1} T^*\dot F_1\otimes N_{v_1}),$$
be the linearized normal $\overline\bdry$-operators for $v_0:\dot F_0\to \widehat{X}^{\tau_0}$ and $v_1: \dot F_1\to \R\times M_+$, where $\mathcal{H}_0$ and $\mathcal{H}_1$ are the Morrey spaces described in Section~\ref{subsection: banach spaces}.  We then write
$$\widetilde D_{v_0}: \mathcal{H}_1(\dot F_0, N_{v_0})\oplus \R\to \mathcal{H}_0(\dot F_0, \Lambda^{0,1} T^*\dot F_0\otimes N_{v_0}),$$
$$(\xi,c)\mapsto D_{v_0}\xi + cY',$$
where the generator of the summand $\R$ is denoted by $a$.

We first consider the case $k=1$. We define $\op{sgn}(v_0)$ and $\op{sgn}(v_1)$ by
$$\mathfrak{o}(D_{v_0})=[\op{sgn}(v_0) (Y')^*],\quad \mathfrak{o}(D_{v_1})=[\op{sgn}(v_1)\partial_s].$$
Then, by Lemma~\ref{lemma: comparison},
$$\mathfrak{o}(\widetilde D_{v_0})=[\op{sgn}(v_0) a\otimes (Y')^*]=\op{sgn}(v_0)[1],$$
since $\widetilde D_{v_0}$ maps $a\mapsto Y'$.

Let $\widetilde D=\widetilde D_{v_0\# v_1}$ be the operator obtained from pregluing $\widetilde D_{v_0}$ and $D_{v_1}$:
$$\widetilde D: \mathcal{H}_1(\dot F_0\# \dot F_1, N_{v_0\# v_1})\oplus \R\to \mathcal{H}_0(\dot F_0\# \dot F_1, \Lambda^{0,1} T^*(\dot F_0\# \dot F_1)\otimes N_{v_0\# v_1}),$$
$$(\xi,c)\mapsto D_{v_0\# v_1}\xi + c\beta_0 Y',$$
where $\beta_0,\beta_1$ are the same as the cutoff functions $\beta_-,\beta_+$ from Section~\ref{subsection:pregluing}. For $i=0,1$ we also have maps
$$%\mathcal{I}_i:
\mathcal{H}_1(\dot F_i,N_{v_i})\to \mathcal{H}_1(\dot F_0\# \dot F_1, N_{v_0\# v_1}),$$
$$%\mathcal{J}_i:
\mathcal{H}_0(\dot F_i, \Lambda^{0,1} T^*\dot F_i\otimes N_{v_i})\to \mathcal{H}_0(\dot F_0\# \dot F_1, \Lambda^{0,1} T^*(\dot F_0\# \dot F_1)\otimes N_{v_0\# v_1}),$$
$$\xi_0\mapsto \xi_0':=\beta_0 \xi_0, \quad \xi_1 \mapsto \xi_1':=\beta_1 (\xi_1)_T$$
which restrict to inclusions on $\ker D_{v_1}$ and $\op{coker} D_{v_0}$.  Here the subscript $T$ indicates a translation which depends on $T$ and $\xi_i'$ is viewed as a section of the appropriate bundle. For $\xi_i\in \mathcal{H}_1(\dot F_i,N_{v_i})$, we let $\widehat\xi_i$ be the $L^2$-orthogonal projection of $\xi'_i$ to $\ker \widetilde D$. By abuse of notation, let $\bdry_\tau$ be the pullback of $\bdry_\tau$ under the projection $\pi:\mathcal{M}^0\to[0,1]$. If $v_0 \cup v_1$ is on $\bdry\mathcal{M}^0$ such that $\partial_\tau$ points away from (resp.\ towards) $v_0 \cup v_1$ and towards (resp.\ away from) the interior of $\mathcal{M}^0$, then $\partial_\tau$ corresponds to $-\widehat\partial_s$ (resp.\ $\widehat\bdry_s$). By the gluing property for coherent orientations we have
$$\mathfrak{o}(\widetilde D)=\op{sgn}(v_0)\op{sgn}(v_1)[\widehat\bdry_s]=\op{sgn}(v_0)\op{sgn}(v_1)[-{\bdry}_\tau]$$
(resp.\ $\mathfrak{o}(\widetilde D)=\op{sgn}(v_0)\op{sgn}(v_1)[{\bdry}_\tau]$) and the sign $\op{sgn}(v_0)\op{sgn}(v_1)$ assigned to $v_0\cup v_1$ agrees with the boundary orientation of $\mathcal{M}^0$.

\subsubsection{The case $k>1$}

Next we consider the case $k>1$. Define $\op{sgn}(v_0)$ and $\op{sgn}(v_1)$ by
$$\mathfrak{o}(D_{v_0})=[\op{sgn}(v_0)\sigma_k^*\wedge \cdots \wedge \sigma_1^*], \quad \mathfrak{o}(D_{v_1})=[\op{sgn}(v_1)e_1\wedge \cdots \wedge e_{k}],$$
where $e_i\in \ker D_{v_1}$, $1\leq i \leq k$, corresponds to $f_i$. Then
$$\mathfrak{o}(\widetilde D_{v_0})=[\op{sgn}(v_0)a \otimes\sigma_k^*\wedge \cdots \wedge \sigma_1^*]=[\op{sgn}(v_0)\sigma_{k-1}^*\wedge \cdots \wedge \sigma_1^*].$$
Notice that $\op{sgn}(v_1)$ is locally constant.

We will now define $\mathfrak{o}(\widetilde D_{v_0})\widetilde\otimes \mathfrak{o}(D_{v_1})$.  Let
$$F=\R\langle e_1',\dots,e'_k\rangle,\quad E=\R\langle \sigma_1',\dots,\sigma_{k}'\rangle.$$
We are assuming that $T\gg 0$ so that $D_{v_0\# v_1}$ is an isomorphism and the composition of $\widetilde D|_F$ and the $L^2$-projection $p:\widetilde D(F)\to E$ is also an isomorphism. Then we set $\widehat \xi= \xi' - D_{v_0\# v_1}^{-1}((1-p)\widetilde D(\xi'))$ and
\begin{equation} \label{theodore}
\mathfrak{o}(\widetilde D_{v_0})\widetilde\otimes \mathfrak{o}(D_{v_1}):=\op{sgn}(v_0)\op{sgn}(v_1) [\widehat{e}_1\wedge \cdots \wedge \widehat{e}_k \otimes (\sigma'_{k-1})^*\wedge \cdots \wedge (\sigma'_1)^*],
\end{equation}
which makes sense in light of Lemma~\ref{lemma: comparison}.

The gluing property for coherent orientations (in a slightly more general form than that of \cite{BM}) implies:

\begin{lemma} \label{dumbledore}
For $T\gg 0$, $\mathfrak{o}(\widetilde D)=\mathfrak{o}(\widetilde D_{v_0})\widetilde\otimes \mathfrak{o}(D_{v_1}).$  Hence
$$\mathfrak{o}(\widetilde D)=\op{sgn}(v_0)\op{sgn}(v_1) [\widehat{e}_1\wedge \cdots \wedge \widehat{e}_k \otimes (\sigma'_{k-1})^*\wedge \cdots \wedge (\sigma'_1)^*].$$
\end{lemma}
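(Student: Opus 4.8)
The plan is to obtain Lemma~\ref{dumbledore} from the linear gluing theorem for coherent orientations of \cite{BM}, adapted to allow the extra parameter direction $a$ in $\widetilde D_{v_0}$. The substantive assertion is the first equation $\mathfrak{o}(\widetilde D)=\mathfrak{o}(\widetilde D_{v_0})\widetilde\otimes\mathfrak{o}(D_{v_1})$; the ``hence'' is pure bookkeeping, obtained by substituting $\mathfrak{o}(\widetilde D_{v_0})=[\op{sgn}(v_0)\sigma_{k-1}^*\wedge\cdots\wedge\sigma_1^*]$ and $\mathfrak{o}(D_{v_1})=[\op{sgn}(v_1)e_1\wedge\cdots\wedge e_k]$ into the definition of $\widetilde\otimes$ in Equation~\eqref{theodore}. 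So I would concentrate on the first equation.

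First I would record the analytic input. By the pregluing of Section~\ref{subsection:pregluing} and the estimates of Section~\ref{subsection: estimates} (Lemmas~\ref{azalea}, \ref{suzuran}, \ref{estimate for tau}), for $T\gg 0$ the glued operator $D_{v_0\#v_1}$ is an isomorphism, $\widetilde D=\widetilde D_{v_0\#v_1}$ is surjective with one-dimensional kernel, and the pregluing maps $\xi_0\mapsto\beta_0\xi_0$ and $\xi_1\mapsto\beta_1(\xi_1)_T$ are, for $T\gg 0$, uniformly close to isometric embeddings of $\op{coker}D_{v_0}$ and $\ker D_{v_1}$ into the domain and cokernel of $\widetilde D$ (the discrepancy being controlled by the exponential decay of the relevant sections on the neck). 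This is exactly the data producing a canonical-up-to-positive-scalar isomorphism $\det\widetilde D\cong\det\widetilde D_{v_0}\otimes\det D_{v_1}$ for $T\gg 0$, and the finite-dimensional model $E=\R\langle\sigma_1',\dots,\sigma_k'\rangle$, $F=\R\langle e_1',\dots,e_k'\rangle$, $\widehat\xi=\xi'-D_{v_0\#v_1}^{-1}((1-p)\widetilde D\xi')$ used in Equation~\eqref{theodore} is precisely the reduction to which Lemma~\ref{lemma: comparison} is applied for $\phi=\widetilde D$: for $T\gg 0$ the estimates guarantee that $\{\widehat e_i\}_{i=1}^k$ spans a complement of $\ker\widetilde D$ inside $\widetilde D^{-1}(E)$ and that $\widetilde D$ carries it onto $E$ compatibly with the ordering of the $\sigma_i'$.

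The one place where \cite{BM} does not apply verbatim is that $\widetilde D_{v_0}\colon\mathcal H_1\oplus\R\to\mathcal H_0$, $(\xi,c)\mapsto D_{v_0}\xi+cY'$, is a rank-one stabilization of an honest linearized $\overline\bdry$-operator rather than such an operator itself. Here I would argue that coherent orientations commute with such stabilizations: by Lemma~\ref{lemma: comparison} with $\phi=\widetilde D_{v_0}$ and the finite-dimensional space $\op{coker}D_{v_0}$ there is a natural isomorphism $\det\widetilde D_{v_0}\cong\Lambda^1\R\langle a\rangle\otimes\det D_{v_0}$ under which $a$ cancels with $\sigma_k^*$, since $\widetilde D_{v_0}(a)=Y'$ has image $Y=\sigma_k$ up to a nonzero scalar in $\op{coker}D_{v_0}$ (Lemma~\ref{lemma: better basis for coker}); this recovers the identity $\mathfrak{o}(\widetilde D_{v_0})=[\op{sgn}(v_0)a\otimes\sigma_k^*\wedge\cdots\wedge\sigma_1^*]=[\op{sgn}(v_0)\sigma_{k-1}^*\wedge\cdots\wedge\sigma_1^*]$ already recorded in the text. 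Because this stabilization commutes with pregluing — one may equally first glue $D_{v_0}$ with $D_{v_1}$ using \cite{BM} and then adjoin the parameter direction, now realized on the glued curve as $(\xi,c)\mapsto D_{v_0\#v_1}\xi+c\beta_0Y'$ — the gluing axiom of the coherent orientation system of \cite{BM} applies to $\widetilde D=\widetilde D_{v_0\#v_1}$ and gives $\mathfrak{o}(\widetilde D)=\mathfrak{o}(\widetilde D_{v_0})\widetilde\otimes\mathfrak{o}(D_{v_1})$. As a consistency check this specializes, when $k=1$, to the explicit computation of Section~\ref{subsubsection: k=1 case}. Substituting the two orientations into Equation~\eqref{theodore} then yields the displayed formula, with overall sign $\op{sgn}(v_0)\op{sgn}(v_1)$.

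I expect the main obstacle to be making the stabilization step rigorous rather than merely quoting ``the gluing property in a slightly more general form than \cite{BM}'': one must verify that the coherent orientation system of \cite{BM} — built for $\overline\partial_J$-operators capped off by disks and here transplanted to the Morrey-space framework of Section~\ref{subsection: banach spaces} — still satisfies the gluing axiom after one factor is stabilized by the one-dimensional parameter space, and that the two orderings ``stabilize then glue'' and ``glue then stabilize'' agree up to a positive scalar. Everything else is exterior-algebra bookkeeping together with the decay estimates already established.
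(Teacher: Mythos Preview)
Your proposal is correct and matches the paper's approach exactly: the paper offers no argument beyond the single sentence ``The gluing property for coherent orientations (in a slightly more general form than that of \cite{BM}) implies'' preceding the lemma. You have correctly identified that the ``slightly more general form'' refers precisely to the rank-one stabilization by the parameter direction $a$, and your outline of why this stabilization commutes with the Bourgeois--Mohnke gluing is the content the paper leaves implicit.
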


Let $\widetilde D_v$ be the operator for the glued curve $v$, given as follows:
$$\widetilde D_v: \mathcal{H}_1(\dot F_0\# \dot F_1,N_v)\oplus \R\to \mathcal{H}_0(\dot F_0\#\dot F_1, \Lambda^{0,1} T^*(\dot F_0\#\dot F_1)\otimes N_v),$$
$$ (\xi,c)\mapsto D_v\xi + c\beta_0 Y',$$
where $D_v$ is the linearized normal $\overline\bdry$-operator for $v$. Then the analog of Lemma~\ref{dumbledore} also holds for $\widetilde D_v$.

\subsubsection{Comparison with signs of zeros of $d\mathfrak{s}$} \label{subsub: guava}

Next we compare $\mathfrak{o}(\widetilde D_v)$ with the signs of zeros of $d\mathfrak{s}$. As before we write $\mathcal{M}=\mathcal{M}_{J_+}^{\op{ind}=k,\op{cyl}}(\gamma_+,\gamma'_+)$.
Assume that $T\gg 0$ is sufficiently generic so that $\mathfrak{s}:\{T\}\times \mathcal{M}/\R\to \mathcal{O}$ is transverse to the zero section. Let $v_1\in \mathfrak{s}^{-1}(0)$.  In Section~\ref{subsub: guava} only, we write $v_+=v_1$ so that we have agreement with Section~\ref{section: gluing}.
We orient $T_{v_+}(\mathcal{M}/\R)$ by
$$-[e_1\wedge \cdots \wedge e_{k}\otimes \partial_s^* ]=-[e^\sharp_1\wedge\dots\wedge e^\sharp_{k-1}\wedge \bdry_r\otimes \bdry_s^*]=[e^\sharp_1\wedge\dots\wedge e^\sharp_{k-1}],$$
where $\bdry_r$ is the outward radial vector field on $\R^k$ and $e^\sharp_1,\dots,e^\sharp_{k-1}$ are tangent to $S^{k-1}\subset \R^k$ at $\widetilde{ev}^k(v_+)$ so that $[e^\sharp_1\wedge\dots\wedge e^\sharp_{k-1}\wedge \bdry_r]=[e_1\wedge \cdots \wedge e_{k}]$,
and orient the fiber $\mathcal{O}_{T,v_+}$ by $[\sigma_{k-1}^*\wedge \cdots \wedge \sigma_1^*].$

\s\n
{\bf Sign of $d\mathfrak{s}$.} We define $\op{sgn}d\mathfrak{s}(v_+)\in \{\pm1\}$ as the sign of $\det (\pi_{\mathcal{O}}\circ d\mathfrak{s}(v_+))$, where $\pi_{\mathcal{O}}$ is the projection to the fiber $\mathcal{O}_{T,v_+}$.

\s
Let
$$F^\sharp=\R\langle (e_1^\sharp)',\dots,(e_{k-1}^\sharp)'\rangle,\quad E^\sharp=\R\langle \sigma_1',\dots,\sigma_{k-1}' \rangle,$$
and let $D_v^\sharp: F^\sharp\to E^\sharp$ be the composition of $\widetilde D_v|_{F^\sharp}$ and the $L^2$-projection to $E^\sharp$. Also we orient $F^\sharp$ by $[(e_1^\sharp)'\wedge\dots\wedge(e_{k-1}^\sharp)']$ and $E^\sharp$ by $[\sigma_1'\wedge\dots\wedge \sigma_{k-1}']$; these are analogous to the orientations for $T_{v_+}(\mathcal{M}/\R)$ and $\mathcal{O}_{T,v_+}$.

\begin{lemma} \label{lemma: relate orientations}
$\op{sgn}d\mathfrak{s}(v_+)=\op{sgn} \op{det} D_v^\sharp$.
\end{lemma}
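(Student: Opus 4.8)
The plan is to show that, at a zero $v_+$ of $\mathfrak{s}$, the linearization $\pi_{\mathcal{O}}\circ d\mathfrak{s}(v_+)$ coincides with the finite-dimensional operator $D_v^\sharp$ as maps between the oriented spaces $T_{v_+}(\mathcal{M}/\R)\cong F^\sharp$ and $\mathcal{O}_{T,v_+}\cong E^\sharp$, up to corrections that are too small to affect the sign of a determinant once $T\gg 0$.

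First I would compute $d\mathfrak{s}(v_+)$ explicitly by differentiating its defining formula~\eqref{equation for s} together with the gluing equations $\Theta_+(\psi_-,\psi_+)=0$, \eqref{equation: D+1-Pi}, and \eqref{equation: one more} at the zero $v_+$. Recall that $v=v(T,v_+)$ is then a genuine $\overline{\bdry}_{\overline{J}^\tau}$-holomorphic curve and that on $\mathfrak{s}^{-1}(0)$ one has $\frac{\bdry\beta_+}{\bdry s}\eta_+ +\mathcal{R}_- =\frac{\bdry\beta_+}{\bdry s}(\eta_+ +\psi_+)$ on the neck. Differentiating in a tangent direction $e_i^\sharp\in T_{v_+}(\mathcal{M}/\R)$ (lifted, as in Section~\ref{subsub: guava}, to $\ker D_{v_1}$, $i=1,\dots,k-1$), the term $\frac{\bdry\beta_+}{\bdry s}\eta_+$ contributes $\frac{\bdry\beta_+}{\bdry s}(e_i^\sharp)_T$, which is exactly $\widetilde D_v((e_i^\sharp)')$ to leading order since $D_{v_1}e_i^\sharp=0$; here $(e_i^\sharp)'=\beta_+(e_i^\sharp)_T$ is the preglued variation. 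The $v_+$-derivatives of the implicitly determined data $(\psi_-,\psi_+,\tau)$ lie in $\op{Im}(D_{v_0\#v_1})+\R\langle\beta_0 Y'\rangle$: the $\tau$-derivative lands in the $Y$-direction, which is quotiented out of $\mathcal{O}_{T,v_+}$, while the $\psi_\pm$-derivatives land in $\op{Im}(D_{v_0\#v_1})$, which is $L^2$-almost orthogonal to $E^\sharp$. Together with the quadratic remainders, all of these are $O(e^{-\lambda T})$ smaller than the leading term by Lemmas~\ref{azalea}, \ref{suzuran}, and \ref{estimate for tau}. Hence $\pi_{\mathcal{O}}\circ d\mathfrak{s}(v_+)$ equals $\pi_{E^\sharp}\circ\widetilde D_v|_{F^\sharp}=D_v^\sharp$ under the identifications $e_i^\sharp\leftrightarrow(e_i^\sharp)'$ and $\sigma_j^*\leftrightarrow\sigma_j'$ determined by the $L^2$-pairing in~\eqref{equation for s}, modulo terms that do not change the sign of a determinant.

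Next I would carry out the orientation bookkeeping. The identification $e_i^\sharp\mapsto(e_i^\sharp)'$ sends $[e_1^\sharp\wedge\cdots\wedge e_{k-1}^\sharp]$ to $[(e_1^\sharp)'\wedge\cdots\wedge(e_{k-1}^\sharp)']$, so it is orientation-preserving; one must then verify, using the dual-basis conventions of Section~\ref{subsection: orientations chain homotopy}, that the identification $\mathcal{O}_{T,v_+}\cong E^\sharp$ carrying the chosen orientation $[\sigma_{k-1}^*\wedge\cdots\wedge\sigma_1^*]$ to $[\sigma_1'\wedge\cdots\wedge\sigma_{k-1}']$ is also orientation-preserving — the reversal in the first wedge is precisely what the dual-basis convention for $\op{Hom}(\ker D_-^*/\R\langle Y\rangle,\R)$ contributes, so the two reversals cancel. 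With both identifications orientation-preserving and the error terms small, $\op{sgn}d\mathfrak{s}(v_+)=\op{sgn}\det(\pi_{\mathcal{O}}\circ d\mathfrak{s}(v_+))=\op{sgn}\det D_v^\sharp$, since a small perturbation cannot flip the sign of a determinant bounded away from $0$ (which holds because $v_+$ is a transverse zero for generic $T$).

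The main obstacle is the first step: verifying rigorously that the $v_+$-variation of the implicit-function-theorem solution $(\psi_-,\psi_+,\tau)$ feeds only into directions projecting trivially to $E^\sharp$, up to errors controllably small compared with the leading $\widetilde D_v((e_i^\sharp)')$ term. This is a differentiation of the gluing construction of Section~\ref{subsection: gluing} and relies essentially on the Morrey-space estimates of Section~\ref{subsection: estimates}; by contrast, the orientation comparison, once set up, is a routine (if sign-sensitive) bookkeeping exercise within the formalism of Section~\ref{subsection: orientations chain homotopy}.
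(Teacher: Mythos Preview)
Your overall strategy---differentiate $\mathfrak{s}$ along tangent directions $e_i^\sharp$ and identify the result with $D_v^\sharp$ up to terms that do not change the sign---is the right one, and the orientation bookkeeping paragraph is fine. But the middle step, where you dispose of the contributions of the implicit variations $\dot\psi_\pm,\dot\tau$, contains a real gap. You write that ``the $\psi_\pm$-derivatives land in $\op{Im}(D_{v_0\#v_1})$, which is $L^2$-almost orthogonal to $E^\sharp$.'' This cannot be the mechanism: for $T\gg 0$ the paper explicitly arranges that $D_{v_0\#v_1}$ is an \emph{isomorphism}, so its image is everything and is certainly not almost orthogonal to $E^\sharp$; indeed your leading term $\widetilde D_v((e_i^\sharp)')$ also lies in $\op{Im}(D_v)$, and you need its projection to $E^\sharp$ to be \emph{nonzero}. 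So appealing to ``lies in the image'' gives you nothing. Your fallback---that the corrections are $O(e^{-\lambda T})$---is the honest way to proceed, but then Lemmas~\ref{azalea}--\ref{estimate for tau} bound only $\psi_\pm,\tau$ themselves, not their derivatives with respect to $v_+$; you would have to differentiate the contraction-mapping equations and redo the estimates.

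The paper avoids this entirely by proving an \emph{exact} identity rather than a perturbative one. The observation is that
\[
\mathfrak{s}(T,v_+)(\sigma)=\Big\langle\sigma,\tfrac{\partial\beta_+}{\partial s}\eta_+ +\mathcal{R}_-\Big\rangle=\langle\sigma,\Theta_-\rangle,
\]
since $\langle\sigma,D_-\psi_-\rangle=0$ (as $\sigma\in\ker D_-^*$) and $\langle\sigma,(\tau-\tau_0)Y'\rangle=0$ (as $\sigma\perp Y$). Differentiating and using $\beta_-\Theta_-+\beta_+\Theta_+=\overline\partial_{\overline J^\tau}v$ together with the fact that the linearized $\Theta_+$ vanishes by construction of $\psi_+^\flat$, one obtains
\[
d\mathfrak{s}(v_+)(\phi_+)(\sigma)=\big\langle\sigma,\widetilde D_v\big(\beta_+(\phi_+ +\psi_+^\flat)+\beta_-\psi_-^\flat,\tau^\flat\big)\big\rangle
\]
on the nose. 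Now the correction $(\psi_+^\flat,\psi_-^\flat,\tau^\flat)$ sits in the \emph{domain} of $\widetilde D_v$ as a small perturbation of the identification $\phi_+\mapsto (e_i^\sharp)'$, so comparing signs with $D_v^\sharp=\pi_{E^\sharp}\circ\widetilde D_v|_{F^\sharp}$ is immediate via Lemma~\ref{dumbledore}. The point is that the implicit-function variations are not discarded as small errors in the \emph{range}; they are absorbed exactly into the argument of $\widetilde D_v$, and what makes them disappear from the pairing is the adjoint relation $D_-^*\sigma=0$ together with $\sigma\perp Y$, not any approximate orthogonality of $\op{Im}(D_{v_0\#v_1})$.
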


\begin{proof}
Using the notation from Section~\ref{section: gluing}, given
$$v=\op{exp}_{v_*}(\beta_+\psi_+ + \beta_-\psi_-),$$
we consider $\overline\bdry_{\overline{J}^\tau} v$, which equals the left-hand side of Equation~\eqref{eqn: d-bar rewritten}. Here $\beta_0=\beta_-$ and $\beta_1=\beta_+$.  For a zero $v_+$ of $\mathfrak{s}$, there exist $(\psi_+,\psi_-,\tau)$ so that $\overline\bdry_{\overline{J}^\tau} v=0$; in other words, $(\psi_+,\psi_-,\tau)$ solves Equations~\eqref{equation: D+1-Pi}--\eqref{equation: Pi=0}.

Consider the variation $\beta_+\phi_+$ with $\phi_+\in \ker D_+$. We claim that
\begin{equation} \label{equivalence}
d\mathfrak{s}(v_+)(\phi_+)(\sigma)=\left\langle \sigma, \widetilde D_v(\beta_+(\phi_+ + \psi_+^\flat) +\beta_-\psi_-^\flat,\tau^\flat)\right\rangle,
\end{equation}
where $(\psi_+ + \phi_+ + \psi_+^\flat,\psi_- + \psi_-^\flat, \tau+\tau^\flat)$ satisfy Equations~\eqref{equation: D+1-Pi} and \eqref{equation: one more}, with $(\psi_+,\psi_-,\tau)$ replaced by $(\psi_+ + \phi_+ + \psi_+^\flat,\psi_- + \psi_-^\flat, \tau+\tau^\flat)$, and $\sigma\in\ker D^*_- /\R\langle Y\rangle $.  Although strictly speaking not necessary, we write out the left-hand side of Equation~\eqref{eqn: d-bar rewritten} for
$$v_{\widetilde\phi_+}=\op{exp}_{v_*}(\beta_+(\psi_+ +\phi_++\psi_+^\flat) + \beta_-(\psi_-+\psi_-^\flat))$$
and $\tau+\tau^\flat$, using the fact that $\overline\bdry_{\overline{J}^\tau} v=0$:
\begin{align*}
&\beta_-\left(D_-\psi_-^\flat +\tau^\flat Y' + {\bdry\beta_+\over \bdry s}(\phi_+ +\psi_+^\flat) +\Delta \mathcal{R}'_-(\psi_- +\psi_-^\flat,\tau-\tau_0+\tau^\flat)\right)\\
+ & \qquad  \beta_+\left(D_+(\phi_+ +\psi^\flat_+) +{\bdry\beta_-\over \bdry s}(\psi_-^\flat) +\Delta\mathcal{R}'_+(\psi_+ +\phi_+ +\psi_+^\flat)\right),
\end{align*}
up to first order in $\phi_+,\psi_+^\flat,\psi_-^\flat,\tau^\flat$.  Here $\Delta\mathcal{R}'_+$ has terms of the form $B(\psi_-, \psi_-^\flat)$, $B(\psi_- ,\tau^\flat)$, $B(\tau-\tau_0,\psi_-^\flat)$, and $B(\tau-\tau_0,\tau^\flat)$; and $\Delta\mathcal{R}'_+$ has terms of the form $B(\psi_+, \phi_+ +\psi_+^\flat)$. By $B(r_0,r_1)$ we mean a term that is linear in $r_1$ with coefficients that are functions of $r_0$. Then, by Equations~\eqref{equation: D+1-Pi} and \eqref{equation: one more}, $d\mathfrak{s}(v_+)(\phi_+)$ is given by
\begin{equation} \label{one}
\sigma\mapsto \left\langle \sigma,{\bdry\beta_+\over \bdry s}(\phi_+ +\psi_+^\flat)+\Delta\mathcal{R}'_-(\psi_- +\psi_-^\flat,\tau-\tau_0+\tau^\flat)\right\rangle,
\end{equation}
whereas $\langle \sigma,\widetilde D_v(\beta_+(\phi_+ + \psi_+^\flat) +\beta_-\psi_-^\flat,\tau^\flat)\rangle$ is given by
\begin{align} \label{one and a half}
\sigma  \mapsto & \left \langle \sigma,\beta_- \left({\bdry\beta_+\over \bdry s}(\phi_+ +\psi_+^\flat)+\Delta\mathcal{R}'_-(\psi_- +\psi_-^\flat,\tau-\tau_0+\tau^\flat)\right)\right\rangle\\
\nonumber & = \left \langle \sigma,{\bdry\beta_+\over \bdry s}(\phi_+ +\psi_+^\flat)+\Delta\mathcal{R}'_-(\psi_- +\psi_-^\flat,\tau-\tau_0+\tau^\flat)\right\rangle,
\end{align}
since $\beta_-=1$ on the support of ${\bdry\beta_+\over \bdry s}$ and $\Delta\mathcal{R}'_-$. This proves the claim.

The claim, together with Lemma~\ref{dumbledore} for $\widetilde D_v$, implies the lemma.
\end{proof}

\subsubsection{From $d\mathfrak{s}$ to $d\mathfrak{s}_0$}

The homotopy $\mathfrak{s}_\zeta$, $\zeta\in[0,1]$, from $\mathfrak{s}=\mathfrak{s}_1$ to $\mathfrak{s}_0$ gives an oriented cobordism from $\mathfrak{s}^{-1}(0)$ to $\mathfrak{s}^{-1}_0(0)$.  In view of Lemmas~\ref{dumbledore} and \ref{lemma: relate orientations} and the facts that $\op{sgn}(v_0)$ is fixed and $\op{sgn}(v_1)$ is locally constant, it suffices to keep track of $\op{sgn} d\mathfrak{s}_\zeta$ as we go from $\mathfrak{s}$ to $\mathfrak{s}_0$.

Let $v_1\in \mathfrak{s}^{-1}_0(0)$.  We will determine $\op{sgn}d\mathfrak{s}_0(v_1)$, using Equation~\eqref{eqn: expression for s zero}. There are two cases: $v_1=v_1^+$ or $v_1^-$, where $\widetilde{ev}^k_-(v_1^\pm)=(0,\dots,0,\pm 1)$.

\begin{lemma} \label{lemma: signs}
$\op{sgn}d\mathfrak{s}_0(v_1^\pm)=\pm 1$.
\end{lemma}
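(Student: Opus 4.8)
The plan is to compute $d\mathfrak{s}_0(v_1^\pm)$ explicitly from the formula of Lemma~\ref{lemma: s_0} and then read off its sign against the orientation conventions fixed in Section~\ref{subsub: guava}. First I would recall that, by Lemma~\ref{lemma: s_0}, for $i=1,\dots,k-1$ we have $\mathfrak{s}_0(T,v_+)(\sigma_i)=e^{-2\lambda_i T}c_i$, where $(c_1,\dots,c_k)=\widetilde{ev}^k_-(v_+)\in S^{k-1}\subset\R^k$. Since $v_0$ (hence $\ker D_-^\ast$ and its basis $\{\sigma_1,\dots,\sigma_{k-1},\sigma_k=Y\}$) is fixed, $\mathcal{O}$ is the trivial bundle with fiber $\op{Hom}(\ker D_-^\ast/\R\langle Y\rangle,\R)=\R\langle\sigma_1^\ast,\dots,\sigma_{k-1}^\ast\rangle$, and over $\{T\}\times\mathcal{M}/\R$ the section factors as $\mathfrak{s}_0=\Lambda_T\circ\widetilde{ev}^k_-$, where $\Lambda_T\colon\R^k\to\mathcal{O}$, $(c_1,\dots,c_k)\mapsto\sum_{i=1}^{k-1}e^{-2\lambda_i T}c_i\,\sigma_i^\ast$. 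In particular $\mathfrak{s}_0^{-1}(0)=[R,\infty)\times(\widetilde{ev}^k_-)^{-1}(\{(0,\dots,0,\pm1)\})$, and at a zero $v_1^\pm$ the vertical derivative is $d\mathfrak{s}_0(v_1^\pm)(w)=\sum_{i=1}^{k-1}e^{-2\lambda_i T}\bigl(d\widetilde{ev}^k_-(v_1^\pm)\,w\bigr)_i\,\sigma_i^\ast$.

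Next I would invoke Theorem~\ref{thm: niceness of ev}(3): $(0,\dots,0,\pm1)$ is a regular value of $\widetilde{ev}^k_-$, so $d\widetilde{ev}^k_-(v_1^\pm)$ is an isomorphism of $T_{v_1^\pm}(\mathcal{M}/\R)$ onto $T_{(0,\dots,0,\pm1)}S^{k-1}=\{x_k=0\}\subset\R^k$; composing with the positive-definite diagonal map $\Lambda_T$ shows $d\mathfrak{s}_0(v_1^\pm)$ is an isomorphism onto $\mathcal{O}$, so $\op{sgn}d\mathfrak{s}_0(v_1^\pm)$ is the sign of its determinant relative to the orientation of $T_{v_1^\pm}(\mathcal{M}/\R)$ by $[e_1^\sharp\wedge\dots\wedge e_{k-1}^\sharp]$ and of $\mathcal{O}$ by $[\sigma_{k-1}^\ast\wedge\dots\wedge\sigma_1^\ast]$. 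The factors $e^{-2\lambda_i T}>0$ do not affect this sign. By the normalization of $\{e_i\}$ used in the proof of Theorem~\ref{thm: niceness of ev}(1) (the row-reduction argument of Lemma~\ref{lemma: good basis for coker}), the identification $\ker D_{v_1^\pm}\cong\R^k$ by leading Fourier coefficients sends $e_i\mapsto\partial_{x_i}$, while the $\R$-translation $\partial_s$ maps to $(\lambda_1c_1,\dots,\lambda_kc_k)$, a positive multiple of $\bdry_r|_{(0,\dots,0,\pm1)}=\pm\partial_{x_k}$. Hence, from the defining relation $[e_1^\sharp\wedge\dots\wedge e_{k-1}^\sharp\wedge\bdry_r]=[e_1\wedge\dots\wedge e_k]$, the isomorphism $d\widetilde{ev}^k_-(v_1^\pm)$ carries the base orientation to $+[\partial_{x_1}\wedge\dots\wedge\partial_{x_{k-1}}]$ at $v_1^+$ and to $-[\partial_{x_1}\wedge\dots\wedge\partial_{x_{k-1}}]$ at $v_1^-$, the single sign flip coming entirely from $\bdry_r=+\partial_{x_k}$ versus $-\partial_{x_k}$ at the two antipodal points.

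Assembling these, $\op{sgn}d\mathfrak{s}_0(v_1^+)$ is the sign of the map $\partial_{x_j}\mapsto e^{-2\lambda_j T}\sigma_j^\ast$ read against $[\partial_{x_1}\wedge\dots\wedge\partial_{x_{k-1}}]$ and $[\sigma_{k-1}^\ast\wedge\dots\wedge\sigma_1^\ast]$, and $\op{sgn}d\mathfrak{s}_0(v_1^-)$ is its negative; after matching the deliberately reversed orderings of Section~\ref{subsub: guava} — the reversal $\sigma_{k-1}^\ast\wedge\dots\wedge\sigma_1^\ast$ on the fiber is the one inherited from $\mathfrak{o}(\widetilde D_{v_0})$, so the $(-1)^{(k-1)(k-2)/2}$ it contributes is cancelled by the matching reversal on the domain side that already appears in Lemma~\ref{dumbledore} and Lemma~\ref{lemma: relate orientations} — one obtains $\op{sgn}d\mathfrak{s}_0(v_1^\pm)=\pm1$. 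The main obstacle is precisely this orientation bookkeeping: one has to be scrupulous about the reversed wedge orderings and about the sign conventions relating $\det\widetilde D$ to $\Lambda^{\op{top}}T(\mathcal{M}/\R)\otimes\Lambda^{\op{top}}\mathcal{O}$, since every factor of $e^{-2\lambda_i T}$, $d\widetilde{ev}^k_-$, and the chosen bases must be tracked consistently with Section~\ref{subsub: guava}. The convention-independent core, however, is robust: $\op{sgn}d\mathfrak{s}_0(v_1^+)=-\op{sgn}d\mathfrak{s}_0(v_1^-)$, forced by the outward radial $\bdry_r$ pointing in opposite directions at $(0,\dots,0,\pm1)$, and this is all that is ultimately needed for the cobordism argument of Section~\ref{subsubsection: From $d\mathfrak{s}$ to $d\mathfrak{s}_0$}.
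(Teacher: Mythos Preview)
Your proposal is correct and takes essentially the same approach as the paper: compute $d\mathfrak{s}_0$ directly from the formula $\mathfrak{s}_0(T,v_+)(\sigma_i)=e^{-2\lambda_iT}c_i$, observe that it is diagonal (with positive entries) in the bases $\{e_i\}$ and $\{\sigma_i^*\}$, and then read off the sign from the fact that $\partial_r=\pm e_k$ at $(0,\dots,0,\pm1)$. The paper's own proof is a three-sentence version of exactly this, simply asserting that $T_{v_1^\pm}(\mathcal{M}/\R)$ is oriented by $\pm[e_1\wedge\dots\wedge e_{k-1}]$ and that $e_i\mapsto(\text{positive})\cdot\sigma_i^*$.

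Your additional paragraph on the reversed wedge ordering $[\sigma_{k-1}^*\wedge\dots\wedge\sigma_1^*]$ versus $[\sigma_1^*\wedge\dots\wedge\sigma_{k-1}^*]$ is a legitimate bookkeeping concern that the paper's proof does not address explicitly; your resolution (that the reversal is matched by the one built into Lemma~\ref{dumbledore} and Lemma~\ref{lemma: relate orientations}) is the right spirit but remains somewhat informal. As you correctly observe, the robust and essential content is that $\op{sgn}d\mathfrak{s}_0(v_1^+)=-\op{sgn}d\mathfrak{s}_0(v_1^-)$, forced by $\partial_r$ flipping direction at the antipodes, and this is what drives the subsequent cobordism argument.
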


\begin{proof}
The tangent space $T_{v_1^\pm}(\mathcal{M}/\R)$ is oriented by $\pm[e_1\wedge \cdots \wedge e_{k-1}]$. For $i=1,\dots,k-1$, $d\mathfrak{s}_0(v_1^+)$ maps $e_i$ to some positive multiple of $\sigma_i^*$ by Equation~\eqref{eqn: expression for s zero}.  Hence $\op{sgn}d\mathfrak{s}_0(v_1^+)=+1$. Similarly, $\op{sgn}d\mathfrak{s}_0(v_1^-)=-1$.
\end{proof}

Let $\widetilde D_{v_0\# v_1^\pm}$ be the analog of $\widetilde D_v$ for $v_0\# v_1^\pm$, with one modification: $D_{v_0\# v_1^\pm}$ is the linearization of $\overline\bdry_{\overline{J}^\tau} (v_0\# v_1^\pm)$, where the term $F_-(\psi_-,\tau-\tau_0)$ of $\mathcal{R}_-$ is set to zero. This is analogous to changing $\mathfrak{s}$ to $\mathfrak{s}_0$.

By Lemmas~\ref{dumbledore} and ~\ref{lemma: relate orientations} adapted to $\widetilde D_{v_0\# v_1^\pm}$ and Lemma~\ref{lemma: signs},
\begin{align}
\label{dumbledore 2}
\mathfrak{o}(\widetilde D_{v_0\# v_1^\pm}) &= \op{sgn}(v_0)\op{sgn}(v_1^\pm) [\widehat e_1\wedge\dots \wedge \widehat e_k \otimes (\sigma'_{k-1})^* \wedge \dots\wedge (\sigma'_1)^*]\\
\nonumber &= (-1)^{k-1} \op{sgn}(v_0) \op{sgn}(v_1^\pm)[\pm \bdry_r \wedge \widehat e_1\wedge\dots \wedge \widehat e_{k-1} \otimes (\sigma'_{k-1})^* \wedge \dots\wedge (\sigma'_1)^*]\\
\nonumber &= (-1)^{k-1} \op{sgn}(v_0) \op{sgn}(v_1^\pm)(\pm 1)\op{sgn} d\mathfrak{s}_0(v_1^\pm) [\pm \bdry_r]\\
\nonumber &= (-1)^{k} \op{sgn}(v_0) \op{sgn}(v_1^\pm)[\mp\bdry_r]
\end{align}
where $\bdry_r$ is the radial vector field for $\R^k$. Note that $e_k=\bdry_r$ at $(0,\dots,0,1)$ and $e_k=-\bdry_r$ at $(0,\dots,0,-1)$.

\subsubsection{Orientations over $\nu$}

Recall that the embedded arc $\nu\subset S^{k-1}$ is oriented from $(0,\dots,0,1)$ to $(0,\dots,0,-1)$. Let $\widetilde\nu$ be a connected component of $(\overline{ev}_-^k)^{-1}(\nu)$.

Suppose $v_1^\pm$ are the endpoints of $\widetilde\nu$ over $(0,\dots,0,\pm 1)$. We can view $\dot \nu$ as the continuation of the tangent vector field $-\bdry_r$ at $v_1^+$ and $\bdry_r$ as the continuation of $\dot \nu$ at $v_1^-$. Since $\op{sgn}(v_0)$ is constant and $\op{sgn}(v_1^+)=\op{sgn}(v_1^-)$, the orientations of $v_0\cup v_1^+$ and $v_0\cup v_1^-$ that come from Equation~\eqref{dumbledore 2} are consistent with the boundary orientation of $v_0\cup \bdry \widetilde\nu$ (up to an overall sign), i.e., the signs are opposite.

Next consider
$$w_1\cup w_2 \in \left.\left( \mathcal{M}_+^{k-1}(\zeta_+,\gamma'_+)/\R \times \mathcal{M}_{J_+}^{\op{ind}=1,\op{cyl}}(\gamma_+,\zeta_+)/\mathbb{R}  \right)\right/ \sim, $$
i.e., $w_1 \cup w_2$ is a boundary point of $\widetilde \nu$ that lies over the interior of $\nu$. We define $\op{sgn}(w_1)$ by $$\mathfrak{o}(D_{w_1})=[\op{sgn}(w_1)\frak{o}(N)\wedge \partial_s^1],$$
where $N$ is the normal bundle to $\nu$ inside $S^{k-1}$ and $\frak{o}(N)$ is (a representative of) the orientation for $N$ such that $[\frak{o}(N) \wedge \dot \nu  \wedge \partial_r]=[e_1\wedge \cdots \wedge e_k]$. Also define $\op{sgn}(w_2)$ by
$$\mathfrak{o}(D_{w_2})=[\op{sgn}(w_2)\partial_s^2].$$
Then
$$\mathfrak{o}(D_{w_1\#w_2})=\mathfrak{o}(D_{w_1})\widetilde\otimes \mathfrak{o}(D_{w_2})=\op{sgn}(w_1)\op{sgn}(w_2)[\widehat{\frak{o}(N)}\wedge \widehat\partial_s^1 \wedge \widehat\partial_s^2],$$
where $\widetilde\otimes$ is defined as in the discussion after Equation~\eqref{widetilde otimes}.

If $\dot{\nu}$ points from the interior of $\widetilde\nu$ towards $w_1\cup w_2$, then $\widehat\partial_s^1 +\widehat\partial_s^2$ corresponds to $\partial_s^{12}$ or $-\bdry_r$, and $-\widehat\partial_s^1+\widehat\partial_s^2$ corresponds to $\dot{\nu}$. Hence
\begin{align} \label{eqn: uno}
\mathfrak{o}(D_{w_1\#w_2}) & =-\op{sgn}(w_1)\op{sgn}(w_2)[\widehat{\frak{o}(N)}\wedge \dot{\nu} \wedge \partial_s^{12}]\\
\nonumber & =-\op{sgn}(w_1)\op{sgn}(w_2)[\widehat{\frak{o}(N)}\wedge \dot{\nu} \wedge -\bdry_r]\\
\nonumber & =\op{sgn}(w_1)\op{sgn}(w_2)[e_1\wedge\dots \wedge e_k].
\end{align}
In particular, $\op{sgn}(w_1)\op{sgn}(w_2)= \op{sgn}(v_1^\pm)$, if $v_1^\pm$ is in the same component of $\overline{\mathcal{M}/\R}$ as $w_1\cup w_2$.
On the other hand, if $\dot{\nu}$ points from $w_1\cup w_2$ towards the interior of $\widetilde\nu$, then $\widehat\partial_s^1 +\widehat\partial_s^2$ corresponds to $\partial_s^{12}$ or $-\bdry_r$, and $\widehat\partial_s^1-\widehat\partial_s^2$ corresponds to $\dot{\nu}$. Hence
\begin{align} \label{eqn: dos}
\mathfrak{o}(D_{w_1\#w_2}) &=-\op{sgn}(w_1)\op{sgn}(w_2)[\widehat{\frak{o}(N)}\wedge \dot{\nu} \wedge \bdry_r]\\
\nonumber &=-\op{sgn}(w_1)\op{sgn}(w_2)[e_1\wedge\dots \wedge e_k].
\end{align}
In particular, $-\op{sgn}(w_1)\op{sgn}(w_2)= \op{sgn}(v_1^\pm)$, if $v_1^\pm$ is in the same component of $\overline{\mathcal{M}/\R}$ as $w_1\cup w_2$.
The signs of $\op{sgn}(w_1)\op{sgn}(w_2)$ in Equations~\eqref{eqn: uno} and \eqref{eqn: dos} are opposite, as desired.

Finally, we compare the signs of Equations~\eqref{eqn: uno} and \eqref{eqn: dos} with those of Equation~\eqref{dumbledore 2}. We contract the right-hand sides of Equations~\eqref{eqn: uno} and \eqref{eqn: dos} with $e_{k-1}^*\wedge\dots\wedge e_1^*$ to obtain
$$(-1)^k \op{sgn}(w_1)\op{sgn}(w_2)[-\bdry_r] \quad \mbox{and} \quad (-1)^{k-1}\op{sgn}(w_1)\op{sgn}(w_2)[\bdry_r].$$
These agree with
$$(-1)^k \op{sgn}(v_1^+)[-\bdry_r] \quad \mbox{and} \quad (-1)^k \op{sgn}(v_1^-)[\bdry_r],$$
which are obtained from Equation~\eqref{dumbledore 2} by dividing by $\op{sgn}(v_0)$.

This finishes the proof of Theorem \ref{thm: chain homotopy}.

\end{document}